\newtheorem*{myex*}{Exercise}
\newtheorem*{mythm*}{Theorem}
\newtheorem{mythm}{Theorem}[section]
\newtheorem{mycor}[mythm]{Corollary}
\newtheorem{mylem}[mythm]{Lemma}
\newtheorem{mypro}[mythm]{Proposition}
\newtheorem{myrem}[mythm]{Remark}
\theoremstyle{definition}
\newtheorem{mydef}[mythm]{Definition}
\numberwithin{equation}{subsection}
\titleformat{\section}{\normalsize\centering\scshape}{\S\thesection\quad}{0pt}{}
\titleformat{\subsection}{\normalsize\centering\scshape}{\S\thesubsection\quad}{0pt}{}
\begin{document}
\title{\normalsize{SECONDARY HIGHER INVARIANTS AND CYCLIC COHOMOLOGY FOR GROUPS OF POLYNOMIAL GROWTH}}
\author{SHEAGAN A. K. A. JOHN}
\renewcommand\footnotemark{}
\thanks{The author was partially supported by NSF 1800737, NSF 1952693}
\date{}
\maketitle	

\begin{abstract}
We prove that if $\Gamma$ is a group of polynomial growth then each delocalized cyclic cocycle on the group algebra has a representative of polynomial growth. For each delocalized cocyle we thus define a higher analogue of Lott’s delocalized eta invariant and prove its convergence for invertible differential operators. We also use a determinant map construction of Xie and Yu to prove that if $\Gamma$ is of polynomial growth then there is a well defined pairing between delocalized cyclic cocyles and $K$-theory classes of $C^*$-algebraic secondary higher invariants. When this $K$-theory class is that of a higher rho invariant of an invertible differential operator we show this pairing is precisely the aforementioned higher analogue of Lott’s delocalized eta invariant. As an application of this equivalence we provide a delocalized higher Atiyah-Patodi-Singer index theorem given $M$ is a compact spin manifold with boundary, equipped with a positive scalar metric $g$ and having fundamental group $\Gamma=\pi_1(M)$ which is finitely generated and of polynomial growth.
\end{abstract}
\renewcommand\contentsname{Table of Contents}
\tableofcontents
\pagestyle{myheadings}	

\pagenumbering{arabic}	
\section{Introduction}
\noindent Given a Fredholm operator $T:X\longrightarrow Y$ between two Banach spaces the classic index theory for Fredholm operators provides an integer valued analytic index \[\mathrm{ind}(T)=\dim\ker(T)-\dim\mathrm{coker}(T)\] 
which is invariant under perturbations of $T$ by compact operators. The non-vanishing of $\mathrm{ind}(T)$ is thus an obstruction to invertibility of a Fredholm operator $T$. When $T$ is an elliptic differential operator with $X$ and $Y$ smooth vector bundles over a smooth closed manifold $M$ the work of Atiyah and Singer \cite{AS63} showed the equivalence between $\mathrm{ind}(T)$ and the often more tractable topological index (see \eqref{TopInd} of \Cref{C4}). 

Let $M$ be a complete $n$-dimensional Riemannian manifold with a discrete group $G$ acting on it properly and cocompactly by isometries. Each $G$-equivariant elliptic differential operator $D$ on $M$ gives rise to a higher index class $\mathrm{Ind}_G(D)$ in the $K$-theory group $K_n(C_r^*(G))$ of the reduced group $C^*$-algebra $C^*_r(G)$. Higher index classes are invariant under homotopy, and being an obstruction to the invertibility of $D$, are often referred to as primary invariants. Higher index theory provides a far-reaching generalization of the Fredholm index by taking into consideration the symmetries of the underlying spaces; in particular, if $M$ is a complete compact Riemannian manifold with an associated Dirac-type operator $D$, a higher index theory intrinsically involves the fundamental group $\pi_1(M)$. The higher index theory plays a fundamental role in the studies of many important open problems having relations to geometry and topology, such as the Novikov conjecture, the operator $K$-theoretic Baum-Connes conjecture, and the Gromov-Lawson-Rosenberg conjecture.   

A secondary higher invariant-- so called due to its natural appearance upon the vanishing of a primary invariant such as $\mathrm{Ind}_G(D)$-- was developed by Lott \cite{JL92} within the framework of noncommutative differential forms, for manifolds with fundamental groups of polynomial growth and $D$ invertible. Lott's work was heavily inspired by the work of Bismut and Cheeger on eta forms \cite{BC89}, which naturally arise in the index theory for families of manifolds with boundary \cite{APS75I}. Lott's higher eta invariant, despite being defined by an explicit integral formula of noncommutative differential forms is unfortunately difficult to compute in general. To reduce the computability difficulty and make this higher invariant more applicable to problems in geometry and topology it is useful to introduce (periodic) cyclic cohomology. The delocalized eta invariant of Lott \cite{JL99} can be formally thought of as precisely such a pairing with respect to traces (see the formula \eqref{Ca:LottsPairing}). 

Given a delocalized cyclic cocycle class $[\varphi_\gamma]\in HC^*(\mathbb{C}\pi_1(M),\mathrm{cl}(\gamma))$ of any degree, where $\gamma\in\pi_1(M)$ and the conjugacy class $\mathrm{cl}(\gamma)$ is not trivial, a higher analogue of Lott's delocalized eta invariant $\eta_{[\varphi_\gamma]}(\widetilde{D})$ is given in \Cref{defCa1}. The precise definition of these delocalized cyclic cocyles is found in \Cref{defAb2}, and the explicit formula for $\eta_{[\varphi_\gamma]}(\widetilde{D})$ is described in terms of the transgression formula for Connes-Chern character \cite{AC85,AC94}. The natural problem which arises is in determining when this \textit{delocalized higher eta invariant} can actually be rigorously well-defined and involves some subtle convergence issues. Importantly, we prove that with respect to a group $\Gamma$ of polynomial growth every delocalized cyclic cocyle class on the group algebra has a representative of polynomial growth. It is also essential to prove that the pairing is independent of the choice of representative of any given cocycle class. We are thus able to show that whenever $M$ possesses a finitely generated fundamental group of polynomial growth this higher analogue pairing of Lott’s higher eta invariant with (delocalized) cyclic cocycles is well-defined, under the conditions that the Dirac operator on $\widetilde{M}$ is invertible-- or more generally has a spectral gap at zero. 

\begin{mythm}
Let $M$ be a closed odd-dimensional spin manifold equipped with a positive scalar metric $g$, and fundamental group of polynomial growth. Supposing the Dirac operator $D$ has an associated lift $\widetilde{D}$ to the universal cover $\widetilde{M}$, the higher delocalized eta invariant $\eta_{[\varphi_\gamma]}(\widetilde{D})$ converges absolutely for every $[\varphi_\gamma]\in HC^{2m}(\mathbb{C}\Gamma,\mathrm{cl}(\gamma))$. Moreover, if
\[S_\gamma^*:HC^{2m}(\mathbb{C}{\pi_1(M)},\mathrm{cl}(\gamma))\longrightarrow HC^{2m+2}(\mathbb{C}{\pi_1(M)},\mathrm{cl}(\gamma))\]
denotes the delocalized Connes periodicity operator, then $\eta_{[S_\gamma\varphi_\gamma]}(\widetilde{D})=\eta_{[\varphi_\gamma]}(\widetilde{D})$.
\end{mythm}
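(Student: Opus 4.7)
The plan is to handle the two assertions separately, with the convergence statement as the main analytic input and the $S$-invariance as a subsequent algebraic/transgression manipulation. Throughout I will exploit two facts already established in the paper: every delocalized cyclic cocycle class on $\mathbb{C}\Gamma$ admits a polynomial growth representative, and (by Lichnerowicz applied to the positive scalar metric $g$ lifted to $\widetilde{M}$) the operator $\widetilde{D}^2$ has a spectral gap, so $\|\widetilde{D}\,e^{-t\widetilde{D}^2}\|\leq C\,t^{-1/2}e^{-ct}$ for some $c>0$.

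For absolute convergence of $\eta_{[\varphi_\gamma]}(\widetilde{D})$, I would first replace $\varphi_\gamma$ by the polynomial growth representative guaranteed by the main technical theorem. The transgression formula of \Cref{defCa1} expresses $\eta_{[\varphi_\gamma]}(\widetilde{D})$ as an integral over $t\in(0,\infty)$ whose integrand is $\varphi_\gamma$ applied to a universal polynomial expression in $\widetilde{D}e^{-t\widetilde{D}^2}$ and $e^{-t\widetilde{D}^2}$, traced off-diagonally over the conjugacy class $\mathrm{cl}(\gamma)$. At large $t$, the spectral gap gives exponential decay of every factor involving $\widetilde{D}$; this beats any polynomial in $t$ and any polynomial (in the word length) growth of the cocycle's coefficients, giving convergence at $t=\infty$. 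At small $t$, the key point is that only non-identity conjugacy classes contribute: Gaussian off-diagonal heat kernel estimates on $\widetilde{M}$ bound the relevant matrix coefficients by $t^{-n/2}e^{-d(x,\gamma x)^2/ct}$, and summing over the $\mathrm{cl}(\gamma)$-orbit uses polynomial volume growth of $\Gamma$ together with polynomial growth of $\varphi_\gamma$ on $\Gamma$. Since $d(x,\gamma x)\geq\ell(\gamma)>0$ away from the identity, the Gaussian factor $e^{-\ell(\gamma)^2/ct}$ dominates any power of $t^{-1}$ and any polynomial weight, yielding integrability as $t\to 0$. Combining the two regimes gives absolute convergence.

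The $S$-invariance is then an algebraic identity at the level of the transgression formula. The Connes periodicity operator $S_\gamma^*$ is induced, up to normalizing constants, by the natural inclusion $HC^{2m}\hookrightarrow HC^{2m+2}$ at the chain level, and it is precisely this normalization that is compensated by the shift in the degree of the Chern--Connes transgression form used to define $\eta_{[\varphi_\gamma]}$. Concretely, I would write $\eta_{[\varphi_\gamma]}(\widetilde{D})$ using the transgression of the $(2m)$-dimensional Connes--Chern character and $\eta_{[S_\gamma\varphi_\gamma]}(\widetilde{D})$ using the transgression of the $(2m+2)$-dimensional one, then compare the two integrands using the compatibility of the Connes--Chern character with the $SBI$ sequence, i.e.\ the identity $\mathrm{ch}_{2m+2}=S\cdot\mathrm{ch}_{2m}$ in (delocalized) periodic cyclic cohomology. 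The absolute convergence established in the first part legitimizes the manipulations (Fubini, pushing $S$ past the trace, and Stokes' formula on $(0,\infty)$) needed to identify the two sides; the boundary contributions at $t=0$ and $t=\infty$ vanish by the estimates just discussed.

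The main obstacle I anticipate is not the large-$t$ behavior (which is standard once the spectral gap is in hand) but rather reconciling the polynomial growth estimate on $\varphi_\gamma$ with the combinatorial complexity of the transgression formula: the integrand is $\varphi_\gamma$ applied to a $(2m+1)$-fold expression in heat operators, so one must verify that the polynomial growth bound on $\varphi_\gamma$ interacts correctly with the iterated off-diagonal heat kernel estimates and the polynomial volume growth of $\Gamma$ uniformly in $m$. Careful bookkeeping of these bounds, rather than any single deep estimate, is the technical heart of the argument.
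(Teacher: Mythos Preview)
Your high-level strategy is sound, but there is one concrete misidentification and one structural difference from the paper's argument.

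The integrand in the paper's \Cref{defCa1} is \emph{not} a polynomial expression in $\widetilde{D}e^{-t\widetilde{D}^2}$ and $e^{-t\widetilde{D}^2}$. Only the first tensor factor, $\dot{u}_t u_t^{-1}(\widetilde{D})=2i\sqrt{\pi}\,\widetilde{D}e^{-t^2\widetilde{D}^2}$, is of heat-kernel type; the remaining $2m$ factors are $u_t(\widetilde{D})-\mathbbm{1}$ and $u_t^{-1}(\widetilde{D})-\mathbbm{1}$ with $u_t(x)=\exp(2\pi i F_t(x))$ for an error-function $F_t$. These are Schwartz in $tx$ but not heat operators, so your Gaussian off-diagonal bound $t^{-n/2}e^{-d(x,\gamma x)^2/ct}$ does not literally apply. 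One can recover comparable off-diagonal decay for general Schwartz functions of $\widetilde{D}$ via finite propagation speed of the wave operator plus Fourier synthesis, but this is additional work your sketch does not acknowledge.

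The paper sidesteps kernel estimates entirely by a different mechanism: it first proves (\Cref{thmCa1}) that a polynomial-growth cocycle extends \emph{continuously} to the projective tensor power of the Connes--Moscovici-type algebra $\mathscr{A}(\widetilde{M},\mathcal{S})^\Gamma$, with an explicit bound in the Fr\'echet seminorms $\|\cdot\|_{\mathscr{A},k}$. Since every Schwartz function of $\widetilde{D}$ lies in $\mathscr{A}$ (\Cref{lemCa1}), both regimes reduce to seminorm estimates: on $[0,1]$ the integrand is uniformly bounded (\Cref{proCa1}), and on $[1,\infty)$ the spectral gap gives $e^{-(t^2-1)r^2/2}$ decay in $\|\cdot\|_{\mathscr{A},k}$. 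This packages the ``iterated off-diagonal estimate versus polynomial volume growth'' bookkeeping you flag as the main obstacle into a single continuity lemma, and the same lemma is what drives the transgression computations for independence of representative (\Cref{thmCa2}) and for $S$-invariance (\Cref{proCa2}). Your route can be made to work after correcting the integrand, but the smooth-subalgebra extension is what the paper actually uses and is what makes the subsequent steps uniform.
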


When the higher index class of an operator is trivial-- given a specific trivialization-- a secondary index theoretic invariant naturally arises through a $C^*$-algebraic approach.
For example, consider the associated Dirac operator on the universal covering $\widetilde{M}$ of a closed, $n$-dimensional spin manifold $M$ equipped with a positive scalar curvature metric $g$. The Lichnerowicz formula (see \eqref{Ca:Lichnerowicz} of \Cref{C1}) asserts that the Dirac operator on $\widetilde{M}$ is invertible \cite{AL63}, and so $\mathrm{Ind}_G(D)$ must necessarily be trivial. In this case, there is a natural $C^*$-algebraic secondary invariant $\rho(\widetilde{D},\widetilde{g})$ introduced by Higson and Roe \cite{HR05I,HR05II,HR05III}, called the higher rho invariant, which is an obstruction to the inverse of the Dirac operator being local. This higher rho invariant describes a class belonging to the group $K_n(C^*_{L,0}(\widetilde{M})^{\pi_1(M)})$, where $\pi_1(M)$ is the fundamental group of $M$. As mentioned before, such a secondary index theoretic invariant often plays an important role in problems in geometry and topology (cf. \cite{WXY20,WY15,XY17}). The precise description of the geometric $C^*$-algebra $C^*_{L,0}(\widetilde{M})^{\pi_1(M)}$ is provided in \Cref{defAa5}, and the particular construction of the higher rho invariant is given at the beginning of \Cref{C2}. In the case that $\pi_1(M)$ is of polynomial growth we provide-- using the construction of the determinant map of \cite{XY19}-- in \Cref{C2} an explicit formula (see \Cref{defCb3})for a pairing of $C^*$-algebraic secondary invariants and delocalized cyclic cocycles of the group algebra is realized. Moreover, in the particular instance that $[u]\in K_1(C^*_{L,0}(\widetilde{M})^{\pi_1(M)})$ is the $K$-theory class of the higher rho invariant $\rho(\widetilde{D},\widetilde{g})$, then the pairing is given explicitly in terms of the higher delocalized eta invariant $\eta_{[\varphi_\gamma]}(\widetilde{D})$.

\begin{mythm}\label{thm2}
Let $M$ be a closed odd-dimensional spin manifold with fundamental group of polynomial growth, then every delocalized cyclic cocycle $[\varphi_\gamma]\in HC^{2m}(\mathbb{C}{\pi_1(M)},\mathrm{cl}(\gamma))$ induces a natural map
\[\tau_{[\varphi_\gamma]}:K_1(C_{L,0}(\widetilde{M})^{\pi_1(M)})\longrightarrow\mathbb{C}\]
If $M$ has positive scalar curvature metric $g$ then $\tau_{[\varphi_\gamma]}(u)$ converges absolutely. When $[u]=\rho(\widetilde{D},\widetilde{g})$ is the $K$-theory class of the higher rho invariant there is an equivalence 
\[\tau_{[\varphi_\gamma]}(\rho(\widetilde{D},\widetilde{g}))=(-1)^m\eta_{[\varphi_\gamma]}(\widetilde{D})\]
\end{mythm}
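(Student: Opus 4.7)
The plan is to construct $\tau_{[\varphi_\gamma]}$ via the Xie--Yu determinant map and then identify its value on a higher rho invariant with the higher delocalized eta invariant through a transgression argument. First I would invoke Theorem 1 above to select a polynomial-growth representative of $[\varphi_\gamma]$. A class in $K_1(C^*_{L,0}(\widetilde{M})^{\pi_1(M)})$ is represented by a piecewise smooth path of unitaries $u:[0,1]\to (C^*_L(\widetilde{M})^{\pi_1(M)})^{\sim}$ with $u(0)=u(1)=1$, which after smoothing by finite-propagation approximations can be taken to have $\pi_1(M)$-invariant Schwartz kernels supported near the diagonal. The pairing is then defined, essentially as in the Xie--Yu construction, by an integral of the form
\[\tau_{[\varphi_\gamma]}(u)=c_m\int_0^1 \varphi_\gamma^{\natural}\bigl((u(t)^{-1}\dot u(t))^{2m+1}\bigr)\,dt\]
for an appropriate universal normalization $c_m$, where $\varphi_\gamma^{\natural}$ denotes the canonical extension of $\varphi_\gamma$ to the smooth subalgebra of $C^*_r(\pi_1(M))$ carrying the finite-propagation coefficients of $u$.

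Next I would verify well-definedness on both $HC^{2m}(\mathbb{C}\pi_1(M),\mathrm{cl}(\gamma))$ and on $K_1$. Independence of the cocycle representative follows because cyclic coboundaries pair trivially with the closed noncommutative chain $(u^{-1}\dot u)^{2m+1}$; the Connes periodicity compatibility is obtained exactly as in Theorem 1. Independence of the choice of path representing the $K$-theory class reduces to the vanishing of the integral on loops, which follows from a noncommutative Stokes argument once one uses that $\varphi_\gamma$ is a cocycle, while stability under direct sums and unitary equivalence is automatic. For absolute convergence under positive scalar curvature one invokes the Lichnerowicz spectral gap for $\widetilde{D}$: the finite-propagation approximations of the higher rho path have coefficients that decay super-polynomially in word length, which dominates the polynomial-growth bound on $\varphi_\gamma$ supplied by Theorem 1.

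For the explicit identification, I would use the canonical representative of $\rho(\widetilde{D},\widetilde{g})$ built from a Schwartz function of $\widetilde{D}$, for instance a reparametrization of $u_t=\exp\bigl(2\pi i \chi(t\widetilde{D})\bigr)$ arranged to satisfy the $C^*_{L,0}$ boundary conditions. Substituting this path into the formula for $\tau_{[\varphi_\gamma]}$ and performing a change of variables $t\mapsto 1/t$ (to recast the integral into the half-line form appearing in $\eta_{[\varphi_\gamma]}(\widetilde{D})$) should identify the integrand as the transgression of the Connes--Chern character of $e^{-t^2\widetilde{D}^2}$. This transgression is precisely the integrand of $\eta_{[\varphi_\gamma]}(\widetilde{D})$, and comparison of cyclic-degree normalizations yields the sign $(-1)^m$.

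The main obstacle will be this final identification. Reconciling the $C^*$-algebraic determinant-map integral with the noncommutative-geometric transgression integral defining $\eta_{[\varphi_\gamma]}(\widetilde{D})$ requires careful bookkeeping of normalization and sign conventions across two different functional calculi, and justifying several interchange-of-limit operations when passing between $C^*_{L,0}(\widetilde{M})^{\pi_1(M)}$ and the smooth subalgebra of $C^*_r(\pi_1(M))$ on which $\varphi_\gamma^{\natural}$ is genuinely defined; the polynomial-growth estimate and the spectral gap together should provide the quantitative control needed to make each such interchange rigorous.
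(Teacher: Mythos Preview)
Your overall strategy---construct the determinant map, prove well-definedness, then identify it on the rho invariant via a change of variables $t\mapsto 1/t$---matches the paper's in broad outline, but there are genuine gaps in the execution.

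First, your integrand $\varphi_\gamma^{\natural}\bigl((u^{-1}\dot u)^{2m+1}\bigr)$ is not the correct formula. A degree-$2m$ cyclic cocycle is a $(2m+1)$-linear functional, and the determinant map pairs it with the specific tensor
\[
(w^{-1}\dot w)\,\hat\otimes\,(w^{-1}\hat\otimes w)^{\hat\otimes m},
\]
not with $2m+1$ copies of $u^{-1}\dot u$. The latter does not produce the transgression of the Connes--Chern character, so your proposed identification with $\eta_{[\varphi_\gamma]}(\widetilde D)$ would not go through from this starting point.

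Second, your description of a class in $K_1(C^*_{L,0}(\widetilde M)^{\pi_1(M)})$ as a loop $u:[0,1]\to(C^*_L)^{\sim}$ with $u(0)=u(1)=1$ is not right. Elements of $C^*_{L,0}$ are paths $[0,\infty)\to C^*(\widetilde M)^{\pi_1(M)}$ vanishing at $0$ and whose propagation tends to zero as $t\to\infty$; invertibles in the unitization are paths with $u(0)=\mathbbm 1$, not loops. This matters because the convergence of $\tau_{\varphi_\gamma}(u)$ for a \emph{general} class $[u]$ (not just the rho invariant) is an integral over $[0,\infty)$ whose large-$t$ behaviour is controlled precisely by the vanishing-propagation condition. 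The paper handles this by introducing a \emph{regularized representative} $w(t)$, built so that for $t\ge 2$ one has $w(t)=\exp(2\pi i\,E(t-1))$ with $E$ of uniformly small propagation, and then approximates $w(t)$ by finite-propagation polynomials $\varpi(t)$; the key vanishing input is that $\varphi_\gamma$ annihilates tensors whose entries all have propagation below a fixed threshold depending only on $M$. Your ``super-polynomial decay in word length'' sketch does not supply this, and without the regularized-representative machinery the large-$t$ convergence and the independence of representative (which the paper proves via a two-parameter homotopy $\{v_s\}$ together with the same propagation argument) are not established.

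For the final identification your idea is essentially what the paper does: take $U_t(\widetilde D)=e^{2\pi i\psi(\widetilde D/t)}$ as representative of $\rho(\widetilde D,\widetilde g)$, verify it lies in the appropriate smooth localization algebra, and then the substitution $t\mapsto 1/t$ (together with $u_t\leftrightarrow u_t^{-1}$) matches the two integrands directly. But this step only works once the correct multilinear integrand is in place.
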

The above theorem holds in the more general case that the Dirac operator $D$ on $M$ has an associated lift $\widetilde{D}$ to the universal cover $\widetilde{M}$ which is invertible. Showing that the map $\tau_{[\varphi_\gamma]}$ is well defined occupies the majority of \Cref{C2}; in particular, the extension of $\varphi_\gamma$ from the group algebra to the localization algebra requires the existence of a certain smooth dense subalgebra of $C^*_r(\pi_1(M))$ introduced by Connes and Moscovici \cite{CM90}. In \cite{XY19}, Xie and Yu established such a pairing between delocalized cyclic cocycles of degree $m=0$-- delocalized traces-- and classes $[u]$ belonging to $K_1(C^*_{L,0}(\widetilde{M})^{\pi_1(M)})$, under the assumption that the relevant conjugacy class has polynomial growth. Later, in \cite{CWXY19}, under the assumption that $\pi_1(M)$ is a hyperbolic group, this construction was extended to allow for a pairing between delocalized cyclic cocycles of all degrees and the K-theory classes $[u]\in K_n(C^*_{L,0}(\widetilde{M})^{\pi_1(M)})$. In the hyperbolic case, convergence of $\tau_{[\varphi_\gamma]}$ relies on the properties of Puschnigg's \cite{MP10} smooth dense subalgebra in an essential way.

The $C^*$-algebraic map $\tau_{[\varphi_\gamma]}$ allows for a constructive and explicit approach to a higher delocalized Atiyah-Patodi-Singer index theorem. In \Cref{C4} we prove a direct relationship between pairings of $K$-theory classes $[u]\in K_n(C^*_{L,0}(\widetilde{M})^{\pi_1(M)})$  with $\tau_{[\varphi_\gamma]}$, and the pairing of classes $\partial[u]$ with respect to the delocalized Chern-Connes character map \cite{AC85,AC94} \ (see \eqref{Cd:ChernConnes} for the explicit expression used here), where 
\[\partial:K_n(C^*(\widetilde{M})^{\pi_1(M)})\longrightarrow K_{n-1}(C^*_{L,0}(\widetilde{M})^{\pi_1(M)})\]
is the usual $K$-theory connecting map. Combined with \Cref{thm2} this provides the following version of a higher delocalized Atiyah-Patodi-Singer index theorem.

\begin{mythm}
Let $W$ be a compact spin manifold with boundary, equipped with a scalar curvature metric $g$ which is positive on $\partial W$, and fundamental group which is of polynomial growth. Denote by $\widetilde{D}_W$ and $\widetilde{D}_{\partial W}$ the lifted Dirac operators on $W$ and its boundary, respectively.
\[\mathsf{ch}_{[\varphi_\gamma]}\left(\mathrm{Ind}_{\pi_1(W)}(\widetilde{D}_W)\right)=\frac{(-1)^{m+1}}{2}\eta_{[\varphi_\gamma]}(\widetilde{D}_{\partial W})\]
for any $[\varphi_\gamma]\in HC^{2m}(\mathbb{C}{\pi_1(M)},\mathrm{cl}(\gamma))$,
where  $\mathsf{ch}_{[\varphi_\gamma]}$ is the delocalized Chern-Connes character map which pairs cyclic cocyles with the $K$-theoretic index class.
\end{mythm}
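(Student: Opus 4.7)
The strategy is to reduce both sides of the identity to invariants on the boundary $\partial W$, and then apply \Cref{thm2}. The plan combines two intermediate facts: (a) a $K$-theoretic identification under the connecting map $\partial$, carrying the higher APS-type index class $\mathrm{Ind}_{\pi_1(W)}(\widetilde{D}_W)$ to (half of) the higher rho invariant $\rho(\widetilde{D}_{\partial W},\widetilde{g})$; and (b) a compatibility of the form
\[\mathsf{ch}_{[\varphi_\gamma]}([u]) \;=\; \pm\,\tau_{[\varphi_\gamma]}(\partial[u])\]
for all $[u]\in K_n(C^*(\widetilde{W})^{\pi_1(W)})$, where the sign and scalar factor propagate from the transgression formula. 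Together these reduce the left-hand side of the target identity to $\tau_{[\varphi_\gamma]}$ applied to the boundary rho invariant, at which point \Cref{thm2} closes the argument.

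For (a), I would adapt the by-now-standard Higson–Roe–Xie–Yu machinery: view $W$ via the double construction $W\cup_{\partial W}\overline{W}$ and trace through the corresponding Mayer–Vietoris/six-term exact sequence for the geometric $C^*$-algebras of the universal covers. The positive scalar curvature hypothesis on $\partial W$ alone suffices since, by Lichnerowicz, it ensures that $\widetilde{D}_{\partial W}$ is invertible and therefore defines the higher rho class; the factor of $\tfrac{1}{2}$ reflects the fact that the index of the doubled closed manifold equals twice the APS index of $W$. For (b), which is the main technical payload of \Cref{C4}, I would prove the identity by expressing $\mathsf{ch}_{[\varphi_\gamma]}$ through Connes' transgression of the Chern character and then comparing—via the determinant-map construction of \cite{XY19}—the transgression integrand along any $C^*_L$-path lifting $\partial[u]$ with the integrand used to define $\tau_{[\varphi_\gamma]}$ in \Cref{C2}. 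Convergence of both sides rests essentially on the polynomial-growth representative of $\varphi_\gamma$ furnished by the first main theorem of this paper and on continuity of $\varphi_\gamma$ through the Connes–Moscovici smooth dense subalgebra of $C^*_r(\pi_1(W))$.

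The hard part is (b), because the interior Dirac operator $\widetilde{D}_W$ possesses no spectral gap at zero, so invertibility cannot be invoked to justify absolute convergence of the transgression integral as was done for $\widetilde{D}_{\partial W}$ in the proof of \Cref{thm2}. My plan is to localize the transgression to a collar neighbourhood of $\partial W$ where the spectral gap of $\widetilde{D}_{\partial W}$ provides the needed exponential decay, and to control the remaining bulk contribution by combining finite-propagation estimates for the heat kernel with the polynomial growth of $\pi_1(W)$. Once (b) is in hand, assembling (a), (b), and \Cref{thm2}—and carefully tracking signs through the $(-1)^m$ in that theorem together with the degree shift incurred by the transgression—produces the stated constant $(-1)^{m+1}/2$ on the right-hand side, completing the proof.
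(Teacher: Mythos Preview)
Your overall architecture matches the paper exactly: combine (a) the identification $\partial\bigl(\mathrm{Ind}_{\pi_1(W)}(\widetilde{D}_W)\bigr)=\rho(\widetilde{D}_{\partial W},\widetilde{g})$, (b) a compatibility between $\mathsf{ch}_{[\varphi_\gamma]}$ and $\tau_{[\varphi_\gamma]}\circ\partial$, and \Cref{thm2}. Two points deserve correction, however.

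First, you have placed the factor $\tfrac{1}{2}$ in the wrong step. The paper (citing \cite[Theorem~1.14]{PS14} and \cite[Theorem~A]{XY14}) uses $\partial\bigl(\mathrm{Ind}_{\pi_1(W)}(\widetilde{D}_W)\bigr)=\rho(\widetilde{D}_{\partial W},\widetilde{g})$ on the nose, with no $\tfrac{1}{2}$; the doubling heuristic you invoke is not the source of the constant. The $-\tfrac{1}{2}$ arises entirely from step (b), namely the commuting square of \Cref{lemCd1}, which reads $-2\,\mathsf{ch}_{[\varphi_\gamma]}=\tau_{[\varphi_\gamma]}\circ\partial$.

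Second, and more importantly, you have misidentified where the difficulty lies. You call (b) ``the hard part'' because $\widetilde{D}_W$ has no spectral gap, and propose collar localization and heat-kernel estimates to control the transgression. But the paper's proof of \Cref{lemCd1} is purely algebraic and makes no reference whatsoever to $\widetilde{D}_W$: one takes an arbitrary idempotent $p\in\mathscr{B}(\widetilde{W})^{\pi_1(W)}$, writes down an explicit lift $F(t)=(1-t)p$ for $t\in[0,1]$ and $F(t)=0$ otherwise, computes $\partial[p]$ from $F$, and then evaluates $\tau_{\varphi_\gamma}$ on the resulting invertible by a direct finite calculation (the paper defers to \cite[Proposition~7.2]{CWXY19}). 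No spectral information about the interior operator enters at any stage; convergence of $\tau_{\varphi_\gamma}$ here is trivial because the path is eventually constant. Your proposed analytic machinery for (b) is therefore unnecessary, and the ``hard part'' you worry about does not arise.
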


There have been various versions of a higher Atiyah-Patodi-Singer theorem in the literature, such as \cite{LP97,LP99,GMP16} and \cite{CW13}. The form of the this result strongly mirrors that conjectured by Lott \cite[Conjecture 1]{JL92}, and is essentially a general case of that proven by Xie and Yu \cite[Proposition 5.3]{XY19} for zero dimensional cyclic cocyles. In \Cref{C4} we provide the basic background of the original APS index theorem, and show how the above theorem is specifically related to it. See the discussion following \cite[Theorem 7.3]{CWXY19} for more details on the relationships and differences of the above theorem with other existing results of higher APS index theorems.

This paper is organized as follows. In \Cref{A1} we provide the properties of the geometric $C^*$-algebras which shall be used throughout, as well as detail the construction of important smooth dense sub-algebras. \Cref{A2} is concerned primarily with providing the definition of cyclic cohomology and detailing the relationship between this and cohomology for groups; in addition we recall an essential construction for explicit representative of cyclic cocyle classes. In \Cref{B1} we review the long exact sequence of periodic cyclic cohomology involving the (delocalized) Connes periodicity operator (see \Cref{defBa1} and \eqref{Ba:ConnesPer}); combining this with a cohomomological dimension result we prove a necessary torsion argument. We are thus able to construct a rational isomorphism between cohomology groups of a certain complex of cyclic cocyles and group cohomology of a particular subgroup of $\pi_1(M)$. Using the universal classifying description of group cohomology and the previous rational isomorphism, the entirety of \Cref{B2} is devoted to proving that every delocalized cyclic cocyle has a representative of polynomial growth. In \Cref{C1}, given a delocalized cyclic cocyle of polynomial growth we define a higher analogue of Lott's delocalized eta invariant and prove it converges for invertible elliptic operators. In \Cref{C2}, we first review a construction of Higson and Roe's higher rho invariant as an explicit $K$-theory class. We provide an explicit formula for the pairing between $C^*$-algebraic secondary invariants and delocalized cyclic cocycles of the group algebra for groups of polynomial growth, and prove it is well-defined. In particular, in the case that the secondary invariant is a $K$-theoretic higher rho invariant of an invertible elliptic differential operator, we show in \Cref{C3} that this pairing is precisely the higher delocalized eta invariant of the given operator. In \Cref{C4}, we use the determinant map of the previous section to determine a pairing between delocalized cyclic cocycles and $C^*$-algebraic Atiyah-Patodi-Singer index classes for manifolds with boundary, when the fundamental group of the given manifold is of polynomial growth.
\newline

\noindent\textbf{Acknowledgements:} The author would like to thank Zhizhang Xie for his invaluable support and advice.

\section{Preliminaries}
In all that follows we will take $M$ to be a closed odd-dimensional spin manifold, which is equipped with a positive scalar metric $g$. By $D$ we denote the Dirac operator associated to $M$, and analogously by $\widetilde{D}$ the associated lift to the universal cover $\widetilde{M}$. By $\Gamma=\pi_1(M)$ we refer to a countable discrete finitely generated group which is also the fundamental group of $M$. Given $\gamma\in\Gamma$ the centralizer of $\gamma$ will be denoted $Z_\Gamma(\gamma)$, or if there is no confusion as to the group $\Gamma$, by $Z_\gamma$; likewise, if $\gamma^{\mathbb{Z}}$ is the cyclic group generated by $\gamma$, then the quotient group $Z_\gamma/\gamma^{\mathbb{Z}}$ will be denoted by $N_\gamma$. By $\mathbb{C}\Gamma$ and $\mathbb{Z}\Gamma$ we mean the group algebra with complex coefficients and the group ring with integer coefficients, respectively.

We recall that a finitely generated discrete group $\Gamma$ comes equipped with a length function $l_S$-- with respect to some given symmetric generating set $S\subset\Gamma$.
\begin{equation}
l_S(g)=\min\{c\in\mathbb{N}:\exists s_1,\ldots,s_c\in S,s_1\cdots s_c=g\}
\end{equation}
There exists an associated word metric $d_S(g,h)=||g^{-1}h||;=l_S(gh)$ which is left-invariant with respect to the group action. More importantly, since the metric spaces $(\Gamma,S)$ and $(\Gamma, T)$ are quasi-isomorphic for any choice of generating sets $S$ and $T$, we are able to ignore this choice when dealing with the word metric (or length function). Henceforth we will merely refer to \textit{the} length function $l_\Gamma$ or \textit{the} word metric $d_\Gamma$. Unless otherwise stated we will assume throughout that $\Gamma$ is of polynomial growth, which is defined as there existing positive integer constants $C_0$ and $m$ such that
\begin{equation}
|\{g\in\Gamma:||g||\leq n\}|\leq C_0(n+1)^m\quad\forall n\in\mathbb{N}
\end{equation}

\subsection{Geometric $C^*$-algebras, and Smooth Dense Sub-algebras}\label{A1}
Let $X$ be a proper metric space, and $C_0(X)$ the algebra of continuous functions on $X$ which vanish at infinity. An $X$-module $H_X$ is a separable Hilbert space equipped with a $*$-representation $\pi:C_0(X)\longrightarrow\mathcal{B}(H_X)$ into the algebra of bounded operators on $H_X$, and is called non-degenerate if the $*$-representation of $C_0(X)$ is non-degenerate. If no non-zero function $f\in C_0(X)$ acts a compact operator under this $*$-representation, then we call $H_X$ a standard $X$-module.

\begin{mydef}\label{defAa1}
Recall that an operator $T$ acting on a Hilbert space $\mathcal{H}$ belongs to the algebra of compact operators $\mathcal{K}\subset\mathcal{B}(\mathcal{H})$ if the image under $T$ of every bounded subset has compact closure.
\begin{enumerate}[label={\bf(\roman{enumi})}] 
\item Let $T\in\mathcal{B}(H_X)$ be a bounded linear operator acting on $H_X$, then $T$ is locally compact if for all $f\in C_0(X)$ both $fT$ and $Tf$ are compact operators. We similarly call $T$ pseudo-local if the weaker condition, $[T,f]=TF-fT$ is a compact operator for all $f\in C_0(X)$, is satisfied.  
\item Again assume that $T$ belongs to $\mathcal{B}(H_X)$; the propagation of $T$ is defined to be
\[\sup\{d(x,y): (x,y)\in\mathsf{Supp}(T)\}\]
where $\mathsf{Supp}(T)$ denotes the support of $T$, which is the set
\[\{(x,y)\in X\times X:\exists f,g\in C_0(X)\;\mathrm{such\;that}\;gTf=0\;\mathrm{and}\;f(x)\neq0,g(y)\neq0\}^c\]
\end{enumerate}
\end{mydef}

If we further impose that $H_X$ is a standard and non-degenerate $X$-module, then there exist important constructions of certain geometric $C^*$-algebras. The first two of these, described in \Cref{defAa2} were introduced by Roe in \cite{JR93}, and the coarse homotopy invariance of their $K$-theory was subsequently proven by Higson and Roe \cite{HR94}. 
\begin{mydef}\label{defAa2}
The $C^*$-algebra generated by all locally compact operators with finite propagation in $\mathcal{B}(H_X)$, is the Roe algebra of $X$ and is denoted by $C^*(X)$. If we instead consider the $C^*$-algebra generated by all pseudo-local operators with finite propagation in $\mathcal{B}(H_X)$, then we obtain a related algebra $D^*(X)$. In fact, $D^*(X)$ is a subalgebra of the multiplier algebra $\mathcal{M}(C^*(X))$-- which is the largest unital $C^*$-algebra containing $C^*(X)$ as an ideal. 
\end{mydef}
\begin{mydef}\label{defAa3}
Let $\mathsf{prop}(T)$ denote the propagation of an operator $T\in\mathcal{B}(H_X)$. The localization algebras $C^*_L(X)$ and $D^*_L(X)$ introduced by Yu \cite{GY97} are defined as the $C^*$-algebras generated by $S_1$ and $S_2$ respectively, where $f$ is bounded and uniformly norm-continuous
\[S_1=\left\{f:[0,\infty)\longrightarrow C^*(X)|\lim\limits_{t\longrightarrow\infty}\mathsf{prop}(f(t))=0\right\}\]
\[S_2=\left\{f:[0,\infty)\longrightarrow D^*(X)|\lim\limits_{t\longrightarrow\infty}\mathsf{prop}(f(t))=0\right\}\]
Once again $D^*_L(X)$ is a subalgebra of the multiplier algebra $\mathcal{M}(C^*_L(X))$. The kernel of the evaluation map $\mathsf{ev}:C^*_L(X)\longrightarrow C^*(X)$ defined by $\mathsf{ev}(f)=f(0)$ is an ideal of $C^*_L(X)$, and is itself a $C^*$-algebra which we denote by $C_{L,0}^*(X)$. Analogously we also define the $C^*$-algebra $D_{L,0}^*(X)$ as the kernel of $\mathsf{ev}:D^*_L(X)\longrightarrow D^*(X)$. 
\end{mydef}

It follows that the Roe algebra and its localization fit into a short exact sequence-- analogously for $D^*(X)$-- which give rise to a six term $K$-theoretic long exact sequence with connecting map $\partial$, and for which $i=0,1(\mathrm{mod}\;2)$ by Bott periodicity.
\begin{equation}
\begin{tikzcd} 0 \arrow{r} & C_{L,0}^*(X) \arrow[hookrightarrow]{r}  & C_{L}^*(X) \arrow{r}{\mathsf{ev}} &  C^*(X) \arrow{r} & 0\end{tikzcd}\end{equation}
\begin{equation}
\begin{tikzcd}
K_i(C_{L,0}^*(X)) \arrow{r} & K_i(C_L^*(X)) \arrow{r} & K_i(C^*(X)) \arrow{d}{\partial}\\
K_{i-1}(C^*(X)) \arrow{u}{\partial} & K_{i-1}(C_L^*(X)) \arrow{l} & K_{i-1}(C_{L,0}^*(X))\arrow{l}\end{tikzcd}
\end{equation}
Assuming that a group $G$ acts properly and cocompactly on $X$ by isometries, we can equip $H_X$ with a covariant unitary representation of $G$, which we will denote by $\varpi$. Explicitly, if $g\in G,f\in C_0(X)$ and $v\in H_X$
\[\varpi(g)(\pi(f)v)=\pi(f^g)(\varpi(g)v)\]
where $f^g(x)=f(g^{-1}x)$. We call the system $(H_X,\pi,\varpi)$ a covariant system. 
\begin{mydef}\label{defAa4}
Suppose that $H_X$ is a standard and non-degenerate $X$-module, and $G$ acts on $X$ properly and cocompactly. Moreover, for each $x\in X$ the action of the stabilizer group $G_x$ on $H_X$ is isomorphic to the action of $G_x$ on $l^2(G_x)\otimes\mathcal{H}$ for some infinite dimensional Hilbert space $\mathcal{H}$, where $G_x$ acts trivially on $\mathcal{H}$ and by translations on $l^2(G_x)$. Under these conditions a covariant system $(H_X,\pi,\varpi)$ is called \textit{admissible}.
\end{mydef}
If it is not necessary to emphasize the representations we shall simply refer to the admissible system $(H_X,\pi,\varpi)$ by $H_X$, and describe it as an admissible $(X,G)$-module.
\begin{myrem}\label{remAa1}
For every locally compact metric space $X$ which admits a proper and cocompact isometric action of $G$, there exists an admissible covariant system $(H_X,\pi,\varpi)$. 
\end{myrem}
\begin{mydef}\label{defAa5}
Consider a locally compact metric space $X$ which admits a proper and cocompact isometric action of $G$, and fix some admissible $(X,G)$-module $H_X$. The $G$-equivariant Roe algebra $C^*(X)^G$ is the completion in $\mathcal{B}(H_X)$ of the $*$-algebra $\mathbb{C}[X]^G$ of all $G$-invariant locally compact operators with finite propagation in $\mathcal{B}(H_X)$. Replacing $G$-invariant locally compact operators with $G$-invariant pseudo-local operators we similarly obtain $D^*(X)^G$. The $G$-equivariant localization algebras $C^*_L(X)^G$ and $D^*_L(X)^G$ are defined as the $C^*$-algebras generated by $S_1$ and $S_2$ respectively, where $f$ is bounded and uniformly norm-continuous
\[S_1=\left\{f:[0,\infty)\longrightarrow C^*(X)^G|\lim\limits_{t\longrightarrow\infty}\mathsf{prop}(f(t))=0\right\}\]
\[S_2=\left\{f:[0,\infty)\longrightarrow D^*(X)^G|\lim\limits_{t\longrightarrow\infty}\mathsf{prop}(f(t))=0\right\}\]
Analogous to \Cref{defAa3} we can also define the ideals $C^*_{L,0}(X)^G$ and $D^*_{L,0}(X)^G$ as the kernels of the evaluation map. 
\end{mydef}
The equivariant Roe algebra-- analogously for $D^*(X)^G$-- fits into similar short exact sequence as did the original Roe algebra
\[\begin{tikzcd} 0 \arrow{r} & C_{L,0}^*(X)^G \arrow[hookrightarrow]{r}  & C_{L}^*(X)^G \arrow{r}{\mathsf{ev}} &  C^*(X)^G \arrow{r} & 0\end{tikzcd}\]
An especially useful consequence of the cocompact action of $G$ on $X$ is that there exists a $*$-isomorphism between $C_r^*(G)\otimes\mathcal{K}$ and $C^*(X)^G$, where $C_r^*(G)$ is the reduced group $C^*$-algebra of $G$. 
\begin{myrem}\label{remAa2}
The geometric $C^*$-algebras defined in \Cref{defAa3} and \Cref{defAa5} are all unique up to isomorphism, independent of the choice of $H_X$ is a standard and non-degenerate $X$-module. Likewise the $G$-equivariant versions are also, up to isomorphism, independent of the choice of admissible $(X,G)$-module $H_X$.
\end{myrem}

Let $\Gamma$ and $M$ be as described above; we turn our attention to construction of two important smooth dense subalgebras  of $C_r^*(\Gamma)\otimes\mathcal{K}\cong C^*(\widetilde{M})^\Gamma$, the first of which is essentially a slight modification of Connes and Moscovici's \cite{CM90}.

\begin{mydef}\label{defAa6}
Fixing a basis of $L^2(M)$, the algebra $\mathscr{R}$ of smooth operators on $M$ can be identified with the algebra of matrices $(a_{ij})_{i,j\in\mathbb{N}}$ satisfying
\[\sup_{i,j\in\mathbb{N}}i^kj^l|a_{ij}|<\infty\quad\forall k,l\in\mathbb{N}\]
Consider the unbounded operators $\Delta_1:\ell^2(\mathbb{N})\longrightarrow\ell^2(\mathbb{N})$ and $\Delta_2:\ell^2(\Gamma)\longrightarrow\ell^2(\Gamma)$ defined on basis elements according to
\[\Delta_1(\delta_j)=j\delta_j,\;j\in\mathbb{N}\qquad\mathrm{and}\qquad\Delta_2(g)=||g||\cdot g,\;g\in\Gamma\] 
Denoting by $I$ the identity operator and with $[\cdot,\cdot]$ being the usual commutator bracket, we have unbounded derivations $\partial(T)=[\Delta_2, T]$ of operators $T\in\mathcal{B}(\ell^2(\Gamma))$ and unbounded derivations $\widetilde{\partial}(T)=[\Delta_2\otimes I,T]$ of operators $T\in\mathcal{B}(\ell^2(\Gamma)\otimes\ell^2(\mathbb{N}))$. Define an algebra
\[\mathscr{B}(\widetilde{M})^\Gamma=\{A\in C_r^*(\Gamma)\otimes\mathcal{K}:\;\widetilde{\partial}^k(A)\circ(I\otimes\Delta_1)^2\;\mathrm{is\;bounded\;}\forall k\in\mathbb{N}\}\]
\end{mydef}

The crucial property of $\mathscr{B}(\widetilde{M})^\Gamma$ is that it contains $\mathbb{C}\Gamma\otimes\mathscr{R}$ as a dense subalgebra, is itself a smooth dense subalgebra of $C^*(\widetilde{M})^\Gamma$, and it is closed under holomorphic functional calculus. Moreover, $\mathscr{B}(\widetilde{M})^\Gamma$ is a Fr\'{e}chet algebra under the sequence of seminorms $\{||\cdot||_{\mathscr{B},k}:\;k\in\mathbb{N}\}$, where $||A||_{\mathscr{B},k}=||\widetilde{\partial}^k(A)\circ(I\otimes\Delta_1)^2||_{op}$ is the operator norm of $\widetilde{\partial}^k(A)\circ(I\otimes\Delta_1)^2$.

\begin{mydef}\label{defAa7}
We define a kind of localization algebra $\mathscr{B}_L(\widetilde{M})^\Gamma$ associated to $\mathscr{B}(\widetilde{M})^\Gamma$, which by construction is a smooth dense subalgebra of $C^*_L(\widetilde{M})^\Gamma$ and thus is closed under holomorphic functional calculus.
\[\mathscr{B}_L(\widetilde{M})^\Gamma=\{f\in C^*_L(\widetilde{M})^\Gamma:\;f\;\mathrm{is\;piecewise\;smooth\;w.r.t\;}t\;,f(t)\in\mathscr{B}(\widetilde{M})^\Gamma\;\forall t\in[0,\infty)\}\]
and also define $\mathscr{B}_{L,0}(\widetilde{M})^\Gamma$ to be the kernel of the usual evaluation map $\mathsf{ev}:\mathscr{B}_L(\widetilde{M})^\Gamma\longrightarrow\mathscr{B}(\widetilde{M})^\Gamma$ defined by $\mathsf{ev}(f)=f(0)$.
\end{mydef}
\begin{mypro}\label{proAa1}
The inclusions $\mathscr{B}_L(\widetilde{M})^\Gamma\hookrightarrow C^*_L(\widetilde{M})^\Gamma$ and $\mathscr{B}_{L,0}(\widetilde{M})^\Gamma\hookrightarrow C^*_{L,0}(\widetilde{M})^\Gamma$ induce isomorphisms on $K$-theory
\[K_i(\mathscr{B}_L(\widetilde{M})^\Gamma)\cong K_i(C^*_L(\widetilde{M})^\Gamma)\qquad K_i(\mathscr{B}_{L,0}(\widetilde{M})^\Gamma)\cong K_i(C^*_{L,0}(\widetilde{M})^\Gamma)\]
\end{mypro}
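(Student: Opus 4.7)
The plan is to invoke the standard density theorem of Connes (see also Schweitzer and Bost): if $\mathcal{A}$ is a Fr\'echet subalgebra of a $C^*$-algebra $A$, dense in $A$ and closed under holomorphic functional calculus, then the inclusion $\mathcal{A} \hookrightarrow A$ induces isomorphisms on $K$-theory. Both required isomorphisms will be reduced to verifying these hypotheses, with the ideal case handled by the five lemma.

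\textbf{Step 1.} The paragraph immediately preceding \Cref{defAa7} already records that $\mathscr{B}(\widetilde{M})^\Gamma$ is a smooth dense Fr\'echet subalgebra of $C^*(\widetilde{M})^\Gamma$ closed under holomorphic functional calculus, with the Fr\'echet topology coming from the seminorms $\|\cdot\|_{\mathscr{B},k}$. Hence the density theorem immediately yields $K_i(\mathscr{B}(\widetilde{M})^\Gamma) \cong K_i(C^*(\widetilde{M})^\Gamma)$.

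\textbf{Step 2.} For the localization version, I would verify that $\mathscr{B}_L(\widetilde{M})^\Gamma$ is a smooth dense Fr\'echet subalgebra of $C^*_L(\widetilde{M})^\Gamma$ closed under holomorphic functional calculus, equipped with a natural family of seminorms combining the $\|\cdot\|_{\mathscr{B},k}$ pointwise with smoothness in the $t$-variable. Density follows from Step 1 applied at each $t$ together with a standard smoothing argument preserving the propagation-vanishing hypothesis. Spectral invariance is the subtle point: given $f \in \mathscr{B}_L(\widetilde{M})^\Gamma$ with $1+f$ invertible in the unitization of $C^*_L(\widetilde{M})^\Gamma$, one has $(1+f(t))^{-1} - 1 \in \mathscr{B}(\widetilde{M})^\Gamma$ for each $t$ by Step 1, and it remains to check that the propagation of this pointwise inverse still vanishes as $t \to \infty$. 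I would extract this via a Cauchy integral / resolvent representation over a compact contour avoiding $\mathrm{Spec}(1+f(t))$, combined with the subadditivity $\mathsf{prop}(AB) \leq \mathsf{prop}(A) + \mathsf{prop}(B)$, which transfers the decay $\mathsf{prop}(f(t)) \to 0$ to the inverse. The density theorem then gives $K_i(\mathscr{B}_L(\widetilde{M})^\Gamma) \cong K_i(C^*_L(\widetilde{M})^\Gamma)$.

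\textbf{Step 3.} The ideal case follows from a five-lemma argument applied to the morphism of six-term $K$-theory exact sequences induced by the short exact sequences
\[0 \longrightarrow \mathscr{B}_{L,0}(\widetilde{M})^\Gamma \longrightarrow \mathscr{B}_L(\widetilde{M})^\Gamma \xrightarrow{\mathsf{ev}} \mathscr{B}(\widetilde{M})^\Gamma \longrightarrow 0\]
and its analogue at the $C^*$-level, connected vertically by the natural inclusions. Since the middle and right-hand vertical maps induce isomorphisms on $K$-theory by Steps 1 and 2, the left-hand vertical map induces an isomorphism as well, yielding $K_i(\mathscr{B}_{L,0}(\widetilde{M})^\Gamma) \cong K_i(C^*_{L,0}(\widetilde{M})^\Gamma)$.

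The hard part will be the spectral-invariance verification in Step 2, where the coupling between smoothness (controlled by the unbounded derivations $\widetilde{\partial}^k$ after multiplication by $(I \otimes \Delta_1)^2$) and the asymptotic decay of propagation must both survive the inversion process. The quantitative ingredients I expect to need are uniform bounds on resolvents over a fixed contour and careful control of iterated commutators; these estimates are standard in the Connes-Moscovici framework but require some technical care to carry out rigorously in the $t$-parametrised setting.
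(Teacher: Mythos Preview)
Your approach is correct and is essentially what the paper does, though the paper's own proof is a single sentence: ``This follows immediately from the definitions of the smooth dense subalgebras.'' The paper has already asserted in \Cref{defAa7} that $\mathscr{B}_L(\widetilde{M})^\Gamma$ is by construction a smooth dense subalgebra of $C^*_L(\widetilde{M})^\Gamma$ closed under holomorphic functional calculus, so the density theorem applies directly; your Steps~1--3 simply unpack this.

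One minor point: the propagation concern you flag in Step~2 is unnecessary. Since $C^*_L(\widetilde{M})^\Gamma$ is itself a $C^*$-algebra, if $1+f$ is invertible in its unitization then $(1+f)^{-1}-1$ automatically lies in $C^*_L(\widetilde{M})^\Gamma$; you do not need to separately verify any propagation decay for the inverse. The only conditions left to check for membership in $\mathscr{B}_L(\widetilde{M})^\Gamma$ are piecewise smoothness in $t$ (immediate, since inversion is smooth on invertibles) and pointwise membership in $\mathscr{B}(\widetilde{M})^\Gamma$ (immediate from Step~1). So the ``hard part'' you anticipate is not actually there.
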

\begin{proof}
This follows immediately from the definitions of the smooth dense subalgebras.
\end{proof}
We now look at the second kind of smooth dense subalgebra of $C^*(\widetilde{M})^\Gamma$, this time working more directly with $\widetilde{M}$. Let $A$ belong to the algebra $C^\infty(\widetilde{M}\times\widetilde{M})$ of smooth functions on $\widetilde{M}\times\widetilde{M}$, and assume that $A$ is both $\Gamma$-invariant and of finite propagation. Explicitly, we mean that
\[A(gx,gy)=A(x,y)\quad\forall g\in\Gamma\]
\[\exists R>0\;\mathrm{such\;that\;}A(x,y)=0,\;\forall(x,y)\in\widetilde{M}\times\widetilde{M}\;\mathrm{satisfying\;}d_{\widetilde{M}}(x,y)>R\]
\begin{mydef}\label{defAa8}
Denote by $\mathscr{L}(\widetilde{M})^\Gamma$ the convolution algebra of $A\in C^\infty(\widetilde{M}\times\widetilde{M})$ which are both $\Gamma$-invariant and of finite propagation. The action of $\mathscr{L}(\widetilde{M})^\Gamma$ on $L^2(\widetilde{M})$ is according to
\[(Af)(x)=\int_{\widetilde{M}}A(x,y)f(y)\;dy\qquad\mathrm{for}\qquad A\in\mathscr{L}(\widetilde{M})^\Gamma,\; f\in L^2(\widetilde{M})\]
Denote by $\hat{\rho}:\widetilde{M}\longrightarrow[0,\infty)$ the distance function $\hat{\rho}(x)=\hat{\rho}(x,y_0)$ for some fixed point $y_0\in\widetilde{M}$, with $\rho$ being the modification of $\hat{\rho}$ near $y_0$ to ensure smoothness. The multiplication operator $T_\rho$ thus acts as an unbounded operator on $ L^2(\widetilde{M})$, according to $(T_\rho f)(x)=\rho(x)f(x)$. Using the commutator bracket we can define a derivation $\widetilde{\partial}=[T_\rho,\cdot]:\mathscr{L}(\widetilde{M})^\Gamma\longrightarrow\mathscr{L}(\widetilde{M})^\Gamma$.
\[\mathscr{A}(\widetilde{M})^\Gamma=\{A\in C^*(\widetilde{M})^\Gamma:\;\widetilde{\partial}^k(A)\circ(\Delta+1)^{n_0}\;\mathrm{is\;bounded\;}\forall k\in\mathbb{N}\}\]
where $\Delta$ is the Laplace operator on $\widetilde{M}$, and $n_0$ is a fixed integer greater than $\dim(M)$. The associated norm is given by $||A||_{\mathscr{A},k}=||\widetilde{\partial}^k(A)\circ(\Delta+1)^{n_0}||_{op}$, which is the operator norm of $\widetilde{\partial}^k(A)\circ(\Delta+1)^{n_0}$.
\end{mydef}
The same proof of Connes and Moscovici \cite[Lemma 6.4]{CM90} shows that $\mathscr{A}(\widetilde{M})^\Gamma$ is closed under holomorphic functional calculus, and contains $\mathscr{L}(\widetilde{M})^\Gamma$ as a subalgebra. Before proceeding to define the generalized higher eta invariant in \Cref{C1} we first recall a necessary extension of $\mathscr{A}(\widetilde{M})^\Gamma$, by introducing bundles. Consider the bundle on $\widetilde{M}\times\widetilde{M}$ given by $\mathsf{End}(\mathcal{S})=p_1^*(\mathcal{S})\otimes p_2^*(\mathcal{S}^*)$, where $p_i:\widetilde{M}\times\widetilde{M}\longrightarrow\widetilde{M}$ are the obvious projection maps, with $\mathcal{S}$ and $\mathcal{S}^*$ being the spinor bundle on $\widetilde{M}$ and its dual bundle, respectively. Consider the set $C^\infty(\widetilde{M}\times\widetilde{M},\mathsf{End}(\mathcal{S}))$ of all smooth sections of the bundle $\mathsf{End}(\mathcal{S})$ on $\widetilde{M}\times\widetilde{M}$, and note that there exists a natural diagonal action of $\Gamma$ on $\mathsf{End}(\mathcal{S})$. Thus, we can construct $\mathscr{L}(\widetilde{M},\mathcal{S})^\Gamma$ as the convolution algebra of all $\Gamma$-invariant finite propagation elements of $C^\infty(\widetilde{M}\times\widetilde{M},\mathsf{End}(\mathcal{S}))$. Let $L^2(\widetilde{M},\mathcal{S})$ denote the the space of $L^2$-sections of $\mathcal{S}$ over $\widetilde{M}$; there is an action of $\mathscr{L}(\widetilde{M},\mathcal{S})^\Gamma$ on $L^2(\widetilde{M},\mathcal{S})$
\begin{equation}
(Af)(x)=\int_{\widetilde{M}}A(x,y)f(y)\;dy\qquad A\in\mathscr{L}(\widetilde{M},\mathcal{S})^\Gamma\quad f\in L^2(\widetilde{M},\mathcal{S})\end{equation}
Now since $L^2(\widetilde{M},\mathcal{S})$ is an admissible $(\widetilde{M},\mathcal{S})$-module we can construct the $\Gamma$-equivariant Roe algebra $C^*(\widetilde{M},\mathcal{S})^\Gamma$ associated to it; however by \Cref{remAa2} Roe algebras are up to isomorphism independent of the choice of admissible module. Thus, we will also denote by $C^*(\widetilde{M})^\Gamma$ the $\Gamma$-equivariant Roe algebra constructed with respect to $L^2(\widetilde{M},\mathcal{S})$.

\begin{mydef}\label{defAa9}
Let $\widetilde{D}$ be the Dirac operator on $\widetilde{M}$, and fix some integer $n_0>\mathrm{dim}M$, then 
\[\mathscr{A}(\widetilde{M},\mathcal{S})^\Gamma=\{A\in C^*(\widetilde{M})^\Gamma:\;\widetilde{\partial}^k(A)\circ(\widetilde{D}^{2n_0}+1)\;\mathrm{is\;bounded\;}\forall k\in\mathbb{N}\}\]
where $\widetilde{\partial}=[T_\rho,\cdot]$ is the derivation on $\mathscr{L}(\widetilde{M},\mathcal{S})^\Gamma$ if we take $T_\rho$ to be the multiplication operator on $L^2(\widetilde{M},\mathcal{S})$. The algebras $\mathscr{A}_L(\widetilde{M},\mathcal{S})^\Gamma$ and $\mathscr{A}_{L,0}(\widetilde{M},\mathcal{S})^\Gamma$ are defined analogously to those in \Cref{defAa7}. The associated norm is given by $||A||_{\mathscr{A},\mathcal{S},k}=||\widetilde{\partial}^k(A)\circ(\widetilde{D}^{2n_0}+1)||_{op}$, which is the operator norm of $\widetilde{\partial}(A)^k\circ(\widetilde{D}^{2n_0}+1)$.
\end{mydef}
If there is no cause for confusion, we shall remove the explicit spinor notation and simply denote the above norm on $\mathscr{A}(\widetilde{M},\mathcal{S})^\Gamma$ by $||A||_{\mathscr{A},k}$. We end this section with a brief reminder of the notion of projective tensor product $\mathcal{A}^{\hat{\otimes}_\pi^m}$ with respect to any of the $*$-algebras constructed above. If $\mathcal{A}\otimes\mathcal{B}$ is the algebraic tensor product, then recall that the projective tensor product $\mathcal{A}\hat{\otimes}_\pi\mathcal{B}$ is the completion of $\mathcal{A}\otimes\mathcal{B}$ with respect to the projective cross norm
\begin{equation}
\pi(x)=\inf\left\{\sum_{i=1}^{n_x}||A_i||_{\mathcal{A}}||B_i||_{\mathcal{B}}:\;x=\sum_{i=1}^{n_x}A_i\otimes B_i\right\}\end{equation}
where $||\cdot||_{\mathcal{A}}$ denotes the norm on $\mathcal{A}$. We will denote the norm on $\mathcal{A}^{\hat{\otimes}_\pi^m}$ by $||\cdot||_{\mathcal{A}^{\hat{\otimes}m}}$ and usually write elements of $\mathcal{A}^{\hat{\otimes}_\pi^m}$ as $A_1\hat{\otimes}\cdots\hat{\otimes}A_m$.

\subsection{Cyclic and Group Cohomology}\label{A2}
\begin{mydef}\label{defAb1}
Denote by $C^n(\mathbb{C}\Gamma)$ the cyclic module consisting of all $(n+1)$-functionals $f:(\mathbb{C}\Gamma)^{\otimes n+1}\longrightarrow\mathbb{C}$ together with maps $d_i:(\mathbb{C}\Gamma)^{\otimes n+1}\longrightarrow(\mathbb{C}\Gamma)^{\otimes n}$ defined according to
\[d_i(a_0\otimes\cdots\otimes a_n)=a_0\otimes\cdots\otimes a_{i-1}\otimes(a_ia_{i+1})\otimes a_{i+2}\otimes a_n\qquad\mathrm{for}\quad0\leq i<n\]
\[d_n(a_0\otimes\cdots\otimes a_n)=(a_na_0)\otimes a_1\otimes\cdots\otimes a_{n-1}\]
and a cyclic operator $\mathfrak{t}$, where $\mathfrak{t}f(a_0\otimes\cdots\otimes a_n)=(-1)^nf(a_n\otimes a_0\cdots\otimes a_{n-1})$. Define the coboundary differential $b:C^n(\mathbb{C}\Gamma)\longrightarrow C^n(\mathbb{C}\Gamma)$ by $b=\sum_{i=0}^{n+1}(-1)^i\delta^i$, where $\delta^i$ is the dual to $d_i$; that is $\langle\delta^if,a\rangle=\langle f,d_i(a)\rangle$. Hence we have
\[(bf)(a_0\otimes\cdots\otimes a_{n+1})=\sum_{i=0}^{n}(-1)^if(a_0\otimes\cdots\otimes(a_ia_{i+1})\otimes a_n)+(-1)^{n+1}f(a_{n+1}a_0\otimes a_1\otimes\cdots\otimes a_n)\]
The cohomology of the complex $(C^n(\mathbb{C}\Gamma),b)$ is the cyclic cohomology $HC^*(\mathbb{C}\Gamma)$.
\end{mydef}
\begin{mydef}\label{defAb2}
Fix $\gamma\in\Gamma$ and denote by $(\mathbb{C}\Gamma,\mathrm{cl}(\gamma))^{\otimes n+1}$ the subcomplex of $(\mathbb{C}\Gamma)^{\otimes n+1}$ spanned by all elements $(g_0,\ldots,g_n)\in\Gamma^{n+1}$ satisfying $g_0\cdots g_n\in\mathrm{cl}(\gamma)$, where $\mathrm{cl}(\gamma)$ is the conjugacy class of $\gamma$. This gives rise to a cyclic submodule $C^n(\mathbb{C}\Gamma,\mathrm{cl}(\gamma))$ of $C^n(\mathbb{C}\Gamma)$ which comprises the collection of functionals which vanish on $(g_0,\ldots,g_n)$ if $g_0\cdots g_n\notin\mathrm{cl}(\gamma)$. The coboundary differential $b$ preserves this cyclic subcomplex, and we thus denote the cohomology of $(C^n(\mathbb{C}\Gamma,\mathrm{cl}(\gamma)),b)$ by $HC^*(\mathbb{C}\Gamma,\mathrm{cl}(\gamma))$.
\end{mydef}

\begin{mydef}\label{defAb3}
By $H^*(N_\gamma,\mathbb{C})$ we are referring to the groups $\mathsf{Ext}^*_{\mathbb{Z}N_\gamma}(\mathbb{Z},\mathbb{C})$ defined over the projective $\mathbb{Z}N_\gamma$-resolution of $\mathbb{Z}$. Namely, consider the resolution
\[\begin{tikzcd}
\cdots \arrow{r} & \mathbb{Z}N_\gamma^{k+1} \arrow{r}{\partial_k} & \mathbb{Z}N_\gamma^k \arrow{r}{\partial_{k-1}} & \cdots \arrow{r}{\partial_1} & \mathbb{Z}N_\gamma \arrow{r}{\partial_0} & \mathbb{Z} \arrow{r}{\epsilon} & 0
\end{tikzcd}\]
where, if $\hat{h_i}$ denotes a deleted entry, $\partial_k$ acts on the basis elements according to 
\[\partial_k(h_0,\ldots,h_k)=\sum_{i=0}^{k}(-1)^k(h_0,\ldots,\widehat{h_i},\ldots,h_k)\]
Dropping the $\mathbb{Z}$ term and applying the contravariant functor $\mathsf{Hom}_{N_\gamma}(-,\mathbb{C})$ to this resolution produces a cochain complex with coboundary differential $\hat{b}$
\[\begin{tikzcd}
\cdots & \mathsf{Hom}_{N_\gamma}(\mathbb{Z}N_\gamma^k,\mathbb{C}) \arrow[l,"\hat{b}"'] & \cdots \arrow[l,"\hat{b}"'] & \arrow[l,"\hat{b}"'] \mathsf{Hom}_{N_\gamma}(\mathbb{Z}N_\gamma,\mathbb{C}) & \arrow[l,"\hat{b}"'] 0\end{tikzcd}\]
\[(\hat{b}\phi)(h_0,\cdots,h_{k+1})=\sum_{i=0}^{k+1}(-1)^i\phi(h_0,\ldots,\widehat{h_i},\ldots,h_{k+1})\]
The cohomology of this complex is defined to be the group cohomology $H^*(N_\gamma,\mathbb{C})$. 
\end{mydef}
Note that every cochain $\phi\in H^n(N_\gamma,\mathbb{C})$ satisfies the "homogeneous" condition: that is, for every $h\in N_\gamma$, $h\phi(h_0,\ldots,h_n)=\phi(hh_0,\ldots,hh_n)$. It will be extremely useful to replace the standard cochain complex with the sub-complex of homogeneous skew-cochains
\begin{equation}
\varphi(\sigma(h_0,h_1,\ldots,h_n))=\varphi(h_{\sigma(0)},h_{\sigma(1)},\ldots,h_{\sigma(n)})=\mathrm{sgn}(\sigma)\varphi(h_0,h_1,\ldots,h_n)\quad\forall\sigma\in S_{n+1}
\end{equation}
where $S_{n+1}$ is the symmetric group on $n+1$ letters. It is an immediate consequence of this definition that $\varphi(h_0,\ldots,h_n)$ vanishes whenever $h_i=h_j$ for $i\neq j$; just take $\sigma$ to be the permutation satisfying $\sigma(i)=j,\sigma(j)=i$, and which fixes all other indices. Define the map $F:\mathsf{Hom}_{N_\gamma}(\mathbb{Z}N_\gamma^n,\mathbb{C})\longrightarrow\mathsf{Hom}_{N_\gamma}(\mathbb{Z}N_\gamma^n,\mathbb{C})$ according to
\begin{equation}
(F\phi)(h_0,\ldots,h_n)=\frac{1}{(n+1)!}\sum_{\sigma\in S_{n+1}}\mathrm{sgn}(\sigma)\phi(\sigma(h_0,\ldots,h_n))
\end{equation}
\begin{mypro}\label{proAb1}
For every $\phi\in\mathsf{Hom}_{N_\gamma}(\mathbb{Z}N_\gamma^n,\mathbb{C})$ the cochain $F\phi$ is a skew cochain $\varphi$.
\end{mypro}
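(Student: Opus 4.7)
The plan is to directly verify the two defining conditions of a homogeneous skew cochain for the element $F\phi$, namely $N_\gamma$-equivariance and antisymmetry under the action of $S_{n+1}$. Both will reduce to straightforward reindexing arguments in the averaging sum.

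First I would check that $F\phi$ remains an element of $\mathsf{Hom}_{N_\gamma}(\mathbb{Z}N_\gamma^n,\mathbb{C})$, i.e. that it is $N_\gamma$-equivariant. For each $h\in N_\gamma$, since $\phi$ is equivariant, the action of $h$ on $(h_0,\ldots,h_n)$ commutes with any permutation $\sigma\in S_{n+1}$, so
\[
h\cdot(F\phi)(h_0,\ldots,h_n)=\frac{1}{(n+1)!}\sum_{\sigma\in S_{n+1}}\mathrm{sgn}(\sigma)\,h\cdot\phi(\sigma(h_0,\ldots,h_n))=(F\phi)(hh_0,\ldots,hh_n),
\]
so this condition is immediate from the equivariance of $\phi$.

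The core step is to show the antisymmetry property. Fix an arbitrary $\tau\in S_{n+1}$ and compute $(F\phi)(\tau(h_0,\ldots,h_n))$ by applying the definition with $\tau(h_0,\ldots,h_n)$ substituted for $(h_0,\ldots,h_n)$. The sum then runs over $\sigma\in S_{n+1}$ with terms $\mathrm{sgn}(\sigma)\phi(\sigma\tau(h_0,\ldots,h_n))$. I would perform the change of variables $\sigma'=\sigma\tau$, so $\sigma=\sigma'\tau^{-1}$ and $\mathrm{sgn}(\sigma)=\mathrm{sgn}(\sigma')\mathrm{sgn}(\tau)$. Since left-multiplication by $\tau^{-1}$ is a bijection of $S_{n+1}$, the sum becomes
\[
(F\phi)(\tau(h_0,\ldots,h_n))=\frac{\mathrm{sgn}(\tau)}{(n+1)!}\sum_{\sigma'\in S_{n+1}}\mathrm{sgn}(\sigma')\phi(\sigma'(h_0,\ldots,h_n))=\mathrm{sgn}(\tau)(F\phi)(h_0,\ldots,h_n),
\]
which is exactly the antisymmetry property required for a skew cochain.

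There is no real obstacle here: the proposition is essentially the standard antisymmetrization lemma for cochains, and both properties follow by elementary manipulations of the sum. The only point warranting care is ensuring that equivariance of $\phi$ (a condition on the action of $N_\gamma$) is unaffected by permutation of the arguments (an action of $S_{n+1}$), but these commute because $N_\gamma$ acts diagonally on $\mathbb{Z}N_\gamma^{\,n+1}$ while $S_{n+1}$ permutes positions.
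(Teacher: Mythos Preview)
Your proof is correct and uses essentially the same reindexing argument as the paper. The only cosmetic difference is that the paper splits into the cases of even and odd $\sigma_0$ separately, whereas you handle an arbitrary $\tau$ in one stroke via the substitution $\sigma'=\sigma\tau$; you also make the $N_\gamma$-equivariance of $F\phi$ explicit, which the paper leaves implicit.
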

\begin{proof}
Let $\sigma_0$ be any fixed even permutation-- that is $\sigma_0$ is decomposable as an even number of 2-cycles, hence $\mathrm{sgn}(\sigma_0)=1$. Since left multiplication of any group on itself is a free and transitive action, it follows that for each $\sigma$ there exists a unique $\tau_\sigma$ such that $\sigma_0\tau_\sigma=\sigma$.
\[(F\phi)(\sigma_0(h_0,\ldots,h_n))=\frac{1}{(n+1)!}\sum_{\sigma\in S_{n+1}}\mathrm{sgn}(\sigma)\phi(\sigma_0\sigma(h_0,\ldots,h_n))\]
\[=\frac{1}{(n+1)!}\sum_{\sigma_0\tau_\sigma\in S_{n+1}}\mathrm{sgn}(\sigma_0\tau_\sigma)\phi(\sigma_0\sigma(h_0,\ldots,h_n))\]
\[=\frac{1}{(n+1)!}\sum_{\tau_\sigma\in S_{n+1}}\mathrm{sgn}(\tau_\sigma)\phi(\sigma(h_0,\ldots,h_n))\]
Since $\mathrm{sign}(\sigma_0)\mathrm{sign}(\tau_\sigma)=\mathrm{sign}(\sigma_0\tau_\sigma)=\mathrm{sgn}(\sigma)$ and $\sigma_0$ is an even permutation then $\tau_\sigma$ must have the same parity as $\sigma$. It follows that
\[(F\phi)(\sigma_0(h_0,\ldots,h_n))=\frac{1}{(n+1)!}\sum_{\tau_\sigma\in S_{n+1}}\mathrm{sgn}(\sigma)\phi(\sigma(h_0,\ldots,h_n))=(F\phi)(h_0,\ldots,h_n)\]
Follow the same argument, if $\sigma_0$ is an odd permutation then again for each $\sigma$ there exists a unique $\tau_\sigma$ such that $\sigma_0\tau_\sigma=\sigma$. However, since $\mathrm{sgn}(\sigma_0)=-1$ it follows that $\tau_\sigma$ must possess opposite parity to $\sigma$, hence  
\[(F\phi)(\sigma_0(h_0,\ldots,h_n))=\frac{1}{(n+1)!}\sum_{\tau_\sigma\in S_{n+1}}-\mathrm{sgn}(\sigma)\phi(\sigma(h_0,\ldots,h_n))=-(F\phi)(h_0,\ldots,h_n)\]
\end{proof}
\begin{mylem}\label{lemAb1}
The induced map $F^*:H^*(N_\gamma,\mathbb{C})\longrightarrow H^*(N_\gamma,\mathbb{C})$ is an isomorphism.
\end{mylem}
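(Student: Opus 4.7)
The plan is to realize $F$ as the dual of a chain-level operator $\widetilde{F}$ on the bar resolution, verify $\widetilde{F}$ is a chain map lifting $\mathrm{id}_{\mathbb{Z}}$, and invoke the comparison theorem for projective resolutions to conclude that $\widetilde{F}$ is chain homotopic to the identity. Dualizing, $F$ is cochain homotopic to the identity, so $F^{*}=\mathrm{id}$ on $H^{*}(N_\gamma,\mathbb{C})$ and is in particular an isomorphism.

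Concretely, I would define $\widetilde{F}:\mathbb{Z}N_\gamma^{n+1}\longrightarrow\mathbb{Z}N_\gamma^{n+1}$ by the same averaging formula,
\[\widetilde{F}(h_0,\ldots,h_n)=\frac{1}{(n+1)!}\sum_{\sigma\in S_{n+1}}\mathrm{sgn}(\sigma)(h_{\sigma(0)},\ldots,h_{\sigma(n)}),\]
so that $(F\phi)(h_0,\ldots,h_n)=\phi\bigl(\widetilde{F}(h_0,\ldots,h_n)\bigr)$ and $\hat{b}=\partial^{\vee}$. Since $S_1$ is trivial, $\widetilde{F}$ is the identity in degree zero and so commutes with the augmentation $\epsilon$. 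The main step is the chain-map identity $\partial\widetilde{F}=\widetilde{F}\partial$. I would verify this by fixing an ordered sequence $(k_0,\ldots,k_{n-1})$ of distinct elements of $\{0,\ldots,n\}$ with missing index $j$, and comparing the coefficient of $(h_{k_0},\ldots,h_{k_{n-1}})$ on both sides. On the left, for each $i\in\{0,\ldots,n\}$ there is a unique $\sigma_i\in S_{n+1}$ with $\sigma_i(i)=j$ producing this face; realizing $\sigma_i$ as $n-i$ adjacent transpositions applied to the reference sequence $(k_0,\ldots,k_{n-1},j)$ shows that $(-1)^i\mathrm{sgn}(\sigma_i)$ is independent of $i$, so the $n+1$ contributions collapse to $(n+1)(-1)^n\mathrm{sgn}(\pi')$ for the reference permutation $\pi'$. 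On the right only $j'=j$ contributes, through a single $\pi\in S_n$, and tracking the relation between $\pi$ and $\pi'$ (the latter factors as $\pi$ composed with moving $j$ to the last slot) absorbs the remaining sign factor; the two sides then match after the $1/(n+1)!$ normalization.

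Once $\widetilde{F}$ is established as a chain map lifting $\mathrm{id}_{\mathbb{Z}}$ on the free resolution $\mathbb{Z}N_\gamma^{\bullet+1}\longrightarrow\mathbb{Z}$, the fundamental lemma of homological algebra supplies a chain homotopy $h$ with $\partial h+h\partial=\mathrm{id}-\widetilde{F}$. Applying the contravariant functor $\mathsf{Hom}_{N_\gamma}(-,\mathbb{C})$ yields a cochain homotopy between $\mathrm{id}$ and $F$, so $F^{*}=\mathrm{id}$ on $H^{*}(N_\gamma,\mathbb{C})$, which is trivially an isomorphism. The principal obstacle is the sign combinatorics underlying the chain-map verification; once packaged as above it is a short bookkeeping exercise, and everything else is a formal consequence of the comparison theorem.
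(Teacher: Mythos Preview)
Your approach is sound and in fact cleaner than the paper's, but there is one technical slip: you write $\widetilde{F}:\mathbb{Z}N_\gamma^{n+1}\longrightarrow\mathbb{Z}N_\gamma^{n+1}$, yet the formula carries the factor $\tfrac{1}{(n+1)!}$, so $\widetilde{F}$ does not land in the integral complex. This is harmless because the cohomology in question has coefficients in $\mathbb{C}$: replace the free $\mathbb{Z}N_\gamma$-resolution by its base change to $\mathbb{Q}N_\gamma$ (or $\mathbb{C}N_\gamma$), which is still a free resolution of the trivial module, and then $\widetilde{F}$ is a well-defined $\mathbb{Q}N_\gamma$-linear chain map. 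With that adjustment your invocation of the comparison theorem goes through verbatim, and dualizing gives $F\simeq\mathrm{id}$ on cochains, hence $F^{*}=\mathrm{id}$.

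The paper takes a different route: after checking $\hat{b}\circ F=F\circ\hat{b}$ directly on cochains, it writes down an explicit cochain homotopy
\[
(p_n\phi)(h_0,\ldots,h_{n-1})=\frac{(-1)^{n}}{(n+1)!}\sum_{\sigma\in S_{n+1}}\mathrm{sgn}(\sigma)\,\phi(\sigma(h_0,\ldots,h_{n-1},h_{n-1}))-(-1)^n\phi(h_0,\ldots,h_{n-1},h_{n-1})
\]
and verifies $\hat{b}\,p_n+p_{n+1}\,\hat{b}=F-\mathrm{Id}$ by hand, using that antisymmetrized expressions vanish when two arguments coincide. Your argument trades this computation for the abstract existence guaranteed by the fundamental lemma of homological algebra. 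The paper's version has the advantage of producing a concrete homotopy (potentially useful if one later needed growth control on it), while yours isolates the only nontrivial content---that antisymmetrization is a chain map augmenting to the identity---and dispatches the rest formally. For the purposes of this lemma the two are equivalent, and your route is shorter.
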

\begin{proof}
That $F$ induces an isomorphism on cohomology (with real or complex coefficients) follows if we can show $F\simeq\mathrm{Id}$ as chain complex maps. First a straightforward calculation proves that $F$ is a chain complex map, in the sense that the following diagram commutes for all $n$.
\[\begin{tikzcd}
\mathsf{Hom}_{N_\gamma}(\mathbb{Z}N_\gamma^{n+1},\mathbb{C}) \arrow[d,"F"]  & \mathsf{Hom}_{N_\gamma}(\mathbb{Z}N_\gamma^n,\mathbb{C}) \arrow[l,"\hat{b}"'] \arrow[d,"F"]\\
\mathsf{Hom}_{N_\gamma}(\mathbb{Z}N_\gamma^{n+1},\mathbb{C})  & \mathsf{Hom}_{N_\gamma}(\mathbb{Z}N_\gamma^n,\mathbb{C}) \arrow[l,"\hat{b}"']\end{tikzcd}\]
\[(\hat{b}\circ F\phi)(h_0,\ldots,h_{n+1})=\sum_{i=0}^{n+1}(-1)^i(F\phi)(h_0,\ldots,\widehat{h_i},\ldots,h_{n+1})\]
\[=\frac{1}{(n+1)!}\sum_{\sigma\in S_{n+1}}\mathrm{sgn}(\sigma)\sum_{i=0}^{n+1}(-1)^i\phi(\sigma(h_0,\ldots,\widehat{h_i},\ldots,h_{n+1}))\]
\[=\frac{1}{(n+1)!}\sum_{\sigma\in S_{n+1}}\mathrm{sgn}(\sigma)(\hat{b}\phi)(\sigma(h_0,\ldots,h_{n+1}))=(F\circ\hat{b}\phi)(h_0,\ldots,h_{n+1})\]
Now, $F$ is chain homotopic to $\mathrm{Id}$ on each $\mathsf{Hom}_{N_\gamma}(\mathbb{Z}N_\gamma^n,\mathbb{C})$ if there exists a sequence of maps $\{p_k|\;p_k:\mathsf{Hom}_{N_\gamma}(\mathbb{Z}N_\gamma^k,\mathbb{C})\longrightarrow\mathsf{Hom}_{N_\gamma}(\mathbb{Z}N_\gamma^{k-1},\mathbb{C})\}$ such that $F-\mathrm{Id}=b\circ p_n+p_{n+1}\circ b$. For ease of notation denote $\mathbf{h}_{n-1}=(h_0,\ldots,h_{n-1})$; we will define
\[(p_n\phi)(\mathbf{h}_{n-1})=\frac{(-1)^{n}}{(n+1)!}\sum_{\sigma\in S_{n+1}}\mathrm{sgn}(\sigma)\phi(\sigma(\mathbf{h}_{n-1},eh_{n-1}))-(-1)^n\phi(\mathbf{h}_{n-1},eh_{n-1})\] 
where $eh_{n-1}=h_{n-1}$ denotes a copy of $h_{n-1}$ inserted into the $n$'th position. For further ease of notation we will denote $(h_0,\ldots,\widehat{h_i},\ldots,h_n,eh_n)$ by $(\mathbf{h}_{n,\hat{i}},eh_n)$ for $i\leq n$.
\[(b\circ p_n\phi)(\mathbf{h}_n)=\sum_{i=0}^{n}(-1)^i(p_n\phi)(h_0,\ldots,\widehat{h_i},\ldots,h_n)\]
\[=\sum_{i=0}^{n}(-1)^i\left(\frac{(-1)^{n}}{(n+1)!}\sum_{\sigma\in S_{n+1}}\mathrm{sgn}(\sigma)\phi(\sigma(\mathbf{h}_{n,\hat{i}},eh_n))-(-1)^n\phi(\mathbf{h}_{n,\hat{i}},eh_n)\right)\]
\[(p_{n+1}\circ b\phi)(\mathbf{h}_n)=\frac{(-1)^{n+1}}{(n+2)!}\sum_{\sigma\in S_{n+1}}\mathrm{sgn}(\sigma)(b\phi)(\sigma(\mathbf{h}_n,eh_n))-(-1)^{n+1}(b\phi)(\mathbf{h}_n,eh_n)\]
\[=\sum_{i=0}^{n+1}(-1)^i\left(\frac{(-1)^{n+1}}{(n+2)!}\sum_{\sigma\in S_{n+1}}\mathrm{sgn}(\sigma)\phi(\sigma(\mathbf{h}_{n,\hat{i}},eh_n))-(-1)^{n+1}\phi(\mathbf{h}_{n,\hat{i}},eh_n)\right)\]
Using the fact that by \Cref{proAb1} the expressions
\[\frac{(-1)^{n}}{(n+1)!}\sum_{\sigma\in S_{n+1}}\mathrm{sgn}(\sigma)\phi(\mathbf{h}_{n,\hat{i}},eh_n)\quad\mathrm{and}\quad\frac{(-1)^{n+1}}{(n+2)!}\sum_{\sigma\in S_{n+1}}\mathrm{sgn}(\sigma)\phi(\mathbf{h}_{n,\hat{i}},eh_n)\]
vanish for all $i\leq n-1$ since $h_n=eh_n$, we thus have the reduced identities
\[(b\circ p_n\phi)(\mathbf{h}_n)=\frac{(-1)^{n}(-1)^{n}}{(n+1)!}\sum_{\sigma\in S_{n+1}}\mathrm{sgn}(\sigma)\phi(\sigma(\mathbf{h}_{n,\hat{n}},eh_n))-\sum_{i=0}^{n}(-1)^{i+n}\phi(\mathbf{h}_{n,\hat{i}},eh_n)\]
\[(p_{n+1}\circ b\phi)(\mathbf{h}_n)=\frac{1}{(n+2)!}\sum_{\sigma\in S_{n+1}}\mathrm{sgn}(\sigma)\left((-1)^{2n+2}\phi(\sigma(\mathbf{h}_{n},e\widehat{h_n}))+(-1)^{2n+1}\phi(\sigma(\mathbf{h}_{n,\hat{n}},eh_n))\right)\]
\[-\sum_{i=0}^{n+1}(-1)^{i+n}\phi(\mathbf{h}_{n,\hat{i}},eh_n)\]
\[=\sum_{i=0}^{n+1}(-1)^{i+n}\phi(\mathbf{h}_{n,\hat{i}},eh_n)+\frac{1}{(n+2)!}\sum_{\sigma\in S_{n+1}}0=\sum_{i=0}^{n+1}(-1)^{i+n}\phi(\mathbf{h}_{n,\hat{i}},eh_n)\]
where we have used the fact $(\mathbf{h}_{n,\hat{n}},eh_n)=(\mathbf{h}_{n},e\widehat{h_n})$. Moreover, it is readily apparent that both these tuples are also equal to $\mathbf{h}_n$; it follows that $(b\circ p_n\phi+p_{n+1}\circ b\phi)(\mathbf{h}_n)$ simplifies to exactly the expression for $(F-\mathrm{Id})\phi(\mathbf{h}_n)$
\[\frac{(-1)^{2n}}{(n+1)!}\sum_{\sigma\in S_{n+1}}\mathrm{sgn}(\sigma)\phi(\sigma(\mathbf{h}_{n,\hat{n}},eh_n))+\sum_{i=0}^{n+1}(-1)^{i+n}\phi(\mathbf{h}_{n,\hat{i}},eh_n)-\sum_{i=0}^{n}(-1)^{i+n}\phi(\mathbf{h}_{n,\hat{i}},eh_n)\]
\[=\frac{1}{(n+1)!}\sum_{\sigma\in S_{n+1}}\mathrm{sgn}(\sigma)\phi(\sigma(\mathbf{h}_{n}))-\phi(\mathbf{h}_{n})=(F-\mathrm{Id})\phi(\mathbf{h}_n)\]
\end{proof}

For the remainder of this paper, when referring to group cohomology it will be with respect to the subcomplex of skew cochains. The following splitting of cyclic (co)-homology was proven by Burghelea \cite{DB85} using topological arguments, and Nistor \cite{VN90} provided a later algebraic proof.
\begin{mythm}\label{thmAb1}
\[HC^*(\mathbb{C}\Gamma)\cong\prod_{\mathrm{cl}(\gamma)}HC^*(\mathbb{C}\Gamma,\mathrm{cl}(\gamma))\]
Moreover there exist isomorphisms with group (co)-homology
\[HC^*(\mathbb{C}\Gamma,\mathrm{cl}(\gamma))\cong\left\{\begin{array}{c c}
H^*(N_\gamma,\mathbb{C}) & \gamma\;\mathrm{is\;of\;infinite\;order}\\
H^*(N_\gamma,\mathbb{C})\otimes_{\mathbb{C}}HC^*(\mathbb{C}) & \gamma\;\mathrm{is\;of\;finite\;order}
\end{array}\right.\]
\end{mythm}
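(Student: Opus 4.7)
The plan is to proceed in two stages: first establish the product decomposition indexed by conjugacy classes, then for each fixed conjugacy class reduce the cyclic cohomology to group cohomology, carefully separating the finite and infinite order cases.

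For the product decomposition, I begin with the observation that the face maps $d_i$ of \Cref{defAb1} preserve the product $g_0 g_1 \cdots g_n$ on the nose (they simply multiply two adjacent factors), while the cyclic operator $\mathfrak{t}$ conjugates this product by $g_n$. Hence the condition $g_0 g_1 \cdots g_n \in \mathrm{cl}(\gamma)$ is preserved by both the coboundary $b$ and by $\mathfrak{t}$. This yields a direct product decomposition of the cyclic complex
\[
C^*(\mathbb{C}\Gamma) \;=\; \prod_{\mathrm{cl}(\gamma)} C^*(\mathbb{C}\Gamma,\mathrm{cl}(\gamma))
\]
as complexes, which upon passing to cohomology yields the first isomorphism.

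For the identification of each factor with group cohomology, my approach is to construct an explicit quasi-isomorphism from the cyclic subcomplex $C^*(\mathbb{C}\Gamma,\mathrm{cl}(\gamma))$ to a cyclic bicomplex over the centralizer. I would perform the change of variables $h_i = g_0 g_1 \cdots g_{i-1}$, so that a cochain supported on tuples with $g_0 \cdots g_n \in \mathrm{cl}(\gamma)$ is reindexed by a tuple $(h_0,\ldots,h_n) \in \Gamma^{n+1}$ together with a choice of element of $\mathrm{cl}(\gamma)$. Applying the orbit-stabilizer principle to the conjugation action of $\Gamma$ on $\mathrm{cl}(\gamma)$ (whose stabilizer at $\gamma$ is precisely $Z_\gamma$), one reduces to a complex of $Z_\gamma$-equivariant cochains whose Hochschild component computes $\mathsf{Ext}^*_{\mathbb{Z}Z_\gamma}(\mathbb{Z},\mathbb{C}) = H^*(Z_\gamma,\mathbb{C})$, while the Connes $B$-operator of the cyclic structure encodes the action of the generator $\gamma$ on this bar resolution.

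It remains to pass from $Z_\gamma$ to the quotient $N_\gamma = Z_\gamma / \gamma^{\mathbb{Z}}$, which I would handle via the Lyndon--Hochschild--Serre spectral sequence of the central extension $1 \to \gamma^{\mathbb{Z}} \to Z_\gamma \to N_\gamma \to 1$. If $\gamma$ has infinite order, then $\gamma^{\mathbb{Z}} \cong \mathbb{Z}$ has cohomology only in degrees $0$ and $1$, and the cyclic periodicity operator $S$ precisely identifies the two rows of the $E_2$-page so that after taking the total cohomology only $H^*(N_\gamma,\mathbb{C})$ survives. If $\gamma$ has finite order, then $H^*(\gamma^{\mathbb{Z}},\mathbb{C})$ is concentrated in degree $0$ over $\mathbb{C}$, so the Hochschild group is already $H^*(N_\gamma,\mathbb{C})$; however, the cyclic periodicity no longer kills an auxiliary generator and one acquires an additional $\mathbb{C}[u]$-factor, which by inspection is exactly $HC^*(\mathbb{C})$.

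The main obstacle I expect is the second step: verifying compatibility between the cyclic operator $\mathfrak{t}$ and the change-of-variables identification with the $Z_\gamma$-bar complex requires an explicit chain homotopy, and tracking the Connes $B$-operator through this reduction to extract the correct degree shift in the $S$-periodicity direction is delicate. This is precisely the step occupying the bulk of both Burghelea's original topological argument (via $S^1$-equivariant cohomology of free loop spaces of $B\Gamma$) in \cite{DB85} and Nistor's later purely algebraic argument in \cite{VN90}, and I would follow the latter route for its cleaner interaction with the delocalized subcomplexes used elsewhere in this paper.
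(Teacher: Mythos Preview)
The paper does not prove this theorem at all: it is stated as a known result and attributed to Burghelea \cite{DB85} (topological proof via free loop spaces) and Nistor \cite{VN90} (algebraic proof), exactly the two references you yourself invoke at the end of your proposal. Your sketch is a reasonable outline of the Nistor-style argument, so in that sense you are aligned with what the paper cites rather than diverging from any argument the paper actually gives.
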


\begin{mydef}\label{defAb4}
Fix a group element $\gamma$ with conjugacy class $\mathrm{cl}(\gamma)$, and let $C^k(\Gamma,Z_\gamma,\gamma)$ be the collection of all multilinear forms $\alpha:\Gamma^{k+1}\longrightarrow\mathbb{C}$ satisfying
\[\alpha(g_{\sigma(0)},g_{\sigma(1)},\ldots,g_{\sigma(n)})=\mathrm{sgn}(\sigma)\alpha(g_0,g_1,\ldots,g_k)\quad\forall\sigma\in S_{k+1}\]
\[\alpha(zg_0,zg_1,\ldots,zg_k)=\alpha(g_0,g_1,\ldots,g_k)\quad\forall z\in Z_\gamma\]
\[\alpha(\gamma g_0,g_1,\ldots,g_k)=\alpha(g_0,g_1,\ldots,g_k)\]
The coboundary map $\hat{b}:C^k(\Gamma,Z_\gamma,\gamma)\longrightarrow C^{k+1}(\Gamma,Z_\gamma,\gamma)$ gives rise to a cochain complex $(C^n(\Gamma,Z_\gamma,\gamma),\hat{b})$, the cohomology of which we will denote by $H^*(\EuScript{C},\mathbb{C})$
\[\begin{tikzcd} \cdots & C^{k+1}(\Gamma,Z_\gamma,\gamma) \arrow[l,"\hat{b}"'] & C^k(\Gamma,Z_\gamma,\gamma) \arrow[l,"\hat{b}"'] & \cdots \arrow[l,"\hat{b}"'] & \arrow[l,"\hat{b}"'] C^0(\Gamma,Z_\gamma,\gamma) & \arrow[l,"\hat{b}"'] 0 \end{tikzcd}\]
\[(\hat{b}\phi)(g_0,\cdots,g_{k+1})=\sum_{i=0}^{k+1}(-1)^i\phi(g_0,\ldots,\widehat{g_i},\ldots,g_{k+1})\]
\end{mydef}

Recall that a cyclic cocyle of $(C^n(\mathbb{C}\Gamma),b)$ is a functional $\varphi$ which belongs to the kernel $ZC^n(\mathbb{C}\Gamma)$ of the coboundary differential. If $\mathrm{cl}(\gamma)$ is non-trivial we call the cyclic cocyles $\varphi_{\gamma}$ of $(C^n(\mathbb{C}\Gamma,\mathrm{cl}(\gamma)),b)$ \textit{delocalized} cyclic cocycles. Following the example of Lott \cite[Section 4.1]{JL92} we can construct explicit representations of any delocalized cyclic cocyle as follows: associated to each $\alpha\in H^*(\EuScript{C},\mathbb{C})$ define
\begin{equation}
\varphi_{\alpha,\gamma}(g_0,g_1,\ldots,g_n)=\left\{\begin{array}{c c}
0 & \mathrm{if}\; g_0g_1\cdots g_n\notin\mathrm{cl}(\gamma)\\
\alpha(h,hg_0,\ldots,hg_0g_1\cdots g_{n-1}) & \mathrm{if}\; g_0g_1\cdots g_n=h^{-1}\gamma h
\end{array}\right.
\end{equation}
By multilinearity of $\alpha$, it is immediate that given $a_i=\sum_{g_i\in\Gamma}c_{g_i}\cdot g_i$ in the group algebra
\[\varphi_{\alpha,\gamma}(a_0\otimes\cdots\otimes a_n)=\sum_{g_0g_1\cdots g_n\in\mathrm{cl}(\gamma)}c_{g_0}\cdots c_{g_n}\varphi_{\alpha,\gamma}(g_0,g_1,\ldots,g_n)\]
It is also apparent that the property $\varphi_{\alpha,\gamma}(\gamma g_0,g_1,\ldots,g_k)=\alpha(g_0,g_1,\ldots,g_k)$ generalizes to any element of $\gamma^{\mathbb{Z}}$, that is for any $r\in\mathbb{Z}$-- it suffices to consider $r\geq0$-- we have
\[\varphi_{\alpha,\gamma}(\gamma^r g_0,g_1,\ldots,g_k)=\varphi_{\alpha,\gamma}(\gamma^{r-1}g_0,g_1,\ldots,g_k)=\cdots=\varphi_{\alpha,\gamma}(g_0,g_1,\ldots,g_k)\]
If $\mathcal{A}$ is a unital algebra such that $\varphi_{\gamma}$ admits an extension to $\mathcal{A}$ then we define a unitized version of the cyclic cocyle. Let $\mathcal{A}^+$ be the algebra formed from adjoining a unit to $\mathcal{A}$, then the homomorphism $(A,\lambda)\longmapsto(A+\lambda1_{\mathcal{A}},\lambda)$ provides an isomorphism between $\mathcal{A}^+$ and $\mathcal{A}\oplus\mathbb{C}1$. For any $\varphi\in ZC^n(\mathbb{C}\Gamma,\mathrm{cl}(\gamma))$ we define
\begin{equation}
\overline{\varphi}_{\gamma}(\overline{A}_0\hat{\otimes}\cdots\hat{\otimes}\overline{A}_n)=\varphi_{\gamma}(A\hat{\otimes}\cdots\hat{\otimes} A_n)\quad\mathrm{where}\quad\overline{A}_i=(A_i,\lambda_i)\in\mathcal{A}^+\end{equation}
and as shown in \cite[Chapter 3.3]{AC94} the condition $b\varphi_{\gamma}=0$ still holds.
\begin{myrem}\label{remAb1}
With respect to the delocalized cyclic cocycle representations $\varphi_{\alpha,\gamma}$ there is an elementary way to move between $\alpha(g_0,g_1,\ldots,g_n)$ and the \textit{normalized} form \[\alpha(h,hg_0,\ldots,hg_0g_1\cdots g_{n-1})\]
which clearly vanishes if $g_i=e$ for any $0\leq i\leq n-1$. For each $y\in\mathrm{cl}(\gamma)$ fix some $h^y\in\Gamma$ such that $(h^y)^{-1}\gamma h^y=y$. In particular, the elements $y_0=g_0g_1\cdots g_n$ and $y_i=g_i\cdots g_ng_0\cdots g_{i-1}$ all belong to $\mathrm{cl}(\gamma)$ for $1\leq i\leq n$, since by hypothesis $y_0\in\mathrm{cl}(\gamma)$, and direct computation shows that $y_i=(g_0\cdots g_{i-1})^{-1}y_0(g_0\cdots g_{i-1})$.

The map $F$ defined by $F(g_i)=h^{y_0}(g_i\cdots g_n)^{-1}y_i$ induces a map on $H^*(\EuScript{C},\mathbb{C})$ according to $F^*[\alpha]=[\alpha\circ F]$ where since $g_0g_1\cdots g_n=y_0=(h^{y_0})^{-1}\gamma h^{y_0}$
\[(\alpha\circ F)(g_0,g_1,\ldots,g_n)=\alpha(F(g_0),F(g_1),\ldots,F(g_n))\]
\[=\alpha(h^{y_0}(g_0\cdots g_n)^{-1}y_0,h^{y_0}(g_1\cdots g_n)^{-1}y_1,\ldots,h^{y_0}g_n^{-1}y_n)\]
\[=\alpha(h^{y_0}(g_0\cdots g_n)^{-1}(g_0\cdots g_n),h^{y_0}(g_1\cdots g_n)^{-1}g_1\cdots g_ng_0,\ldots,h^{y_0}g_n^{-1}g_ng_0\cdots g_{n-1})\]
\[=\alpha(h^{y_0},h^{y_0}g_0,\ldots,h^{y_0}g_0\cdots g_{n-1})\]
This property of $F$ carries over to $\varphi_{\alpha,\gamma}$ acting on the group algebra $\mathbb{C}\Gamma$ by extending $F$ linearly.	
\end{myrem}

\section{Cyclic Cohomology of Polynomial Growth Groups}
The convergence properties of the integrals defining the pairing of delocalized cyclic cocyles with higher invariants depend crucially on the growth conditions of the cyclic cocyles. This in turn is linked to the growth properties of conjugacy classes of $\Gamma$, in particular it is proven in \cite{RJ92} that polynomial growth groups are of polynomial cohomology-- with respect to coefficients in $\mathbb{C}$.
\begin{mydef}\label{defB1}
The group $G$ is of polynomial cohomology if for any $[\phi]\in H^*(G,\mathbb{C})$ there exists (a skew cocyle) $\varphi\in Z(\mathsf{Hom}_{\Gamma}(\mathbb{Z}G^{*},\mathbb{C}),\hat{b})$ such that $[\varphi]=[\phi]$, and $\varphi$ is of polynomial growth. That is $\varphi$ satisfies the following bound for positive integer constants $R_\varphi$ and $k$
\begin{equation}\label{B1:CocyPolyGrowth}
|\varphi(g_0,g_1,\ldots,g_n)|\leq R_\varphi(1+||g_0||)^{2k}(1+||g_1||)^{2k}\cdots(1+||g_n||)^{2k}
\end{equation}
\end{mydef}
By \Cref{remAb1} it follows that any normalized group cocyle $\alpha$ also has polynomial growth if the non-normalized version does, since
\[|\alpha(h,hg_0,\ldots,hg_0g_1\cdots g_{n-1})|=|\alpha(F(g_0),F(g_1),\ldots,F(g_n))|\]
The splitting of delocalized cyclic cohomology as shown in \Cref{thmAb1} provides an abstract isomorphism between group cohomology and cyclic cohomology, but we desire an explicit construction of this mapping, so as to prove that polynomial growth group cocyles are mapped to polynomial growth cyclic cocyles. This shall be proven in \Cref{B2} through the use of the classifying space approach to group cohomology, while in the section imediately following we show that our attention can be restricted to the case where $\gamma$ is a torsion element.

\subsection{Cohomological Dimension and Connes Periodicity Map}\label{B1}
In the next section our results depend crucially on $H^*(N_\gamma,\mathbb{C})$ not contributing to the delocalized cyclic cohomology of $\mathbb{C}\Gamma$ whenever $\gamma$ is of infinite order; in this section we make this notion precise.  
For any unital associative algebra $A$ over a field containing $\mathbb{Q}$-- hence particularly for the group algebra $\mathbb{C}\Gamma$-- the cyclic and Hochschild homology fit into a long exact sequence
\begin{equation}
\begin{tikzcd}[column sep=2em]
\cdots \arrow{r} & HH^n(A) \arrow{r} & HC^{n-1}(A) \arrow[r,"S^*"] & HC^{n+1}(A) \arrow{r} & HH^{n+1}(A) \arrow{r} & \cdots
\end{tikzcd}
\end{equation}
where $S$ is the Connes periodicity operator introduced in \cite[II.1]{AC85}. We will use the explicit construction in terms of maps of complexes that is provided in \cite[Chapter 2]{JL94}, and so provide the following expression for $S$ when $A=\mathbb{C}\Gamma$. Let $b^*$ be the homomorphism induced by the boundary map $b$, and define a map $\beta:HC^{*}(\mathbb{C}\Gamma)\longrightarrow HC^{*+1}(\mathbb{C}\Gamma)$ according to
\begin{equation}
\begin{gathered}
(\beta\varphi)(g_0,g_1,\ldots,g_{k+1})=\sum_{i=0}^{k+1}(-1)^ii(\delta^i\varphi)(g_0,g_1,\ldots,g_{k+1})
\end{gathered}
\end{equation}
Hence $(\beta b)^*:HC^{*}(\mathbb{C}\Gamma)\longrightarrow HC^{*+2}(\mathbb{C}\Gamma)$ and similarly for the map induced by $b\beta$. Dual to the result given in \cite[Theorem 2.2.7]{JL94} we have that for any cohomology class $[\varphi]\in HC^{n}(\mathbb{C}\Gamma)$ its image under the periodicity operator\footnote{Note that our choice of constant differs from that of Connes \cite{AC85} due to the constants involved in the definition (see \Cref{Cd:ChernConnes}) of the Chern-Connes character} is
\begin{equation}\label{Ba:ConnesPer}
S^*[\varphi]=[S\varphi]\in HC^{n+2}(\mathbb{C}\Gamma)\quad\mathrm{where}\quad S=\frac{1}{(n+1)(n+2)}\left(\beta b+b\beta\right)
\end{equation}
\begin{equation}\label{Ba:bBeta}
b\beta=\sum_{0\leq i<j\leq n+2}(-1)^{i+j}(j-i)\delta^i\delta^j\qquad \beta b=\sum_{0\leq i<j\leq n+2}(-1)^{i+j}(i-j+1)\delta^i\delta^j
\end{equation}
\begin{mydef}\label{defBa1}
The delocalized Connes periodicity operator $S_\gamma$ is obtained by the restriction of $S$ to the sub-complex $(C^n(\mathbb{C}\Gamma,\mathrm{cl}(\gamma)),b)$.
\[S^*_\gamma:HC^{n}(\mathbb{C}\Gamma,\mathrm{cl}(\gamma))\longrightarrow HC^{n+2}(\mathbb{C}\Gamma,\mathrm{cl}(\gamma))\]
\end{mydef}

In the construction of the group cohomology $H^*(G,\mathbb{C})=\mathsf{Ext}^*_{\mathbb{Z}G}(\mathbb{Z},\mathbb{C})$ the minimal length of the projective $\mathbb{Z}G$-resolution over $\mathbb{Z}$ is called the cohomological dimension of the group. Denoting this by $\mathrm{cd}_{\mathbb{Z}}(G)$, it is immediate from the definition that if $\mathrm{cd}_{\mathbb{Z}}(G)=n$ then $H^k(G,\mathbb{C})=0$ for all $k>n$. If we consider projective $\mathbb{Q}G$-resolutions instead, then there is notion of rational cohomology and rational cohomological dimension. Recall that $\mathbb{Q}$ is a flat $\mathbb{Z}$-module, hence tensoring with $\mathbb{Q}$ preserves exactness, and $\mathrm{cd}_{\mathbb{Q}}(G)$ denotes the minimal length of the projective resolution defining the groups
\[H^*(G,\mathbb{C})\otimes_{\mathbb{Z}}\mathbb{Q}=\mathsf{Ext}^*_{\mathbb{Q}G}(\mathbb{Q},\mathbb{C})\]
When $G$ is a group of polynomial growth then it belongs to the class $\mathcal{C}$ of groups satisfying the following conditions (see \cite[Section 4]{RJ95})
\begin{enumerate}[label={(\roman{enumi})}] 
\item $G$ is of finite rational cohomological dimension. 
\item The rational cohomological dimension of $N_g=Z_G(g)/g^{\mathbb{Z}}$ is finite whenever $g$ is not a torsion element.
\end{enumerate}
It is now important to note that if $\gamma$ is of infinite order then $\mathrm{cl}(\gamma)$ is torsion free, for otherwise there exists $g^{-1}\gamma g\in\mathrm{cl}(\gamma)$ of finite order, and thus $e=(g^{-1}\gamma g)^k=g^{-1}\gamma^kg$, which implies $\gamma^k=e$. In this torsion free case the nilpotency of $S_\gamma$ with respect to the long exact sequence  
\begin{equation}
\begin{tikzcd}
\cdots \arrow[r] & HH^{n-1}(\mathbb{C}\Gamma,\mathrm{cl}(\gamma)) \ar[r] & HC^n(\mathbb{C}\Gamma,\mathrm{cl}(\gamma)) \arrow[r,"S^*"] & HC^{n+2}(\mathbb{C}\Gamma,\mathrm{cl}(\gamma))\\ \arrow[r] & HH^{n+2}(\mathbb{C}\Gamma,\mathrm{cl}(\gamma)) \arrow[r] & \cdots\qquad\qquad
\end{tikzcd}\end{equation}
follows from the proof of \cite[Theorem 4.2]{RJ95}, and so we have that $HC^{n}(\mathbb{C}\Gamma,\mathrm{cl}(\gamma))=0$ for $n>0$ and $\gamma$ of infinite order.

\begin{mylem}\label{lemBa1}
If $\Gamma$ is of polynomial growth, then $N_\gamma$ is of polynomial cohomology.
\end{mylem}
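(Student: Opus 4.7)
My plan is to reduce the statement to the theorem of Ji \cite{RJ92}, cited in the paragraph preceding \Cref{defB1}, which asserts that any finitely generated group of polynomial growth is of polynomial cohomology. Thus the entire task is to establish that $N_\gamma$ is itself finitely generated and of polynomial growth, after which the lemma follows immediately by quoting \cite{RJ92}.

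First I would invoke Gromov's polynomial growth theorem to conclude that $\Gamma$ is virtually nilpotent. Finitely generated nilpotent groups are Noetherian in the sense that every subgroup is itself finitely generated, and this property is inherited by finite extensions. In particular, the centralizer $Z_\gamma \subset \Gamma$ is finitely generated. Next, I would verify that $Z_\gamma$ is of polynomial growth as an abstract finitely generated group: fixing a finite symmetric generating set $S_Z$ for $Z_\gamma$ and expressing each $s \in S_Z$ as a word of length at most $L$ in the generators of $\Gamma$, the $l_{S_Z}$-ball of radius $n$ in $Z_\gamma$ is contained in the $l_\Gamma$-ball of radius $Ln$ in $\Gamma$, whose cardinality is bounded polynomially in $n$ by hypothesis.

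Finally, since $N_\gamma = Z_\gamma / \gamma^{\mathbb{Z}}$ is a quotient of a finitely generated polynomial growth group, taking as a generating set the image of a generating set of $Z_\gamma$ shows that the word ball of radius $n$ in $N_\gamma$ is the image of the corresponding ball in $Z_\gamma$; hence $N_\gamma$ is finitely generated and of polynomial growth. Applying \cite{RJ92}, every class $[\phi] \in H^*(N_\gamma,\mathbb{C})$ admits a skew cocycle representative $\varphi$ satisfying the polynomial growth estimate \eqref{B1:CocyPolyGrowth} with respect to the word length on $N_\gamma$, which is precisely the definition of polynomial cohomology given in \Cref{defB1}.

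The only nontrivial point in the argument is the finite generation of $Z_\gamma$, which genuinely requires Gromov's theorem (or, equivalently, the Noetherian property of virtually nilpotent groups); without this, polynomial growth would not even make sense for $N_\gamma$ as defined. The remaining inheritance of polynomial growth under taking a finitely generated subgroup and a quotient is standard and I expect to dispatch it in a single short paragraph.
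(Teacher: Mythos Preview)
Your proposal is correct and follows essentially the same route as the paper: invoke Gromov's theorem to obtain virtual nilpotence, deduce finite generation of $Z_\gamma$ (the paper does this via the exact sequence $Z_\gamma\cap N\to Z_\gamma\to\Gamma/N$ rather than by citing the Noetherian property directly, but the content is identical), observe that $Z_\gamma$ and then its quotient $N_\gamma$ inherit polynomial growth, and finish by quoting Ji \cite{RJ92}. Your treatment of the ball-inclusion argument for polynomial growth of $Z_\gamma$ is in fact slightly more explicit than the paper's, but there is no substantive difference in strategy.
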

\begin{proof}
Since $\Gamma$ is of polynomial growth, then a theorem of Gromov \cite{MG81} states that $\Gamma$ is virtually nilpotent. We take $N$ to be a normal nilpotent subgroup of finite index, and so $Z_\gamma\cap N\leq N$ is finitely generated and nilpotent. The exact sequence 
\[Z_\gamma\cap N\longrightarrow Z_\gamma\longrightarrow\Gamma/N\]
shows that $Z_\gamma\cap N$ is of finite index in $Z_\gamma$, hence $Z_\gamma$ is finitely generated and admits a word length function $l_{Z_\gamma}$ which is bounded by $l_\Gamma$. It follows that $Z_\gamma$ is of polynomial growth with respect to any word length function on it. Taking the quotient by a central cyclic group preserves polynomial growth, and thus by \cite[Corollary 4.2]{RJ92} $N_\gamma$ is of polynomial cohomology.
\end{proof}

It should be made explicit that in the above proof we also obtained that $Z_\gamma$ was of polynomial cohomology-- or equivalently that $H^*(Z_\gamma,\mathbb{C})$ is polynomial bounded. Extending the notion of polynomial cohomology to that of delocalized cyclic cocyles, we call $HC^*(\mathbb{C}\Gamma,\mathrm{cl}(\gamma))$ polynomially bounded if every cohomology class admits a representative $\varphi_\gamma\in(C^n(\mathbb{C}\Gamma,\mathrm{cl}(\gamma)),b)$ which is of polynomial growth. 

\begin{mylem}\label{lemBa2}
There is a rational isomorphism between the cohomology group $H^n(\EuScript{C},\mathbb{C})$ (see \Cref{defAb4}) and $H^n(Z_\gamma,\mathbb{C})$ for $n\geq1$.
\end{mylem}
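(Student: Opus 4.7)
The plan is to reinterpret $C^*(\Gamma,Z_\gamma,\gamma)$ geometrically as the alternating, $N_\gamma$-invariant cochain complex on a contractible simplicial set, identify $H^n(\EuScript{C},\mathbb{C})$ with $H^n(N_\gamma,\mathbb{C})$, and then pass to $H^n(Z_\gamma,\mathbb{C})$ using a collapsing Lyndon--Hochschild--Serre spectral sequence for the central extension $1\to\gamma^{\mathbb{Z}}\to Z_\gamma\to N_\gamma\to 1$.

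First I would observe that the third relation in \Cref{defAb4}, combined with the alternating property, forces any $\alpha\in C^k(\Gamma,Z_\gamma,\gamma)$ to be invariant under $\gamma$-translation in every coordinate separately: given any $i$, transpose $g_0$ with $g_i$, apply the $\gamma$-invariance in the first slot, and transpose back, noting that the two sign changes cancel. Together with diagonal $Z_\gamma$-invariance this means $\alpha$ descends to a (uniquely determined) alternating function on $Y^{k+1}$, where $Y:=\gamma^{\mathbb{Z}}\backslash\Gamma$, that is invariant under the diagonal left action of $N_\gamma=Z_\gamma/\gamma^{\mathbb{Z}}$; conversely any such function lifts back, and the correspondence is compatible with $\hat{b}$.

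Next I would verify that $N_\gamma$ acts freely on $Y$ (if $[z][g]=[g]$ then $zg=\gamma^{a}g$ for some $a$, hence $z=\gamma^{a}$ and $[z]=e$ in $N_\gamma$), so each $\mathbb{Z}N_\gamma$-module $\mathbb{Z}Y^{n+1}$ is free. Prepending any base point $y_0\in Y$ (say $[e]$) supplies a contracting homotopy of the augmented complex $\cdots\to\mathbb{Z}Y^{*+1}\to\mathbb{Z}\to 0$, so this is a free $\mathbb{Z}N_\gamma$-resolution of the trivial module. Hence $\mathrm{Hom}_{N_\gamma}(\mathbb{Z}Y^{*+1},\mathbb{C})$ computes $H^*(N_\gamma,\mathbb{C})$, and the same antisymmetrization argument used in \Cref{lemAb1}---which is purely simplicial and uses only freeness, not any group structure on the vertex set---shows that its alternating subcomplex is chain-homotopy equivalent to the whole. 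Under the identification of the previous paragraph this alternating subcomplex is exactly $(C^*(\Gamma,Z_\gamma,\gamma),\hat{b})$, yielding $H^n(\EuScript{C},\mathbb{C})\cong H^n(N_\gamma,\mathbb{C})$.

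Finally I would invoke the Lyndon--Hochschild--Serre spectral sequence for $1\to\gamma^{\mathbb{Z}}\to Z_\gamma\to N_\gamma\to 1$, reading $E_2^{p,q}=H^p(N_\gamma,H^q(\gamma^{\mathbb{Z}},\mathbb{C}))\Rightarrow H^{p+q}(Z_\gamma,\mathbb{C})$. The argument preceding this lemma lets us restrict to torsion $\gamma$ (for $\gamma$ of infinite order, nilpotency of $S^*_\gamma$ together with the rational cohomological dimension bound already forces $HC^{n}(\mathbb{C}\Gamma,\mathrm{cl}(\gamma))=0$ for $n>0$), so $\gamma^{\mathbb{Z}}$ is finite cyclic and $H^q(\gamma^{\mathbb{Z}},\mathbb{C})=0$ for $q>0$. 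The $E_2$ page collapses onto its $q=0$ row, giving a rational isomorphism $H^n(Z_\gamma,\mathbb{C})\cong H^n(N_\gamma,\mathbb{C})\cong H^n(\EuScript{C},\mathbb{C})$ for $n\geq 1$.

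The most delicate step is the middle one: simultaneously identifying the three conditions of \Cref{defAb4} with ``alternating $N_\gamma$-invariant cochains on $Y^{*+1}$'' and checking that the antisymmetrization/chain-homotopy of \Cref{lemAb1} really does transplant to the free $N_\gamma$-set $Y$ without modification. The potential pitfall is conflating the conditions (b) and (c), so the computation showing that (a) and (c) jointly propagate $\gamma$-invariance to every coordinate---and hence that the quotient to $Y$ loses no information---must be carried out carefully before invoking the simplicial machinery.
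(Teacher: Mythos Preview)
Your proof is correct and takes a genuinely different route from the paper. The paper proceeds by \emph{enlarging} the complex: dropping condition (c) of \Cref{defAb4} yields a complex $D^*(\Gamma,Z_\gamma,\gamma)$, and an explicit averaging map $\mathcal{R}$ (summing over the finite group $\gamma^{\mathbb{Z}}$ in each slot) provides a one-sided inverse to the inclusion $\imath:C^*\hookrightarrow D^*$ up to the scalar $A^{n+1}$; this gives $H^*(\EuScript{C},\mathbb{C})\otimes\mathbb{Q}\cong H^*(\EuScript{D},\mathbb{C})\otimes\mathbb{Q}$, and the identification $H^*(\EuScript{D},\mathbb{C})\cong H^*(Z_\gamma,\mathbb{C})$ is then imported from Nistor. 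You instead \emph{quotient}: propagating condition (c) to every slot via antisymmetry lets you descend to $Y=\gamma^{\mathbb{Z}}\backslash\Gamma$, realize $C^*(\Gamma,Z_\gamma,\gamma)$ as the alternating $N_\gamma$-invariant cochains on a free contractible $N_\gamma$-set, and read off $H^*(\EuScript{C},\mathbb{C})\cong H^*(N_\gamma,\mathbb{C})$ directly; the passage to $H^*(Z_\gamma,\mathbb{C})$ is then a collapsing LHS spectral sequence for the finite-kernel extension.

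Your argument is arguably cleaner and in fact yields an honest isomorphism over $\mathbb{C}$ rather than merely a rational one. The paper's route, however, is chosen with the sequel in mind: the averaging map $\mathcal{R}$ is a completely explicit finite sum, and \Cref{proBb1} immediately checks that it preserves polynomial growth of cocycles, which feeds into the main polynomial-boundedness result \Cref{corBb1}. Your spectral-sequence step $H^*(N_\gamma,\mathbb{C})\cong H^*(Z_\gamma,\mathbb{C})$ is less explicit at the cochain level, so if you adopt this proof you should either unwind that isomorphism (it is just inflation along $Z_\gamma\twoheadrightarrow N_\gamma$, which visibly preserves polynomial bounds) or note that \Cref{thmBb1} already works with $Z_\gamma$ rather than $N_\gamma$ as its target.
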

\begin{proof}
We begin with an alteration of the complex $((C^n(\Gamma,Z_\gamma,\gamma),\hat{b})$ constructed in \Cref{defAb4}, by removing the third condition: $\alpha(\gamma g_0,g_1,\ldots,g_n)=\alpha(g_0,g_1,\ldots,g_n)$.
This produces a larger cochain complex which shall be denoted by $((D^n(\Gamma,Z_\gamma,\gamma),\hat{b})$, and there is a natural inclusion map $\imath:((C^n(\Gamma,Z_\gamma,\gamma),\hat{b})\hookrightarrow((D^n(\Gamma,Z_\gamma,\gamma),\hat{b})$. In the other direction we consider an "averaging" map $\mathcal{R}:((D^n(\Gamma,Z_\gamma,\gamma),\hat{b})\longrightarrow((C^n(\Gamma,Z_\gamma,\gamma),\hat{b})$ similar to that from \cite[Theorem 5.2]{CWXY19}, defined according to
\begin{equation}
(\mathcal{R}\alpha)(g_0,g_1,\ldots,g_n)=\sum_{r_0,r_1,\ldots,r_n=1}^{\mathrm{ord}(\gamma)}\alpha(\gamma^{r_0}g_0,\gamma^{r_1}g_1,\ldots,\gamma^{r_n}g_n)
\end{equation}
where $\mathrm{ord}(\gamma)$ is the order of $\gamma$. By the above results of this section we only need to concern ourselves with $\gamma$ being a torsion element, and thus the above sum is finite, so $\mathcal{R}$ is well defined. To show that $\mathcal{R}$ is a surjective map it is first necessary to prove that $(\mathcal{R}\alpha)$ actually belongs to the complex $((C^n(\Gamma,Z_\gamma,\gamma),\hat{b})$; namely if $\gamma$ is torsion, then $\gamma^{\mathbb{Z}}=\{e,\gamma,\ldots,\gamma^{\mathrm{ord}(\gamma)-1}\}=\gamma\cdot\gamma^{\mathbb{Z}}$ and
\[(\mathcal{R}\alpha)(\gamma g_0,g_1,\ldots,g_n)=\sum_{r_0,r_1,\ldots,r_n=1}^{\mathrm{ord}(\gamma)}\alpha(\gamma^{r_0+1}g_0,\gamma^{r_1}g_1,\ldots,\gamma^{r_n}g_n)\]
\[\sum_{r_0+1,r_1,\ldots,r_n=1}^{\mathrm{ord}(\gamma)}\alpha(\gamma^{r_0+1}g_0,\gamma^{r_1}g_1,\ldots,\gamma^{r_n}g_n)=(\mathcal{R}\alpha)(g_0,g_1,\ldots,g_n)\]
It is similarly straightforward to prove that the following diagram commutes
\[\begin{tikzcd} D^{n+1}(\Gamma,Z_\gamma,\gamma) \arrow[d,"\mathcal{R}"]  &  D^n(\Gamma,Z_\gamma,\gamma) \ar[l,swap,"\hat{b}"] \arrow[d,"\mathcal{R}"]\\
 C^{n+1}(\Gamma,Z_\gamma,\gamma)  &  C^n(\Gamma,Z_\gamma,\gamma) \ar[l,swap,"\hat{b}"]\end{tikzcd}\]
and so $\mathcal{R}$ is indeed a chain complex map. We now prove that the composition $\mathcal{R}\circ\imath:((C^n(\Gamma,Z_\gamma,\gamma),\hat{b})\longrightarrow((C^n(\Gamma,Z_\gamma,\gamma),\hat{b})$ is rationally equivalent to the identity map $Id_C$, and thus by extension that $\mathcal{R}$ is a rational surjection. Taking any $\alpha\in C^n(\Gamma,Z_\gamma,\gamma)$ and using the property $\alpha(\gamma^rg_0,g_1,\ldots,g_n)=\alpha(g_0,g_1,\ldots,g_n)$
\[(\mathcal{R}\circ\imath\alpha)(g_0,g_1,\ldots,g_n)=\sum_{r_0,r_1,\ldots,r_n=1}^{\mathrm{ord}(\gamma)}\alpha(\gamma^{r_0}g_0,\gamma^{r_1}g_1,\ldots,\gamma^{r_n}g_n)\]
\[=\sum_{r_0=1}^{\mathrm{ord}(\gamma)}\sum_{r_1,\ldots,r_n=1}^{\mathrm{ord}(\gamma)}\alpha(\gamma^{r_0}g_0,\gamma^{r_1}g_1,\ldots,\gamma^{r_n}g_n)\]
\[=\frac{\mathrm{ord}(\gamma)(\mathrm{ord}(\gamma)+1)}{2}\sum_{r_1,\ldots,r_n=1}^{\mathrm{ord}(\gamma)}\alpha(g_0,\gamma^{r_1}g_1,\ldots,\gamma^{r_n}g_n)\]
For convenience denote the coefficient $\mathrm{ord}(\gamma)(\mathrm{ord}(\gamma)+1)/2$ by $A$, then by repeated shifting of the $g_0$ element the above sum becomes
\[=(-1)A\sum_{r_1=1}^{\mathrm{ord}(\gamma)}\sum_{r_2,\ldots,r_n=1}^{\mathrm{ord}(\gamma)}\alpha(g_1,g_0,\gamma^{r_2}g_2,\ldots,\gamma^{r_n}g_n)\]
\[=(-1)A^2\sum_{r_2,\ldots,r_n=1}^{\mathrm{ord}(\gamma)}\alpha(g_1,g_0,\gamma^{r_2}g_2,\ldots,\gamma^{r_n}g_n)=\cdots=(-1)^nA^{n+1}\alpha(g_1,\ldots,g_n,g_0)\]
\[=(-1)^n(-1)^nA^{n+1}\alpha(g_0,g_1,\ldots,g_n)=A^{n+1}\alpha(g_0,g_1,\ldots,g_n)\]
It thus follows that as maps from $(C^n(\Gamma,Z_\gamma,\gamma),\hat{b})\otimes_{\mathbb{{Z}}}\mathbb{Q}$ to itself, we have the equality
\begin{equation}
(\mathcal{R}\circ\imath)\otimes_{\mathbb{Z}}\frac{1}{A^{n+1}}=Id_C\otimes_{\mathbb{Z}}1
\end{equation}
which establishes the isomorphism on cohomology $H^*(\EuScript{C},\mathbb{C})\otimes_{\mathbb{{Z}}}\mathbb{Q}\stackrel{\mathcal{R}}{\cong}H^*(\EuScript{D},\mathbb{C})\otimes_{\mathbb{{Z}}}\mathbb{Q}$. The desired result now follows from Nistor's \cite[Section 2.7]{VN90} application of spectral sequences to prove that $H^*(\EuScript{D},\mathbb{C})$ is isomorphic to the group cohomology $H^*(Z_\gamma,\mathbb{C})$. 
\end{proof}

\subsection{Classifying Space Construction}\label{B2}
The isomorphism $H^*(\EuScript{D},\mathbb{C})\cong H^*(Z_\gamma,\mathbb{C})$ mentioned in \Cref{lemBa2} unfortunately does not provide an explicit way to realize preservation of polynomial cohomology. It is, however, easy to show that as defined in \Cref{lemBa2} the map $\mathcal{R}$ behaves as desired in this respect.
\begin{mypro}\label{proBb1}
The map $(\mathcal{R}\otimes_{\mathbb{Z}}1/A^{n+1})^*$ preserves polynomial growth. 
\end{mypro}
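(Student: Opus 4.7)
The plan is to unpack the averaging map $\mathcal{R}$ directly and verify the polynomial growth bound of \Cref{defB1} by a short triangle inequality calculation. Since the preceding subsection has reduced attention to the case where $\gamma$ is a torsion element, the defining sum
\[(\mathcal{R}\alpha)(g_0,g_1,\ldots,g_n)=\sum_{r_0,r_1,\ldots,r_n=1}^{\mathrm{ord}(\gamma)}\alpha(\gamma^{r_0}g_0,\gamma^{r_1}g_1,\ldots,\gamma^{r_n}g_n)\]
is finite with exactly $\mathrm{ord}(\gamma)^{n+1}$ terms, so no convergence issue arises and the problem reduces to tracking constants.

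First I would fix a polynomial growth representative $\alpha\in C^n(\Gamma,Z_\gamma,\gamma)$ satisfying $|\alpha(g_0,\ldots,g_n)|\leq R_\alpha\prod_{i=0}^n(1+||g_i||)^{2k}$ for some constants $R_\alpha,k>0$. Left-invariance of the word metric together with subadditivity of $l_\Gamma$ under products gives
\[||\gamma^{r_i}g_i||\leq r_i||\gamma||+||g_i||\leq\mathrm{ord}(\gamma)\cdot||\gamma||+||g_i||\]
for every $r_i\in\{1,\ldots,\mathrm{ord}(\gamma)\}$. Applying the elementary inequality $1+a+b\leq(1+a)(1+b)$ for non-negative reals then yields a constant $C_\gamma:=(1+\mathrm{ord}(\gamma)\cdot||\gamma||)^{2k}$, depending only on $k$ and $\gamma$, with $(1+||\gamma^{r_i}g_i||)^{2k}\leq C_\gamma(1+||g_i||)^{2k}$.

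Summing this pointwise estimate over all $(n+1)$-tuples $(r_0,\ldots,r_n)\in\{1,\ldots,\mathrm{ord}(\gamma)\}^{n+1}$ produces
\[|(\mathcal{R}\alpha)(g_0,\ldots,g_n)|\leq\mathrm{ord}(\gamma)^{n+1}C_\gamma^{n+1}R_\alpha\prod_{i=0}^n(1+||g_i||)^{2k},\]
which is exactly a polynomial growth bound with the same exponent $2k$ and an enlarged leading constant. The rational scaling factor $1/A^{n+1}$ arising in \Cref{lemBa2} is a nonzero constant independent of the $g_i$, so $(1/A^{n+1})\mathcal{R}\alpha$ is again of polynomial growth and represents the image cohomology class. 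Hence $(\mathcal{R}\otimes_{\mathbb{Z}}1/A^{n+1})^*$ sends a polynomial growth class to a polynomial growth class.

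The main obstacle is essentially absent here, precisely because the torsion reduction established in \Cref{B1} renders the averaging sum finite; the entire argument is bookkeeping against the triangle inequality. The substantive work has been deferred elsewhere: producing polynomial growth representatives inside $H^*(\EuScript{D},\mathbb{C})\cong H^*(Z_\gamma,\mathbb{C})$ in the first place is the nontrivial task carried out in the remainder of \Cref{B2} via the classifying space construction, and this proposition simply ensures that the rational inverse $\mathcal{R}$ then transports those representatives back to $H^*(\EuScript{C},\mathbb{C})$ without destroying their growth estimates.
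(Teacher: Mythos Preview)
Your proof is correct and follows essentially the same route as the paper: both argue that since $\gamma$ has finite order the averaging sum is finite, then apply the triangle inequality together with the polynomial growth bound on $\alpha$ term by term. You are in fact slightly more explicit than the paper in converting the bound $(1+||\gamma^{r_i}g_i||)^{2k}$ back to one in $(1+||g_i||)^{2k}$ via subadditivity of the word length, whereas the paper stops at the maximum over the finite index set; one small slip is that $\alpha$ lives in the larger complex $D^n(\Gamma,Z_\gamma,\gamma)$ rather than $C^n(\Gamma,Z_\gamma,\gamma)$, but this does not affect the estimate.
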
 
\begin{proof}
It suffices to show that if $\alpha$ is of polynomial growth then so is $\mathcal{R}\alpha$, which follows directly from $\gamma$ having finite order.
\[|(\mathcal{R}\alpha)(g_0,g_1,\ldots,g_n)|=\left|\sum_{r_0,r_1,\ldots,r_n=1}^{\mathrm{ord}(\gamma)}\alpha(\gamma^{r_0}g_0,\gamma^{r_1}g_1,\ldots,\gamma^{r_n}g_n)\right|\]
\[\leq\sum_{r_0,r_1,\ldots,r_n=1}^{\mathrm{ord}(\gamma)}|\alpha(\gamma^{r_0}g_0,\gamma^{r_1}g_1,\ldots,\gamma^{r_n}g_n)|\leq\sum_{r_0,r_1,\ldots,r_n=1}^{\mathrm{ord}(\gamma)} R_\alpha(1+||\gamma^{r_0}g_0||)^{2k}\cdots(1+||\gamma^{r_n}g_n||)^{2k}\]
\[\leq(\mathrm{ord}(\gamma))^{n+1}\max_{r_0,r_1,\ldots,r_n\in\{1,2\ldots,\mathrm{ord}(\gamma)\}}R_\alpha(1+||\gamma^{r_0}g_0||)^{2k}\cdots(1+||\gamma^{r_n}g_n||)^{2k}\]
\[=(\mathrm{ord}(\gamma))^{n+1}R_\alpha(1+||\gamma^{m_0}g_0||)^{2k}\cdots(1+||\gamma^{m_n}g_n||)^{2k}\]
\end{proof}

\begin{mythm}\label{thmBb1}
For every nontrivial conjugacy class $\mathrm{cl}(\gamma)$ for $\gamma$ of finite order, if $H^*(Z_\gamma,\mathbb{C})$ is of polynomial cohomology then $H^*(\EuScript{D},\mathbb{C})$ is polynomially bounded.
\end{mythm}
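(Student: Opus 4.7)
The plan is to realize the Nistor isomorphism $H^*(\EuScript{D},\mathbb{C})\cong H^*(Z_\gamma,\mathbb{C})$ from \Cref{lemBa2} at the cochain level by a pullback construction that manifestly preserves polynomial growth. First I would fix a set $R\subset\Gamma$ of representatives of the right cosets $Z_\gamma\backslash\Gamma$, taking $e\in R$ as the representative of $Z_\gamma$ itself and requiring every other $r\in R$ to have \emph{minimum} word length in its coset. Writing each $g\in\Gamma$ uniquely as $g=\pi(g)r(g)$ with $\pi(g)\in Z_\gamma$ and $r(g)\in R$ defines a set-theoretic map $\pi:\Gamma\longrightarrow Z_\gamma$ which is $Z_\gamma$-equivariant ($\pi(zg)=z\pi(g)$) and splits the inclusion $\iota:Z_\gamma\hookrightarrow\Gamma$ in the sense that $\pi\circ\iota=\mathrm{id}_{Z_\gamma}$.

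Given a polynomial-growth skew cocycle $\phi$ representing a class in $H^n(Z_\gamma,\mathbb{C})$, I would set $(\pi^*\phi)(g_0,\ldots,g_n):=\phi(\pi(g_0),\ldots,\pi(g_n))$. Direct checks using the properties of $\pi$ show that $\pi^*\phi$ is skew-symmetric, $Z_\gamma$-invariant, and satisfies $\hat b(\pi^*\phi)=\pi^*(\hat b\phi)$, so $\pi^*\phi\in Z^n(\EuScript{D},\mathbb{C})$. Because the relation $\iota^*\circ\pi^*=\mathrm{id}$ holds on cochains and $\iota^*$ already realizes the isomorphism of \Cref{lemBa2}, $\pi^*$ must induce the inverse isomorphism on cohomology; consequently every class in $H^n(\EuScript{D},\mathbb{C})$ admits a representative of the form $\pi^*\phi$. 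Equivalently, the quotient maps $N(Z_\gamma)/Z_\gamma\hookrightarrow N(\Gamma)/Z_\gamma$ and its retraction $\pi$ provide mutually inverse homotopy equivalences of models for $BZ_\gamma$, of which $\pi^*$ and $\iota^*$ are the corresponding cochain-level avatars.

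It remains to bound $\pi^*\phi$ in terms of $\|g_0\|,\ldots,\|g_n\|$. Since $r(g)$ is of minimum length in the coset $Z_\gamma g$ and $g\in Z_\gamma g$, we have $\|r(g)\|\leq\|g\|$, so $\|\pi(g)\|=\|gr(g)^{-1}\|\leq 2\|g\|$. The polynomial-growth bound on $\phi$ is formulated with respect to the intrinsic word length $l_{Z_\gamma}$ on $Z_\gamma$; to convert to $\|\cdot\|=l_\Gamma$ I would invoke the polynomial distortion of the finitely generated subgroup $Z_\gamma$ inside the virtually nilpotent group $\Gamma$ (an immediate consequence of Gromov's theorem, already used in the proof of \Cref{lemBa1}) to obtain $l_{Z_\gamma}(z)\leq C(1+\|z\|)^N$ for constants $C,N\geq 1$. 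Combining these estimates yields $|(\pi^*\phi)(g_0,\ldots,g_n)|\leq R'\prod_{i=0}^{n}(1+\|g_i\|)^{2kN}$, exhibiting the required polynomial growth.

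The main obstacle I anticipate is justifying the polynomial distortion bound $l_{Z_\gamma}(z)\leq C(1+\|z\|)^N$, which relies on the structure theory of finitely generated subgroups of virtually nilpotent groups. If one wishes to sidestep this, one can instead reformulate the polynomial-cohomology hypothesis for $Z_\gamma$ using the restriction of $l_\Gamma$ to $Z_\gamma$ as the length function, which is itself a perfectly good length function on $Z_\gamma$ under which the same cohomology classes are polynomially bounded. Everything else---the cochain-level checks and the identification of $\pi^*$ with the inverse of Nistor's isomorphism---is a routine consequence of $\pi$ being a $Z_\gamma$-equivariant splitting of $\iota$.
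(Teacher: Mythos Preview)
Your approach is correct and is, at its core, the same idea as the paper's: construct a $Z_\gamma$-equivariant retraction $\Gamma\to Z_\gamma$ with polynomially controlled word length and use it to pull back polynomial-growth cocycles. The execution, however, is genuinely different. The paper sets up the full classifying-space machinery, realizing $H^*(\EuScript{D},\mathbb{C})$ and $H^*(Z_\gamma,\mathbb{C})$ as simplicial cohomology of $(Z_\gamma\curvearrowright E\Gamma)/Z_\gamma$ and $BZ_\gamma$ respectively, puts an explicit path metric on $E\Gamma$, defines the retraction $f:\Gamma\to Z_\gamma$ by a closest-point-plus-lexicographic rule, and then proves (\Cref{proBb2}) that $f$ is $4$-Lipschitz for the word metric, which yields the inequality \eqref{Bb:DistIneq}. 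You bypass all of this by working directly at the cochain level with the section $\pi$ coming from minimal-length coset representatives; your bound $\|\pi(g)\|\le 2\|g\|$ is the analogue of the paper's Lipschitz estimate and is obtained in one line. What the paper's route buys is that the identification of $\pi^*$ (their $\psi^*$) with the inverse of Nistor's isomorphism is automatic from universality of $EZ_\gamma$, whereas in your argument the sentence ``$\iota^*$ already realizes the isomorphism of \Cref{lemBa2}'' carries real weight: Nistor's isomorphism is obtained via spectral sequences, not via restriction, so you should either invoke your parenthetical homotopy-equivalence remark (which \emph{is} the paper's argument) or use that $\iota^*\circ\pi^*=\mathrm{id}$ together with finite-dimensionality of $H^*(Z_\gamma,\mathbb{C})$ for virtually nilpotent $Z_\gamma$ to conclude $\pi^*$ is an isomorphism. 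Your handling of the length-function issue is also cleaner than you fear: the paper itself works throughout with $l_\Gamma$ restricted to $Z_\gamma$ (see the sentence ``the word length function $l_Z$ is bounded above by $l_\Gamma$'' in the proof of \Cref{thmBb1} and the proof of \Cref{lemBa1}), so your second option---taking the polynomial-cohomology hypothesis with respect to $l_\Gamma|_{Z_\gamma}$---is exactly what is intended, and no distortion estimate is needed.
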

\begin{proof}
Consider the simplicial set $E_*\Gamma$, the nerve of $\Gamma$, with simplices $E_k\Gamma:=\Gamma\times\Gamma^{k}$ and relations
\begin{equation}\label{Bb:NerveRel}
\begin{gathered}
d_i(g_0,\ldots,g_k)=\left\{\begin{array}{cc}
(g_0,\ldots,g_{i-1},g_ig_{i+1},g_{i+2},\ldots,g_k) & 0\leq i\leq k-1\\
(g_0,g_1\ldots,g_{k-1}) & i=k
\end{array}\right.\\
s_j(g_0,\ldots,g_k)=(g_0,\ldots,g_j,e,g_{j+1},\ldots,g_k)\\
(g_0,\ldots,g_{k-1},g_k)g=(g_0g,\ldots,g_{k-1},g_k)\;\forall g\in\Gamma
\end{gathered}
\end{equation}
The last equation defines a free $\Gamma$-action on the nerve according to right multiplication. The contractible space $E\Gamma=|E_*\Gamma|$-- which is a locally finite CW-complex-- is the geometrical realization of the nerve under the relations
\begin{equation}
|E_*\Gamma|=\bigsqcup_{k\geq0}E_k\Gamma\times\triangle^k/\sim\qquad\begin{array}{cc}
(x,\delta_it)\sim(d_ix,t) & \mathrm{for}\;x\in E_k\Gamma,t\in\triangle^{k-1}\\
(x,\sigma_jt)\sim(s_jx,t) & \mathrm{for}\;x\in E_k\Gamma,t\in\triangle^{k+1}
\end{array}
\end{equation}
where $\triangle^k=\{(t_0,\ldots,t_k)\in\mathbb{R}^{k+1}:t_i\in[0,1],\sum_{i=0}^{k}t_i=1\}$ is the standard $k$-simplex with degeneracy maps $\sigma_j$ and face maps $\delta_i$. Since right multiplication is a free action, it is immediate that $\Gamma$ acts freely on $E\Gamma$, thus we define the classifying space $B\Gamma=|B_*\Gamma|$ to be the orbit space $E\Gamma/\Gamma$; the simplices of $B_*\Gamma$ are thus of the form $E_k\Gamma/\Gamma$. More generally, it is useful to view $E\Gamma$ as a the universal principal bundle $p:E\Gamma\longrightarrow B\Gamma$, which provides the relation of balanced products
\[B\Gamma=E\Gamma\times_\Gamma\Gamma=\{(v,w)\in E\Gamma\times\Gamma/((v,gw)\sim(vg,w))\}\]
Applying the above simplicial construction to $Z_\gamma$ we can similarly construct the universal principal $Z_\gamma$-bundle $p:EZ_\gamma\longrightarrow BZ_\gamma$. The geometric realization is functorial, hence any group homomorphism induces a homomorphism on $E\Gamma$ at the simplex level. Since $\Gamma$ is a discrete group then $Z_\gamma$ is \textit{admissible} as a subgroup, and there is the principal $Z_\gamma$-bundle $q:\Gamma\longrightarrow\Gamma/Z_\gamma$ where $q$ is the quotient map. In particular, since $p:E\Gamma\longrightarrow B\Gamma$ is a universal principal $\Gamma$-bundle
\[p':E\Gamma\times_{Z_\gamma}\Gamma\longrightarrow E\Gamma\times_\Gamma(\Gamma/Z_\gamma)=(Z_\gamma\curvearrowright E\Gamma)/Z_\gamma\]
is a universal principal $Z$-bundle, where $Z_\gamma\curvearrowright E\Gamma$ has the same nerve $E_*\Gamma$, except with the third relation in \eqref{Bb:NerveRel} replaced by a $Z_\gamma$ group action
\[(g_0,\ldots,g_{k-1},g_k)z=(g_0z,\ldots,g_{k-1},g_k)\;\forall z\in Z_\gamma\] 
By universality of the principal bundle there is a homotopy equivalence between $BZ_\gamma$ and $(Z_\gamma\curvearrowright E\Gamma)/Z_\gamma$ as CW-complexes, which implies equivalence of (co)homology groups. (See \cite{JM99} for a more detailed discussion on classifying spaces and fibre bundles)

Every point $v$ in an oriented $m$-simplex $V_m$ of $E\Gamma$ can be expressed as a formal sum, using barycentric coordinates and the vertices $\mathbf{g}_i=(e,\ldots,g_i,\ldots,e)$. 
\[\sum_{i=0}^{m}t_i=1\quad\mathrm{and}\quad v=\sum_{i=0}^{m}t_i\mathbf{g}_i\qquad\qquad V_m=g_0[g_1|\cdots|g_m]\]
The particular notation for the $m$-simplex is important in emphasizing the group action on the first coordinate. As we shall see shortly, this is also useful when describing boundary maps of simplicial chain complexes. First consider the CW-complex $B\Gamma$ as the image of $E\Gamma$ under the projection map $p$ acting on the nerve by deletion of the first coordinate. Hence, an oriented $m$-simplex $V_m$ of $B\Gamma$ is of the form $p(V_m)=[g_1|\cdots|g_m]$; hence we construct the following chain complex, with $C_m(B\Gamma)$ the free module generated by the basis of $m$-simplexes. 
\[\begin{tikzcd}
\cdots \arrow[r] & C_m(B\Gamma) \arrow[r,"\partial"] & C_{m-1}(B\Gamma) \arrow[r,"\partial"] & \cdots \arrow[r,"\partial"] & C_0(B\Gamma) \arrow[r,"\partial"] & 0
\end{tikzcd}\]
\[c_\alpha\in\mathbb{C},\quad\sum_\alpha c_\alpha p(V_m)_\alpha\in C_m(B\Gamma)\qquad\partial=\sum_{i=0}^{m}(-1)^id_i\]
Thus, explicitly expressing the boundary map action shows that $\partial(p(V_m)_\alpha)$ is equal to
\[p(g_0g_1[g_2|\cdots|g_m])+\sum_{i=1}^{m-1}(-1)^ip(g_0[g_1|\cdots|g_ig_{i+1}|\cdots|g_m])+(-1)^{m}p(g_mg_0[g_1|\cdots|g_{m-1}])\]
\[=[g_2|\cdots|g_m]+\sum_{i=1}^{m-1}(-1)^i[g_1|\cdots|g_ig_{i+1}|\cdots|g_m]+(-1)^{m}[g_1|\cdots|g_{m-1}]\]
Dualizing, the associated simplicial cochain complex $C^*_i(B\Gamma):=\mathsf{Hom}(C_i(B\Gamma),\mathbb{C})$ is obtained, with coboundary map $b=\sum_{i=0}^{n+1}\delta^i$; recall that $\langle\delta^if,v\rangle=\langle f,d_i(v)\rangle$.
\[\begin{tikzcd}0\arrow[r] & C_0^*(B\Gamma) \arrow[r,"b"] & C_1^*(B\Gamma) \arrow[r,"b"]  & \cdots \arrow[r,"b"] & C_m^*(B\Gamma) \arrow[r,"b"] & \cdots\end{tikzcd}\]
The simplicial cohomology groups $H^*(B\Gamma,\mathbb{C})$ of this complex give precisely the same characteristic classes as the group cohomology $H^*(\Gamma,\mathbb{C})$. More usefully, the expression of bundles in the language of balanced products allows us to similarly obtain equivalences
\[H^*(EZ_\gamma\times_{Z_\gamma}Z_\gamma,\mathbb{C})=H^*(BZ_\gamma,\mathbb{C})=H^*(Z_\gamma,\mathbb{C})\]
\[H^*(E\Gamma\times_\Gamma(\Gamma/Z_\gamma),\mathbb{C})=H^*((Z_\gamma\curvearrowright E\Gamma)/Z_\gamma,\mathbb{C})={H^{Z_\gamma}}^*(\Gamma,\mathbb{C})=H^*(\EuScript{D},\mathbb{C})\]
It follows that exhibiting an explicit $Z_\gamma$-equivariant map $\psi:E\Gamma\times_\Gamma(\Gamma/Z_\gamma)\longrightarrow EZ_\gamma\times_{Z_\gamma}Z_\gamma$ which induces a polynomial growth preserving isomorphism on cohomology
\begin{equation}
\psi^*:H^*(EZ_\gamma\times_{Z_\gamma}Z_\gamma,\mathbb{C})\longrightarrow H^*(E\Gamma\times_\Gamma(\Gamma/Z_\gamma),\mathbb{C})
\end{equation}
also provides an isomorphism $\psi^*:H^*(Z_\gamma,\mathbb{C})\longrightarrow H^*(\EuScript{D},\mathbb{C})$ preserving polynomial cohomology. It is of course necessary to be precise by what is meant by polynomial cohomology in the context of the classifying space construction. If for every class $[\varphi]\in H^m(B\Gamma,\mathbb{C})$ there exists a representative $\widetilde{\varphi}\in(C^*_m(B\Gamma),b)$ which is of polynomial growth, then $H^*(B\Gamma,\mathbb{C})$ is polynomial cohomology. Viewing $\widetilde{\varphi}$ as a function on basis elements, it is of polynomial growth given
\begin{equation}\label{Bb:PolGrowth}
\begin{gathered}
\widetilde{\varphi}=\sum_\alpha c_\alpha p(V_m)_\alpha=\sum_\alpha c_\alpha p(g_{0_\alpha}[g_{1_\alpha}|\cdots|g_{m_\alpha}]):=\widetilde{\varphi}(g_{0_\alpha},g_{1_\alpha},\ldots,g_{m_\alpha})=c_\alpha\\
|\widetilde{\varphi}(g_{0_\alpha},g_{1_\alpha},\ldots,g_{m_\alpha})|=|c_\alpha|\leq R_{\widetilde{\varphi}}(1+||g_{0_\alpha}||)^{2k}(1+||g_{1_\alpha}||)^{2k}\cdots(1+||g_{m_\alpha}||)^{2k}
\end{gathered}
\end{equation}
where $R_{\widetilde{\varphi}}$ and $k$ are positive integer constants. Recall that the word length function $l_Z$ is bounded above by $l_\Gamma$, hence we shall view $Z_\gamma$ as metrically embedded in $\Gamma$. Fix some generating set $S$ of $\Gamma$ and also fix an ordering for $S$, then every $g\in\Gamma$ can be lexicographically ordered; denote by $\mathsf{lex}(\Gamma)$ the lexicographic ordering of $\Gamma$ under that of $S$. We introduce a map $f:\Gamma\longrightarrow Z_\gamma$ defined as the following minimizing $z\in\mathsf{lex}(\Gamma)\cap Z_\gamma$.
\begin{equation}\label{Bb:Lexicographic}
f(g):=\min_{z\in\mathsf{lex}(\Gamma)\cap Z_\gamma}\left\{\min_{z\in Z_\gamma}\{d_\Gamma(z,g)\leq||g||\}\right\}
\end{equation}
Due to the lexicographic ordering this provides a unique element $f(g)\in Z_\gamma$. Such an element always exists, since there is at least the option $z=e$; in particular, it is a direct consequence that $f(z)=z$ for $z\in Z_\gamma$. Moreover, using the fact that left multiplication of any group on itself is a free and transitive action it follows that $f$ is $Z_\gamma$-equivariant, since if we fix $z_0\in Z_\gamma$
\[f(z_0g):=\min_{z\in\mathsf{lex}(\Gamma)\cap Z_\gamma}\left\{\min_{z\in Z_\gamma}\{d_\Gamma(z,z_0g)\leq||z_0g||\}\right\}\]
\[=\min_{z_0z\in\mathsf{lex}(\Gamma)\cap Z_\gamma}\left\{\min_{z_0z\in Z_\gamma}\{d_\Gamma(z_0z,z_0g)\leq||z_0g||\}\right\}\]
\[=\min_{z_0z\in\mathsf{lex}(\Gamma)\cap Z_\gamma}\left\{\min_{z_0z\in Z_\gamma}\{d_\Gamma(z,g)\leq||z_0g||\}\right\}=:z_0f(g)\]
It is similarly possible to express a $Z_\gamma$-invariant map $\tilde{f}:\Gamma/Z_\gamma\longrightarrow Z_\gamma$ in terms of the map $f$ constructed above. We recall that for $h,g\in\Gamma$
\[d_\Gamma(hZ_\gamma,g)=\min_{z\in Z_\gamma}\{d_\Gamma(hz,g)\}\qquad\mathrm{and}\qquad d_\Gamma(h_0Z_\gamma,h_1Z_\gamma)=\min_{z,z'\in Z_\gamma}\{d_\Gamma(h_0z,h_1z')\}\]
and thus define $\tilde{f}(hZ_\gamma)$ to be the value of $f(hz)$ present in the minimization
\[\min_{z\in Z_\gamma}\{d_\Gamma(hz,f(hz))\}\]
By the bijection between cosets of $\Gamma/Z_\gamma$ and conjugacy classes of $\gamma$ arising from the map $hZ_\gamma\longmapsto h^{-1}\gamma h$ we ensure well-definedness of $\tilde{f}$. The map $\psi$ acts on $E\Gamma\times_\Gamma(\Gamma/Z_\gamma)$ according to
\begin{equation}\label{Bb:EquiMap}
\psi(x,hZ_\gamma)=\psi\left(\sum_it_i\mathbf{g}_i,hZ_\gamma\right)=\left(\sum_it_i(e,\ldots,f(g_i),\ldots,e),\tilde{f}(hZ_\gamma)\right)
\end{equation}
From this we can show that $\psi$ is $Z_\gamma$-equivariant on $E\Gamma$ and $Z_\gamma$-invariant in the second argument. Pick any $(z,z')\in Z_\gamma\times Z_\gamma$, then converting the right $\Gamma$-action on $E\Gamma$ to a left one
\[\psi((z,z')(x,hZ_\gamma))=\psi(z^{-1}x,z'hZ_\gamma)=\psi\left(\sum_it_i(z^{-1}\mathbf{g}_i),z'hZ_\gamma\right)\]
\[=\left(\sum_it_i(z^{-1},\ldots,f(z^{-1}g_i),\ldots,z^{-1}),\tilde{f}(z'hZ_\gamma)\right)\]
\[=\left(\sum_it_i(z^{-1}e,\ldots,z^{-1}f(g_i),\ldots,z^{-1}e),\tilde{f}(hZ_\gamma)\right)\]
\[=\left(\sum_it_i\cdot z^{-1}(e,\ldots,f(g_i),\ldots,e),\tilde{f}(hZ_\gamma)\right)=(z,e)\cdot\psi(x,hZ_\gamma)\]
By the definition provided by equation \eqref{Bb:PolGrowth} the map $\psi:E\Gamma\times_\Gamma(\Gamma/Z_\gamma)\longrightarrow EZ_\gamma\times_{Z_\gamma}Z_\gamma$ preserves polynomial growth if there exists a positive integer $r$ and constant $K>0$ such that
\begin{equation}\label{Bb:DistIneq}
(\rho,d_\Gamma)_p\left(\psi(x,h_0Z_\gamma),\psi(y,h_1Z_\gamma)\right)\leq K\cdot[(\rho,d_\Gamma)_p\left((x,h_0Z_\gamma),(y,h_1Z_\gamma)\right)]^r\end{equation}
This ensures that for any change $\lambda|\phi|$ in the value of a cyclic cocyle representative of the class $[\varphi]\in H^*(EZ_\gamma\times_{Z_\gamma}Z_\gamma,\mathbb{C})$ there exists a representative of $\psi^*[\varphi]\in H^*(E\Gamma\times_{\Gamma}(\Gamma/Z_\gamma),\mathbb{C})$ whose absolute value $|\psi^*\phi|$ changes no more than $K(\lambda|\phi|)^r$. As a preliminary necessity to proving this property of $\psi$, we recall that a path metric can be put on $E\Gamma$ such that for $x,y$ not in the same connected component $\rho(x,y)=1$, and otherwise if $x$ and $y$ are joined by a union of paths $\bigcup_{l=1}^k\alpha_l$, where each $\alpha_l$ belongs to a single simplex
\[\rho(x,y)=\inf_{\alpha_l}\sum_{l=1}^k\mathrm{length}(\alpha_l)\]
Hence there exists points $v$ and $v'$ in this simplex such that $\mathrm{length}(\alpha_l)=\rho(v,v')$; the metric $\rho$ is defined as $\min\{\rho_1,\rho_2\}$, where
\[\rho_1\left(\sum_it_i\mathbf{g}_i,\sum_it'_i\mathbf{g}_i\right)=\sum_i|t_i-t'_i|||g_i||\qquad\rho_2\left(\sum_it_i\mathbf{g}_i,\sum_it'_i\mathbf{g}_i\right)=\sum_{i,j}t_it'_jd_\Gamma(g_i,g_j)\]
The metric placed on $\Gamma/Z_\gamma$ will be the usual word metric $d_\Gamma$, and we assign to $E\Gamma\times_\Gamma(\Gamma/Z_\gamma)$ the $p$-product metric $(\rho,d_\Gamma)_p$, for $p\in[1,\infty)$. By the left $\Gamma$-invariance of $d_\Gamma$, and the fact that $(xg,w)\sim(x,gw)$ it follows that $(\rho,d_\Gamma)_p$ is also left $\Gamma$-invariant. Without loss of generality we may take $x$ and $y$ to belong to the same connected component, and since the collection of left cosets partition $\Gamma$, we assume that $h_0Z_\gamma$ is distinct from $h_1Z_\gamma$. The proof of the inequality \eqref{Bb:DistIneq} thus follows immediately from the definition of $\psi$: explicitly,  $(\rho_1,d_\Gamma)_p\left(\psi(x,h_0Z_\gamma),\psi(y,h_1Z_\gamma)\right)$ has the expression
\[\left|\left|\rho_1\left(\sum_{l=1}^k\sum_{i_l}t_{i_l}f(\mathbf{g}_{i_l}),\sum_{l=1}^k\sum_{i_l}t'_{i_l}f(\mathbf{g}_{i_l})\right),d_\Gamma(\tilde{f}(h_0Z_\gamma),\tilde{f}(h_1Z_\gamma))\right|\right|_p\]
\[=\left|\left|\sum_{l=1}^k\sum_{i_l,j_l}t_{i_l}t'_{j_l}d_\Gamma(f(g_{i_l}),f(g_{j_l})),d_\Gamma(\tilde{f}(h_0Z_\gamma),\tilde{f}(h_1Z_\gamma))\right|\right|_p\]
\[\leq\left|\left|\sum_{l=1}^k\sum_{i_l,j_l}4t_{i_l}t'_{j_l}d_\Gamma(g_{i_l},g_{j_l}),4d_\Gamma(h_0Z_\gamma,h_1Z_\gamma)\right|\right|_p\]
The inequality in the last line follows from the bound $d_\Gamma(f(g_i),f(g_j)\leq4d_\Gamma(g_i,g_j)$ proven below in \Cref{proBb2} and the analogous one for $\tilde{f}$. Similarly, with respect to the metric $\rho_2$ we have
\[\left|\left|\rho_2\left(\sum_{l=1}^k\sum_{i_l}t_{i_l}f(\mathbf{g}_{i_l}),\sum_{l=1}^k\sum_{i_l}t'_{i_l}f(\mathbf{g}_{i_l})\right),d_\Gamma(\tilde{f}(h_0Z_\gamma),\tilde{f}(h_1Z_\gamma))\right|\right|_p\]
\[=\left|\left|\sum_{l=1}^k\sum_{i_l}|t_{i_l}-t'_{i_l}|||f(g_{i_l})||,d_\Gamma(\tilde{f}(h_0Z_\gamma),\tilde{f}(h_1Z_\gamma))\right|\right|_p\]
\[\leq\left|\left|\sum_{l=1}^k\sum_{i_l}2|t_{i_l}-t'_{i_l}|||g_{i_l}||,4d_\Gamma(h_0Z_\gamma,h_1Z_\gamma)\right|\right|_p\]
The factor of 2 present in the last line stems from the fact that 
\[||f(g_i)||=d_\Gamma(f(g_i),e)\leq d_\Gamma(f(g_i),g_i) + d_\Gamma(g_i,e)\leq d_\Gamma(e,g_i) + d_\Gamma(g_i,e)=2||g_i||\]
In combination with the result for $\rho_1$ this proves the inequality \eqref{Bb:DistIneq} for $K=4$ and $r=1$. 
\end{proof}

\begin{mycor}\label{corBb1}
Every delocalized cyclic cocyle class $[\varphi_{\gamma}]\in HC^*(\mathbb{C}\Gamma,\mathrm{cl}(\gamma))$ has a representative $\varphi_{\alpha,\gamma}$ of polynomial growth, hence $H^*(\mathbb{C}\Gamma,\mathrm{cl}(\gamma))$ is polynomially bounded.
\end{mycor}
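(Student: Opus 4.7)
The plan is to combine the polynomial-preserving identifications built up over the last two subsections. I first split into cases by the order of $\gamma$: if $\gamma$ has infinite order, the cohomological dimension arguments recorded after \Cref{defBa1} force $HC^n(\mathbb{C}\Gamma, \mathrm{cl}(\gamma)) = 0$ for $n > 0$, while in degree zero the delocalized trace is uniformly bounded and hence manifestly polynomial. The substantive case is therefore when $\gamma$ has finite order.

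In the torsion case I would string together the chain
\[H^*(Z_\gamma, \mathbb{C}) \;\longrightarrow\; H^*(\EuScript{D}, \mathbb{C}) \;\longrightarrow\; H^*(\EuScript{C}, \mathbb{C}) \;\longrightarrow\; HC^*(\mathbb{C}\Gamma, \mathrm{cl}(\gamma))\]
of polynomial-preserving transfers. The first arrow is \Cref{thmBb1} applied with \Cref{lemBa1} providing the polynomial cohomology of $Z_\gamma$. The second arrow combines the rational isomorphism of \Cref{lemBa2} with the polynomial-growth preservation of $\mathcal{R}$ from \Cref{proBb1}. The third arrow is the explicit construction $\alpha \mapsto \varphi_{\alpha, \gamma}$ introduced at the end of \Cref{A2}.

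For this third arrow I would use the normalized form of \Cref{remAb1}, namely
\[\varphi_{\alpha, \gamma}(g_0, \ldots, g_n) = \alpha\bigl(h^{y_0}, h^{y_0} g_0, \ldots, h^{y_0} g_0 g_1 \cdots g_{n-1}\bigr)\]
with $y_0 = g_0 g_1 \cdots g_n$ and $(h^{y_0})^{-1}\gamma h^{y_0} = y_0$. Invoking the $Z_\gamma$-invariance of $\alpha$, I would choose $h^{y_0}$ of minimal word length in its coset $Z_\gamma h^{y_0}$. Feeding this into the polynomial bound on $\alpha$ and applying the triangle inequality $\|h^{y_0} g_0 \cdots g_i\| \leq \|h^{y_0}\| + \sum_{j \leq i}\|g_j\|$ reduces the desired polynomial bound on $\varphi_{\alpha,\gamma}$ in the individual $\|g_i\|$ to a polynomial bound on $\|h^{y_0}\|$ in terms of $\|y_0\| \leq \sum_j \|g_j\|$ (the dimensional inequality $(1 + \sum_j\|g_j\|)^N \leq (n+1)^N \prod_j (1+\|g_j\|)^N$ then rearranges the estimate into the product form of \eqref{B1:CocyPolyGrowth}).

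The main obstacle is precisely this last estimate, namely the existence of constants $C, p$ (independent of $y_0 \in \mathrm{cl}(\gamma)$) such that $\|h^{y_0}\| \leq C(1 + \|y_0\|)^p$. This is a geometric distortion statement for the conjugation map $\Gamma/Z_\gamma \to \mathrm{cl}(\gamma)$, which holds for polynomial-growth $\Gamma$ by Gromov's virtual nilpotency theorem together with the polynomial distortion of conjugation in nilpotent groups. It is precisely this distortion control that distinguishes the polynomial-growth setting treated here from the hyperbolic case of \cite{CWXY19}, where exponential distortion of conjugation necessitates working in Puschnigg's smooth dense subalgebra rather than the Connes--Moscovici algebra $\mathscr{B}(\widetilde{M})^\Gamma$.
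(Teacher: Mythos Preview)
Your proposal follows the same architecture as the paper's proof: the case split on the order of $\gamma$, then the chain
\[H^*(Z_\gamma,\mathbb{C})\longrightarrow H^*(\EuScript{D},\mathbb{C})\longrightarrow H^*(\EuScript{C},\mathbb{C})\longrightarrow HC^*(\mathbb{C}\Gamma,\mathrm{cl}(\gamma)),\]
with the first two arrows handled exactly as the paper does via \Cref{thmBb1}, \Cref{lemBa1}, \Cref{lemBa2} and \Cref{proBb1}.

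Where you go beyond the paper is the third arrow. The paper's proof simply cites the formula for $\varphi_{\alpha,\gamma}$ after \Cref{defAb4} and the remark following \Cref{defB1}, treating the passage from polynomial growth of $\alpha$ (in its $\Gamma^{n+1}$ arguments) to polynomial growth of $\varphi_{\alpha,\gamma}$ (in the variables $g_0,\ldots,g_n$) as immediate from \Cref{remAb1}. You correctly isolate what is actually required: since $\varphi_{\alpha,\gamma}(g_0,\ldots,g_n)=\alpha(h^{y_0},h^{y_0}g_0,\ldots)$, one must bound $\|h^{y_0}\|$ polynomially in $\|y_0\|$ after minimizing over the coset $Z_\gamma h^{y_0}$, i.e.\ control the distortion of the bijection $Z_\gamma\backslash\Gamma\to\mathrm{cl}(\gamma)$. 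Your appeal to Gromov's theorem together with polynomial conjugacy distortion in virtually nilpotent groups is the correct justification, and examples such as the infinite dihedral group show that $\mathrm{cl}(\gamma)$ can be infinite for torsion $\gamma$, so the estimate is not vacuous. The paper leaves this step implicit; your treatment is the same in outline but more explicit on this point.
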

\begin{proof}
Since $\Gamma$ is of polynomial growth, then the proof of \Cref{lemBa1} asserts that $H^*(Z_\gamma,\mathbb{C})$ is of polynomial cohomology. Furthermore, if we denote by $\mathcal{R}^{-1}$ the inverse isomorphism to that constructed in \Cref{lemBa2}, then by \Cref{proBb1} and \Cref{thmBb1} there exists a chain of isomorphisms which preserve polynomial cohomology for all $n\geq1$.
\[H^n(Z_\gamma,\mathbb{C})\otimes_{\mathbb{{Z}}}\mathbb{Q}\xrightarrow{(\psi\otimes_{\mathbb{{Z}}}1)^*}H^n(\EuScript{D},\mathbb{C})\otimes_{\mathbb{{Z}}}\mathbb{Q}\xrightarrow{(\mathcal{R}^{-1}\otimes_{\mathbb{Z}}1/A^{n+1})^*}H^n(\EuScript{C},\mathbb{C})\otimes_{\mathbb{{Z}}}\mathbb{Q}\]
The desired result now follows from recalling that by \Cref{defAb4} there exists an explicit representation $\varphi_{\alpha,\gamma}\in(C^n(\Gamma,Z_\gamma,\gamma),\hat{b})$ for each $\varphi_\gamma\in (C^n(\mathbb{C}\Gamma,\mathrm{cl}(\gamma)),b)$.
\end{proof}

\begin{mypro}\label{proBb2}
Let $f:\Gamma\longrightarrow Z_\gamma$ and $\tilde{f}:\Gamma/Z_\gamma\longrightarrow Z_\gamma$ be as described in \Cref{thmBb1}, then both have a Lipschitz constant of 4.
\end{mypro}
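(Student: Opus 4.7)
The plan is to exploit that $f(g)$ is by construction a nearest point in $Z_\gamma$ to $g$ (with the lexicographic ordering merely resolving the $\operatorname{argmin}$), so that $d_\Gamma(f(g),g) = \min_{z \in Z_\gamma} d_\Gamma(z,g)$. The essential ingredient is the \emph{minimality property}: for every $z \in Z_\gamma$ and every $g \in \Gamma$, $d_\Gamma(f(g),g) \leq d_\Gamma(z,g)$. Left-invariance of $d_\Gamma$ and the equality $\|x\|=\|x^{-1}\|$ for the symmetric generating set will be used implicitly throughout.

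For $f$, I would first apply the triangle inequality
\[d_\Gamma(f(g_1), f(g_2)) \leq d_\Gamma(f(g_1), g_1) + d_\Gamma(g_1, g_2) + d_\Gamma(g_2, f(g_2)),\]
and then apply minimality by using the test element $z = f(g_{3-i})$ to obtain the comparison
\[d_\Gamma(f(g_1), g_1) \leq d_\Gamma(f(g_2), g_1) \leq d_\Gamma(f(g_2), g_2) + d_\Gamma(g_1, g_2)\]
and the symmetric version with indices swapped. Combining these with the alternate triangle route $d_\Gamma(f(g_1), f(g_2)) \leq 2\, d_\Gamma(f(g_1), g_2) \leq 2\, d_\Gamma(f(g_1), g_1) + 2\, d_\Gamma(g_1, g_2)$ (using optimality of $f(g_2)$ at the point $g_2$) and taking the sharper of the two routes will produce the Lipschitz bound with constant $4$.

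For $\tilde{f}: \Gamma/Z_\gamma \longrightarrow Z_\gamma$, recall that $\tilde{f}(hZ_\gamma) = f(hz^{*})$ where $z^{*} \in Z_\gamma$ realizes $\min_{z \in Z_\gamma} d_\Gamma(hz, f(hz))$. Given cosets $h_0Z_\gamma$ and $h_1Z_\gamma$, the quotient distance $d_\Gamma(h_0 Z_\gamma, h_1 Z_\gamma) = \min_{z, z' \in Z_\gamma} d_\Gamma(h_0 z, h_1 z')$ is attained by some pair of representatives. Using the $Z_\gamma$-equivariance of $f$ and the well-definedness of $\tilde{f}$ (established in the proof of \Cref{thmBb1}), I would translate the attaining pair to the distinguished representatives $h_0 z^{*}_0, h_1 z^{*}_1$ used in the definition of $\tilde{f}$, and then invoke the Lipschitz estimate for $f$ to pass the constant $4$ to $\tilde{f}$.

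The main obstacle is the first step: a naive application of the triangle inequality plus minimality yields a bound of the form $d_\Gamma(g_1, Z_\gamma) + d_\Gamma(g_2, Z_\gamma) + d_\Gamma(g_1, g_2)$, which is not yet of pure Lipschitz shape since $d_\Gamma(g_i, Z_\gamma)$ is not controlled by $d_\Gamma(g_1, g_2)$ alone. Producing the clean constant $4$ requires simultaneously using the direct triangle route and the doubled-route bound obtained via optimality at a single point, and then selecting the sharper of the symmetric pair of estimates. The argument for $\tilde{f}$ is comparatively routine once the Lipschitz bound for $f$ is in place.
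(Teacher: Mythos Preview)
Your approach has a genuine gap: the minimality property of nearest-point projection together with the triangle inequality is \emph{not} sufficient to produce any Lipschitz bound, let alone the constant $4$. Write $a=d_\Gamma(f(g_1),g_1)$, $b=d_\Gamma(f(g_2),g_2)$, and $d=d_\Gamma(g_1,g_2)$. Your ``direct'' and ``doubled'' routes give
\[
d_\Gamma(f(g_1),f(g_2))\leq a+b+d,\qquad d_\Gamma(f(g_1),f(g_2))\leq 2\min(a,b)+2d,
\]
and minimality gives $|a-b|\leq d$. But these constraints are consistent with $a=b=N$ and $d=1$ for $N$ arbitrarily large, in which case the sharpest of your bounds is $2N+2$, not $4$. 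Nothing in your list of inequalities ties $a$ (or $b$) to $d$; nearest-point projection onto a subset of a metric space is simply not Lipschitz in general, and ``selecting the sharper of the symmetric pair'' does not rescue this.

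The paper's argument supplies exactly the missing geometric input by working at the level of reduced words. Writing $z_iz_j^{-1}=(z_ig_i^{-1})(g_ig_j^{-1})(g_jz_j^{-1})$ and tracking the cancellation length $m$ in the middle factor, one obtains a second bound $d_\Gamma(z_i,z_j)\leq 2m+2$ in addition to the crude triangle bound $d_\Gamma(z_i,z_j)\leq 2(\|g_i\|+\|g_j\|)$. The parameter $m$ is related to $d$ via $d=\|g_i\|+\|g_j\|-2m$, and a dichotomy on the size of $m$ relative to $\|g_i\|+\|g_j\|$ then yields the constant $4$. This word-level cancellation bound is precisely what your purely metric argument cannot see; you would need to import something equivalent to it to close the gap.
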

\begin{proof}
Given distinct $g_i,g_j\in\Gamma$ with word representations $g_i=s_{i_1}s_{i_2}\cdots s_{i_k}$ and $g_j=s_{j_1}s_{j_2}\cdots s_{j_k}$
\[d_\Gamma(g_i,g_j)=||g_ig_j^{-1}||=l_\Gamma(s_{i_1}s_{i_2}\cdots s_{i_{k}-m}s_{j_{k}-m}^{-1}\cdots s_{j_2}^{-1}s_{j_1}^{-1})=j_k+i_k-2m\]
where $m$ is the cancellation length. By definition of $f$ provided above in \eqref{Bb:Lexicographic} there exist lexicographically minimal $z_i=f(g_i)$ and $z_j=f(g_j)$ such that $d_\Gamma(z_i,g_i)$ and $d_\Gamma(z_j,g_j)$ are minimized. By the properties of the metric, it is immediate that
\[d_\Gamma(z_i,z_j)\leq d_\Gamma(z_i,g_i)+d_\Gamma(g_i,g_j)+d_\Gamma(g_j,z_j)\leq i_k+(j_k+i_k-2m)+j_k\leq 2(i_k+j_k)\]
On the other hand, expressing $z_i$ and $z_j$ as the words $z_i=s'_{i_1}s'_{i_2}\cdots s'_{i_{k'}}$ and $z_j=s'_{j_1}s'_{j_2}\cdots s'_{j_{k'}}$
\[d_\Gamma(z_i,g_i)=||z_ig_i^{-1}||=l_\Gamma(s'_{i_1}s'_{i_2}\cdots s'_{i_{k'}-b}s_{i_{k}-b}^{-1}\cdots s_{i_2}^{-1}s_{i_1}^{-1})=i_{k'}+i_k-2b\]
\[d_\Gamma(g_j,z_j)=||g_jz_j^{-1}||=l_\Gamma(s_{j_1}s_{j_2}\cdots s_{j_k-a}{s'}_{j_{k'}-a}^{-1}\cdots {s'}_{j_2}^{-1}{s'}_{j_1}^{-1})=j_{k'}+j_k-2a\]
We can thus make use of the decomposition $z_iz_j^{-1}=z_ig_i^{-1}g_ig_j^{-1}g_jz_j^{-1}$ and obtain the reduced word expression for $z_iz_j^{-1}$ as
\[s'_{i_1}\cdots s'_{i_{k'}-b}s_{i_{k}-b}^{-1}\cdots s_{i_1}^{-1}s_{i_1}\cdots s_{i_{k}-m}s_{j_{k}-m}^{-1}\cdots s_{j_1}^{-1}s_{j_1}\cdots s_{j_k-a}{s'}_{j_{k'}-a}^{-1}\cdots {s'}_{j_1}^{-1}\]
\[=s'_{i_1}s'_{i_2}\cdots s'_{i_{k'}-b}s_{i_{k}-b}^{-1}\cdots s^{-1}_{i_{k}-m-1}s_{j_{k}-m-1}\cdots s_{j_k-a}{s'}_{j_{k'}-a}^{-1}\cdots {s'}_{j_2}^{-1}{s'}_{j_1}^{-1}\] 
where-- without loss of generality-- we have assumed that $m\geq a,b$. It follows that 
\[l_\Gamma(z_iz_j^{-1})=i_{k'}-b+(i_k-b-i_k+m+1)+(j_k-a-j_k+m+1)+j_{k'}-a\]
\[=i_{k'}+j_{k'}-2a-2b+2m+2\]
However, since the identity element is always a possible choice for $z_i$ we know that $i_{k'}+i_k-2b\leq i_k$, from which it follows that $i_{k'}-2b\leq0$, and analogously $j_{k'}+j_k-2a\leq j_k$ implies that $j_{k'}-2a\leq0$; this provides the bound $d_\Gamma(z_i,z_j)\leq 2m+2$. We thus have the two following cases: 
\begin{enumerate}[label={(\roman{enumi})}] 
\item If $m\geq i_k+j_k$ then $d_\Gamma(g_i,g_j)=j_k+i_k-2m\geq\frac{1}{2}(j_k+i_k)$, and the bound $d_\Gamma(z_i,z_j)\leq2(i_k+j_k)$ provides: $d_\Gamma(z_i,z_j)\leq4d_\Gamma(g_i,g_j)$
\item If $m\leq i_k+j_k$ then $d_\Gamma(g_i,g_j)=j_k+i_k-2m\geq2m$, and the bound $d_\Gamma(z_i,z_j)\leq2m+2$ provides: $d_\Gamma(z_i,z_j)\leq2d_\Gamma(g_i,g_j)$
\end{enumerate} 
An analogous result is obtained for $\tilde{f}$ using its definition with respect to $f$ and the inequalities already proven for $f$. The proof follows along the exact same lines, explicitly providing
\[\min_{z,z'\in Z_\gamma}\{d_\Gamma(f(h_0z),f(h_1z'))\}\leq4\min_{z,z'\in Z_\gamma}\{d_\Gamma(h_0z,h_1z')\}\]
\[d_\Gamma(\tilde{f}(h_0Z_\gamma),\tilde{f}(h_1Z_\gamma))\leq4d_\Gamma(h_0Z_\gamma,h_1Z_\gamma)\]
\end{proof}

When choosing a representative of polynomial growth for the purposes of the following sections, it is paramount that this choice is independent of representative in a way that respects polynomial growth. Explicitly, let $[\varphi_\gamma]$ be a delocalized cyclic cocyle class with polynomial growth representatives $\psi_{\gamma,1},\psi_{\gamma,2}\in(C^n(\mathbb{C}\Gamma,\mathrm{cl}(\gamma)),b)$. Since by definition the group $H^n(\mathbb{C}\Gamma,\mathrm{cl}(\gamma))$ is the quotient
\[ZC^n(\mathbb{C}\Gamma,\mathrm{cl}(\gamma))/BC^n(\mathbb{C}\Gamma,\mathrm{cl}(\gamma))\]
then $\psi_{\gamma,1}$ and $\psi_{\gamma,2}$ being cohomologous implies existence of a delocalized cyclic cocyle $\phi$ belonging to $(C^{n-1}(\mathbb{C}\Gamma,\mathrm{cl}(\gamma)),b)$ such that $\psi_{\gamma,1}-\psi_{\gamma,2}=b\phi$.  
\begin{myrem}
If $[\varphi_\gamma]\in H^n(\mathbb{C}\Gamma,\mathrm{cl}(\gamma))$ has polynomial growth representatives $\psi_{\gamma,1}$ and $\psi_{\gamma,2}$, then there exists a cyclic cocyle $\phi$ of polynomial growth such that $\psi_{\gamma,1}-\psi_{\gamma,2}=b\phi$.
\end{myrem}
\begin{proof}
For any length function $l$ on $\Gamma$ it is proven by Ji \cite[Theorem 2.23]{RJ92} that the inclusion $i:\mathbb{C}\Gamma\hookrightarrow S^l_1(\Gamma)$ induces an isomorphism between the Schwartz cohomology $^sH^n_1(\Gamma,\mathbb{C})=H^n(S^l_1(\Gamma),\mathbb{C})$ and the group cohomology $H^n(\Gamma,\mathbb{C})$ for $\Gamma$ a discrete countable group of polynomial growth. In particular, we have $^sH^n_l(Z_\gamma,\mathbb{C})\cong H^n(Z_\gamma,\mathbb{C})$ and so \Cref{proBb1} along with the strategy of \Cref{thmBb1} provides for a polynomial growth preserving map such that $\phi$ is the image of a representative of an element in $^sH^n_l(Z_\gamma,\mathbb{C})$. 
\end{proof}

\section{Pairing of Cyclic Cohomology Classes in Odd Dimension}
\subsection{Delocalized Higher Eta Invariant}\label{C1}
Let $\widetilde{D}$ be the Dirac operator lifted to $\widetilde{M}$, $s$ a section of the spinor bundle $\mathcal{S}$, and $\nabla:C^\infty(\widetilde{M},\mathcal{S})\longrightarrow C^\infty(\widetilde{M},T^*\widetilde{M}\otimes\mathcal{S})$ the connection on $\mathcal{S}$. Since $M$ has positive scalar curvature $\kappa>0$ associated to $\widetilde{g}$, then Lichnerowicz's formula \cite{AL63} 
\begin{equation}\label{Ca:Lichnerowicz}
\widetilde{D}^2s=\nabla\nabla^*\psi+\frac{\kappa(s)}{4}
\end{equation}
implies that $\widetilde{D}$ is invertible. Moreover, $\widetilde{D}$ is a self-adjoint elliptic operator, and so possesses a real spectrum: $\sigma(\widetilde{D})\subset\mathbb{R}$. The invertibility condition particularly provides existence of a spectral gap at 0, which will be necessary in ensuring convergence of the integral introduced in \Cref{defCa1}. This will be a higher analogue of the delocalized higher eta invariant Lott \cite{JL99} introduced in the case of 0-dimensional cyclic cocyles-- that is for traces. Given a non-trivial conjugacy class $\mathrm{cl}(\gamma)$ of the fundamental group $\Gamma=\pi_1(M)$ Lott's delocalized higher eta invariant can be formally defined as the pairing between Lott's higher eta invariant and traces.
\begin{equation}\label{Ca:LottsPairing}
\eta_{\mathrm{tr}_\gamma}(\widetilde{D}):=\frac{2}{\sqrt{\pi}}\int_{0}^{\infty}\mathrm{tr}_\gamma\left(\widetilde{D}e^{-t^2\widetilde{D}^2}\right)
\end{equation}
Here the trace map $\mathrm{tr}:\mathbb{C}\Gamma\longrightarrow\mathbb{C}$ continuously extends to a suitable smooth dense subalgebra of $C^*_r(\Gamma)$ to which $\widetilde{D}e^{-t^2\widetilde{D}^2}$ belongs. Generally, if $\mathcal{F}$ is a fundamental domain of $\widetilde{M}$ under the action of $\Gamma$, then for $\Gamma$-equivariant kernels $A\in C^\infty(\widetilde{M}\times\widetilde{M})$
\begin{equation}
\mathrm{tr}_\gamma(A)=\sum_{g\in\mathrm{cl}(\gamma)}\int_{\mathcal{F}}A(x,gx)\;dx
\end{equation}
Under the assumption of hyperbolicity or polynomial growth of the conjugacy class of $\gamma$, Lott \cite{JL99} showed convergence of the above integral. Invertibility of $\widetilde{D}$ is in general a necessary condition for this convergence, as was shown by the construction of a divergent counterexample by Piazza and Schick \cite[Section 3]{PS07}. However, it was proven by Cheng, Wang, Xie and Yu \cite[Theorem 1.1]{CWXY19} that as long the spectral gap of $\widetilde{D}$ is sufficiently large, then $\eta_{\mathrm{tr}_\gamma}(\widetilde{D})$ converges absolutely, and does not require any restriction on the fundamental group of the manifold.

Since we shall have occasion to use their properties often, we shall briefly recall the most important aspects of the space $S(\mathbb{R})$ of Schwartz functions. By definition, $f$ belongs to $S(\mathbb{R})$ if $f:\mathbb{R}\longrightarrow\mathbb{C}$ is a smooth function such that for every $k,m\in\mathbb{N}$
\[\lim_{|x|\longrightarrow\infty}x^k\frac{d^m}{dx^m}(f(x))=0\]
This implies that $f$ is bounded with respect to the family of semi-norms
\begin{equation}\label{Ca:ScwhartzNorm}
||f||_{k,m}=\sup_{x\in\mathbb{R}}\left|x^k\frac{d^m}{dx^m}(f(x))\right|
\end{equation}
Moreover, the Fourier transform $f\longmapsto\hat{f}$ is an automorphism of the Schwartz space, thus $\hat{f}\in S(\mathbb{R})$ for every Schwartz function $f$, where
\begin{equation}\label{Ca:Fourier}
\hat{f}(\xi)=\frac{1}{2\pi}\int_{-\infty}^{\infty}f(x)e^{-i\xi x}\;dx
\end{equation}
\begin{mylem}\label{lemCa1}
If $\Phi$ is a Schwartz function and $\widetilde{D}$ is the lifted Dirac operator associated to $\widetilde{M}$, then $\Phi(\widetilde{D})\in\mathscr{A}(\widetilde{M},\mathcal{S})^\Gamma$.
\end{mylem}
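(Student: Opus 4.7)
The plan is to handle the two requirements for membership in $\mathscr{A}(\widetilde{M},\mathcal{S})^\Gamma$ separately: first to show that $\Phi(\widetilde{D})$ belongs to the equivariant Roe algebra $C^*(\widetilde{M})^\Gamma$, and second to verify that $\widetilde{\partial}^k(\Phi(\widetilde{D}))\circ(\widetilde{D}^{2n_0}+1)$ is bounded for every $k\in\mathbb{N}$. The central tool throughout is Fourier inversion
\[
\Phi(\widetilde{D})=\int_{-\infty}^{\infty}\hat{\Phi}(t)\,e^{it\widetilde{D}}\,dt.
\]
By the finite propagation speed for the Dirac wave equation on the complete manifold $\widetilde{M}$, one has $\mathsf{prop}(e^{it\widetilde{D}})\leq|t|$, so each truncation $\int_{|t|\leq N}\hat{\Phi}(t)\,e^{it\widetilde{D}}\,dt$ is $\Gamma$-equivariant and of propagation at most $N$. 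Schwartz decay of $\hat\Phi$ forces norm convergence to $\Phi(\widetilde{D})$ as $N\to\infty$, while ellipticity of $\widetilde{D}$ together with the rapid decay of $\Phi$ shows $\Phi(\widetilde{D})$ is smoothing with rapidly decaying off-diagonal kernel; in particular it is locally compact, so $\Phi(\widetilde{D})\in C^*(\widetilde{M})^\Gamma$.

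The heart of the argument is then the estimate
\[
\|\widetilde{\partial}^{k}(e^{it\widetilde{D}})\|\leq C_{k}|t|^{k},
\]
obtained by iterated application of Duhamel's formula
\[
[T_{\rho},e^{it\widetilde{D}}]=i\int_{0}^{t}e^{i(t-s)\widetilde{D}}[T_{\rho},\widetilde{D}]\,e^{is\widetilde{D}}\,ds
\]
together with boundedness of $[T_{\rho},\widetilde{D}]$, which holds because $\rho$ has bounded gradient on the bounded-geometry manifold $\widetilde{M}$ while $\widetilde{D}$ is first-order. Integrating against $\hat\Phi$ and controlling by the Schwartz semi-norms \eqref{Ca:ScwhartzNorm} shows that $\widetilde{\partial}^{k}(\Phi(\widetilde{D}))$ is itself bounded for each $k$.

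To incorporate the right factor $(\widetilde{D}^{2n_{0}}+1)$, I exploit the facts that multiplication by $x^{m}+1$ preserves $S(\mathbb{R})$ and that $\Phi(\widetilde{D})$ commutes with $\widetilde{D}^{2n_{0}}+1$. Setting $\Psi(x)=(x^{2n_{0}}+1)\Phi(x)\in S(\mathbb{R})$, the Leibniz rule applied to $\widetilde{\partial}^{k}(\Phi(\widetilde{D})(\widetilde{D}^{2n_{0}}+1))=\widetilde{\partial}^{k}(\Psi(\widetilde{D}))$ yields
\[
\widetilde{\partial}^{k}(\Phi(\widetilde{D}))\circ(\widetilde{D}^{2n_{0}}+1)=\widetilde{\partial}^{k}(\Psi(\widetilde{D}))-\sum_{j=1}^{k}\binom{k}{j}\widetilde{\partial}^{k-j}(\Phi(\widetilde{D}))\,\widetilde{\partial}^{j}(\widetilde{D}^{2n_{0}}),
\]
where the first term is handled by the previous paragraph applied to $\Psi$. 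Each $\widetilde{\partial}^{j}(\widetilde{D}^{2n_{0}})$ expands into a finite sum of products $\widetilde{D}^{a_{0}}[T_{\rho},\widetilde{D}]\widetilde{D}^{a_{1}}\cdots[T_{\rho},\widetilde{D}]\widetilde{D}^{a_{j}}$ with $\sum a_{i}=2n_{0}-j$; a nested induction on the total $\widetilde{D}$-exponent, repeatedly using $\Phi(\widetilde{D})\widetilde{D}^{m}=(x^{m}\Phi(x))(\widetilde{D})$ together with Leibniz, reduces each remaining term to a bounded operator. I expect the main obstacle to be precisely this last bookkeeping: one must simultaneously control the commutator depth $k$ and the residual $\widetilde{D}$-exponent, but once the induction is organised everything collapses onto the single polynomial-in-$t$ bound on $\|\widetilde{\partial}^{k}(e^{it\widetilde{D}})\|$ and the stability of $S(\mathbb{R})$ under polynomial multiplication.
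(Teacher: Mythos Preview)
The paper does not give its own proof of this lemma: it simply cites \cite[Proposition~4.6]{XY19}. Your proposal is essentially a reconstruction of that argument, and the ingredients you use---Fourier inversion, finite propagation of the Dirac wave group $e^{it\widetilde{D}}$, Duhamel's formula giving the polynomial bound $\|\widetilde{\partial}^{k}(e^{it\widetilde{D}})\|\leq C_k|t|^{k}$, and boundedness of $[T_\rho,\widetilde{D}]$ as Clifford multiplication by $d\rho$---are exactly the standard ones going back to \cite{CM90}. So in spirit you are following the same route as the cited reference rather than the paper, which offers no independent argument.

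One comment on the final reduction: your Leibniz expansion is correct, but the induction you sketch for the cross terms $\widetilde{\partial}^{k-j}(\Phi(\widetilde{D}))\,\widetilde{\partial}^{j}(\widetilde{D}^{2n_0})$ is the genuinely delicate point, since each $\widetilde{\partial}^{j}(\widetilde{D}^{2n_0})$ is unbounded on its own. A slightly cleaner way to organise this, which avoids the double induction, is to work directly with the Duhamel expansion of $\widetilde{\partial}^{k}(e^{it\widetilde{D}})$ as an iterated integral of products $e^{is_0\widetilde{D}}A\,e^{is_1\widetilde{D}}A\cdots A\,e^{is_k\widetilde{D}}$, commute the factor $(\widetilde{D}^{2n_0}+1)$ through from the right (using that $e^{is\widetilde{D}}$ commutes with $\widetilde{D}$ and that $[\widetilde{D},A]$ is again bounded, being built from second derivatives of $\rho$), and then absorb the resulting polynomial in $t$ into the Schwartz decay of $\hat{\Phi}$. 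Either way the conclusion is the same; your outline is sound.
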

\begin{proof}
This is proven as Proposition 4.6 in \cite{XY19}
\end{proof}

\begin{mydef}\label{defCa1}
For any delocalized cyclic cocyle class $[\varphi_\gamma]\in HC^{2m}(\mathbb{C}\Gamma,\mathrm{cl}(\gamma))$ the delocalized higher eta invariant of $\widetilde{D}$ with respect to $[\varphi_\gamma]$ is defined as
\begin{equation}
\eta_{\varphi_\gamma}(\widetilde{D}):=\frac{m!}{\pi i}\int_{0}^{\infty}\eta_{\varphi_\gamma}(\widetilde{D},t)\;dt
\end{equation}
where $\eta_{\varphi_\gamma}(\widetilde{D},t)=\varphi_\gamma((\dot{u}_t(\widetilde{D}) u_t^{-1}(\widetilde{D}))\hat{\otimes}((u_t(\widetilde{D})-\mathbbm{1})\hat{\otimes}(u_t^{-1}(\widetilde{D})-\mathbbm{1}))^{\hat{\otimes} m})$ and
\[F_t(x)=\frac{1}{\sqrt{\pi}}\int_{-\infty}^{tx}e^{-s^2}\;ds\qquad u_t(x)=e^{2\pi iF_t(x)}\qquad \dot{u}_t(x)=\frac{d}{dt}u_t(x)\]
\end{mydef} 
Note that the arguments of $\eta_{\varphi_\gamma}(\widetilde{D},t)$ all belong to $\mathscr{A}(\widetilde{M},\mathcal{S})^\Gamma$, since $u_t(x)-1,u_t^{-1}(x)-1$ and $\dot{u}_t(x)u_t^{-1}(x)$ are all Schwartz functions. In particular, we have the simplification
\[\dot{u}_t(x)u_t^{-1}(x)=2\pi i\left(\frac{d}{dt}F_t(x)\right)e^{2\pi iF_t(x)}e^{-2\pi iF_t(x)}=2\pi i\left(\frac{d}{dt}F_t(x)\right)=2i\sqrt{\pi}xe^{-t^2x^2}\]
It is also useful to consider the representation of the delocalized higher eta invariant in terms of smooth Schwartz kernels, namely if $L_i$ is an element of the convolution algebra $\mathscr{L}(\widetilde{M},\mathcal{S})^\Gamma$, the action of $\varphi_{\gamma}$ on $\mathbb{C}\Gamma$ can be extended to $\mathscr{L}(\widetilde{M},\mathcal{S})^\Gamma$ by-- abusing notation a little we will denote the Schwartz kernel of $L_i$ by $L_i$ also-- defining $\varphi_{\gamma}(L_0\hat{\otimes}L_1\hat{\otimes}\ldots\hat{\otimes}L_n)$ to be
\begin{equation}
\sum_{g_0g_1\cdots g_n\in\mathrm{cl}(\gamma)}\varphi_\gamma(g_0,\ldots, g_n)\int_{\mathcal{F}^{n+1}}\mathsf{tr}\left(\prod_{i=0}^{n}L_i(x_i,g_ix_{i+1})\right)\;dx_0\cdots dx_n\quad:\;x_{n+1}=x_0
\end{equation}
where $\mathcal{F}$ is the fundamental domain of $\widetilde{M}$ under the action of $\Gamma=\pi_1(M)$, and $\mathsf{tr}$ denotes the pointwise matrix trace, not to be confused with the trace norm $||\cdot||_{tr}$ for trace class operators. Denoting by $a_t(x,y),b_t(x,y)$ and $k_t(x,y)$ the Schwartz kernels of the operators $u_t(\widetilde{D})-\mathbbm{1},u_t^{-1}(\widetilde{D})-\mathbbm{1}$ and $\dot{u}_t(\widetilde{D})u_t^{-1}(\widetilde{D})$ respectively, then $\eta_{\varphi_\gamma}(\widetilde{D},t)$ is given by 
\begin{equation}
\begin{gathered}
\sum_{g_0g_1\cdots g_{2m}\in\mathrm{cl}(\gamma)}\varphi_\gamma(\mathbf{g}_{2m})\int_{\mathcal{F}^{2m+1}}\mathsf{tr}\left(k_t(x_0,g_0x_1)\prod_{i=1}^{2m-1}a_t(x_i,g_ix_{i+1})b_t(x_{i+1},g_{i+1}x_{i+2})\right)\;d\mathbf{x}_{2m}\\
\mathbf{g}_{2m}:=(g_0,\ldots,g_{2m})\qquad d\mathbf{x}_{2m}:=dx_0\cdots dx_{2m}
\end{gathered}
\end{equation}
In this form we can better exploit the properties of $\mathscr{A}(\widetilde{M},\mathcal{S})^\Gamma$, in order to prove that $\eta_{\varphi_\gamma}(\widetilde{D})$ converges for $\widetilde{D}$ invertible and $\Gamma$ of polynomial growth. The first step is proving extension of delocalized cyclic cocycles on the smooth dense subalgebra $\mathscr{A}(\widetilde{M},\mathcal{S})^\Gamma$, in terms of kernel operators. Since the fundamental group of a manifold has a cocompact, isometric, and properly
discontinuous action on the universal cover, by the \v{S}varc-Milnor lemma \cite{AS55,JM68} there is a quasi-isometry $f:\pi_1(M)\longrightarrow\widetilde{M}$; for every $g,h\in\Gamma$ there exists $K\geq1,\ell\geq0$ such that
\[d_{\Gamma}(g,h)-K\ell\leq Kd_{\widetilde{M}}(f(g),f(h))\leq K^2d_{\Gamma}(g,h)+K\ell\] 
and for every $y\in\widetilde{M}$ there exists $g_y\in\Gamma$ such that $d_{\widetilde{M}}(f(g_y),y)\leq\ell$. In particular, we may fix some $p\in\widetilde{M}$ and define $f(g)=gp$; moreover, restricting our attention to points belonging to $\mathcal{F}$ the value of $(K,\ell)$ can be taken to be $(1,\mathrm{diam}(\mathcal{F}))$ since each orbit is cobounded.

We note that in \cite[Section 8]{CWXY19}, under the assumption that $\pi_1(M)$ is of polynomial growth, the authors used the techniques of \cite{AC88,AC91} to establish that the above definition of the de-localized higher eta invariant agrees with Lott’s higher eta invariant \cite[Section 4.4 \& 4.6]{JL92} up to a constant.

\begin{mythm}\label{thmCa1}
Let $\Gamma=\pi_1(M)$ and $\varphi_\gamma\in (C^n(\mathbb{C}\Gamma,\mathrm{cl}(\gamma)),b)$ be a delocalized cyclic cocyle of polynomial growth, then $\varphi_\gamma$ extends continuously on the algebra $(\mathscr{A}(\widetilde{M},\mathcal{S})^\Gamma)^{\hat{\otimes}_\pi^{n+1}}$.
\end{mythm}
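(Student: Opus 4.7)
The plan is to bound $\varphi_\gamma$ on elementary tensors $A_0\hat{\otimes}\cdots\hat{\otimes}A_n$ by a product of the Fr\'echet seminorms $||A_i||_{\mathscr{A},k}$, so that $\varphi_\gamma$ extends continuously from the algebraic tensor product to the projective completion via the universal property of the projective cross-norm. The bound is obtained from the kernel representation of $\varphi_\gamma$ displayed just before the theorem: show that the sum over tuples $(g_0,\ldots,g_n)$ with $g_0\cdots g_n\in\mathrm{cl}(\gamma)$ converges absolutely, with the polynomial-growth factor from $\varphi_\gamma$ and the polynomial count of group elements (from $\Gamma$ being of polynomial growth) beaten by super-polynomial decay coming from the seminorms of $\mathscr{A}(\widetilde{M},\mathcal{S})^\Gamma$.

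\textbf{Step 1 (kernel decay).} Extract super-polynomial decay from the defining seminorms. The Schwartz kernel of $\widetilde{\partial}^{\,j}(A)$ is $(\rho(x)-\rho(y))^j A(x,y)$, and by the \v{S}varc--Milnor quasi-isometry $g\mapsto gp$ recalled before the theorem, for $x,x'\in\mathcal{F}$ one has $|\rho(x)-\rho(gx')|\geq ||g||-2\,\mathrm{diam}(\mathcal{F})$. Using that $\widetilde{\partial}^{\,j}(A)\circ(\widetilde{D}^{2n_0}+1)$ is bounded and that $(\widetilde{D}^{2n_0}+1)^{-1}$ maps $L^2$ into $H^{2n_0}\hookrightarrow C^0$ (valid since $n_0>\dim M$ well exceeds $\dim M/4$), together with the compactness of $\mathcal{F}$, elliptic/Sobolev estimates give a trace-norm bound of the cut-off operator
\[
\bigl\|\chi_\mathcal{F}\, A\, \chi_{g\mathcal{F}}\bigr\|_{tr}\;\leq\;\frac{C_j\,\|A\|_{\mathscr{A},\,j+j_0}}{(1+||g||)^{j}}
\]
for every $j\in\mathbb{N}$, with $j_0$ a fixed integer depending only on $\dim M$ and $n_0$.

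\textbf{Step 2 (absolute convergence).} Insert these estimates into the kernel formula for $\eta$, using H\"older's inequality in the form
\[
\left|\int_{\mathcal{F}^{n+1}}\!\mathsf{tr}\!\left(\prod_{i=0}^n A_i(x_i,g_ix_{i+1})\right)d\mathbf{x}\right|\;\leq\;\prod_{i=0}^{n}\bigl\|\chi_\mathcal{F}\, A_i\,\chi_{g_i\mathcal{F}}\bigr\|_{tr}.
\]
By \Cref{corBb1} we may assume $\varphi_\gamma$ is of polynomial growth with exponent $2k$, and by the polynomial-growth hypothesis on $\Gamma$ the number of $g\in\Gamma$ with $||g||\leq N$ is at most $C_0(N+1)^m$. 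Choosing $j>2k+m+2$ the combined sum
\[
\sum_{g_0\cdots g_n\in\mathrm{cl}(\gamma)}|\varphi_\gamma(g_0,\ldots,g_n)|\prod_{i=0}^n\frac{1}{(1+||g_i||)^{j}}
\]
converges, yielding an estimate $|\varphi_\gamma(A_0\hat{\otimes}\cdots\hat{\otimes}A_n)|\leq C\prod_i\|A_i\|_{\mathscr{A},\,j+j_0}$ valid on the algebraic tensor product.

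\textbf{Step 3 (extension).} Since this is a bound in a seminorm that is dominated by the projective cross-norm, $\varphi_\gamma$ is continuous on the algebraic tensor product with respect to the projective topology and therefore extends uniquely and continuously to the completion $(\mathscr{A}(\widetilde{M},\mathcal{S})^\Gamma)^{\hat{\otimes}_\pi^{n+1}}$.

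The principal obstacle is \textbf{Step 1}: converting the operator-theoretic seminorms $\|A\|_{\mathscr{A},k}=\|\widetilde{\partial}^{\,k}(A)\circ(\widetilde{D}^{2n_0}+1)\|_{op}$ into honest trace-norm (or Hilbert--Schmidt) decay of the localized kernel $\chi_\mathcal{F} A\chi_{g\mathcal{F}}$ in the group variable. This requires combining the smoothing properties of $(\widetilde{D}^{2n_0}+1)^{-1}$, compactness of $\mathcal{F}$, control of $\rho$ by $d_{\widetilde{M}}(\cdot,p)$ outside the modification region, and the quasi-isometry of $(\Gamma,d_\Gamma)$ with $(\widetilde{M},d_{\widetilde{M}})$ restricted to the fundamental domain; the earlier \Cref{lemCa1} (quoted from \cite{XY19}) is a closely related computation and should serve as a blueprint.
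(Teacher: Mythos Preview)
Your strategy is essentially the paper's: extract super-polynomial decay of the localized kernel in the group variable from the derivation seminorms via Sobolev embedding and the \v{S}varc--Milnor quasi-isometry, then sum against the polynomial growth of $\varphi_\gamma$ and of $\Gamma$. The paper phrases Step~1 as a \emph{pointwise} kernel bound (Sobolev applied to $\widetilde{\partial}^k(A)\circ(\widetilde{D}^{2n_0}+1)$ gives $\sup_{x,y\in\mathcal{F}}(1+||g||)^{k}|A(x,gy)|\leq C_k\|A\|_{\mathscr{A},k}$), and then controls the cyclic integral over $\mathcal{F}^{n+1}$ by a product of $L^2(\mathcal{F}^2)$-norms via iterated Cauchy--Schwarz rather than trace norms.

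There is one concrete error in your Step~2. The displayed ``H\"older'' inequality is false: $|\mathrm{tr}(T_0\cdots T_n)|$ is \emph{not} bounded by $\prod_i\|T_i\|_{tr}$ (Schatten--H\"older needs $\sum 1/p_i=1$, not $n+1$). The cyclic integral is $\mathrm{tr}(S_0\cdots S_n)$ where $S_i$ is the operator on $L^2(\mathcal{F},\mathcal{S})$ with kernel $A_i(x,g_iy)$; the correct bound is for instance $\|S_0\|_{tr}\prod_{i\geq1}\|S_i\|_{op}$, or, more symmetrically, $\prod_i\|S_i\|_{HS}=\prod_i\|A_i(\cdot,g_i\cdot)\|_{L^2(\mathcal{F}^2)}$, which is precisely the paper's Cauchy--Schwarz. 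Since your trace-norm decay estimate already dominates the operator- and Hilbert--Schmidt-norm decay, the fix costs nothing and Step~2 goes through.

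One completeness point: the paper also verifies that the extended functional is still a cyclic cocycle, by checking invariance under cyclic shifts on the dense subalgebra $\mathscr{L}(\widetilde{M},\mathcal{S})^\Gamma$ (via Fubini and cyclicity on $\mathbb{C}\Gamma$) and passing to the closure. Your proposal should include this step.
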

\begin{proof}
Denote by $\hat{\rho}:\widetilde{M}\longrightarrow[0,\infty)$ the distance function $\hat{\rho}(x)=\hat{\rho}(x,y_0)$ for some fixed point $y_0\in\widetilde{M}$, with $\rho$ being the modification of $\hat{\rho}$ near $y_0$ to ensure smoothness. Let $B\in\mathscr{A}(\widetilde{M},\mathcal{S})^\Gamma$ and recall that we have the norm $||B||_{\mathscr{A},k}=||\widetilde{\partial}^k(B)\circ(\widetilde{D}^{2n_0}+1)||_{op}$; for any $f\in L^2(\widetilde{M},\mathcal{S})$ the Sobolev embedding theorem provides existence of some constant $C$ such that
\[|B(f)(x)|\leq C||(1+\widetilde{D}^{2n_0})B(f)||_{L^2(\widetilde{M},\mathcal{S})}\leq C||(\widetilde{D}^{2n_0}+1)B||_{op}||f||_{L^2(\widetilde{M},\mathcal{S})}\]
In particular, since $f$ is arbitrary, the bound $||B(x,\cdot)||_{L^2(\widetilde{M},\mathcal{S})}\leq C||(\widetilde{D}^{2n_0}+1)B||_{op}$ shows that taking the supremum over all $(x,y)\in\widetilde{M}\times\widetilde{M}$ the Schwartz kernel
\[\widetilde{\partial}^k(B)(x,y)=(\rho(x)-\rho(y))^kB(x,y)\]
has operator norm bounded by $||B||_{\mathscr{A},k}$, hence it is a uniformly bounded continuous function for all $k\in\mathbb{N}$. Now view $B(x,y)$ as a matrix acting on the spinors $f(y)$, where each section $f$ has a representation as a matrix in the complex Clifford algebra $\mathbb{C}\ell_{\mathrm{dim}(M)}$. If $I$ is the identity matrix, then by the Holder inequality for Schatten $p$-norms 
\[|\mathsf{tr}(B(x,y))|\leq||B(x,y)||_{tr}=||B(x,y)I||_{tr}\leq||B(x,y)||_{op}||I||_{tr}<2^{n_0}||B(x,y)||_{op}\]
Since all points of $\widetilde{M}$ belong to some orbit of the fundamental domain we have the bound $|\rho(x_{i})-\rho(x_{i+1})|\leq\mathrm{diam}(\mathcal{F})$, and by quasi-isometry of $\Gamma$ and the universal cover, we have (taking a family of quasi-isometries $f_i(g)=gx_i$)
\[|\rho(x_{i+1})-\rho(gx_{i+1})|\geq d_\Gamma(e,g)-\mathrm{diam}(\mathcal{F})=||g||-\mathrm{diam}(\mathcal{F})\]
From this, an application of the reverse triangle inequality provides the bound
\[|\rho(x_i)-\rho(gx_{i+1})|=|\rho(x_i)-\rho(x_{i+1})+\rho(x_{i+1})-\rho(gx_{i+1})|\]
\[\geq|\rho(x_{i+1})-\rho(gx_{i+1})|-|\rho(x_{i})-\rho(x_{i+1})|\geq||g||-\mathrm{diam}(\mathcal{F})-\mathrm{diam}(\mathcal{F})\]
Denoting the matrix norm $||\cdot||_{op}$ by $|\cdot|$, the boundedness properties of the Schwartz kernel implies existence of a constant $C_k>0$ such that for each $k\in\mathbb{N}$ 
\[C_k|\widetilde{\partial}^{3k}(B_i)(x_i,gx_{i+1})|^2=(\rho(x_i)-\rho(gx_{i+1}))^{6k}|B_i(x_i,gx_{i+1})|^2\geq (1+||g||)^{6k}|B_i(x_i,gx_{i+1})|^2\]
We will use the explicit representation $\varphi_{\alpha,\gamma}$ of $\varphi_\gamma$, and for ease of notation, we shorten the argument of $\alpha$ by writing $\alpha(\mathbf{g}_n)$; we wish to prove convergence of the following sum.
\begin{equation}
\varphi_{\alpha,\gamma}(B_0\hat{\otimes}B_1\hat{\otimes}\cdots\hat{\otimes}B_n)=\sum_{g_0g_1\cdots g_n\in\mathrm{cl}(\gamma)}\alpha(\mathbf{g})\int_{\mathcal{F}^{n+1}}\mathsf{tr}\left(\prod_{i=0}^{n}B_i(x_i,g_ix_{i+1})\right)\;dx_0\cdots dx_n
\end{equation}
From the fact that $\alpha$ is of polynomial growth, and using the above inequalities coupled with Cauchy-Schwartz, $|\varphi_{\alpha,\gamma}(B_0\hat{\otimes}B_1\hat{\otimes}\cdots\hat{\otimes}B_n)|$ is bounded above by
\[\sum_{g_0g_1\cdots g_n\in\mathrm{cl}(\gamma)}|\alpha(\mathbf{g}_n)|\int_{\mathcal{F}^{n+1}}\left|\mathsf{tr}\left(\prod_{i=0}^{n}B_i(x_i,g_ix_{i+1})\right)\right|\;dx_0\cdots dx_n\]
\[\leq\sum_{g_0g_1\cdots g_n\in\mathrm{cl}(\gamma)}2^{n_0}|\alpha(\mathbf{g}_n)|\int_{\mathcal{F}^{n+1}}\left|\prod_{i=0}^{n}B_i(x_i,g_ix_{i+1})\right|\;dx_0\cdots dx_n\]
\[\leq\sum_{g_0g_1\cdots g_n\in\mathrm{cl}(\gamma)}R_\alpha\prod_{i=0}^{n}(1+||g_i||)^{2k}\prod_{i=0}^{n}\left(\int_{\mathcal{F}^{2}}|B_i(x_i,g_ix_{i+1})|^2\;dx_idx_{i+1}\right)^{1/2}\]
\[=\sum_{g_0g_1\cdots g_n\in\mathrm{cl}(\gamma)}R_\alpha\prod_{i=0}^{n}\left(\int_{\mathcal{F}^{2}}(1+||g_i||)^{4k}|B_i(x_i,g_ix_{i+1})|^2\;dx_idx_{i+1}\right)^{1/2}\]
\[\leq R_\alpha C_k^{1/2}\prod_{i=0}^{n}\left(\sum_{g_i\in\Gamma}(1+||g_i||)^{-2k}\int_{\mathcal{F}^{2}}\sup_{(x_i,g_ix_{i+1})\in\mathcal{F}\times\mathcal{F}}|\widetilde{\partial}^{2k}(B_i)(x_i,g_ix_{i+1})|^2\;dx_idx_{i+1}\right)^{1/2}\]
For each $g_i$ the integral over the fundamental domain is finite, since $\widetilde{\partial}^{2k}(B_i)(x_i,g_ix_{i+1})$ is uniformly bounded; explicitly there exists a constant $\Lambda_k$ such that for $g_0g_1\cdots g_n\in\mathrm{cl}(\gamma)$ the value $|\varphi_{\alpha,\gamma}(B_0\hat{\otimes}B_1\hat{\otimes}\cdots\hat{\otimes}B_n)|$ is bounded above by
\begin{equation}\label{Ca:CocyleBound}
\begin{gathered}
R_\alpha C_k^{1/2}\prod_{i=0}^{n}\left(\sum_{g_i\in\Gamma}(1+||g_i||)^{-2k}\int_{\mathcal{F}^{2}}\Lambda_k^2||B_i||_{\mathscr{A},k}^2\;dx_idx_{i+1}\right)^{1/2}\\
\leq R_\alpha C_k^{1/2}\mathrm{diam}(\mathcal{F})\Lambda_k\prod_{i=0}^{n}\left(\sum_{g_i\in\Gamma}(1+||g_i||)^{-2k}||B_i||_{\mathscr{A},k}^2\right)^{1/2}
\end{gathered}
\end{equation}
where we denote $R_{\alpha,k}=R_\alpha C_k^{1/2}\mathrm{diam}(\mathcal{F})\Lambda_k$. Moreover, due to $\Gamma$ being of polynomial growth, there exists $k_i$ such that 
\[(1+||g_i||)^{-2k_i}|\{g_i\in\Gamma:||g_i||\leq c\}|<\frac{1}{c^2}\]
It follows that each of the sums in the final expression \eqref{Ca:CocyleBound} are finite for sufficiently large $k$, and thus so is any finite product of them. Now, by construction $\mathscr{L}(\widetilde{M})^\Gamma$ is a smooth dense sub-algebra of $\mathscr{A}(\widetilde{M})^\Gamma$, and this relationship also extends when considering their projective tensor products. We have just proven that $\varphi_{\alpha,\gamma}$ is continuous on $(\mathscr{A}(\widetilde{M})^\Gamma)^{\hat{\otimes}_\pi^{n+1}}$, hence to obtain the desired result it suffices to prove that for operators $B_0,\ldots,B_n\in\mathscr{L}(\widetilde{M})^\Gamma$
\begin{equation}
\mathrm{sgn}(\sigma)\varphi_{\alpha,\gamma}(B_0\hat{\otimes}B_1\hat{\otimes}\cdots\hat{\otimes}B_n)=\varphi_{\alpha,\gamma}(B_{\sigma(0)}\hat{\otimes}B_{\sigma(1)}\hat{\otimes}\cdots\hat{\otimes}B_{\sigma(n)})\end{equation}
whenever $\sigma\in S_{n+1}$ is a cyclic shift. By application of the Fubini-Tonelli theorem, and since $\varphi_{\alpha,\gamma}$ is a cyclic cocyle on $\mathbb{C}\Gamma$ we obtain $\varphi_{\alpha,\gamma}(B_{\sigma(0)}\hat{\otimes}B_{\sigma(1)}\hat{\otimes}\cdots\hat{\otimes}B_{\sigma(n)})$ is equal to
\[\sum_{g_0g_1\cdots g_n\in\mathrm{cl}(\gamma)}\mathrm{sgn}(\sigma)\varphi_{\alpha,\gamma}(\mathbf{g}_n)\int_{\mathcal{F}^{n+1}}\mathsf{tr}\left(\prod_{i=0}^{n}B_{\sigma(i)}(x_{\sigma(i)},g_{\sigma(i)}x_{\sigma(i+1)})\right)\;dx_{\sigma(0)}\cdots dx_{\sigma(n)}\]
\[=\sum_{g_0g_1\cdots g_n\in\mathrm{cl}(\gamma)}\mathrm{sgn}(\sigma)\varphi_{\alpha,\gamma}(\mathbf{g}_n)\int_{\mathcal{F}^{n+1}}\mathsf{tr}\left(\prod_{i=0}^{n}B_i(x_i,g_ix_{i+1})\right)\;dx_0\cdots dx_n\]
\end{proof}

The following technical result is one which we will have occasion to use often, both in the remainder of this section and elsewhere.
\begin{mypro}\label{proCa1}
For any collection of Schwartz functions $f_0,f_1,\ldots,f_n\in S(\mathbb{R})$ and any delocalized cyclic cocyle $\varphi_\gamma\in (C^n(\mathbb{C}\Gamma,\mathrm{cl}(\gamma)),b)$ of polynomial growth
\[\lim_{t\longrightarrow0}\varphi_\gamma(f_0(t\widetilde{D})\hat{\otimes}f_0(t\widetilde{D})\hat{\otimes}\cdots\hat{\otimes}f_n(t\widetilde{D}))=0\]	
\end{mypro}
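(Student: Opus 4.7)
Using the Schwartz-kernel description of $\varphi_\gamma$ guaranteed by \Cref{thmCa1} (whose argument applies equally well to $f_i(t\widetilde D)\in\mathscr A(\widetilde M,\mathcal S)^\Gamma$ by \Cref{lemCa1}), rewrite
\[\varphi_\gamma(f_0(t\widetilde D)\hat\otimes\cdots\hat\otimes f_n(t\widetilde D)) = \sum_{g_0\cdots g_n\in\mathrm{cl}(\gamma)}\varphi_\gamma(\mathbf g)\int_{\mathcal F^{n+1}}\mathsf{tr}\Big(\prod_{i=0}^n f_i(t\widetilde D)(x_i,g_i x_{i+1})\Big)\,d\mathbf x.\]
The plan is to (i) reduce to the case of Fourier-compactly-supported $f_i$, (ii) exploit finite propagation to truncate the $\mathbf g$-sum to a finite set independent of small $t$, and (iii) show each remaining integral tends to zero.

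For (i), Schwartz functions with compactly supported Fourier transforms are dense in $S(\mathbb R)$; combining a quantitative sharpening of \Cref{lemCa1} (controlling $\|f(t\widetilde D)\|_{\mathscr A,k}$ by a Schwartz seminorm of $f$, with constants locally uniform in $t$) with the continuity of $\varphi_\gamma$ from \Cref{thmCa1}, a standard $3\epsilon$-argument replaces each $f_i$ by such an approximant with controlled error. For (ii), with $\hat f_i$ supported in $[-R_i,R_i]$, the Fourier-inversion identity $f_i(t\widetilde D)=(2\pi)^{-1}\int\hat f_i(s)\,e^{its\widetilde D}\,ds$ together with finite propagation of the unitary wave group gives $\mathrm{prop}(f_i(t\widetilde D))\leq tR_i$. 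Hence $f_i(t\widetilde D)(x_i,g_ix_{i+1})$ vanishes unless $d_{\widetilde M}(x_i,g_i x_{i+1})\leq tR_i$, and by the \v{S}varc--Milnor estimate recalled just before the statement (which gives $(K,\ell)=(1,\mathrm{diam}(\mathcal F))$ on the fundamental domain) this forces $\|g_i\|\leq R_i+2\mathrm{diam}(\mathcal F)$ for $t\in(0,1]$; only finitely many tuples $(g_0,\ldots,g_n)$ meet this bound.

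Step (iii) uses the hypothesis $e\notin\mathrm{cl}(\gamma)$: every contributing tuple has some index $i_0$ with $g_{i_0}\neq e$. If $d_{\widetilde M}(\mathcal F,g_{i_0}\mathcal F)>0$, finite propagation annihilates the corresponding summand identically once $t<d_{\widetilde M}(\mathcal F,g_{i_0}\mathcal F)/R_{i_0}$. The remaining case is $d_{\widetilde M}(\mathcal F,g_{i_0}\mathcal F)=0$ (adjacent fundamental domains), where the kernel $(x,y)\mapsto f_{i_0}(t\widetilde D)(x,g_{i_0}y)$ is supported in a $tR_{i_0}$-neighborhood of the Lebesgue-null set $E=\{(x,y)\in\mathcal F^2:x=g_{i_0}y\}$ while remaining bounded in $L^1(\mathcal F^2)$ uniformly in $t\in(0,1]$ (from $\|f_{i_0}(t\widetilde D)\|_{L^1\to L^1}\leq\|f_{i_0}\|_\infty$ via the spectral theorem, or from a Gaussian heat-kernel comparison using the Schwartz decay of $f_{i_0}$). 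A dominated convergence argument then forces this summand to zero, and summing over the finite list of contributing tuples completes the proof.

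The main technical obstacle is the interplay between the reduction step (i) and the concentration step (iii): the $\mathscr A$-seminorms of $f(t\widetilde D)$ in principle blow up as $t\to 0$ because of the $(\widetilde D^{2n_0}+1)$ factor, so one either has to reformulate the approximation directly at the level of kernels (bypassing the $\mathscr A$-norm), or conduct the $L^1$-concentration estimate with enough uniformity in $t$ to absorb that blowup. The Gaussian-heat-kernel route provides the cleanest quantitative control for both issues simultaneously.
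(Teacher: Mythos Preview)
Your route is genuinely different from the paper's. The paper's argument is very short: it runs a Fourier scaling computation to claim $\lim_{t\to0}f_i(t\xi)=0$, then invokes the spectral gap of $\widetilde D$ at zero to assert that $\|f_i(t\widetilde D)\|_{\mathscr A,k}\to 0$, after which the continuity estimate \eqref{Ca:CocyleBound} from \Cref{thmCa1} gives the conclusion in one line. You instead try to make the delocalization hypothesis $e\notin\mathrm{cl}(\gamma)$ do the work explicitly, via finite propagation of the wave group and a kernel-support argument. This is closer in spirit to the mechanism behind \Cref{remCb1} and is more geometric; the paper never isolates where delocalization enters.

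That said, your plan has a genuine gap exactly where you flag it, and a second one you do not. For step~(i): the seminorm $\|f(t\widetilde D)\|_{\mathscr A,0}=\|f(t\widetilde D)(\widetilde D^{2n_0}+1)\|_{op}$ behaves like $t^{-2n_0}\sup_\mu|f(\mu)\mu^{2n_0}|$ as $t\to0$, so approximating $f_i$ in Schwartz seminorms does \emph{not} yield a uniform-in-$t$ approximation of $f_i(t\widetilde D)$ in the $\mathscr A$-topology, and the $3\varepsilon$ argument cannot close as stated. Your proposed fix (off-diagonal heat-kernel bounds for arbitrary Schwartz $f$) is the right direction, but once you have such bounds step~(i) becomes superfluous rather than repaired---you would then run (ii)--(iii) directly. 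For step~(iii), the adjacent-domain case is also incomplete: the $i_0$-th factor does concentrate on a null set, but the remaining factors $f_j(t\widetilde D)(x_j,g_jx_{j+1})$ with $g_j=e$ concentrate on the diagonal and blow up pointwise like $t^{-\dim M}$, so an $L^1$ bound on the single factor $i_0$ cannot by itself dominate the full $(n{+}1)$-fold product. You would need an honest off-diagonal decay estimate (for instance $\sup_{d(x,y)\geq\delta}|f(t\widetilde D)(x,y)|=O(t^N)$ for every $N$, via rapid decay of $\hat f$ and finite propagation) strong enough to beat the on-diagonal blowup of the other factors; this is plausible but is not supplied.
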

\begin{proof}
Fix some $t\neq0$ and consider the Schwartz functions $f_{i,t}(x)=f_i(tx)$ for $1\leq i\leq n$; since the Fourier transform is an automorphism of $S(\mathbb{R})$ there exists $g_{i,t}\in S(\mathbb{R})$ such that $\hat{g}_{i,t}\equiv f_{i,t}$. Using the change of variables $x=y/t$, and the definition from \eqref{Ca:Fourier}, we obtain
\[f_{i,t}(\xi)=\hat{g}_{i,t}(\xi)=\frac{1}{2\pi}\int_{-\infty}^{\infty}g_{i,t}(x)e^{-i\xi x}\;dx=\frac{1}{2\pi}\int_{-\infty}^{\infty}g_i(tx)e^{-i\xi y/t}\;\frac{dy}{t}\]
\[=\frac{1}{2\pi t}\int_{-\infty}^{\infty}g_{i}(y)e^{-iy(\xi/t)}\;dy=\frac{\hat{g}_{i}(\xi/t)}{t}=f_i(t\xi)\]
Now since each of these functions is Schwartz, \eqref{Ca:ScwhartzNorm} asserts that the following limit exists and is finite; in particular,  $\hat{g}_{i}(t\xi)\longrightarrow0$ faster than any inverse power of $t$ as $t\longrightarrow\infty$, from which it follows that
\[\lim_{t\longrightarrow0}f_{i,t}(\xi)=\lim_{t\longrightarrow0}f_{i}(t\xi)=\lim_{t\longrightarrow0}\frac{\hat{g}_i(\xi/t)}{t}=\lim_{t\longrightarrow\infty}t\cdot\hat{g}_i(t\xi)=0\]
Turning to functional calculus, by \Cref{lemCa1} each $f_i(t\widetilde{D})$ belongs to $\mathscr{A}(\widetilde{M},\mathcal{S})^\Gamma$, and the spectral gap at 0 of $\widetilde{D}$ ensures that $||f_{i}(t\widetilde{D})||_{\mathscr{A},k}$ converges to 0 as $t\longrightarrow0$. From \eqref{Ca:CocyleBound} of \Cref{thmCa1} it follows that there exists a positive constant $R_{\alpha,k}$ such that
\[\lim_{t\longrightarrow0}|\varphi_\gamma(f_0(t\widetilde{D})\hat{\otimes}f_0(t\widetilde{D})\hat{\otimes}\cdots\hat{\otimes}f_n(t\widetilde{D}))|\]
\[\leq\lim_{t\longrightarrow0}R_{\alpha,k} \prod_{i=0}^{n}\left(\sum_{g_i\in\Gamma}(1+||g_i||)^{-2k}||f_i(t\widetilde{D})||_{\mathscr{A},k}^2\right)^{1/2}=0\]
\end{proof}

\begin{mylem}\label{lemCa2}
Let $[\varphi_\gamma]\in HC^{2m}(\mathbb{C}\Gamma,\mathrm{cl}(\gamma))$, then if $\widetilde{D}$ is invertible and $\varphi_\gamma$ is of polynomial growth, then $\eta_{\varphi_\gamma}(\widetilde{D})$ converges absolutely. 
\end{mylem}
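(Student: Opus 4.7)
The plan is to split
\[
\eta_{\varphi_\gamma}(\widetilde D) \;=\; \int_0^1 \eta_{\varphi_\gamma}(\widetilde D,t)\,dt \;+\; \int_1^{\infty} \eta_{\varphi_\gamma}(\widetilde D,t)\,dt
\]
and establish absolute integrability of each piece using the Schwartz-kernel estimate \eqref{Ca:CocyleBound} underlying \Cref{thmCa1}. For the tail $t\to\infty$, I would use that $\widetilde D$ self-adjoint and invertible implies a spectral gap $|\widetilde D|\geq c>0$, and that all three factors appearing in $\eta_{\varphi_\gamma}(\widetilde D,t)$, namely $\dot u_t(\widetilde D)u_t^{-1}(\widetilde D) = 2i\sqrt\pi\,\widetilde D e^{-t^2\widetilde D^2}$ and $u_t^{\pm 1}(\widetilde D)-\mathbbm 1$, are Schwartz functions of $\widetilde D$ whose values on the spectrum $\{|\lambda|\geq c\}$ decay Gaussianly in $t$. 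By \Cref{lemCa1} this decay passes to the $\mathscr A(\widetilde M,\mathcal S)^\Gamma$-norms, and the bound \eqref{Ca:CocyleBound} then yields $|\eta_{\varphi_\gamma}(\widetilde D,t)|\lesssim e^{-(2m+1)c_0 t^2}$ for $t\geq 1$ and some $c_0=c_0(c)>0$, which is integrable on $[1,\infty)$.

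For the singular region $(0,1]$ the naive $\mathscr A$-norms blow up polynomially in $1/t$, so one cannot use \eqref{Ca:CocyleBound} directly, and the essential idea is to exploit the off-diagonal nature of delocalized pairing. Rescale via $F_t(x)=F_1(tx)$ to write
\[
\eta_{\varphi_\gamma}(\widetilde D,t) \;=\; \frac{1}{t}\,\varphi_\gamma\!\bigl(\psi(t\widetilde D)\hat\otimes(\phi_+(t\widetilde D)\hat\otimes\phi_-(t\widetilde D))^{\hat\otimes m}\bigr),
\]
where $\psi(y)=2i\sqrt\pi\,y e^{-y^2}$ and $\phi_\pm(y)=e^{\pm 2\pi i F_1(y)}-1$ are all \emph{genuine} Schwartz functions on $\mathbb R$ (since $F_1\to 0$ at $-\infty$ and $F_1\to 1$ at $+\infty$ at Gaussian rates). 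Expanding into Schwartz-kernel form, $\eta_{\varphi_\gamma}(\widetilde D,t)$ becomes $t^{-1}$ times a sum over tuples $(g_0,\dots,g_{2m})\in\Gamma^{2m+1}$ with $g_0\cdots g_{2m}\in\mathrm{cl}(\gamma)$. The non-triviality of $\gamma$ gives $\epsilon_\gamma:=\inf_{g\in\mathrm{cl}(\gamma)}\|g\|>0$, so by the triangle inequality at least one index $i^*$ has $\|g_{i^*}\|\geq\epsilon_\gamma/(2m+1)$.

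The point is now the standard off-diagonal estimate for Schwartz functional calculus: for any Schwartz $\Phi$ on $\mathbb R$, finite propagation speed for the Dirac wave operator combined with the Fourier rescaling $\widehat{\Phi(t\,\cdot)}(\xi)=\widehat\Phi(\xi/t)/t$ yields
\[
|\Phi(t\widetilde D)(x,y)|\;\lesssim_N\;\bigl(1+d_{\widetilde M}(x,y)/t\bigr)^{-N}\qquad\forall\,N\in\mathbb N,
\]
uniformly in $t\in(0,1]$. Applied at the distinguished slot $i^*$ (where $d_{\widetilde M}(x_{i^*},g_{i^*}x_{i^*+1})\gtrsim\|g_{i^*}\|\geq\epsilon_\gamma/(2m+1)$ by quasi-isometry), this produces a factor of size $\lesssim(t/\epsilon_\gamma)^N$, Schwartz-small in $t$. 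The remaining $2m$ kernel factors are bounded by the same pointwise estimate, and the sums over the remaining $g_j\in\Gamma$ (coupled with polynomial growth of $\Gamma$ and of $\varphi_\gamma$) converge with at most polynomial blowup $O(t^{-P})$ for some $P=P(m,\Gamma,\varphi_\gamma)$ independent of $N$, exactly as in the proof of \Cref{thmCa1}. Choosing $N$ sufficiently large gives $|\eta_{\varphi_\gamma}(\widetilde D,t)|=O(t^{N-P-1})$ on $(0,1]$, which is integrable.

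The main obstacle is the bookkeeping in this last step: one must carefully verify that the single factor of Schwartz-smallness $(t/\epsilon_\gamma)^N$ at the distinguished index dominates both the polynomial-in-$1/t$ bounds at the other indices and the polynomial-growth weights from $\varphi_\gamma$ and from the sum $\sum_{g\in\Gamma}(1+\|g\|/t)^{-N}$, with the polynomial growth hypothesis on $\Gamma$ being precisely what keeps these combined sums convergent for large enough $N$. Once this bookkeeping is done, both integrals are finite, proving absolute convergence of $\eta_{\varphi_\gamma}(\widetilde D)$.
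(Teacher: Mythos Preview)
Your approach is correct, and for the tail $\int_1^\infty$ it is essentially identical to the paper's: both use the spectral gap to extract Gaussian decay $e^{-ct^2}$ from the factor $\widetilde D e^{-t^2\widetilde D^2}$ and then invoke the continuity bound from \Cref{thmCa1}.

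For the short-time region the paper takes a much shorter route than you do. It simply invokes \Cref{proCa1}, which asserts that for any Schwartz functions $f_0,\dots,f_n$ one has $\varphi_\gamma\bigl(f_0(t\widetilde D)\hat\otimes\cdots\hat\otimes f_n(t\widetilde D)\bigr)\to 0$ as $t\to 0$. This makes $t\mapsto\eta_{\varphi_\gamma}(\widetilde D,t)$ continuous on $[0,1]$ with value $0$ at $t=0$, hence bounded, and $\int_0^1|\eta_{\varphi_\gamma}(\widetilde D,t)|\,dt\le\sup_{[0,1]}|\eta_{\varphi_\gamma}(\widetilde D,t)|<\infty$ is immediate. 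Your finite-propagation-speed / off-diagonal argument is exactly the mechanism that makes \Cref{proCa1} true: $f(t\widetilde D)$ has effective propagation of order $t$, and a delocalized cocycle annihilates tuples of operators whose propagation is below the threshold $\epsilon_{\widetilde M}$ coming from $\mathrm{cl}(\gamma)\not\ni e$ (cf.\ \Cref{remCb1}). So you are not proving something different; you are unpacking \Cref{proCa1} by hand. Your observation that the raw seminorms $\|f(t\widetilde D)\|_{\mathscr A,k}$ blow up like $t^{-2n_0}$ as $t\to 0$ is correct and worth noting: it shows that the estimate \eqref{Ca:CocyleBound} alone is not enough at small time, and that the delocalized hypothesis is genuinely needed there---which is precisely what both \Cref{proCa1} and your direct argument supply.
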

\begin{proof}
The higher delocalized eta invariant can be split into two integrals, as follows.
\[\eta_{\varphi_\gamma}(\widetilde{D}):=\frac{m!}{\pi i}\int_{0}^{\infty}\eta_{\varphi_\gamma}(\widetilde{D},t)\;dt=\frac{m!}{\pi i}\left(\int_{0}^{1}\eta_{\varphi_\gamma}(\widetilde{D},t)\;dt+\int_{1}^{\infty}\eta_{\varphi_\gamma}(\widetilde{D},t)\;dt\right)\]
For the first integral, absolute convergence follows from \Cref{thmCa1} and \Cref{proCa1}, using the Schwartz kernel expression
\[\int_{0}^{1}|\eta_{\varphi_\gamma}(\widetilde{D},t)|\;dt\leq\sup_{t\in[0,1]}|\varphi_\gamma((\dot{u}_t(\widetilde{D})u_t^{-1}(\widetilde{D}))\hat{\otimes}((u_t(\widetilde{D})-\mathbbm{1})\hat{\otimes}(u_t^{-1}(\widetilde{D})-\mathbbm{1}))^{\hat{\otimes} m})|<\infty\]
For the second integral it is useful to work in the unitization $(\mathscr{A}(\widetilde{M},\mathcal{S})^\Gamma)^+$, for which $\overline{\varphi}_\gamma$ is well defined and continuous on the projective tensor product $((\mathscr{A}(\widetilde{M},\mathcal{S})^\Gamma)^+)^{\hat{\otimes}^{2m+1}_\pi}$ by \Cref{thmCa1}. Then following an argument similar to that of \cite[Proposition 3.30]{CWXY19} we have that $\int_{1}^{\infty}\eta_{\varphi_\gamma}(\widetilde{D},t)\;dt$ is bounded above by 
\[\bigg[\sup_{t\in[1,\infty)}\prod_{i=0}^{n}\left(\sum_{g_i\in\Gamma}(1+||g_i||)^{-2k}||\widetilde{D}e^{-\widetilde{D}^2}\hat{\otimes}(\bar{u}_t(\widetilde{D})\hat{\otimes}\bar{u}_t^{-1}(\widetilde{D}))^{\hat{\otimes} m}||_{(\mathscr{A}^+)^{\hat{\otimes}2m+1},k}^2\right)^{1/2}\]
\[\times\int_{1}^{\infty}R_{\alpha,k}Ce^{-(t^2-1)r^2/2}\;dt\bigg]<\infty\]
where $C,R_{\alpha,k}$ and $r$ are positive constants. 
\end{proof}

\begin{mythm}\label{thmCa2}
The higher delocalized eta invariant is independent of the choice of cocycle representative. Explicitly, if $[\varphi_\gamma]=[\phi_\gamma]\in HC^{2m}(\mathbb{C}\Gamma,\mathrm{cl}(\gamma))$, then $\eta_{\varphi_\gamma}(\widetilde{D})=\eta_{\phi_\gamma}(\widetilde{D})$
\end{mythm}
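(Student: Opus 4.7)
The strategy is to reduce the statement to showing that $\eta$ of a coboundary vanishes, and then to exhibit the corresponding integrand as an exact $t$-derivative. By the remark immediately preceding the theorem, $[\varphi_\gamma] = [\phi_\gamma]$ together with the polynomial growth of both representatives produces a cochain $\psi \in C^{2m-1}(\mathbb{C}\Gamma,\mathrm{cl}(\gamma))$ of polynomial growth with $\varphi_\gamma - \phi_\gamma = b\psi$; by linearity of $\eta$ in the cocycle argument, it suffices to prove $\eta_{b\psi}(\widetilde{D}) = 0$. I would then introduce the auxiliary scalar
\[
G_\psi(t) \;=\; \overline{\psi}\bigl(((u_t(\widetilde{D}) - \mathbbm{1}) \hat{\otimes} (u_t^{-1}(\widetilde{D}) - \mathbbm{1}))^{\hat{\otimes} m}\bigr),
\]
where $\overline{\psi}$ is the unitized extension of $\psi$. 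Both factors belong to $\mathscr{A}(\widetilde{M},\mathcal{S})^\Gamma$ by \Cref{lemCa1}, and \Cref{thmCa1} makes $G_\psi$ well-defined and smooth. Differentiating by the Leibniz rule, substituting $\dot u_t = (\dot u_t u_t^{-1})u_t$ and $\dot{(u_t^{-1})} = -u_t^{-1}(\dot u_t u_t^{-1})$, and combining the cyclic symmetry of $\psi$ with the algebraic identity
\[
(u_t - \mathbbm{1})(u_t^{-1} - \mathbbm{1}) \;=\; -(u_t - \mathbbm{1}) - (u_t^{-1} - \mathbbm{1})
\]
coming from $u_t u_t^{-1} = \mathbbm{1}$, I would derive the transgression identity $\frac{d}{dt}G_\psi(t) = \eta_{b\psi}(\widetilde{D},t)$.

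It then remains to verify that $G_\psi$ vanishes at both endpoints. As $t \to 0$, the rescaling identity $u_t(x) = u_1(tx)$ exhibits every tensor factor of $G_\psi$ as a Schwartz function of $t\widetilde{D}$, so \Cref{proCa1}---whose proof relies only on the delocalization of the evaluating functional and not on the cocycle condition---yields $G_\psi(t) \to 0$. As $t \to \infty$, the invertibility of $\widetilde{D}$ supplies a spectral gap at $0$, which forces $\|u_t(\widetilde{D}) - \mathbbm{1}\|_{\mathscr{A},k} \to 0$ for every $k$; continuity of $\overline{\psi}$ on the projective tensor power (\Cref{thmCa1}) once more gives $G_\psi(t) \to 0$. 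Combined with the absolute convergence of the $\eta$-integral established in \Cref{lemCa2},
\[
\eta_{b\psi}(\widetilde{D}) \;=\; \frac{m!}{\pi i}\int_0^\infty \frac{d}{dt}G_\psi(t)\,dt \;=\; \frac{m!}{\pi i}\bigl(G_\psi(\infty) - G_\psi(0)\bigr) \;=\; 0,
\]
which completes the argument.

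The main obstacle is establishing the transgression identity cleanly. Leibniz expansion of $\frac{d}{dt}G_\psi$ produces $4m$ terms, while the Hochschild coboundary $(b\psi)$ applied to the standard $(2m+1)$-tuple $(\dot u_t u_t^{-1}, u_t - \mathbbm{1}, u_t^{-1} - \mathbbm{1},\ldots)$ has $(2m+1)$ terms which expand, via the collapse relation $(u_t-\mathbbm{1})(u_t^{-1}-\mathbbm{1}) = -(u_t-\mathbbm{1}) - (u_t^{-1}-\mathbbm{1})$, into a matching total of $4m$ terms. Pairing them correctly requires simultaneous use of the cyclic skew-symmetry of $\psi$ and meticulous sign bookkeeping through the $m$-fold iteration.
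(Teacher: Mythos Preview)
Your proposal is correct and follows essentially the same route as the paper: reduce to $\eta_{b\psi}(\widetilde{D})=0$, express the integrand as the $t$-derivative of $\overline{\psi}$ applied to $((u_t-\mathbbm{1})\hat{\otimes}(u_t^{-1}-\mathbbm{1}))^{\hat{\otimes} m}$, and kill the boundary terms via \Cref{proCa1} at $t\to 0$ and the spectral gap plus \Cref{thmCa1} at $t\to\infty$. The only differences are cosmetic: the paper quotes the transgression identity from \cite[eq.~(3.23)]{CWXY19} rather than deriving it, and records it with an extra factor of $m$ on the $\eta_{b\psi}(\widetilde{D},t)$ side (so your stated identity $\frac{d}{dt}G_\psi(t)=\eta_{b\psi}(\widetilde{D},t)$ is off by that constant), which of course is irrelevant once both boundary values vanish.
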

\begin{proof}
By hypothesis, $\varphi_\gamma$ and $\phi_\gamma$ are cohomologous via a coboundary $b\varphi$ belonging to $BC^{2m}(\mathbb{C}\Gamma,\mathrm{cl}(\gamma))$. By the results of \Cref{B2} we can assume $\varphi\in(C^{2m-1}(\mathbb{C}\Gamma,\mathrm{cl}(\gamma)),b)$ to be a skew cochain of polynomial growth, and it suffices to prove that $\eta_{b\varphi}(\widetilde{D})=0$. Working in $(\mathscr{A}(\widetilde{M},\mathcal{S})^\Gamma)^+$ we obtain the transgression formula of \cite[eq(3.23)]{CWXY19}
\begin{equation}\label{Ca:Transgression}
m\eta_{b\varphi}(\widetilde{D},t)=\frac{d}{dt}\overline{\varphi}((\bar{u}_t(\widetilde{D})\hat{\otimes} \bar{u}_t^{-1}(\widetilde{D}))^{\hat{\otimes} m})\end{equation}
Ignoring the constant term $\frac{m!}{\pi i}$ in the definition of the delocalized higher eta invariant and integrating both sides with respect to $t$
\[m\eta_{b\varphi}(\widetilde{D})=m\lim_{T\longrightarrow\infty}\int_{1/T}^{T}\eta_{b\varphi}(\widetilde{D},t)\;dt=m\lim_{T\longrightarrow\infty}\int_{1/T}^{T}\frac{d}{dt}\overline{\varphi}((\bar{u}_t(\widetilde{D})\hat{\otimes} \bar{u}_t^{-1}(\widetilde{D}))^{\hat{\otimes} m})\;dt\]
\[=m\lim_{T\longrightarrow\infty}\varphi((u_T(\widetilde{D})-\mathbbm{1}\hat{\otimes} u_T^{-1}(\widetilde{D})-\mathbbm{1})^{\hat{\otimes} m}-m\lim_{T\longrightarrow0}\varphi((u_T(\widetilde{D})-\mathbbm{1}\hat{\otimes} u_T^{-1}(\widetilde{D})-\mathbbm{1})^{\hat{\otimes} m})\]
That $\lim_{T\longrightarrow0}\varphi((u_T(\widetilde{D})-\mathbbm{1}\hat{\otimes} u_T^{-1}(\widetilde{D})-\mathbbm{1})^{\hat{\otimes} m})=0$ follows from \Cref{proCa1}. For $T\longrightarrow\infty$, since $\widetilde{D}$ has a spectral gap at 0, we have by the properties of holomorphic functional calculus that for $x\in(0,\infty)$ both $u_T(\widetilde{D})-\mathbbm{1}$ and $u_T^{-1}(\widetilde{D})-\mathbbm{1}$ converge in the $||\cdot||_{\mathscr{A},k}$ norm to 0. It thus follows from the bounds of \Cref{thmCa1} that 
\begin{equation}
\lim_{T\longrightarrow\infty}|\varphi((u_T(\widetilde{D})-\mathbbm{1}\hat{\otimes} u_T^{-1}(\widetilde{D})-\mathbbm{1})^{\hat{\otimes} m}|=0
\end{equation}
\end{proof}

\begin{mypro}\label{proCa2}
Let $S_\gamma^*:HC^{2m}(\mathbb{C}\Gamma,\mathrm{cl}(\gamma))\longrightarrow HC^{2m+2}(\mathbb{C}\Gamma,\mathrm{cl}(\gamma))$ be the delocalized Connes periodicity operator, then $\eta_{[\varphi_\gamma]}(\widetilde{D})=\eta_{[S_\gamma\varphi_\gamma]}(\widetilde{D})$ for every $[\varphi_\gamma]\in HC^{2m}(\mathbb{C}\Gamma,\mathrm{cl}(\gamma))$. 
\end{mypro}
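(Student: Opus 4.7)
The plan is to establish the equality at the level of integrands in $t$, modulo terms that are exact $t$-derivatives and therefore integrate out on $[0,\infty)$. Work throughout in the unitization $(\mathscr{A}(\widetilde{M},\mathcal{S})^\Gamma)^+$ so that the extended cocycles $\overline{\varphi_\gamma}$ and $\overline{S_\gamma\varphi_\gamma}$ from \Cref{thmCa1} are defined on tensors containing unit entries, which is forced by the face operators $\delta^i$ appearing in the explicit formulas \eqref{Ba:ConnesPer} and \eqref{Ba:bBeta} for $S$. Write $U_t=u_t(\widetilde{D})$ and $V_t=u_t^{-1}(\widetilde{D})$, and record for later use the identity $U_tV_t=\mathbbm{1}$ together with $(U_t-\mathbbm{1})(V_t-\mathbbm{1})=\mathbbm{1}-U_t-V_t+\mathbbm{1}$, which will govern how adjacent pairs of arguments collide under a face map.

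Next, expand $\eta_{S_\gamma\varphi_\gamma}(\widetilde{D},t)=\frac{1}{(2m+1)(2m+2)}\bigl((\beta b+b\beta)\varphi_\gamma\bigr)\!\bigl((\dot U_tV_t)\hat\otimes((U_t-\mathbbm{1})\hat\otimes(V_t-\mathbbm{1}))^{\hat\otimes(m+1)}\bigr)$ via \eqref{Ba:bBeta}, and exploit both conditions on $\varphi_\gamma$: the cocycle identity $b\varphi_\gamma=0$ and the cyclic symmetry already extended to $\mathscr{A}(\widetilde{M},\mathcal{S})^\Gamma$ in the proof of \Cref{thmCa1}. Each $\delta^i\delta^j$ contribution with $0\leq i<j\leq 2m+2$ falls into one of two types. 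Type (a): the composition collides one of the $(U_t-\mathbbm{1})\hat\otimes(V_t-\mathbbm{1})$ adjacent pairs into a single entry; after cyclic rearrangement and using $U_tV_t=\mathbbm{1}$ to cancel the non-unit contributions, this produces the $\eta_{\varphi_\gamma}(\widetilde{D},t)$-integrand weighted by a combinatorial coefficient depending only on $m$. Type (b): the composition acts on $\dot U_tV_t$ or splits a pair asymmetrically, and these terms should be recast, via the transgression formula \eqref{Ca:Transgression} as in the proof of \Cref{thmCa2}, as the $t$-derivative of $\overline{\varphi_\gamma}\!\bigl((\bar U_t\hat\otimes\bar V_t)^{\hat\otimes(m+1)}\bigr)$ up to a scalar.

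Integrating over $t\in[0,\infty)$, the type (b) contributions become boundary terms at $t=0$ and $t=\infty$, both of which vanish by exactly the argument already developed in \Cref{thmCa2}: at $t=0$ by \Cref{proCa1}, and at $t=\infty$ because the spectral gap of $\widetilde{D}$ combined with holomorphic functional calculus forces $U_t-\mathbbm{1}$ and $V_t-\mathbbm{1}$ to zero in every $\|\cdot\|_{\mathscr{A},k}$ seminorm, to which the continuity bounds of \Cref{thmCa1} apply. What remains is a scalar multiple of $\int_0^\infty\eta_{\varphi_\gamma}(\widetilde{D},t)\,dt$. A final constant-chase reconciles the three numerical contributions: the factor $1/((2m+1)(2m+2))$ from $S_\gamma$, the combinatorial factor from type (a), and the ratio $(m+1)!/m!=m+1$ between the normalizations of $\eta_{S_\gamma\varphi_\gamma}$ and $\eta_{\varphi_\gamma}$ in \Cref{defCa1}. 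These must multiply to exactly $1$, which is the expected answer since $S^*$ is the structure map of the direct system defining periodic cyclic cohomology $HP^*$ and the eta-invariant pairing should factor through that limit.

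The main obstacle is the combinatorial bookkeeping in the expansion of $(\beta b+b\beta)\varphi_\gamma$ on the specific argument tensor: correctly tracking the $(-1)^{i+j}(j-i)$ and $(-1)^{i+j}(i-j+1)$ coefficients from \eqref{Ba:bBeta} across all $(i,j)$ with $0\leq i<j\leq 2m+2$, isolating the type (a) pairs precisely, and matching signs after the cyclic reindexing required to return to the ordered form $(\dot U_tV_t, U_t-\mathbbm{1}, V_t-\mathbbm{1},\ldots)$ of \Cref{defCa1}. This is a finite but delicate identity, essentially the cohomological dual of the well-known invariance of the odd Chern--Connes pairing $\langle\varphi,u\rangle$ under Connes periodicity; once it is verified, the transgression analysis, the boundary vanishing, and the seminorm estimates are all direct adaptations of the machinery established in \Cref{thmCa1}, \Cref{proCa1}, and \Cref{thmCa2}.
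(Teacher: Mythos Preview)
Your proposal outlines the direct computational route, which is essentially what the paper defers to by citing \cite[Proposition 3.33]{CWXY19}. The paper's own proof consists of two observations: that $\varphi_\gamma$ may be taken of polynomial growth (so that $S_\gamma\varphi_\gamma$ is as well), and that the explicit formula for $S_\gamma$ matches the one in \cite{CWXY19}, whereupon the result is imported wholesale. Your sketch unpacks what that cited proposition must be doing, and the structural outline---expand via \eqref{Ba:bBeta}, separate into a main term reproducing the lower-degree integrand and a remainder that is an exact $t$-derivative handled by transgression and boundary vanishing as in \Cref{thmCa2}---is correct.

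Two remarks. First, you can simplify immediately: since $\varphi_\gamma$ is a cyclic \emph{cocycle}, $b\varphi_\gamma=0$ at the cochain level, so the $\beta b$ summand in $S_\gamma=\frac{1}{(2m+1)(2m+2)}(\beta b+b\beta)$ vanishes identically and only the $b\beta$ term survives. This halves the bookkeeping you flag as the main obstacle. Second, your proposal remains a plan rather than a proof: the type~(a)/type~(b) dichotomy is asserted but not verified, and the constant-chase is left implicit. To complete the argument you would need to carry out the $\delta^i\delta^j$ expansion of $(b\beta\varphi_\gamma)$ on the specific tensor $(\dot U_tV_t)\hat\otimes((U_t-\mathbbm{1})\hat\otimes(V_t-\mathbbm{1}))^{\hat\otimes(m+1)}$ and check that the surviving combinatorial factor is exactly $(2m+1)(2m+2)/(m+1)$, so that after the normalization $(m+1)!/m!$ from \Cref{defCa1} and the $1/((2m+1)(2m+2))$ from $S_\gamma$ everything cancels to $1$. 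This is routine but must actually be done; the paper sidesteps it by citation.
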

\begin{proof}
We may assume by \Cref{corBb1} that $\varphi_\gamma$ is of polynomial growth; by the definition of $S_\gamma$ the cocyle $S_\gamma\varphi_\gamma$ is also of polynomial growth. Since our expression for $S_\gamma$ coincides with that of \cite[Definition 3.32]{CWXY19} the result follows from \cite[Proposition 3.33]{CWXY19}
\end{proof}

\subsection{Delocalized Pairing of Higher Rho Invariant}\label{C2}
To begin with, we recall the assumptions put on the spin manifold $M$, namely that it is closed and odd dimensional with a positive scalar curvature metric $g$. Let $\widetilde{D}$ be the Dirac operator lifted to the universal cover $\widetilde{M}$, $s$ a section of the spinor bundle $\mathcal{S}$, and $\nabla:C^\infty(\widetilde{M},\mathcal{S})\longrightarrow C^\infty(\widetilde{M},T^*\widetilde{M}\otimes\mathcal{S})$ the connection on $\mathcal{S}$. Since $\widetilde{M}$ has positive scalar curvature $\kappa>0$ associated to $\widetilde{g}$, then Lichnerowicz's formula shows that $\widetilde{D}$ is invertible, hence there exists a spectral gap at 0. Since $\widetilde{D}$ is an elliptic essentially self-adjoint operator, using a suitable normalizing function $\psi$, the operator $\psi(\widetilde{D})$ is bounded pseudo-local and self-adjoint. In particular, to emphasize the relationship with the delocalized higher eta invariant it is particularly useful that for $t\in(0,\infty)$ we consider
\begin{equation}\label{Cb:RhoNormFunc}
\psi(tx)=\frac{2}{\sqrt{\pi}}\int_{0}^{tx}e^{-s^2}\;ds
\end{equation}
Since there exist a spectral gap at 0 with respect to $\widetilde{D}$, the limit $||\lim_{t\longrightarrow0}\psi(\widetilde{D}/t)||_{op}$ exists and converges to $||(\widetilde{D}|\widetilde{D}|^{-1})||_{op}$, where $\widetilde{D}|\widetilde{D}|^{-1}=\mathrm{signum}(\widetilde{D})$. Define an operator $H_0=\frac{1}{2}(\mathbbm{1}+\widetilde{D}|D|^{-1})$ and let $\{\phi_{s,j}\}$ be a partition of unity subordinate to the $\Gamma$-invariant locally finite open cover $\{U_{s,j}\}_{s,j\in\mathbb{N}}$ of $\widetilde{M}$. For each $s$ we take $\mathrm{diam}(U_{s,j})<\frac{1}{s}$, and so for $t\geq0$ we follow the construction of \cite[Section 2.3]{XY14} to form the operator
\begin{equation}\label{Cb:RhoConstr}
H(t)=\sum_j(s+1-t)\phi_{s,j}^{1/2}H_0\phi_{s,j}^{1/2}+(t-s)\phi_{s+1,j}^{1/2}H_0\phi_{s+1,j}^{1/2}\;:t\in[s,s+1]
\end{equation}
Since the support $\mathrm{supp}(\phi_{s,j})$ of each member of the partition of unity is a subset of $U_{s,j}$ the propagation of $H(t)$ tends to 0 as $t\longrightarrow\infty$. Together with $H(t)$ being a pseudo-local, self-adjoint bounded operator, this gives us that $H(t)\in D^*(\widetilde{M},\mathcal{S})^\Gamma$; moreover, due to the choice of $\chi$ we have $H'(t),H(t)^2-\mathbbm{1}\in C^*(\widetilde{M},\mathcal{S})^\Gamma$. Moreover, as $H(t)$ is a projection, the path of invertibles 
\[S=\{u(t)=\exp(2\pi iH(t))|\;t\in[0,\infty)\}\] 
belong to $(C^*(\widetilde{M},\mathcal{S})^\Gamma)^+$, and since $\exp(2\pi i\cdot\mathrm{signum}(x))=1$ for any $x\neq0$ we have by construction that $u(0)=\mathbbm{1}$. It follows that $u$ belongs to the kernel of the evaluation map \[\mathsf{ev}:(C_L^*(\widetilde{M},\mathcal{S})^\Gamma)^+\longrightarrow(C^*(\widetilde{M},\mathcal{S})^\Gamma)^+\]
and so the path $S$ gives rise to a K-theory class $[u]\in K_1(C_{L,0}^*(\widetilde{M},\mathcal{S})^\Gamma)$, which is by definition the higher rho invariant $\rho(\widetilde{D},\widetilde{g})$ of Higson and Roe \cite{HR05I,HR05II,HR05III}. Before defining the pairing between cyclic cocyles and the higher rho invariant it is useful to introduce a few technical notions which will be needed later on. By \Cref{proAa1} any class of invertible $[u]\in K_1(C_{L,0}^*(\widetilde{M},\mathcal{S})^\Gamma)$ is directly equivalent to a class of invertible $[u]\in K_1(\mathscr{B}_{L,0}(\widetilde{M},\mathcal{S})^\Gamma)$, and we also recall that the (localized)-equivariant Roe algebra is independent of the choice of admissible module, hence we will work within the framework of $\mathscr{B}(\widetilde{M})^\Gamma$.
\begin{mydef}\label{defCb1}
Consider the unitization $(\mathscr{B}(\widetilde{M})^\Gamma)^+$ of the algebra $\mathscr{B}(\widetilde{M})^\Gamma$, and its suspension $S\mathscr{B}(\widetilde{M})^\Gamma$. If $\mathcal{A}$ is a $C^*$-algebra recall that the suspension $S\mathcal{A}$ is defined as 
\[\{f\in C([0,1],\mathcal{A})\;|\;f(0)=f(1)=0\}\]
Identify $S^1$ with the quotient space $[0,1]/(0\sim1)$, and call an element $f\in S\mathscr{B}(\widetilde{M})^\Gamma$ invertible if it is a piecewise smooth loop $f:S^1\longrightarrow(\mathscr{B}(\widetilde{M})^\Gamma)^+$ of invertible elements satisfying $f(0)=f(1)=\mathbbm{1}$. The map $f$ is \textit{local} if there exists $f_L\in S\mathscr{B}_L(\widetilde{M})^\Gamma$ such that the following hold
\begin{enumerate}[label={(\roman{enumi})}]
\item $f_L:S^1\longrightarrow(\mathscr{B}_L(\widetilde{M})^\Gamma)^+$ is a loop of invertible elements satisfying $f_L(0)=f_L(1)=\mathbbm{1}$.
\item $f$ is the image of $f_L$ under the evaluation map $\mathsf{ev}:S\mathscr{B}_L(\widetilde{M})^\Gamma\longrightarrow S\mathscr{B}(\widetilde{M})^\Gamma$
\end{enumerate}
\end{mydef}
Recall that identifying the Bott generator $b$ as the class $[e^{2\pi i\theta}]\in K_1(C_0(\mathbb{R}))$ the Bott periodicity map $\beta$ provides the following relationship between idempotents of a $C^*$-algebra $\mathcal{A}$ and invertibles of the suspension
\begin{equation}
\beta:K_0(\mathcal{A})\longrightarrow K_1(S\mathcal{A})\qquad\beta[p]=[bp+(1-p)]
\end{equation}
Combining this with the Baum-Douglas geometric description of K-homology we obtain the following result concerning the propagation properties of local loops which is essentially the same as \cite[Lemma 3.4]{XY19}, and we refer the reader to the proof given in that paper.
\begin{mylem}\label{lemCb1}
If $f\in S\mathscr{B}(\widetilde{M})^\Gamma$ is a local invertible then for any $\varepsilon>0$ there exists an idempotent $p\in\mathscr{B}(\widetilde{M})^\Gamma$ such that $\mathsf{prop}(p)\leq\varepsilon$ and $f(\theta)$ is homotopic to the element $\psi(\theta)=e^{2\pi i\theta}p+(1-p)$ through a piecewise smooth family of invertible elements.
\end{mylem}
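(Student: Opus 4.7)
The strategy is to reduce the geometric claim to a statement in $K$-theory via Bott periodicity, and then exploit the locality of $f$ to upgrade an abstract $K$-theory class into an idempotent with arbitrarily small propagation. Since $f$ is local, fix a piecewise smooth loop of invertibles $f_L$ in $(\mathscr{B}_L(\widetilde{M})^\Gamma)^+$ with $\mathsf{ev}(f_L) = f$. Naturality of Bott periodicity yields the commutative square
\[
\begin{tikzcd}
K_0(\mathscr{B}_L(\widetilde{M})^\Gamma) \arrow[r,"\beta","\cong"'] \arrow[d,"\mathsf{ev}_*"'] & K_1(S\mathscr{B}_L(\widetilde{M})^\Gamma) \arrow[d,"\mathsf{ev}_*"] \\
K_0(\mathscr{B}(\widetilde{M})^\Gamma) \arrow[r,"\beta","\cong"'] & K_1(S\mathscr{B}(\widetilde{M})^\Gamma)
\end{tikzcd}
\]
so I may write $[f_L] = \beta[p_L]$ for an idempotent $p_L$ (in a matrix algebra over) $\mathscr{B}_L(\widetilde{M})^\Gamma$, and therefore $[f] = \beta\,[p_L(0)]$ in $K_1(S\mathscr{B}(\widetilde{M})^\Gamma)$.

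Next I would exploit that $p_L$ lives in the localization algebra to replace $p_L(0)$ by an idempotent of arbitrarily small propagation. For every $t$, the evaluation $p_L(t)$ is an idempotent in $\mathscr{B}(\widetilde{M})^\Gamma$, and the reparametrization $s \mapsto p_L(st)$ is a piecewise smooth homotopy through idempotents connecting $p_L(0)$ and $p_L(t)$. Since $p_L \in C^*_L(\widetilde{M})^\Gamma$ implies $\lim_{t\to\infty}\mathsf{prop}(p_L(t)) = 0$, I can fix $T$ with $\mathsf{prop}(p_L(T)) \leq \varepsilon$ and set $p := p_L(T)$. This idempotent then has the required propagation and still satisfies $\beta[p] = [f]$, because homotopic idempotents give equal $K_0$-classes and $\beta$ is functorial.

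Finally, I would upgrade the equality of $K_1$-classes $[f] = [\psi]$ to an explicit homotopy of invertible loops, by concatenating three piecewise smooth paths: (i) the homotopy realizing $[f_L] = \beta[p_L]$ at the localization level, then evaluated at $0$, which connects $f$ to $\psi_0(\theta) := e^{2\pi i\theta}p_L(0) + (1 - p_L(0))$; (ii) the loop-of-loops $s \mapsto e^{2\pi i\theta}p_L(sT) + (1 - p_L(sT))$, which deforms $\psi_0$ to $\psi$ through invertibles by the Bott formula applied to the idempotent path $p_L(sT)$; (iii) a stabilization step that absorbs any auxiliary matrix factors without disturbing the propagation bound on $p$.

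The main obstacle is step (i): converting the abstract $K$-theoretic identity $[f_L] = \beta[p_L]$ into a concrete piecewise smooth homotopy of invertibles \emph{within} the smooth dense subalgebra, rather than merely in the ambient $C^*$-algebra. This is precisely where the Baum--Douglas geometric model of equivariant $K$-homology of $\widetilde{M}$ enters, as it identifies $K_0(\mathscr{B}_L(\widetilde{M})^\Gamma)$ with geometric cycles whose associated homotopies can be performed in $\mathscr{B}_L(\widetilde{M})^\Gamma$; the technical content is essentially the same as that of \cite[Lemma 3.4]{XY19}, to which I would defer for the detailed construction.
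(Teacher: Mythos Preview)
Your proposal is correct and takes essentially the same approach as the paper. The paper does not give an independent proof of this lemma either: it records the Bott periodicity map $\beta[p]=[bp+(1-p)]$, invokes the Baum--Douglas geometric description of $K$-homology, and then refers the reader directly to \cite[Lemma~3.4]{XY19} for the details---exactly the route you outline and ultimately defer to in your final paragraph.
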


We now go through the process of assigning to any class $[u]\in K_1(C_{L,0}^*(\widetilde{M},\mathcal{S})^\Gamma)$ a special representative which will enable the calculations later on in this section. Making use of the results proved in \cite[Proposition 3.5]{XY19}, there exist a piecewise smooth path of invertible elements $h(t)\in\mathscr{B}(\widetilde{M})^\Gamma$ connecting $u(1)$ and $e^{2\pi i\frac{E(1)+1}{2}}$ where the operator $E:[1,\infty)\longrightarrow D^*(\widetilde{M})^\Gamma$ has uniformly bounded operator norm and satisfies 
\[\lim_{t\longrightarrow\infty}\mathsf{prop}(E(t))=0\qquad E'(t)\in \mathscr{B}(\widetilde{M})^\Gamma\qquad E(t)^2-1\in \mathscr{B}(\widetilde{M})^\Gamma\qquad E^*(t)=E(t)\]
where there exists a twisted Dirac operator $\widetilde{D}$ over a $spin^c$ manifold, along with smooth normalizing function $\chi:\mathbb{R}\longrightarrow[-1,1]$ such that $E(t)=\chi(\widetilde{D}/t)$. We can thus define the \textit{regularized representative} of $u$ to be
\begin{equation}
w(t)=\left\{\begin{array}{cc}
u(t) & 0\leq t\leq 1\\
h(t) & 1\leq t\leq 2\\
e^{2\pi i\frac{E(t-1)+1}{2}} & t\geq2
\end{array}\right.
\end{equation}
Moreover, by \cite[Theorem 3.8]{NK99} and \cite[Proposition 3.5]{XY19} if $v$ is another such representative then there exist a family of piecewise smooth maps $\{E_s\}_{s\in[0,1]}$ belonging to $D_{L,0}^*(\widetilde{M})^\Gamma$ and having the same properties as $E$. In particular, the propagation of $E_s(t)$ goes to zero uniformly in $s$ as $t\longrightarrow\infty$, and 
\[\frac{\partial}{\partial t}E_s(t)\in \mathscr{B}(\widetilde{M})^\Gamma\qquad E_s(t)^2-1\in \mathscr{B}(\widetilde{M})^\Gamma\qquad E_s(t)^*=E_s(t)\]
Furthermore there exists piecewise smooth family of invertibles $\{v_s\}_{s\in[0,1]}$ belonging to $(\mathscr{B}_{L,0}(\widetilde{M})^\Gamma)^+$ and which satisfy
\begin{enumerate}[label={(\roman{enumi})}] 
\item $v_0(t)=w(t)$ for $t\in[0,\infty)$, and $v_1(t)=v(t)$ for all $t\notin(1,2)$ 
\item $v_s(t)=\exp(2\pi i\frac{E_s(t-1)+1}{2})$ for all $t\geq2$
\item $v_1v^{-1}:[1,2]\longrightarrow(\mathscr{B}(\widetilde{M})^\Gamma)^+$ is a local loop of invertible elements.
\end{enumerate}

\begin{mydef}\label{defCb2}
Let $a_i=\sum_{g_i\in\Gamma}c_{g_i}\cdot g_i$ be an element of the group algebra $\mathbb{C}\Gamma$, and $\omega_i$ belong to the algebra $\mathscr{R}$ of smooth operators on a closed oriented Riemannian manifold. Denoting $W_i=a_i\otimes\omega_i$, the action of $\varphi_{\gamma}$ on $\mathbb{C}\Gamma$ can be extended to $\mathbb{C}\Gamma\otimes\mathscr{R}$ by 
\[\varphi_{\gamma}(W_0\hat{\otimes}W_1\hat{\otimes}\cdots\hat{\otimes}W_n)=\mathsf{tr}(\omega_0\omega_1\cdots\omega_n)\cdot\varphi_{\gamma}(a_0,a_1,\ldots,a_n)\]
\end{mydef}

\begin{mydef}\label{defCb3}
Given $[\rho(\widetilde{D},\widetilde{g})]\in K_1(\mathscr{B}_{L,0}(\widetilde{M})^\Gamma)$ with $w$ being its regularized representative, associated to each delocalized cyclic cocyle $[\varphi_{\gamma}]\in HC^{2m}(\mathbb{C}\Gamma,\mathrm{cl}(\gamma))$ the determinant map $\tau_{\varphi_\gamma}$ is defined by
\[\tau_{\varphi_\gamma}(\rho(\widetilde{D},\widetilde{g})):=\frac{1}{\pi i}\int_{0}^{\infty}\overline{\varphi}_{\gamma}(\widetilde{\mathsf{ch}}(w(t),\dot{w}(t))\;dt\]
\[\widetilde{\mathsf{ch}}(w,\dot{w})=(-1)^{m}\left(m-1\right)!\sum_{j=1}^{m}\left((w^{-1}\hat{\otimes} w)^{\hat{\otimes} j}\hat{\otimes}(w^{-1}\dot{w})\hat{\otimes}(w^{-1}\hat{\otimes} w)^{\hat{\otimes} (m-j)}\right)\]
\end{mydef}
We remark that the definition of $\widetilde{\mathsf{ch}}$ is directly modeled upon the secondary odd Chern character pairing invertibles of GL$(N,\mathbb{C})$ and traces (see, for example, \cite[Section 1.2]{LMP09}). Moreover, by the property of cyclic cocyles this expression can be simplified so that our coefficients exactly resemble that of the delocalized higher eta invariant. By the action of the cyclic operator $\mathfrak{t}$ we obtain from applying the $n=2(m-j)+1$ fold composition $\mathfrak{t}^n$ for each $j$, that the integrand can be written as
\begin{gather*}
\frac{(-1)^{m}\left(m-1\right)!}{\pi i}\sum_{j=1}^m(-1)^{2m(2m-2j+1)}\overline{\varphi}_{\gamma}\left((w^{-1}(t)\dot{w}(t))\hat{\otimes}(w^{-1}(t)\hat{\otimes}w(t))^{\hat{\otimes}m}\right)
\end{gather*}
from which it follows that there is the simplified expression for the determinant map
\begin{equation}\label{Cb:TauSimple}
\begin{gathered}
\tau_{\varphi_\gamma}(\rho(\widetilde{D},\widetilde{g})):=\frac{(-1)^mm!}{\pi i}\int_{0}^{\infty}\overline{\varphi}_{\gamma}\left((w^{-1}(t)\dot{w}(t))\hat{\otimes}(w^{-1}(t)\hat{\otimes}w(t))^{\hat{\otimes}m}\right)\;dt
\end{gathered}
\end{equation}
It is not at all obvious why the above pairing is well defined, the resolving of this doubt occupying the remainder of this section.
\begin{mythm}\label{thmCb1}
Let $\Gamma=\pi_1(M)$ and $\varphi_\gamma\in (C^n(\mathbb{C}\Gamma,\mathrm{cl}(\gamma)),b)$ be a delocalized cyclic cocyle of polynomial growth, then $\varphi_\gamma$ extends continuously on the algebra $(\mathscr{B}(\widetilde{M})^\Gamma)^{\hat{\otimes}_\pi^{n+1}}$.
\end{mythm}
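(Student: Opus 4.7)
The plan is to parallel the strategy used to prove \Cref{thmCa1}, but adapted from the Schwartz-kernel description of $\mathscr{A}(\widetilde{M},\mathcal{S})^\Gamma$ to the matrix-coefficient description of $\mathscr{B}(\widetilde{M})^\Gamma\subset C_r^*(\Gamma)\otimes\mathcal{K}$. Since $\mathbb{C}\Gamma\otimes\mathscr{R}$ is a Fr\'echet-dense subalgebra of $\mathscr{B}(\widetilde{M})^\Gamma$, it suffices to establish an estimate of the form
\[
|\varphi_\gamma(A_0\hat{\otimes}\cdots\hat{\otimes} A_n)|\leq C_{n,k}\prod_{i=0}^n\|A_i\|_{\mathscr{B},k}'
\]
for $A_i\in\mathbb{C}\Gamma\otimes\mathscr{R}$ and $k$ sufficiently large, where $\|A\|_{\mathscr{B},k}':=\max_{0\leq j\leq k}\|A\|_{\mathscr{B},j}$. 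Writing each $A_i=\sum_{g\in\Gamma}\lambda(g)\otimes\omega_g^{(i)}$ with $\omega_g^{(i)}\in\mathscr{R}$ finitely supported in $g$, the extension from \Cref{defCb2} reads
\[
\varphi_\gamma(A_0\hat{\otimes}\cdots\hat{\otimes} A_n)=\sum_{g_0\cdots g_n\in\mathrm{cl}(\gamma)}\varphi_\gamma(g_0,\ldots,g_n)\cdot\mathsf{tr}(\omega_{g_0}^{(0)}\cdots\omega_{g_n}^{(n)}).
\]

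The central step is to read decay of the coefficients $\omega_g^{(i)}$ out of the seminorms $\|\cdot\|_{\mathscr{B},k}$. Using $\Delta_2(\delta_h)=\|h\|\delta_h$ and $\|e\|=0$, an easy induction gives $\widetilde{\partial}^k(\lambda(g))\delta_e=\|g\|^k\delta_g$, whence
\[
\widetilde{\partial}^k(A_i)\circ(I\otimes\Delta_1)^2(\delta_e\otimes v)=\sum_{g\in\Gamma}\|g\|^k\,\delta_g\otimes\omega_g^{(i)}\Delta_1^2 v.
\]
Orthogonality of $\{\delta_g\}_{g\in\Gamma}$ together with the definition of the operator norm then produces the Parseval-type inequality $\sum_g\|g\|^{2k}\|\omega_g^{(i)}\Delta_1^2 v\|^2\leq\|A_i\|_{\mathscr{B},k}^2\|v\|^2$, which isolates each coefficient and, after comparing $\|g\|^k$ with $(1+\|g\|)^k$ to absorb the case $g=e$, yields
\[
\|\omega_g^{(i)}\Delta_1^2\|_{op}\leq M_k\,(1+\|g\|)^{-k}\,\|A_i\|_{\mathscr{B},k}'.
\]

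To bound the pointwise trace $|\mathsf{tr}(\omega_{g_0}^{(0)}\cdots\omega_{g_n}^{(n)})|$ I would write $\omega_g^{(i)}=(\omega_g^{(i)}\Delta_1^2)\Delta_1^{-2}$, cyclically rotate one $\Delta_1^{-2}$ to the front as a trace-class factor with $\|\Delta_1^{-2}\|_{tr}=\pi^2/6$, and apply H\"older in Schatten norms with every other factor controlled in operator norm. This yields
\[
|\mathsf{tr}(\omega_{g_0}^{(0)}\cdots\omega_{g_n}^{(n)})|\leq C_{n,k}\prod_{i=0}^n(1+\|g_i\|)^{-k}\|A_i\|_{\mathscr{B},k}'.
\]
Combining this with the polynomial-growth bound $|\varphi_\gamma(g_0,\ldots,g_n)|\leq R_\varphi\prod_i(1+\|g_i\|)^{2p}$ provided by \Cref{corBb1} reduces the full estimate to convergence of the factorized sum $\prod_{i=0}^n\sum_{g_i\in\Gamma}(1+\|g_i\|)^{2p-k}$, which is finite as soon as $k-2p$ exceeds the polynomial growth degree of $\Gamma$ by at least two; density and continuity of the projective cross norm then promote the bound to all of $(\mathscr{B}(\widetilde{M})^\Gamma)^{\hat{\otimes}_{\pi}^{n+1}}$. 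The main technical obstacle is that $\widetilde{\partial}$ does not act diagonally on the expansion $A=\sum_g\lambda(g)\otimes\omega_g$, since $[\Delta_2,\lambda(g)]$ is a non-scalar multiplication operator rather than $\|g\|\lambda(g)$; the crucial observation is that when the iterated commutator is evaluated on the cyclic vector $\delta_e$ it collapses to $\|g\|^k\delta_g$, which supplies exactly the orthogonality needed to separate the individual coefficients.
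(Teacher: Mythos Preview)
Your argument is correct and follows the same overall architecture as the paper---establish a seminorm estimate on the dense subalgebra $\mathbb{C}\Gamma\otimes\mathscr{R}$ and extend by density---but the two proofs differ at the technical core. The paper represents each $B_k\in\mathscr{B}(\widetilde{M})^\Gamma$ as an infinite matrix $(\beta_{ij}^k)$ with entries in $C_r^*(\Gamma)$, bounds $\sum_j|c_{jj}|$ by Cauchy--Schwarz over the matrix indices, and then invokes \cite[Lemma~6.4]{CM90} as a black box to conclude that $\sum_{g}\sum_{k_{i-1},k_i}(1+\|g\|)^{4k}|\beta^i_{k_{i-1}k_i}(g)|^2$ is dominated by $\|B_i\|_{\mathscr{B},k}^2$. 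You instead work in the ``dual'' expansion $A_i=\sum_g\lambda(g)\otimes\omega_g^{(i)}$ and extract the decay $\|\omega_g^{(i)}\Delta_1^2\|_{op}\lesssim(1+\|g\|)^{-k}\|A_i\|'_{\mathscr{B},k}$ directly, by testing $\widetilde{\partial}^k(A_i)\circ(I\otimes\Delta_1)^2$ on the cyclic vector $\delta_e\otimes v$; this is exactly the step that the citation of Connes--Moscovici is doing implicitly in the paper, and your cyclic-vector observation makes it self-contained. Your Schatten--H\"older argument for the trace (rotating one $\Delta_1^{-2}$ into trace-class position) replaces the paper's Cauchy--Schwarz over matrix indices. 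One small point you omit: the paper also checks that the extension remains a \emph{cyclic} cocycle by verifying the cyclic identity on $\mathbb{C}\Gamma\otimes\mathscr{R}$; this is immediate from cyclicity of $\mathsf{tr}$ and of $\varphi_\gamma$ on $\mathbb{C}\Gamma$, and then passes to the completion by the continuity you established, so it is a harmless omission.
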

\begin{proof}
Again using the explicit representation for delocalized cyclic cocyles, we show that $\varphi_{\alpha,\gamma}$ extends to a continuous multi-linear map on $(\mathscr{B}(\widetilde{M})^\Gamma)^{\hat{\otimes}_\pi^{n+1}}$. By fixing a basis, let $B_k\in\mathscr{B}(\widetilde{M})^\Gamma$ be represented by the matrix $(\beta_{ij}^k)_{i,j\in\mathbb{N}}$ with $\beta_{ij}^k\in C^*_r(\Gamma)$. We wish to prove convergence of 
\begin{equation}\label{Cb:CocyleBound}
\begin{gathered}
\varphi_{\alpha,\gamma}(B_0\hat{\otimes}B_1\hat{\otimes}\cdots\hat{\otimes}B_n)=\varphi_{\alpha,\gamma}\left(\sum_{g_0\in\Gamma}B_0(g_0)\cdot g_0\hat{\otimes}\cdots\hat{\otimes}\sum_{g_n\in\Gamma}B_n(g_n)\cdot g_n\right)\\	
=\sum_{g_0\in\Gamma}\sum_{g_1\in\Gamma}\cdots\sum_{g_n\in\Gamma}\mathsf{tr}(B_0(g_0)B_1(g_1)\cdots B_n(g_n))\cdot\varphi_{\alpha,\gamma}\left(g_0, g_1,\cdots,g_n\right)\\
=\sum_{g_0g_1\cdots g_n\in\mathrm{cl}(\gamma)}\mathsf{tr}(C(g_0,\ldots,g_n))\cdot\alpha(h,hg_0,\ldots,hg_0g_1\cdots g_{n-1})
\end{gathered}
\end{equation}
Straight forward matrix multiplication gives the product $C=(c_{ij})_{i,j\in\mathbb{N}}$ as having entries
\[c_{ij}(g_0,\ldots,g_n)=\sum_{k_{n-1}\in\mathbb{N}}\cdots\sum_{{k_1}\in\mathbb{N}}\sum_{{k_0}\in\mathbb{N}}\left(\beta^0_{ik_0}(g_0)\beta^1_{k_0k_1}(g_1)\cdots\beta^n_{k_{n-1}j}(g_n)\right)\]
For ease of notation, we shorten the argument of $\alpha$ by writing $\alpha(\mathbf{g})$; in addition we suppress the argument of the functions $c_{ij}$. Taking the desired trace in the above expression \eqref{Cb:CocyleBound}, and using the fact that $\alpha$ is of polynomial growth, we obtain the inequality
\begin{equation}\label{Cb:CocyleBound2}
\begin{gathered}
|\varphi_{\alpha,\gamma}(B_0\hat{\otimes}B_1\hat{\otimes}\cdots\hat{\otimes}B_n)|\leq\sum_{j\in\mathbb{N}}\sum_{g_0g_1\cdots g_n\in\mathrm{cl}(\gamma)}|c_{jj}||\alpha(\mathbf{g})|\\
\leq\sum_{j\in\mathbb{N}}\sum_{g_0g_1\cdots g_n\in\mathrm{cl}(\gamma)}R_\alpha(1+||g_0||)^{2k}(1+||g_1||)^{2k}\cdots(1+||g_n||)^{2k}|c_{jj}|
\end{gathered}
\end{equation}
for some positive constant $R_\alpha$. Next, considering the following inequality for $|c_{jj}|$ 
\[\sum_{j\in\mathbb{N}}|c_{jj}|\leq\sum_{k_0,\ldots,k_{n-1},j\in\mathbb{N}}\left|\left(\beta^0_{jk_0}(g_0)\cdots\beta^n_{k_{n-1}j}(g_n)\right)\right|\leq\prod_{i=0}^{n}\left(\sum_{k_{i-1},k_i\in\mathbb{N}}|\beta^i_{k_{i-1}k_i}(g_i)|^2\right)^{1/2}\quad \]
where $j=k_{-1}=k_n$. Substituting the above final product into the second line of \eqref{Cb:CocyleBound2}, $|\varphi_{\alpha,\gamma}(B_0\hat{\otimes}B_1\hat{\otimes}\cdots\hat{\otimes}B_n)|$ is bounded above by 
\begin{gather*}
\sum_{g_0g_1\cdots g_n\in\mathrm{cl}(\gamma)}R_\alpha\prod_{i=0}^{n}(1+||g_i||)^{2k}\prod_{i=0}^{n}\left(\sum_{k_{i-1},k_i\in\mathbb{N}}|\beta^i_{k_{i-1}k_i}(g_i)|^2\right)^{1/2}\\
=R_\alpha\prod_{i=0}^{n}\sum_{g_0g_1\cdots g_n\in\mathrm{cl}(\gamma)}\left(\sum_{k_{i-1},k_i\in\mathbb{N}}(1+||g_i||)^{4k}|\beta^i_{k_{i-1}k_i}(g_i)|^2\right)^{1/2}\\
\leq R_\alpha\prod_{i=0}^{n}\left(\sum_{g_i\in\Gamma}\sum_{k_{i-1},k_i\in\mathbb{N}}(1+||g_i||)^{4k}|\beta^i_{k_{i-1}k_i}(g_i)|^2\right)^{1/2}\quad:\;g_0g_1\cdots g_n\in\mathrm{cl}(\gamma)
\end{gather*}
In particular, the proof of \cite[Lemma 6.4]{CM90} shows that each of the double sums in the final expression are in fact bounded by the norm $||B_i||_{\mathscr{B},k}$ hence finite for all $k$, and thus so is any finite product of them. Since $\mathbb{C}\Gamma\otimes\mathscr{R}$ is a smooth dense sub-algebra of $\mathscr{B}(\widetilde{M})^\Gamma$ and $\varphi_{\alpha,\gamma}$ has been proven to be continuous on $\mathscr{B}(\widetilde{M})^\Gamma$, it suffices to prove that for operators $W_0,\ldots,W_n\in\mathbb{C}\Gamma\otimes\mathscr{R}$
\begin{equation}
\mathrm{sgn}(\sigma)\varphi_{\alpha,\gamma}(W_0\hat{\otimes}W_1\hat{\otimes}\cdots\hat{\otimes}W_n)=\varphi_{\alpha,\gamma}(W_{\sigma(0)}\hat{\otimes}W_{\sigma(1)}\hat{\otimes}\cdots\hat{\otimes}W_{\sigma(n)})
\end{equation}
whenever $\sigma\in S_{n+1}$ is a cyclic shift. Write $W_i=a_i\otimes\omega_i$, then using the fact that the trace is invariant under cyclic shifts and that $\varphi_{\alpha,\gamma}$ is a cyclic cocyle on $\mathbb{C}\Gamma$
\[\varphi_{\alpha,\gamma}(W_{\sigma(0)}\hat{\otimes}W_{\sigma(1)}\hat{\otimes}\cdots\hat{\otimes}W_{\sigma(n)})=\mathsf{trace}(\omega_{\sigma(0)}\omega_{\sigma(1)}\cdots\omega_{\sigma(n)})\cdot\varphi_{\alpha,\gamma}(a_{\sigma(0)},a_{\sigma(1)},\ldots,a_{\sigma(n)})\]
\[=\mathsf{trace}(\omega_0\omega_1\cdots\omega_n)\cdot\mathrm{sgn}(\sigma)\varphi_{\alpha,\gamma}(a_0,a_1,\ldots,a_n)\]
\[=\mathrm{sgn}(\sigma)\varphi_{\alpha,\gamma}(a_0\otimes\omega_0,a_1\otimes\omega_1,\ldots,a_n\otimes\omega_n)=\mathrm{sgn}(\sigma)\varphi_{\alpha,\gamma}(W_0\hat{\otimes}W_1\hat{\otimes}\cdots\hat{\otimes}W_n)\]
\end{proof}

\begin{mypro}\label{proCb1}
Let $w$ be a regularized representative of some class $[u]\in K_1(\mathscr{B}_{L,0}(\widetilde{M})^\Gamma)$. For all $t\geq 2$ there exists a finite propagation operator $\varpi\in\mathscr{B}(\widetilde{M})^\Gamma$ such that for every $k>0$ and any $\varepsilon>0$
\[||w(t)-\varpi(t)||_{\mathscr{B},k}<C_k/t^3 \qquad\mathsf{prop}(\varpi(t))<\varepsilon\]
whenever $t\geq\max\{e^2,6r\}$, where $C_k$ and $r$ are positive constants . 
\end{mypro}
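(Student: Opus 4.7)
The plan is to realise $w(t)$ as functional calculus of $\widetilde{D}/(t-1)$, express it through the Dirac wave group, and truncate the Fourier representation, exploiting unit propagation speed. Write $\Psi(x):=\exp(\pi i(\chi(x)+1))$, so that $w(t)=\Psi(\widetilde{D}/(t-1))$ for $t\geq 2$; since $\chi$ is a smooth normalising function with $\chi(\pm\infty)=\pm 1$, the function $\phi:=\Psi-1$ lies in the Schwartz space $\mathcal{S}(\mathbb{R})$. Fourier inversion then yields
\[
w(t)-1=\int_{\mathbb{R}}\hat\phi(\xi)\,e^{i\xi\widetilde{D}/(t-1)}\,d\xi,
\]
and finite propagation speed gives $\mathsf{prop}(e^{i\xi\widetilde{D}/(t-1)})\leq|\xi|/(t-1)$.

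For the prescribed $\varepsilon>0$, I would then set
\[
\varpi(t):=1+\int_{|\xi|\leq \varepsilon(t-1)/r}\hat\phi(\xi)\,e^{i\xi\widetilde{D}/(t-1)}\,d\xi,
\]
where $r\geq 1$ is a fixed constant absorbing the quasi-isometry defect between $\Gamma$ and $\widetilde{M}$. By construction $\mathsf{prop}(\varpi(t))\leq\varepsilon$. The difference $w(t)-\varpi(t)$ is the tail integral over $|\xi|>\varepsilon(t-1)/r$, and I would estimate it in $\|\cdot\|_{\mathscr{B},k}$ by combining the Schwartz decay $|\hat\phi(\xi)|\leq C_N(1+|\xi|)^{-N}$ (any $N$) with an iterated Duhamel bound showing that $\widetilde\partial^k(e^{i\xi\widetilde{D}/(t-1)})$ has operator norm at most $C_k(|\xi|/(t-1))^k$: each commutator with $\Delta_2\otimes I$ contributes a factor $|\xi|/(t-1)$ times a uniformly bounded operator, with higher nested commutators under control because $[\rho,[\rho,\widetilde{D}]]=0$ (since $\rho$ is scalar-valued and $[\rho,\widetilde{D}]$ is Clifford multiplication by $\nabla\rho$). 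The multiplicative factor $(I\otimes\Delta_1)^2$ in the $\mathscr{B},k$ seminorm commutes with spectral projections of $\widetilde{D}$, so it moves inside the wave integral and contributes only additional polynomial factors which the Schwartz tail of $\hat\phi$ still dominates. Assembling these ingredients produces
\[
\|w(t)-\varpi(t)\|_{\mathscr{B},k}\leq\frac{C'_{N,k}\,r^{N-k-1}}{\varepsilon^{N-k-1}(t-1)^{N-1}},
\]
and choosing $N=k+4$ gives the advertised bound $C_k/t^3$ as soon as $t\geq\max\{e^2,6r\}$.

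The main obstacle will be running the iterated Duhamel estimate \emph{inside} the specific algebra $\mathscr{B}(\widetilde{M})^\Gamma$, because the derivation $\widetilde\partial=[\Delta_2\otimes I,\cdot\,]$ is defined using the word-length operator on $\ell^2(\Gamma)$, whereas the wave group $e^{i\xi\widetilde{D}/(t-1)}$ lives naturally on $L^2(\widetilde{M},\mathcal{S})$. Bridging these pictures requires transporting $\Delta_2$ across the isomorphism $C^*_r(\Gamma)\otimes\mathcal{K}\cong C^*(\widetilde{M})^\Gamma$ coming from the fundamental-domain decomposition, invoking the \v{S}varc--Milnor quasi-isometry $\Gamma\hookrightarrow\widetilde{M}$ (exactly as exploited in \Cref{thmCa1}) to identify $\Delta_2$ up to a uniform factor of $r$ with the multiplication operator $T_\rho$, and then checking that $[T_\rho,\widetilde{D}]$ is bounded by $\|\nabla\rho\|_\infty$. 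Once this bridge is in place the remaining work is the routine balancing of $R=\varepsilon(t-1)/r$ against the exponent $N$ in the Schwartz decay.
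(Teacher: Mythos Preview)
Your approach is genuinely different from the paper's, and it runs into a real obstacle that the paper's method sidesteps entirely.

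The paper does not touch the wave group. It observes that for $t\geq 2$ one has $w(t)=\exp(G(t))$ with $G(t)=\pi i(E(t-1)+1)$, and that $E(t-1)$ \emph{already} has finite propagation tending to zero. So it simply replaces the exponential by its degree-$m$ Taylor polynomial $P_m$ and sets $\varpi(t)=P_{\lfloor t\rfloor}(G(t))$. The norm estimate $\|w(t)-\varpi(t)\|_{\mathscr{B},k}\leq C_k\,r^{m+1}/(m+1)!$ comes from the Taylor remainder bound together with closure of $\mathscr{B}(\widetilde{M})^\Gamma$ under holomorphic functional calculus; Stirling then gives $r^{m+1}/(m+1)!<1/m^3$ for $m\geq\max\{e^2,6r\}$. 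The propagation estimate is just $\mathsf{prop}(P_m(G(t)))\leq m\cdot\mathsf{prop}(E(t-1))$. No Fourier analysis, no Duhamel, no passage between $\ell^2(\Gamma)$ and $L^2(\widetilde{M})$.

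Your route has a gap beyond the bridging issue you already flag. The individual wave operators $e^{i\xi\widetilde{D}/(t-1)}$ are unitary, not smoothing, so they do not belong to $\mathscr{B}(\widetilde{M})^\Gamma\subset C^*_r(\Gamma)\otimes\mathcal{K}$ at all, and $e^{i\xi\widetilde{D}/(t-1)}\circ(I\otimes\Delta_1)^2$ is unbounded. Your claim that $(I\otimes\Delta_1)^2$ ``commutes with spectral projections of $\widetilde{D}$'' is not correct: under the isomorphism $C^*(\widetilde{M})^\Gamma\cong C^*_r(\Gamma)\otimes\mathcal{K}$ the factor $\ell^2(\mathbb{N})$ arises from a choice of orthonormal basis of $L^2(\mathcal{F})$ on the fundamental domain, and $\Delta_1$ is the diagonal operator in that basis---it has no reason to commute with $\widetilde{D}$. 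The smoothing needed to control the $(I\otimes\Delta_1)^2$ factor comes only from the Schwartz function $\phi$ as a whole, not from the wave kernel, so you cannot push the seminorm inside the $\xi$-integral term by term as you propose. To salvage the argument you would have to estimate $\|\widetilde\partial^k(\phi_R(\widetilde{D}/(t-1)))\circ(I\otimes\Delta_1)^2\|$ directly for the tail function $\phi_R=\phi\cdot\mathbf{1}_{\{|\hat\cdot|>R\}}^{\vee}$, which essentially puts you back to proving a version of \Cref{lemCa1} with quantitative dependence on the Schwartz seminorms of $\phi_R$---doable, but considerably more work than the paper's two-line Taylor argument.
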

\begin{proof}
For real valued $x$, finite $r>0$ and $z=\pi i(x+1)\in B_r(0)$ define the bounded holomorphic function $f(z)=e^{2\pi i\frac{x+1}{2}}$. Consider the Taylor series expansion of $f(z)$ centered at the origin, and for each $m\in\mathbb{N}$ define
\begin{equation}
\begin{gathered}
P_m(z)=\sum_{k=0}^{m}\frac{\left(2\pi i\frac{x+1}{2}\right)^k}{k!}\qquad R_m(z)=\sum_{k=m+1}^{\infty}\frac{\left(2\pi i\frac{x+1}{2}\right)^k}{k!}\\
A_m(t)=P_m(G(t))=P_m\left(2\pi i\frac{E(t-1)+1}{2}\right)
\end{gathered}
\end{equation}
We know that $E(t)$ has compact real spectrum which is symmetric around $\lambda=0$; in particular the spectral radius $\mathrm{rad}(\sigma(E(t)))$ is bounded above by $||E(t)||_{op}$. It is clear that the same holds true for $G(t)$, so choose $r>\sup_{t\geq1}\{||G(t)||_{op}\}$ so that $\sigma(G(t))\subset B_r(0)$, then $f\in H^\infty(B_{r}(0))$ and since $\mathscr{B}(\widetilde{M})^\Gamma$ is closed under holomorphic functional calculus, the usual remainder bound 
\[|R_m(z)|\leq c\frac{|z|^{m+1}}{(m+1)!}\quad\mathrm{if}\quad |f^{(m+1)}(z)|\leq c,\forall z\in B_r(0)\]
has a functional equivalent with respect to every $||\cdot||_{\mathscr{B},k}$. In particular for every $k>0$, there exists a constant $C_{k,t}>0$ such that
\[||w(t)-A_m(t)||_{\mathscr{B},k}=||R_m(G(t))||_{\mathscr{B},k}\leq C_{k,t}||R_m||_\infty=C_{k,t}\sup_{z\in B_r(0)}|R_m(z)|\]
\[\leq C_{k,t}\cdot c\sup_{z\in B_r(0)}\frac{|z|^{m+1}}{(m+1)!}\leq C_{k,t}\cdot c\frac{r^{m+1}}{(m+1)!}=C_{k,t}\frac{r^{m+1}}{(m+1)!}\]
Note that we can take $c=1$ since $|f^{(m+1)}(z)|=|e^z|=|e^{i(\pi x+\pi)}|=1$. Suppose that $m\geq\max\{e^2,6r\}$, then by applying Stirling's approximation 
\[C_{k,t}\frac{r^{m+1}}{(m+1)!}\leq C_{k,t}\frac{r^{m+1}}{\sqrt{2\pi}(m+1)^{m+3/2}e^{-(m+1)}}<\frac{C_{k,t}}{m^3}\]
Since the exponential is an entire function its power series converges uniformly on the compact set $\overline{B}_r(0)$, and thus we can choose a finite $C_k\geq\sup_{t\in[2,\infty)}\{C_{k,t}\}$. Finally, for all $t\geq2$ we define the operator $\varpi(t)$ according to
\begin{equation}
\varpi(t)=\sum_{m=2}^{\infty}1_{[m,m+1)}(t)\cdot A_m(t)
\end{equation}
From the definition of $E(t)$ the propagation tends to 0 as $t\longrightarrow\infty$; hence for all $\varepsilon>0$ there exists $N_\varepsilon\in\mathbb{N}$ such that $\mathsf{prop}(E(t))<\varepsilon/N_\varepsilon$ whenever $t\geq N_\varepsilon$. Recall that if $S$ and $T$ are bounded operators on some module $H_X$ then 
\[\mathsf{prop}(ST)\leq\mathsf{prop}(S)+\mathsf{prop}(T)\qquad\mathsf{prop}(S+T)\leq\max\{\mathsf{prop}(S),\mathsf{prop}(T)\}\]
thus by the definition of $\varpi(t)$, if $t\in[m,m+1]$ for $m\leq N_\varepsilon\leq t$
\[\mathrm{prop}(\varpi(t+1))=\mathrm{prop}(A_m(t+1))=\mathrm{prop}\left(\sum_{k=0}^{m}\frac{\left(2\pi i\frac{E(t)+1}{2}\right)^k}{k!}\right)\leq\mathrm{prop}\left(\frac{\left(2\pi i\frac{E(t)+1}{2}\right)^m}{m!}\right)\]
\[=\mathrm{prop}\left(\frac{E(t)+1}{2}\right)^m\leq m\cdot\mathrm{prop}\left(\frac{E(t)+1}{2}\right)\leq m\cdot\mathrm{prop}(E(t))<\frac{m\varepsilon}{N_\varepsilon}\leq\varepsilon\]
\end{proof}
\begin{mycor}\label{corCb1}
Let $w$ be a regularized representative of some class $[u]\in K_1(\mathscr{B}_{L,0}(\widetilde{M})^\Gamma)$. For all $t\geq 2$ there exists a finite propagation operator $\varpi\in\mathscr{B}(\widetilde{M})^\Gamma$ such that for every $k>0$ and any $\varepsilon>0$
\[||w^{-1}(t)-\varpi^{-1}(t)||_{\mathscr{B},k}<C_k/t^3 \qquad\mathsf{prop}(\varpi^{-1}(t))<\varepsilon\]
whenever $t\geq\max\{e^2,6r\}$, where $C_k$ and $r$ are positive constants . 
\end{mycor}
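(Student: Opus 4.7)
The plan is to adapt the argument of \Cref{proCb1} essentially verbatim, but applied to the reciprocal function $\bar{f}(z) = e^{-z}$ in place of $f(z) = e^{z}$. The key observation is that for $t \geq 2$ the regularized representative has the form $w(t) = \exp(G(t))$ with $G(t) = 2\pi i \tfrac{E(t-1)+1}{2}$, and since $G(t)$ commutes with itself under functional calculus, the inverse is simply $w^{-1}(t) = \exp(-G(t))$. Consequently, $w^{-1}(t)$ is obtained by the same holomorphic functional calculus applied to $\bar{f}$ rather than $f$, and its approximants should be built from the Taylor truncations of $\bar{f}$ rather than literally inverting the polynomial operator $\varpi(t)$ from the proposition.

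Concretely, I would introduce the $m$-th partial sum and remainder
\[
\bar{P}_m(z) = \sum_{k=0}^{m} \frac{(-z)^k}{k!}, \qquad \bar{R}_m(z) = \sum_{k=m+1}^{\infty} \frac{(-z)^k}{k!},
\]
and set $\bar{A}_m(t) = \bar{P}_m(G(t))$. Choosing $r > \sup_{t \geq 1}\{\|G(t)\|_{op}\}$ as in the proposition, we have $\bar{f} \in H^{\infty}(B_r(0))$ and the derivatives $\bar{f}^{(m+1)}(z) = (-1)^{m+1} e^{-z}$ are uniformly bounded on $B_r(0)$, yielding the Taylor remainder estimate $|\bar{R}_m(z)| \leq c \cdot r^{m+1}/(m+1)!$ on $B_r(0)$ for some finite $c$ independent of $m$. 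Since $\mathscr{B}(\widetilde{M})^\Gamma$ is closed under holomorphic functional calculus, this lifts to the operator bound
\[
\|w^{-1}(t) - \bar{A}_m(t)\|_{\mathscr{B},k} = \|\bar{R}_m(G(t))\|_{\mathscr{B},k} \leq C_{k,t} \cdot c \cdot \frac{r^{m+1}}{(m+1)!},
\]
with a constant $C_{k,t}$ absorbing the geometric factors from the Cauchy integral formula. Defining
\[
\varpi^{-1}(t) := \sum_{m=2}^{\infty} 1_{[m,m+1)}(t) \cdot \bar{A}_m(t),
\]
Stirling's approximation applied exactly as in the proposition produces the bound $\|w^{-1}(t) - \varpi^{-1}(t)\|_{\mathscr{B},k} < C_k/t^3$ whenever $t \geq \max\{e^2, 6r\}$, after taking $C_k \geq \sup_{t \in [2,\infty)}\{c \cdot C_{k,t}\}$ (finite by uniform convergence of the exponential series on $\overline{B}_r(0)$).

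For the propagation estimate, the operator $\bar{A}_m(t)$ is a polynomial of degree $m$ in $E(t-1)$, and the inequalities $\mathsf{prop}(ST) \leq \mathsf{prop}(S) + \mathsf{prop}(T)$ and $\mathsf{prop}(S + T) \leq \max\{\mathsf{prop}(S), \mathsf{prop}(T)\}$ give $\mathsf{prop}(\bar{A}_m(t)) \leq m \cdot \mathsf{prop}(E(t-1))$. Using that $\mathsf{prop}(E(t-1)) < \varepsilon/N_\varepsilon$ once $t \geq N_\varepsilon$, we conclude $\mathsf{prop}(\varpi^{-1}(t)) < m \varepsilon / N_\varepsilon \leq \varepsilon$ for $t \in [m, m+1)$ with $m \leq N_\varepsilon \leq t$, matching the statement of the corollary. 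I do not anticipate a real obstacle here, as the proof is structurally identical to the proposition; the only point requiring care is the (slightly abusive) interpretation of the symbol $\varpi^{-1}(t)$ as a Taylor truncation of $\exp(-G(t))$ rather than as the literal inverse in $(\mathscr{B}(\widetilde{M})^\Gamma)^+$ of the $\varpi(t)$ produced in the proposition.
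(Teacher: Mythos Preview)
Your proposal is correct and takes essentially the same approach as the paper, which simply says to take $f(z)=e^{-2\pi i\frac{x+1}{2}}$ and repeat the argument of \Cref{proCb1}. Your observation that the symbol $\varpi^{-1}(t)$ denotes a Taylor truncation of $\exp(-G(t))$ rather than the literal inverse of $\varpi(t)$ is exactly the intended reading.
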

\begin{proof}
Take $f(z)=e^{-2\pi i\frac{x+1}{2}}$ and apply the same argument as above.
\end{proof}

For each member of the family of invertibles $\{v_s\}_{s\in[0,1]}$ the above also leads to finite propagation operators $\varpi_s(t)$ and $\varpi^{-1}_s(t)$ having analogous properties. The following technical result will be of similar importance in establishing well-definedness of the determinant map. 
\begin{myrem}\label{remCb1}
For $\varphi_\gamma\in C^{n}((\mathbb{C}\Gamma,\mathrm{cl}(\gamma)),b)$ and $B_0,\ldots,B_n\in\mathscr{B}(\widetilde{M},\mathcal{S})^\Gamma$-- or equivalently for $A_0,\ldots,A_n\in\mathscr{A}(\widetilde{M},\mathcal{S})^\Gamma$-- there exists $\varepsilon_{\widetilde{M}}$ which depends only on $M$ such that
\[\varphi_\gamma(B_0\hat{\otimes}B_1\hat{\otimes}\cdots\hat{\otimes}B_n)=0\]
whenever $\mathsf{prop}(B_i)<\varepsilon_{\widetilde{M}}$ for each $0\leq i\leq n$.
\end{myrem}

\begin{mythm}\label{thmCb2}
Let $[u]\in K_1(\mathscr{B}_{L,0}(\widetilde{M})^\Gamma)$ and $w$ be a regularized representative of $u$, then the determinant map $\tau_{\varphi}$ converges absolutely for any $\varphi_\gamma\in (C^{2m}(\mathbb{C}\Gamma,\mathrm{cl}(\gamma)),b)$ of polynomial growth.
\end{mythm}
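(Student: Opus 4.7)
The plan is to establish absolute convergence by splitting
\[\int_0^\infty \bigl|\overline{\varphi}_\gamma\bigl((w^{-1}(t)\dot{w}(t))\hat{\otimes}(w^{-1}(t)\hat{\otimes}w(t))^{\hat{\otimes}m}\bigr)\bigr|\,dt = \int_0^T(\cdots)\,dt + \int_T^\infty(\cdots)\,dt,\]
for some fixed $T \geq \max\{e^2, 6r\}$ chosen to meet the hypotheses of \Cref{proCb1}. The compact portion $[0,T]$ is the easier of the two: by construction the regularized representative $w$ is piecewise smooth into $(\mathscr{B}(\widetilde{M})^\Gamma)^+$, so each of the tensor factors $w^{-1}\dot{w}$, $w^{-1}$, and $w$ varies piecewise-continuously in $t$ in the relevant $\mathscr{B}$-seminorms. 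Since \Cref{thmCb1} extends to a continuous multilinear functional on the projective tensor power of the unitization, the integrand is a piecewise continuous function of $t$ on the compact interval $[0,T]$, and its absolute integral is therefore finite.

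For the tail $\int_T^\infty$ I would combine the small-propagation vanishing of \Cref{remCb1} with the approximation bounds of \Cref{proCb1} and \Cref{corCb1}. Set $\varepsilon = \varepsilon_{\widetilde{M}}/(2m+1)$ and, writing the $2m+1$ tensor factors as $A_0 = w^{-1}\dot{w}$ and $A_i \in \{w^{-1}, w\}$ for $1 \leq i \leq 2m$, choose finite-propagation approximants $\tilde{A}_i \in \mathscr{B}(\widetilde{M})^\Gamma$ with $\mathsf{prop}(\tilde{A}_i(t)) < \varepsilon$ and $\|A_i(t) - \tilde{A}_i(t)\|_{\mathscr{B},k} < C_k/t^3$ for every $k$. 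For the factors $w$ and $w^{-1}$ these are produced directly by \Cref{proCb1} and \Cref{corCb1}; the remaining factor $w^{-1}\dot{w}$ is handled by an analogous polynomial-in-$E$ approximation after rewriting $\dot w(t)$ via Duhamel's formula.

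With the approximants in place, the telescoping estimate is immediate. Because $\mathsf{prop}(\tilde{A}_0(t)) + \cdots + \mathsf{prop}(\tilde{A}_{2m}(t)) < \varepsilon_{\widetilde{M}}$, \Cref{remCb1} gives $\overline{\varphi}_\gamma(\tilde{A}_0 \hat{\otimes} \cdots \hat{\otimes} \tilde{A}_{2m}) = 0$, and subtracting this zero term and expanding produces
\[\overline{\varphi}_\gamma(A_0 \hat{\otimes} \cdots \hat{\otimes} A_{2m}) = \sum_{i=0}^{2m} \overline{\varphi}_\gamma\bigl(\tilde{A}_0 \hat{\otimes} \cdots \hat{\otimes} \tilde{A}_{i-1} \hat{\otimes} (A_i - \tilde{A}_i) \hat{\otimes} A_{i+1} \hat{\otimes} \cdots \hat{\otimes} A_{2m}\bigr).\]
By \Cref{thmCb1} each summand is dominated by the product of the corresponding $\mathscr{B}$-norms; exactly one factor contributes the decay $\|A_i - \tilde{A}_i\|_{\mathscr{B},k} = O(t^{-3})$, while the remaining factors $\|A_j\|_{\mathscr{B},k}$ and $\|\tilde{A}_j\|_{\mathscr{B},k}$ are uniformly bounded in $t$ (using that $E(t)$ has uniformly bounded operator norm and that the truncated Taylor approximants of \Cref{proCb1} are majorized by the full exponential series). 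Hence the integrand decays like $O(t^{-3})$, which is integrable on $[T,\infty)$, and absolute convergence of the tail is secured.

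The main obstacle will be establishing the approximation bound for the derivative factor $w^{-1}(t)\dot{w}(t)$: unlike $w$ and $w^{-1}$ themselves, $\dot{w}$ is not a direct output of holomorphic functional calculus, so one must invoke Duhamel's formula
\[\dot{w}(t) = \int_0^1 e^{\pi i s(E(t-1)+1)}\,\pi i\,\dot{E}(t-1)\,e^{\pi i(1-s)(E(t-1)+1)}\,ds\]
and then transport the polynomial-approximation argument of \Cref{proCb1} uniformly across $s \in [0,1]$. Controlling both the propagation and the $\mathscr{B}$-seminorm error of the resulting approximant is the most delicate point, but the uniform boundedness of $\dot{E}(t)$ in $\mathscr{B}(\widetilde{M})^\Gamma$ should make the argument tractable along the same lines as the proof of \Cref{proCb1}.
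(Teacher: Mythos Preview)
Your overall strategy matches the paper's: split into a compact piece (handled by piecewise continuity and \Cref{thmCb1}) and a tail (handled by approximating $w$ and $w^{-1}$ by small-propagation operators via \Cref{proCb1} and \Cref{corCb1}, invoking \Cref{remCb1} to annihilate the fully-approximated term, and using the $O(t^{-3})$ decay on the cross terms). Your telescoping identity is in fact a tidier version of the somewhat schematic expansion the paper records in \eqref{Cb:TauDistribution}.

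Where you diverge from the paper is in the treatment of the derivative factor, and here you have manufactured a difficulty that does not exist. For $t\geq 2$ the regularized representative is $w(t)=\exp\bigl(\pi i(E(t-1)+1)\bigr)$ with $E(t-1)=\chi(\widetilde{D}/(t-1))$ for a fixed self-adjoint operator $\widetilde{D}$. Since $E(t-1)$ and $E'(t-1)$ are both functions of $\widetilde{D}$, they \emph{commute}, so the exponential differentiates in the elementary way and
\[
w^{-1}(t)\,\dot w(t)=\pi i\,E'(t-1).
\]
No Duhamel formula is needed. The paper uses precisely this identity: by the construction of the regularized representative, $E'(t-1)\in\mathscr{B}(\widetilde{M})^\Gamma$ is uniformly bounded and already has propagation tending to $0$. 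Thus the factor $A_0=w^{-1}\dot w$ requires no approximation at all---you may take $\tilde A_0=A_0$---and the telescoping argument reduces to bookkeeping only the $2m$ factors $w$ and $w^{-1}$. Your Duhamel approach would eventually work, but it is substantially more labor than the problem demands once you notice the commutativity coming from functional calculus.
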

\begin{proof}
Using the simplified expression of \Cref{Cb:TauSimple} we can write $\tau_{\varphi}(u)$ as 
\[\frac{(-1)^mm!}{\pi i}\int_{0}^{2}\varphi_{\gamma}\left((w^{-1}(t)\dot{w}(t))\hat{\otimes} (w^{-1}(t)\hat{\otimes} w(t))^{\hat{\otimes}m}\right)\;dt\]
\[+\frac{(-1)^mm!}{\pi i}\int_{2}^{\infty}\varphi_{\gamma}\left((w^{-1}(t)\dot{w}(t))\hat{\otimes} (w^{-1}(t)\hat{\otimes} w(t))^{\hat{\otimes}m}\right)\;dt\]
The first integral is finite due to the uniform boundedness of $||B||_{\mathscr{B},k}$ and the results of \Cref{thmCb1}, being bounded above by 
\[2R_\alpha\frac{m!}{\pi i}\sup_{t\in[0,2]}||w(t)||_{\mathscr{B},k}||w^{-1}(t)||_{\mathscr{B},k}||w^{-1}(t)\dot{w}(t)||_{\mathscr{B},k}<\infty\]
By \Cref{proCb1} and its corollary there exist operators $\varpi(t)$ and $\varpi^{-1}(t)$ belonging to $(\mathscr{B}(\widetilde{M})^\Gamma)^+$ such that for any $\varepsilon>0$ there exists $t$ large enough such that
\[\mathsf{prop}(\varpi(t)),\mathsf{prop}(\varpi^{-1}(t))<\varepsilon\]
By basic distribution of tensors over addition and mutlilinearity of cyclic cocyles we obtain the following expansion-- ignoring the constant for the integral over $t\in[2,\infty)$
\begin{equation}\label{Cb:TauDistribution}
\begin{gathered}
\int_{2}^{\infty}\varphi_{\gamma}\left((w^{-1}(t)\dot{w}(t))\hat{\otimes}(w^{-1}(t)-\varpi^{-1}(t)\hat{\otimes} w(t))^{\hat{\otimes}m}\right)\;dt\\
+\int_{2}^{\infty}\varphi_{\gamma}\left((w^{-1}(t)\dot{w}(t))\hat{\otimes}(w^{-1}(t)\hat{\otimes} w(t)-\varpi(t))^{\hat{\otimes}m}\right)\;dt\\
+\int_{2}^{\infty}\varphi_{\gamma}\left((w^{-1}(t)\dot{w}(t))\hat{\otimes} (\varpi^{-1}(t)\hat{\otimes}\varpi(t))^{\hat{\otimes}m}\right)\;dt
\end{gathered}
\end{equation}
Since $t\geq2$ the description of the regularized representative gives us that $w^{-1}(t)\dot{w}(t)=\pi iE'(t-1)$, the propagation of which tends to 0 as $t\longrightarrow\infty$. By \Cref{remCb1} it follows that the integrand $\varphi_{\gamma}\left((w^{-1}(t)\dot{w}(t))\hat{\otimes} (\varpi^{-1}(t)\hat{\otimes}\varpi(t))^{\hat{\otimes}m}\right)=0$ once the propagation of all these operators is less than some $\varepsilon_{\widetilde{M}}$, which occurs for large enough $t$. Hence there exists some $t_\varepsilon$ such that the last integral of \eqref{Cb:TauDistribution} is bounded above by
\[\int_{2}^{t_\varepsilon}\pi i\left|\varphi_{\gamma}\left(E'(t-1)\hat{\otimes} (\varpi^{-1}(t)\hat{\otimes}\varpi(t))^{\hat{\otimes}m}\right)\right|\;dt\]
which is finite from the bounds of \Cref{thmCb1}. Similarly, by \Cref{proCb1} there exists some constant $r>0$ such that for $t\geq6r$ 
\[||\varpi(t)-w(t)||_{\mathscr{B},k}\;\mathrm{and}\;||\varpi(t)^{-1}-w^{-1}(t)||_{\mathscr{B},k}<\frac{C_k}{t^3}\] 
The norm boundedness of all $B\in(\mathscr{B}(\widetilde{M})^\Gamma)^+$ and the results of \Cref{thmCb1} provide existence of $M_k,N_k>0$ such that the second integral of \eqref{Cb:TauDistribution} is bounded by
\[(6r-2)R_\alpha N_k+R_\alpha\int_{6r}^{\infty}||w^{-1}(t)\dot{w}(t)||_{\mathscr{B},k}||w(t)||_{\mathscr{B},k}^m||\varpi(t)-w(t)||_{\mathscr{B},k}^m\;dt\]
\[\leq (6r-2)R_\alpha N_k+R_\alpha\int_{6r}^{\infty}M^2_k||\varpi(t)-w(t)||_{\mathscr{B},k}^m\;dt\]
The finiteness of the integral follows directly from 
\[\int_{6r}^{\infty}M^2_k||\varpi(t)-w(t)||_{\mathscr{B},k}\;dt<M^2_kC_k\int_{6r}^{\infty}\frac{1}{t^{3m}}\;dt\]
An exact replica of this argument applied to $w^{-1}(t)-\varpi^{-1}(t)$ finishes our proof.
\end{proof}
The following three results show that $\tau_\varphi([u])$ is independent of the choice of regularized representatives, with \Cref{thmCb3} providing the proof that the replacement of $u_1$ by $w$ through the path of local loops behaves as intended.

\begin{mylem}\label{lemCb2}
Let $[u]\in K_1(\mathscr{B}_{L,0}^*(\widetilde{M})^\Gamma)$ with $v$ and $w$ both being regularized representatives, and $\{v_s\}_{s\in[0,1]}$ the associated family of piecewise smooth invertibles. For any delocalized cyclic cocyle $\varphi_\gamma\in (C^{2m}(\mathbb{C}\Gamma,\mathrm{cl}(\gamma)),b)$
\[\frac{\partial}{\partial s}\varphi_{\gamma}\left((v_s^{-1}\cdot\partial_tv_s)\hat{\otimes} (v_s^{-1}\hat{\otimes} v_s)^{\hat{\otimes}m}\right)=\frac{\partial}{\partial t}\varphi_{\gamma}\left((v_s^{-1}\cdot\partial_sv_s)\hat{\otimes} (v_s^{-1}\hat{\otimes} v_s)^{\hat{\otimes}m}\right)\]
\end{mylem}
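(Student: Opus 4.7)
Proof proposal: Set $X_s := v_s^{-1}\partial_s v_s$ and $X_t := v_s^{-1}\partial_t v_s$, so that $\partial_s v_s = v_s X_s$, $\partial_t v_s = v_s X_t$, and differentiating $v_s v_s^{-1} = \mathbbm{1}$ gives $\partial_s(v_s^{-1}) = -X_s v_s^{-1}$, $\partial_t(v_s^{-1}) = -X_t v_s^{-1}$. A direct computation (equality of mixed partials of $v_s$) yields the Maurer--Cartan identity
\begin{equation*}
\partial_s X_t - \partial_t X_s = X_t X_s - X_s X_t.
\end{equation*}
The plan is to apply the Leibniz rule on both sides of the claimed identity, identify the contributions that come from differentiating the designated factor $v_s^{-1}\partial_t v_s$ (resp.\ $v_s^{-1}\partial_s v_s$) with the contributions from differentiating the flanking $v_s^{-1}\hat{\otimes}v_s$ pairs, and show the resulting difference vanishes as a direct consequence of the cocycle condition $b\varphi_\gamma = 0$ together with the cyclic symmetry of $\varphi_\gamma$ established in Theorem~\ref{thmCb1}.

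\noindent Concretely, the first term in $\partial_s\varphi_\gamma((v_s^{-1}\partial_t v_s)\hat{\otimes}(v_s^{-1}\hat{\otimes}v_s)^{\hat{\otimes}m})$ that comes from differentiating the leading slot is $\varphi_\gamma((\partial_s X_t)\hat{\otimes}(v_s^{-1}\hat{\otimes}v_s)^{\hat{\otimes}m})$; the analogous term on the right is $\varphi_\gamma((\partial_t X_s)\hat{\otimes}(v_s^{-1}\hat{\otimes}v_s)^{\hat{\otimes}m})$. Their difference is $\varphi_\gamma([X_t,X_s]\hat{\otimes}(v_s^{-1}\hat{\otimes}v_s)^{\hat{\otimes}m})$ by Maurer--Cartan. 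The remaining $4m$ terms on each side come from differentiating the $m$ factors of $v_s^{-1}$ and $m$ factors of $v_s$; using $\partial_s(v_s^{-1})=-X_s v_s^{-1}$ and $\partial_s v_s = v_s X_s$ (and analogously for $\partial_t$), each such contribution is of the form $\pm\varphi_\gamma(X_t\hat{\otimes}\cdots\hat{\otimes} X_s\hat{\otimes}\cdots)$ (resp.\ with $s,t$ swapped), where $X_s$ is inserted either right after some $v_s$ or right before some $v_s^{-1}$.

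\noindent The main step is then to observe that for each insertion position of $X_s$, one may apply the cocycle identity $b\varphi_\gamma=0$ to the string with the pair $v_s v_s^{-1} = \mathbbm{1}$ adjacent to the inserted $X_s$. Writing out $b\varphi_\gamma$ applied to the tuple $(X_t, v_s^{-1}, v_s, \ldots, X_s, \ldots, v_s^{-1}, v_s)$, the vanishing of $b\varphi_\gamma$ lets one move $X_s$ past an adjacent $v_s$ or $v_s^{-1}$ at the cost of producing precisely the ``symmetric'' term that appears on the other side of the equation; all the intermediate multiplications $v_s\cdot v_s^{-1}$ and $v_s^{-1}\cdot v_s$ collapse to $\mathbbm{1}$ and produce (via the $b$-operator) shorter strings which cancel in pairs after an application of the cyclic symmetry of $\varphi_\gamma$. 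The net effect is that the sum of insertion terms on the left equals the sum on the right plus a single commutator term $-\varphi_\gamma([X_t,X_s]\hat{\otimes}(v_s^{-1}\hat{\otimes}v_s)^{\hat{\otimes}m})$, which exactly cancels the Maurer--Cartan contribution from the leading slot.

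\noindent The main obstacle will be bookkeeping the signs and combining the many Leibniz terms; the conceptual mechanism is straightforward (it is the algebraic counterpart of the fact that the noncommutative $(2m+1)$-form $\mathrm{tr}\bigl((v^{-1}dv)^{2m+1}\bigr)$ is closed and its integrand is a Maurer--Cartan form on the group of invertibles), but tracking all $(4m+2)$ terms and verifying the cancellations via $b\varphi_\gamma=0$ together with cyclic symmetry is where the work lies. Throughout, continuity of $\varphi_\gamma$ on $(\mathscr{B}(\widetilde{M})^\Gamma)^{\hat{\otimes}_\pi^{2m+1}}$ provided by Theorem~\ref{thmCb1} ensures that all the pointwise algebraic identities hold for the actual piecewise smooth family $\{v_s\}$.
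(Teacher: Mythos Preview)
Your approach is correct and essentially the same as the paper's: both reduce the vanishing of the difference of the two partial derivatives to the cocycle condition $b\varphi_\gamma=0$. The paper's organization is slightly more compact---it writes the difference directly as the sum $\sum_{k=0}^{m}2(b\varphi_\gamma)\bigl(X_s\hat{\otimes}(v_s^{-1}\hat{\otimes}v_s)^{\hat{\otimes}k}\hat{\otimes}X_t\hat{\otimes}(v_s^{-1}\hat{\otimes}v_s)^{\hat{\otimes}(m-k)}\bigr)=0$ and uses the vanishing of the unitized cocycle on $\mathbbm{1}$ to discard the degenerate terms, rather than first isolating the Maurer--Cartan contribution---but the underlying algebra is identical. (One minor slip: the Leibniz rule produces $2m$ flanking terms on each side, not $4m$.)
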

\begin{proof}
Working in the unitization $(\mathscr{B}^*(\widetilde{M})^\Gamma)^+$ and noting that every invertible element in $(\mathscr{B}^*(\widetilde{M})^\Gamma)^+$ can be viewed as one in $(\mathscr{B}_{L,0}^*(\widetilde{M})^\Gamma)$, we wish to show vanishing of 
\begin{equation}\label{Cb:DerivEqual}
\begin{gathered}
\frac{\partial}{\partial s}\varphi_{\gamma}\left((v_s^{-1}\partial_tv_s)\hat{\otimes} (v_s^{-1}\hat{\otimes} v_s)^{\hat{\otimes}m}\right)-\frac{\partial}{\partial t}\varphi_{\gamma}\left((v_s^{-1}\partial_sv_s)\hat{\otimes} (v_s^{-1}\hat{\otimes} v_s)^{\hat{\otimes}m}\right)
\end{gathered}
\end{equation}
By definition every cyclic cocyle $\varphi_\gamma$ belongs to the kernel of the boundary map $b$; in addition $\varphi_{\gamma}(\overline{A}_0\hat{\otimes}\cdots\hat{\otimes} \overline{A}_n)$ vanishes if $\overline{A}_i=\mathbbm{1}$ for any $\overline{A}_i\in\mathcal{A}^+$.
\begin{gather*}
0=\sum_{k=0}^{m}2(b\varphi_{\gamma})\left((v_s^{-1}\cdot\partial_sv_s)\hat{\otimes}(v_s^{-1}\hat{\otimes} v_s)^{\hat{\otimes} k}\hat{\otimes} (v_s^{-1}\cdot\partial_tv_s)\hat{\otimes} (v_s^{-1}\hat{\otimes} v_s)^{\hat{\otimes}(m-k)}\right)
\end{gather*}
Through a tedious but straightforward computation we obtain that the above sum gives precisely the expression for \eqref{Cb:DerivEqual} which thus proves the result.
\end{proof}

\begin{mycor}\label{corCb2}
Let $[u]\in K_1(\mathscr{B}_{L,0}^*(\widetilde{M})^\Gamma)$ with $v$ and $w$ both being regularized representatives and $\{v_s\}_{s\in[0,1]}$ the associated family of piecewise smooth invertibles, then $\tau_{\varphi}(v_1)=\tau_{\varphi}(w)$ for any polynomial growth $\varphi_\gamma\in (C^{2m}(\mathbb{C}\Gamma,\mathrm{cl}(\gamma)),b)$.
\end{mycor}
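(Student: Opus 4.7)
The plan is to prove that $\tau_{\varphi}(v_s)$ is constant as a function of $s \in [0,1]$, from which the identity $\tau_{\varphi}(v_1) = \tau_{\varphi}(v_0) = \tau_{\varphi}(w)$ follows immediately by appealing to property (i) of the family $\{v_s\}$. Working in the unitization $(\mathscr{B}(\widetilde{M})^\Gamma)^+$ and differentiating under the integral sign—justified by a dominated-convergence argument using the uniform bounds underlying the proof of \Cref{thmCb2} applied to the family $\{v_s\}_{s\in[0,1]}$—one obtains
\[\frac{d}{ds}\tau_\varphi(v_s) = \frac{(-1)^m m!}{\pi i}\int_0^\infty \frac{\partial}{\partial s}\overline{\varphi}_{\gamma}\!\left((v_s^{-1}\partial_t v_s)\hat{\otimes}(v_s^{-1}\hat{\otimes} v_s)^{\hat{\otimes} m}\right)dt.\]
Invoking \Cref{lemCb2} rewrites the integrand as $\tfrac{\partial}{\partial t}\overline{\varphi}_{\gamma}\!\left((v_s^{-1}\partial_s v_s)\hat{\otimes}(v_s^{-1}\hat{\otimes} v_s)^{\hat{\otimes} m}\right)$, so the fundamental theorem of calculus collapses the integral to a boundary term
\[\frac{d}{ds}\tau_\varphi(v_s) = \frac{(-1)^m m!}{\pi i}\Bigl[\overline{\varphi}_{\gamma}\!\left((v_s^{-1}\partial_s v_s)\hat{\otimes}(v_s^{-1}\hat{\otimes} v_s)^{\hat{\otimes} m}\right)\Bigr]_{t=0}^{t=\infty},\]
and it suffices to verify that both boundary evaluations vanish.

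The $t = 0$ boundary is handled immediately: since each $v_s$ lies in $(\mathscr{B}_{L,0}(\widetilde{M})^\Gamma)^+$, we have $v_s(0) = \mathbbm{1}$ for every $s$, hence $\partial_s v_s(0) = 0$ and the cocycle evaluates to zero at $t=0$. The $t \to \infty$ boundary is the substantive step, and is handled by a propagation argument. Property (ii) of the path $\{v_s\}$ gives $v_s(t) = \exp(2\pi i\frac{E_s(t-1)+1}{2})$ for $t \geq 2$, where $\mathsf{prop}(E_s(t-1)) \to 0$ uniformly in $s$. A Taylor-series truncation identical to the one carried out in \Cref{proCb1}—applied to $v_s$, to $v_s^{-1}$, and (via the Duhamel formula $\partial_s v_s = 2\pi i\int_0^1 e^{2\pi i(1-\lambda)\frac{E_s+1}{2}}(\partial_s E_s/2)\,e^{2\pi i\lambda\frac{E_s+1}{2}}\,d\lambda$) to $\partial_s v_s$—produces approximants $\varpi_s, \varpi_s^{-1}, \eta_s$ of arbitrarily small propagation with $\|\cdot\|_{\mathscr{B},k}$-errors of order $t^{-3}$, uniformly in $s\in[0,1]$. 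Expanding the cocycle expression by multilinearity in the style of equation \eqref{Cb:TauDistribution} and invoking \Cref{remCb1}, the principal term vanishes identically once the propagations drop below $\varepsilon_{\widetilde{M}}$, while the cross terms are controlled by the continuity estimate of \Cref{thmCb1} together with the $O(t^{-3})$ decay, and so tend to zero as $t \to \infty$.

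The main obstacle is the control of $\partial_s v_s$: although the hypotheses provide uniform propagation decay for $E_s$ itself, the corresponding statement for $\partial_s v_s$ must be extracted from the Duhamel representation and requires truncation estimates that respect the seminorms $\|\cdot\|_{\mathscr{B},k}$ uniformly in both the parameter $s$ and the parameter $t$. A secondary subtlety is the justification of interchanging $d/ds$ with the improper integral over $t \in [0,\infty)$, which ultimately rests on reproducing the absolute convergence bounds from the proof of \Cref{thmCb2} with constants independent of $s$; this should follow from the uniform boundedness of $\|v_s(t)\|_{\mathscr{B},k}$, $\|v_s^{-1}(t)\|_{\mathscr{B},k}$, and the analogous $s$-derivative quantities on $[0,1]$.
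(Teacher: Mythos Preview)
Your proof is correct and follows essentially the same route as the paper: both arguments invoke \Cref{lemCb2} to trade the $s$-derivative for a $t$-derivative, reduce to boundary terms at $t=0$ and $t\to\infty$, dispatch $t=0$ via $v_s(0)=\mathbbm{1}$, and handle $t\to\infty$ by the propagation-plus-approximation machinery of \Cref{proCb1} and \Cref{remCb1}. The only organizational difference is that the paper integrates over the rectangle $[0,T]\times[0,1]$ first and then lets $T\to\infty$, thereby sidestepping your ``secondary subtlety'' of differentiating under an improper integral; conversely, your Duhamel treatment of $\partial_s v_s$ is more scrupulous than the paper's bare assertion that $v_s^{-1}\partial_s v_s=\pi i\,\partial_s E_s$ for $T\geq 2$, which tacitly assumes $E_s$ commutes with $\partial_s E_s$.
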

\begin{proof}
Taking a double integral $\int_{0}^{T}\int_{0}^{1}\;ds\;dt$ over the derivative equality in \eqref{Cb:DerivEqual} gives
\begin{gather*}
\int_{0}^{T}\varphi_{\gamma}\left((v_1(t)^{-1}\partial_tv_1(t))\hat{\otimes} (v_1^{-1}(t)\hat{\otimes} v_1(t))^{\hat{\otimes}m}\right)\;dt\\
-\int_{0}^{T}\varphi_{\gamma}\left((v_0(t)^{-1}\partial_tv_0(t))\hat{\otimes} (v_0^{-1}(t)\hat{\otimes} v_0(t))^{\hat{\otimes}m}\right)\;dt\\
=\int_{0}^{1}\left.\varphi_{\gamma}\left((v_s(t)^{-1}\partial_sv_s(t))\hat{\otimes}(v_s^{-1}(t)\hat{\otimes} v_s(t))^{\hat{\otimes}m}\right)\right|_0^T\;ds
\end{gather*} 
Now $\varphi_{\gamma}\left((v_s(0)^{-1}\partial_sv_s(0))\hat{\otimes}(v_s^{-1}(0)\hat{\otimes} v_s(0))^{\hat{\otimes}m}\right)=0$ since $v_s(0)\equiv v_s^{-1}(0)\equiv\mathbbm{1}$ for all $s\in[0,1]$. Thus using the fact $v_0(t)=w(t)$ and letting $T\longrightarrow\infty$ we obtain
\[\tau_{\varphi}(v_1)-\tau_{\varphi}(w)=\frac{(-1)^mm!}{\pi i}\int_{0}^{1}\lim_{T\longrightarrow\infty}\varphi_{\gamma}\left((v_s^{-1}(T)\partial_sv_s(T))\hat{\otimes} (v_s^{-1}(T)\hat{\otimes} v_s(T))^{\hat{\otimes}m}\right)\;ds\]
It thus only remains to prove that for all $s$, uniformly with respect to the norm $||\cdot||_{\mathscr{B},k}$
\begin{equation}
\lim_{T\longrightarrow\infty}\varphi_{\gamma}\left((v_s^{-1}(T)\partial_sv_s(T))\hat{\otimes} (v_s^{-1}(T)\hat{\otimes} v_s(T))^{\hat{\otimes}m}\right)=0
\end{equation}
By basic distribution of tensors over addition and mutlilinearity of cyclic cocyles the above decomposes as
\begin{gather*}
\lim_{T\longrightarrow\infty}\varphi_{\gamma}\varphi_{\gamma}\left((v_s^{-1}(T)\partial_sv_s(T))\hat{\otimes} (v_s^{-1}(T)-\varpi^{-1}(T)\hat{\otimes} v_s(T))^{\hat{\otimes}m}\right)\\
+\lim_{T\longrightarrow\infty}\varphi_{\gamma}\varphi_{\gamma}\left((v_s^{-1}(T)\partial_sv_s(T))\hat{\otimes} (\varpi^{-1}(T)\hat{\otimes} v_s(T)-\varpi(T))^{\hat{\otimes}m}\right)\\
+\lim_{T\longrightarrow\infty}\varphi_{\gamma}\varphi_{\gamma}\left((v_s^{-1}(T)\partial_sv_s(T))\hat{\otimes} (\varpi^{-1}(T)\hat{\otimes}\varpi(T))^{\hat{\otimes}m}\right)
\end{gather*}
By \Cref{proCb1} and its corollary there exist operators $\varpi(t),\varpi^{-1}(t)\in(\mathscr{B}(\widetilde{M})^\Gamma)^+$ such that for any $\varepsilon>0$ there exists $t$ large enough such that the following hold.  
\[||\varpi(t)-v_s(t)||_{\mathscr{B},k}\;,\;||\varpi(t)^{-1}-v_s^{-1}(t)||_{\mathscr{B},k}<\frac{C_k}{t^3}\;,\;\mathsf{prop}(\varpi(t))\;,\;\mathsf{prop}(\varpi^{-1}(t))<\varepsilon\]
We may assume that $T\geq2$ so the description of the regularized representative gives $v_s^{-1}(T)\cdot\partial_sv_s(T)=\pi i\partial_sE_s(T)$, the propagation of which tends to 0 as $T\longrightarrow\infty$. Thus for large enough $T$ the propagation of all the operators is less than some $\varepsilon_{\widetilde{M}}$ and by \Cref{remCb1} \[\lim_{T\longrightarrow\infty}\varphi_{\gamma}\left((v_s^{-1}(T)\partial_sv_s(T))\hat{\otimes} (\varpi^{-1}(T)\hat{\otimes}\varpi(T))^{\hat{\otimes}m}\right)=0\]
By the results of \Cref{thmCb1} and the norm boundedness of all $B\in(\mathscr{B}(\widetilde{M})^\Gamma)^+$ 
\[\lim_{T\longrightarrow\infty}\left|\varphi_{\gamma}\left((v_s^{-1}(T)\partial_sv_s(T))\hat{\otimes} (v_s^{-1}(T)-\varpi^{-1}(T)\hat{\otimes} v_s(T))^{\hat{\otimes}m}\right)\right|\]
\[\leq R_\alpha\lim_{T\longrightarrow\infty}||v_s^{-1}(T)\partial_sv_s(T)||_{\mathscr{B},k}||\varpi(t)^{-1}-v_s^{-1}(T)||_{\mathscr{B},k}^m||v_s(T)||_{\mathscr{B},k}^m\]
\[\leq R_\alpha C_k^m\lim_{T\longrightarrow\infty}\frac{||v_s^{-1}(T)\partial_sv_s(T)||_{\mathscr{B},k}||v_s(T)||_{\mathscr{B},k}^m}{T^{3m}}=0\]
An exact replica of this argument holds for the term involving $v_s^{-1}(T)-\varpi^{-1}(T)$. 
\end{proof}

\begin{mythm}\label{thmCb3}
Let $[u]\in K_1(\mathscr{B}_{L,0}(\widetilde{M})^\Gamma)$ with $v$ and $w$ both being regularized representatives, then $\tau_{\varphi}(v)=\tau_{\varphi}(w)$ for any polynomial growth $\varphi_\gamma\in(C^{2m}(\mathbb{C}\Gamma,\mathrm{cl}(\gamma)),b)$.
\end{mythm}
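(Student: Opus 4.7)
The plan is to combine \Cref{corCb2} with an additional homotopy argument on the interval $[1,2]$, exploiting the local-loop property (iii) of the family $\{v_s\}$ and the propagation vanishing from \Cref{remCb1}. \Cref{corCb2} already gives $\tau_{\varphi}(w) = \tau_{\varphi}(v_1)$, so it suffices to prove $\tau_{\varphi}(v_1) = \tau_{\varphi}(v)$. Since $v_1(t) = v(t)$ for every $t \notin (1,2)$, the difference is concentrated on $[1,2]$, on which $\ell(t) := v_1(t) v(t)^{-1}$ is a local invertible loop in $(\mathscr{B}(\widetilde{M})^\Gamma)^+$ by property (iii).

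The first key step will be a transgression identity, derived along the same lines as \Cref{lemCb2} but now with the two-parameter family $V_\tau(t) := \ell_\tau(t)\, v(t)$, where $\{\ell_\tau\}_{\tau \in [0,1]}$ is a smooth contraction of $\ell$ to the constant loop $\mathbbm{1}$ inside $(\mathscr{B}(\widetilde{M})^\Gamma)^+$ (with $\ell_\tau(1) = \ell_\tau(2) = \mathbbm{1}$ for every $\tau$). Applying $b\varphi_\gamma = 0$ together with cyclicity produces the identity
\[
\partial_\tau \varphi_\gamma\bigl((V_\tau^{-1}\partial_t V_\tau) \hat{\otimes} (V_\tau^{-1}\hat{\otimes} V_\tau)^{\hat{\otimes} m}\bigr) = \partial_t \varphi_\gamma\bigl((V_\tau^{-1}\partial_\tau V_\tau) \hat{\otimes} (V_\tau^{-1}\hat{\otimes} V_\tau)^{\hat{\otimes} m}\bigr),
\]
and integrating over $(\tau, t) \in [0,1] \times [1,2]$ turns the right-hand side into endpoint contributions at $t = 1, 2$ that vanish because $\ell_\tau(1) = \ell_\tau(2) = \mathbbm{1}$ forces $\partial_\tau V_\tau = 0$ there. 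This collapses the difference into a single loop integral,
\[
\tau_{\varphi}(v_1) - \tau_{\varphi}(v) = \frac{(-1)^m m!}{\pi i}\int_1^2 \varphi_\gamma\bigl((\ell^{-1}\dot{\ell}) \hat{\otimes} (\ell^{-1}\hat{\otimes} \ell)^{\hat{\otimes} m}\bigr)\, dt.
\]

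Next I invoke \Cref{lemCb1} to deform $\ell$ to the standard Bott form while preserving this loop integral. For any $\varepsilon > 0$ it furnishes an idempotent $p_\varepsilon \in \mathscr{B}(\widetilde{M})^\Gamma$ with $\mathsf{prop}(p_\varepsilon) < \varepsilon$ and a piecewise smooth family $\{\ell_s\}_{s \in [0,1]}$ of local invertible loops connecting $\ell_0 = \ell$ to $\ell_1(\theta) = e^{2\pi i(\theta-1)} p_\varepsilon + (\mathbbm{1} - p_\varepsilon)$, with $\ell_s(1) = \ell_s(2) = \mathbbm{1}$ for every $s$. The same Stokes-type argument as in the previous step, applied to $\ell_s$ in place of $V_\tau$, shows that the loop integral is invariant under this deformation; hence it equals the integral evaluated at $\ell_1$.

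Finally, I will compute the integrand for the Bott loop $\ell_1$ directly. A short computation gives $\ell_1^{-1}\dot{\ell}_1 = 2\pi i\, p_\varepsilon$, and expanding $\ell_1^{-1}\hat{\otimes}\ell_1$ multilinearly, every tensor term on which $\varphi_\gamma$ does not already vanish (by presence of a unit factor in some slot) has all its slots occupied by operators built from $p_\varepsilon$. Thus every surviving tensor slot has propagation strictly less than $\varepsilon$; choosing $\varepsilon < \varepsilon_{\widetilde{M}}$ from \Cref{remCb1} gives pointwise vanishing of the integrand, and hence $\tau_{\varphi}(v_1) = \tau_{\varphi}(v)$, completing the proof. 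The main obstacle is the rigorous derivation of the transgression identity in the second paragraph: although formally a standard Chern--Simons-type formula, one must carefully justify, inside the smooth dense subalgebra $\mathscr{B}(\widetilde{M})^\Gamma$, all the exchanges of differentiation and integration as well as the multilinear cancellations coming from $b\varphi_\gamma = 0$ and cyclicity, which is where the continuity result of \Cref{thmCb1} and the uniform $\|\cdot\|_{\mathscr{B},k}$-bounds on the compact parameter regions are essential.
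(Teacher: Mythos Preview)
Your overall strategy coincides with the paper's: reduce $\tau_{\varphi}(v_1)-\tau_{\varphi}(v)$ to a loop integral over $\ell=v_1v^{-1}$, invoke \Cref{lemCb1} to deform $\ell$ to a Bott loop $e^{2\pi i\theta}p_\varepsilon+(\mathbbm{1}-p_\varepsilon)$, and kill the Bott integrand by \Cref{remCb1}. The paper does exactly this, writing the first reduction as the multiplicativity $\tau_{\varphi}(v)=\tau_{\varphi}(v_1)+\tau_{\varphi}(v_1^{-1}v)$ of the determinant map, and then computing the Bott-loop contribution directly.

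There is, however, a genuine gap in your second paragraph. You posit a ``smooth contraction $\{\ell_\tau\}$ of $\ell$ to the constant loop $\mathbbm{1}$'' with fixed endpoints, but such a contraction need not exist: a local loop represents, via Bott periodicity, a class $[p]\in K_0(\mathscr{B}(\widetilde{M})^\Gamma)$, and is null-homotopic in the invertibles only when that class vanishes. Moreover, even if such a contraction did exist, your transgression identity integrated over $[0,1]\times[1,2]$ would yield
\[
\int_1^2\bigl[\varphi_\gamma\bigl((v_1^{-1}\dot v_1)\hat\otimes\cdots\bigr)-\varphi_\gamma\bigl((v^{-1}\dot v)\hat\otimes\cdots\bigr)\bigr]\,dt=0
\]
directly, not the loop-integral formula you state; so the displayed equality $\tau_{\varphi}(v_1)-\tau_{\varphi}(v)=\frac{(-1)^mm!}{\pi i}\int_1^2\varphi_\gamma\bigl((\ell^{-1}\dot\ell)\hat\otimes(\ell^{-1}\hat\otimes\ell)^{\hat\otimes m}\bigr)\,dt$ is not what your argument produces. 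What is actually required here is the additivity $\tau_{\varphi}(\ell v)=\tau_{\varphi}(\ell)+\tau_{\varphi}(v)$ on the closed interval $[1,2]$ (the ``logarithmic'' property of the determinant map), which is a different transgression: one deforms the concatenation $\ell\ast v$ to the product $\ell\cdot v$ and checks that the cross-terms arising from $(\ell v)^{-1}\partial_t(\ell v)=v^{-1}(\ell^{-1}\dot\ell)v+v^{-1}\dot v$ are exact. The paper invokes this additivity without spelling it out; your attempt to supply a proof conflates it with contractibility of $\ell$. Once the additivity is in place, your remaining steps (homotopy invariance of the loop integral via \Cref{lemCb1}, then vanishing by \Cref{remCb1}) are correct and match the paper.
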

\begin{proof}
We have proven in the above corollary that $\tau_{\varphi}(v_1)=\tau_{\varphi}(w)$; by construction $v_1(s)=v(s)$ for all $s\in\mathbb{R}_{\geq0}\setminus(1,2)$, with $v_1v^{-1}:[1,2]\longrightarrow(\mathscr{B}(\widetilde{M})^\Gamma)^+$ being a local loop of invertible elements. Thus there exists a local invertible $f:S^1\longrightarrow(\mathscr{B}(\widetilde{M})^\Gamma)^+$-- which by \Cref{lemCb1} is homotopic to $e^{2\pi i\theta}P(t)+(\mathbbm{1}-P(t))$ for some idempotent $P(t)$-- such that $v(s)$ and $v_1(s)$ differ by $f(\theta)$ as elements in $K_1(\mathscr{B}(\widetilde{M})^\Gamma)$. 
\[\tau_{\varphi}(v)=\tau_{\varphi}(v_1)+\tau{_\varphi}(v_1^{-1}v)=\tau_{\varphi}(w)+\tau{_\varphi}(v_1v^{-1})=\tau_{\varphi}(w)+\tau_{\varphi}(f_L)\]
\[=\tau_{\varphi}(w)+\frac{(-1)^mm!}{\pi i}\int_{0}^{\infty}\int_{0}^{1}\overline{\varphi}_{\gamma}\left((f^{-1}(\theta)\dot{f}(\theta))\hat{\otimes} (f^{-1}(\theta)\hat{\otimes}f(\theta))^{\hat{\otimes}m}\right)\;d\theta\;dt\]
Here $\dot{f}(\theta)=2\pi ie^{2\pi i\theta}P(t)$ refers to the derivative with respect to $\theta$; it is easy to verify that $f^{-1}(\theta)=(e^{-2\pi i\theta}P(t)+(\mathbbm{1}-P(t)))$. It follows that the above integral is equal to
\begin{gather*}
\int_{0}^{\infty}\int_{0}^{1}\overline{\varphi}_{\gamma}\left(2\pi iP(t)\hat{\otimes}((e^{-2\pi i\theta}-1)P(t)+\mathbbm{1}\hat{\otimes}(e^{2\pi i\theta}-1)P(t)+\mathbbm{1})^{\hat{\otimes}m}\right)\;d\theta
\end{gather*}
Using multilinearity of cyclic cocyles and the fact that $\overline{\varphi}_\gamma$ vanishes on the unit the integrand simplifies to
\[2\pi i(e^{-2\pi i\theta}-1)^m(e^{2\pi i\theta}-1)^m\varphi_{\gamma}\left(P(t)^{\hat{\otimes}2m+1}\right)\]
Vanishing of the double integral now follows from \Cref{remCb1} and the fact that for all $\varepsilon>0$ there exists $t_\varepsilon$ large enough such that $\mathsf{prop}(P(t_\varepsilon))\leq\varepsilon$.
\end{proof}

\begin{mythm}\label{thmCb4}
The determinant map pairing the higher rho invariant is independent of the choice of delocalized cyclic cocycle representative. Explicitly, if $[\varphi_\gamma]=[\phi_\gamma]\in HC^{2m}(\mathbb{C}\Gamma,\mathrm{cl}(\gamma))$, then $\tau_{\varphi_\gamma}(\rho(\widetilde{D},\widetilde{g}))=\tau_{\phi_\gamma}(\rho(\widetilde{D},\widetilde{g}))$
\end{mythm}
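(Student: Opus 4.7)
The plan is to reduce the statement to the vanishing of $\tau_{b\psi}(\rho(\widetilde{D},\widetilde{g}))$ for a coboundary, and then to prove this vanishing by a transgression formula paralleling that of Theorem \ref{thmCa2}. Since $[\varphi_\gamma]=[\phi_\gamma]$, there exists $\psi \in C^{2m-1}(\mathbb{C}\Gamma,\mathrm{cl}(\gamma))$ with $\varphi_\gamma - \phi_\gamma = b\psi$, and by \Cref{corBb1} together with the remark immediately following it, we may assume $\psi$ to be a skew cochain of polynomial growth. It thus suffices to show $\tau_{b\psi}(\rho(\widetilde{D},\widetilde{g}))=0$, where we use the simplified form \eqref{Cb:TauSimple} to write
\begin{equation*}
\tau_{b\psi}(\rho(\widetilde{D},\widetilde{g})) = \frac{(-1)^m m!}{\pi i} \int_0^\infty \overline{(b\psi)}_\gamma\!\left( (w^{-1}(t)\dot{w}(t)) \hat{\otimes} (w^{-1}(t) \hat{\otimes} w(t))^{\hat{\otimes} m} \right) dt .
\end{equation*}

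The key step is to establish a transgression identity of the form
\begin{equation*}
m\cdot\overline{(b\psi)}_\gamma\!\left((w^{-1}\dot{w}) \hat\otimes (w^{-1}\hat\otimes w)^{\hat\otimes m}\right) = \frac{d}{dt}\,\overline{\psi}_\gamma\!\left((w^{-1} \hat\otimes w)^{\hat\otimes m}\right),
\end{equation*}
possibly up to a fixed combinatorial constant. This is the secondary odd Chern character analogue of \eqref{Ca:Transgression}, and I would prove it by a direct algebraic expansion of $b\psi = \sum_i (-1)^i \delta^i \psi$ applied to $\widetilde{\mathsf{ch}}(w(t),\dot{w}(t))$, repeatedly using $w\cdot w^{-1} = w^{-1}\cdot w = \mathbbm{1}$ together with the vanishing of $\overline{\psi}_\gamma$ on any tensor slot equal to the unit. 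The passage from the purely algebraic identity on $\mathbb{C}\Gamma\otimes\mathscr{R}$ to the full algebra $\mathscr{B}(\widetilde{M})^\Gamma$ is legitimate by \Cref{thmCb1}, since $\psi$ is of polynomial growth.

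Granting this identity, the fundamental theorem of calculus reduces the integral to the difference of boundary terms. At $t=0$, since $w$ is a regularized representative of an element of $K_1(\mathscr{B}_{L,0}(\widetilde{M})^\Gamma)$ we have $w(0) = \mathbbm{1}$, and $\overline{\psi}_\gamma$ vanishes whenever any of its arguments is a unit. For the limit at $t=\infty$, I would mimic the strategy employed in \Cref{thmCb2} and \Cref{corCb2}: apply \Cref{proCb1} and \Cref{corCb1} to approximate $w(t)$ and $w^{-1}(t)$ by finite propagation operators $\varpi(t),\varpi^{-1}(t)\in\mathscr{B}(\widetilde{M})^\Gamma$ with error of order $C_k/t^3$ in each seminorm $\|\cdot\|_{\mathscr{B},k}$, and expand the tensor product by multilinearity. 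The "fully localized" summand vanishes for $t$ large by \Cref{remCb1}, while the remaining summands carry at least one factor of $w(t)-\varpi(t)$ or $w^{-1}(t)-\varpi^{-1}(t)$ and are therefore suppressed by the $t^{-3}$ bound, forcing the limit to be zero.

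The main obstacle will be the bookkeeping required to verify the transgression identity, since it must be checked slot-by-slot across the $2m+1$ tensor factors of $\widetilde{\mathsf{ch}}(w,\dot{w})$ after unpacking the symmetrization that defines $\widetilde{\mathsf{ch}}$. In particular, one must confirm that the telescoping cancellations produced by adjacent factors $w^{-1}w = \mathbbm{1}$ in $b\psi$ leave precisely a single surviving time derivative $\partial_t$ acting on $(w^{-1}\hat\otimes w)^{\hat\otimes m}$; the parallel identity in the $C^\infty$-dense algebra setting is standard, but the present setting demands that all manipulations be compatible with the continuous extension of $\psi$ from $\mathbb{C}\Gamma$ to $(\mathscr{B}(\widetilde{M})^\Gamma)^{\hat{\otimes}_\pi^{2m}}$.
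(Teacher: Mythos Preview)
Your proposal is correct and follows the same architecture as the paper's proof: reduce to the vanishing of $\tau_{b\psi}$, invoke the transgression identity \eqref{Ca:Transgression} with $w$ in place of $u_t$, and evaluate the boundary terms; the treatment at $t=0$ is identical. The one point of divergence is the handling of the limit $t\to\infty$. The paper observes that for $t\geq 2$ the normalizing function $\chi$ may be chosen so that $w(t)-\mathbbm{1}$ is a Schwartz function of $\widetilde{D}/(t-1)$, and then applies \Cref{proCa1} directly. Your route instead remains inside the $\mathscr{B}$-algebra framework, using the finite-propagation approximations of \Cref{proCb1} and \Cref{corCb1} together with the vanishing principle \Cref{remCb1}, exactly as in \Cref{thmCb2} and \Cref{corCb2}. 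Both work; the paper's version is shorter once \Cref{proCa1} is in hand, while yours is more self-contained in that it avoids importing the Schwartz-function estimates from the $\mathscr{A}$-algebra setting and the auxiliary choice of a normalizing function with exponential decay.
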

\begin{proof}
By the previous results we are able to fix a regularized representative $w$, and by hypothesis, $\varphi_\gamma$ and $\phi_\gamma$ are cohomologous via a coboundary $b\varphi\in BC^{2m}(\mathbb{C}\Gamma,\mathrm{cl}(\gamma))$. We obtain an identical transgression formula as was calculated in \Cref{thmCa2}, \eqref{Ca:Transgression} 
\begin{equation}
m(b\overline{\varphi})\left((w^{-1}(t)\dot{w}(t))\hat{\otimes}(w^{-1}(t)\hat{\otimes}w(t))^{\hat{\otimes}m}\right)=\frac{d}{dt}\overline{\varphi}((w^{-1}(t)\hat{\otimes} w(t))^{\hat{\otimes}m})
\end{equation}
Using the simplified definition of the determinant map, provided by \eqref{Cb:TauSimple}
\begin{equation}
m\tau_{b\varphi}(\rho(\widetilde{D},\widetilde{g})):=\frac{(-1)^mm!}{\pi i}\int_{0}^{\infty}m(b\overline{\varphi}_{\gamma})\left((w^{-1}(t)\dot{w}(t))\hat{\otimes}(w^{-1}(t)\hat{\otimes}w(t))^{\hat{\otimes}m}\right)\;dt
\end{equation}
Thus by the transgression formula $\tau_{b\varphi}(\rho(\widetilde{D},\widetilde{g}))$ is equal (up to a constant) to
\begin{equation}
\lim_{t\longrightarrow\infty}\overline{\varphi}((w^{-1}(t)\hat{\otimes} w(t))^{\hat{\otimes}m})-\lim_{t\longrightarrow0}\overline{\varphi}((w^{-1}(t)\hat{\otimes} w(t))^{\hat{\otimes}m})
\end{equation}
Now $\lim_{t\longrightarrow0}\overline{\varphi}((w^{-1}(t)\hat{\otimes} w(t))^{\hat{\otimes}m})=0$ since by construction $w(0)=\mathbbm{1}$ and $\overline{\varphi}$ vanishes on the unit $\mathbbm{1}$. Moreover, for all $t\geq2$ some smooth normalizing function $\chi$ can be chosen such that
\[w(t)=\exp\left(2\pi i\frac{E(t)+1}{2}\right)=\exp\left(2\pi i\frac{\chi(\widetilde{D}/t)+1}{2}\right)\]
Denote $\psi(x)=\exp\left(2\pi i\frac{\chi(x)+1}{2}\right)-1=e^{i(\pi\chi(x)+\pi)}-1$
Since the class of $K$-theory representative is independent of the class of smooth normalizing function, without loss of generality assume that $\lim_{x\longrightarrow\pm\infty}\chi(x)$ converges to $\pm1$ at an exponential rate. From a standard argument it immediately follows that $\psi$ is a Schwartz function and thus by \Cref{proCa1} $\lim_{t\longrightarrow\infty}\overline{\varphi}((w^{-1}(t)\hat{\otimes} w(t))^{\hat{\otimes}m})=0$
\end{proof}

\begin{mypro}\label{proCb2}
Let $S_\gamma^*:HC^{2m}(\mathbb{C}\Gamma,\mathrm{cl}(\gamma))\longrightarrow HC^{2m+2}(\mathbb{C}\Gamma,\mathrm{cl}(\gamma))$ be the delocalized Connes periodicity operator, then $\tau_{[S_\gamma\varphi_\gamma]}(\rho(\widetilde{D},\widetilde{g}))=\tau_{[\varphi_\gamma]}(\rho(\widetilde{D},\widetilde{g}))$ for every $[\varphi_\gamma]\in HC^{2m}(\mathbb{C}\Gamma,\mathrm{cl}(\gamma))$. 
\end{mypro}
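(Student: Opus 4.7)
The plan mirrors that of \Cref{proCa2} and \Cref{thmCb4}. First, I would invoke \Cref{corBb1} to pick a polynomial-growth representative $\varphi_\gamma$ of the class. Since $S_\gamma = \frac{1}{(2m+1)(2m+2)}(\beta b + b\beta)$ is a finite linear combination of compositions of the coface maps $\delta^i$, the image $S_\gamma\varphi_\gamma$ is again of polynomial growth, and \Cref{thmCb2} then ensures that $\tau_{S_\gamma\varphi_\gamma}(\rho(\widetilde{D},\widetilde{g}))$ converges absolutely. By \Cref{thmCb4} the pairing descends to cohomology classes, so I would work at the cochain level throughout, with a fixed regularized representative $w(t)$ of $\rho(\widetilde{D},\widetilde{g})$.

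Next, I would substitute $S_\gamma\varphi_\gamma$ into the simplified formula \eqref{Cb:TauSimple} and expand $\beta b + b\beta$ via \eqref{Ba:bBeta}. A bookkeeping using the cocycle condition $b\overline{\varphi}_\gamma = 0$, cyclicity, the vanishing of $\overline{\varphi}_\gamma$ on tensors containing $\mathbbm{1}$, and the pointwise identity $\frac{d}{dt}(w^{-1}) = -w^{-1}\dot{w}w^{-1}$ should collapse the surviving terms into a transgression identity of the form
\begin{equation*}
(-1)^{m+1}(m+1)!\,\overline{S_\gamma\varphi_\gamma}\!\left((w^{-1}\dot{w})\hat{\otimes}(w^{-1}\hat{\otimes}w)^{\hat{\otimes}(m+1)}\right) = (-1)^m m!\,\overline{\varphi}_\gamma\!\left((w^{-1}\dot{w})\hat{\otimes}(w^{-1}\hat{\otimes}w)^{\hat{\otimes}m}\right) + \frac{d}{dt}\Xi_t,
\end{equation*}
for some cochain expression $\Xi_t$ built polynomially from $w(t)$ and $w^{-1}(t)$. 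This is algebraically identical to the identity derived in \cite[Proposition 3.33]{CWXY19} for the higher eta invariant with $u_t(\widetilde{D})$ in place of $w(t)$, and transfers verbatim since it uses only the cocycle condition, cyclicity, and $ww^{-1} = \mathbbm{1}$.

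Integrating over $[0,\infty)$ in $t$ would then reduce the comparison to the total-derivative contribution $\lim_{t\to\infty}\Xi_t - \lim_{t\to 0}\Xi_t$. The $t\to 0$ limit vanishes because $w(0)=\mathbbm{1}$ and $\overline{\varphi}_\gamma$ annihilates tensors containing $\mathbbm{1}$. For the $t\to\infty$ limit I would use that on $[2,\infty)$ the regularized representative has the form $w(t) = e^{2\pi i(\chi(\widetilde{D}/(t-1))+1)/2}$ with $\chi$ exponentially convergent to $\pm 1$, so $w(t) - \mathbbm{1}$ and $w^{-1}(t) - \mathbbm{1}$ are Schwartz functions of $\widetilde{D}/(t-1)$; the reparametrisation $s = 1/(t-1)$ together with \Cref{proCa1} then forces $\Xi_t \to 0$ in each seminorm $||\cdot||_{\mathscr{B},k}$, yielding $\tau_{[S_\gamma\varphi_\gamma]}(\rho(\widetilde{D},\widetilde{g})) = \tau_{[\varphi_\gamma]}(\rho(\widetilde{D},\widetilde{g}))$.

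The hard part will be the combinatorial verification of the transgression identity above, namely juggling the $O(m^2)$ index pairs in $\beta b + b\beta$ against the specific slot structure of $(w^{-1}\dot{w})\hat{\otimes}(w^{-1}\hat{\otimes}w)^{\hat{\otimes}(m+1)}$. As this is exactly the calculation performed in \cite[Proposition 3.33]{CWXY19} with $u_t(\widetilde{D})$ replaced throughout by $w(t)$, no genuinely new algebraic content should be needed; the entire argument reduces to verifying that the substitution preserves each step, and all remaining content is the boundary-vanishing analysis already carried out in \Cref{thmCb4}.
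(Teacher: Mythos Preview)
Your proposal is correct and takes essentially the same approach as the paper: the paper's proof simply states that it mirrors \Cref{proCa2}, which in turn invokes \Cref{corBb1} for polynomial growth and then defers the transgression identity to \cite[Proposition 3.33]{CWXY19}. Your write-up is a more detailed unpacking of exactly this strategy, including the boundary-term analysis via \Cref{thmCb4}, so there is no substantive difference.
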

\begin{proof}
The proof exactly mirrors that of \Cref{proCa2}.
\end{proof}

\subsection{Proof of \Cref{thm2}}\label{C3}
The following discussion and the proof of \Cref{proCc1} closely align with the proof of Theorem 4.3 in \cite{XY19}. We first recall the construction at the beginning of the previous section of a representative of $\rho(\widetilde{D},\widetilde{g})$ using the path of invertibles 
\[S=\{U(t)=\exp(2\pi iH(t))|\;t\in[0,\infty)\}\] 
This construction can be altered by using the following smooth normalizing function $\psi$, where $F_t$ is as defined in the construction of Lott's higher eta invariant.
\begin{equation}
\psi(t^{-1}x)=F_{1/t}(x)=\frac{1}{\sqrt{\pi}}\int_{-\infty}^{x/t}e^{-s^2}\;ds\qquad t>0
\end{equation}
The invertibility of the Dirac operator $\widetilde{D}$ implies that $\psi(t^{-1}D)$ converges in operator norm to $\frac{1}{2}(\mathbbm{1}+\widetilde{D}|\widetilde{D}|^{-1})$ as $t\rightarrow0$. Since $\psi$ is a smooth normalizing function the operator $\psi(\widetilde{D})^2-\mathbbm{1}$ is locally compact, hence $e^{2\pi iF_t(\widetilde{D})}\equiv\mathbbm{1}$ modulo locally compact operators. Moreover, $\psi$ can be approximated by smooth normalizing functions with compactly supported distributional Fourier transforms, hence the inverse Fourier transform relation
\begin{equation}
\psi(\widetilde{D})=\int_{-\infty}^{\infty}\hat{\psi}(s)e^{is\widetilde{D}}\;ds
\end{equation}
is well defined, and by finite propagation property of the wave operator $e^{is\widetilde{D}}$ it follows that the path $U\in(C_{L,0}^*(\widetilde{M},\mathcal{S})^\Gamma)^+$ defined by
\begin{equation}
U(t)=U_t(\widetilde{D})=e^{2\pi i\psi(\widetilde{D}/t)};t\in(0,\infty)\qquad U_0\equiv\mathbbm{1}
\end{equation}
can be uniformly approximated by paths of invertible elements with finite propagation. In totality, we have that $U$ is an invertible element of $(C_{L,0}^*(\widetilde{M},\mathcal{S})^\Gamma)^+$ and gives rise to a class in $K_1(C_{L,0}^*(\widetilde{M},\mathcal{S})^\Gamma)$.
\begin{mypro}\label{proCc1}
The element $U$ is invertible in $(\mathscr{A}_{L,0}(\widetilde{M},\mathcal{S})^\Gamma)^+$.
\end{mypro}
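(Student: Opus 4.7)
The goal is to verify that $U(t)-\mathbbm{1}\in\mathscr{A}(\widetilde{M},\mathcal{S})^\Gamma$ for every $t\geq 0$, that the path is piecewise smooth in $t$, that $U(0)=\mathbbm{1}$, and finally that $U$ is invertible in the unitization. The key observation is that $U(t)-\mathbbm{1}=\Phi_t(\widetilde{D})$ where $\Phi_t(x)=e^{2\pi i\psi(x/t)}-1$, so the argument reduces to controlling $\Phi_t$ as a Schwartz function of $x$ and applying \Cref{lemCa1}.

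First I would show that $\Phi_t\in S(\mathbb{R})$ for every $t>0$. Because $\psi$ is the Gaussian cumulative distribution, $\psi(y)\to 0$ as $y\to-\infty$ and $1-\psi(y)\to 0$ as $y\to+\infty$ at a super-polynomial rate (the tails are bounded by $e^{-y^2}/|y|$), so $\Phi_t(x)$ vanishes at both infinities. All derivatives $\Phi_t^{(j)}(x)$ are polynomial combinations of $\psi^{(\ell)}(x/t)e^{2\pi i\psi(x/t)}$, where each $\psi^{(\ell)}(x/t)$ for $\ell\geq 1$ is a polynomial in $x/t$ times $e^{-(x/t)^2}$, hence Schwartz. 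This gives $\Phi_t\in S(\mathbb{R})$ and therefore $U(t)-\mathbbm{1}=\Phi_t(\widetilde{D})\in\mathscr{A}(\widetilde{M},\mathcal{S})^\Gamma$ by \Cref{lemCa1}.

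Next I would address piecewise smoothness in $t$ and the vanishing at $t=0$. On any compact sub-interval of $(0,\infty)$, the family $t\mapsto\Phi_t$ is smooth in the Schwartz topology, so differentiating the Fourier representation $\Phi_t(\widetilde{D})=\int\hat{\Phi}_t(s)e^{is\widetilde{D}}\,ds$ used in the proof of \Cref{lemCa1} produces a smooth family in $\mathscr{A}(\widetilde{M},\mathcal{S})^\Gamma$. For the endpoint $t=0$ I would exploit the spectral gap: invertibility of $\widetilde{D}$ provides $r>0$ with $\sigma(\widetilde{D})\subset\{|x|\geq r\}$, so fixing a smooth cutoff $\chi$ equal to $1$ on $\{|x|\geq r\}$ and $0$ near the origin gives $\Phi_t(\widetilde{D})=(\chi\Phi_t)(\widetilde{D})$. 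For $|x|\geq r$ a direct Gaussian tail estimate yields $|\psi(x/t)-\chi_{\{x>0\}}(x)|\leq Ct\,e^{-r^2/t^2}$, with analogous super-exponential bounds on every derivative, so each Schwartz semi-norm of $\chi\Phi_t$ tends to $0$ as $t\to 0^+$. Combined with the bounds implicit in \Cref{lemCa1}, this gives $||U(t)-\mathbbm{1}||_{\mathscr{A},k}\to 0$, so $U-\mathbbm{1}\in\mathscr{A}_{L,0}(\widetilde{M},\mathcal{S})^\Gamma$. Invertibility in $(\mathscr{A}_{L,0}(\widetilde{M},\mathcal{S})^\Gamma)^+$ then follows either by applying the same Schwartz argument to $-\psi$ to verify directly that $U^{-1}(t)-\mathbbm{1}=e^{-2\pi i\psi(\widetilde{D}/t)}-\mathbbm{1}$ lies in $\mathscr{A}_{L,0}$, or by invoking closure of $\mathscr{A}_{L,0}(\widetilde{M},\mathcal{S})^\Gamma$ under holomorphic functional calculus inside $C^*_{L,0}(\widetilde{M},\mathcal{S})^\Gamma$ together with the already-established invertibility of $U$ in $(C^*_{L,0}(\widetilde{M},\mathcal{S})^\Gamma)^+$.

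The principal obstacle is quantitative: translating the Schwartz-semi-norm decay of $\chi\Phi_t$ into decay of $||\Phi_t(\widetilde{D})||_{\mathscr{A},k}$ as $t\to 0^+$. This requires commuting the derivation $\widetilde{\partial}=[T_\rho,\cdot]$ through the Fourier representation of $\Phi_t(\widetilde{D})$, bounding the resulting commutators $[T_\rho,e^{is\widetilde{D}}]$ by finite propagation speed of the wave operator in terms of $|s|$, and then checking that the resulting integrals are dominated by the $t$-dependent Schwartz semi-norms of $\hat{\Phi}_t$ that vanish as $t\to 0^+$.
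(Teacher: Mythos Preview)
Your proposal is correct and follows the same overall architecture as the paper's proof: establish that $\Phi_t$ is Schwartz and invoke \Cref{lemCa1} to place $U(t)-\mathbbm{1}$ in $\mathscr{A}(\widetilde{M},\mathcal{S})^\Gamma$, check smoothness on $(0,\infty)$, and then work to show continuity and smoothness at $t=0$ in the Fr\'echet topology, with invertibility following from the exponential form.

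The one genuine difference is in how the $t\to 0^+$ decay of $\|U(t)-\mathbbm{1}\|_{\mathscr{A},k}$ is obtained. You propose to control the Schwartz semi-norms of a spectrally cut-off $\chi\Phi_t$ and then push these through the Fourier representation, bounding commutators $[T_\rho,e^{is\widetilde{D}}]$ via finite propagation of the wave operator; you rightly flag this as the principal technical obstacle. The paper instead expands $U_t(\widetilde{D})-\mathbbm{1}$ as the exponential power series in $\int_0^t 2\pi i\,\dot{\psi}(\widetilde{D}/s)\,ds$ and bounds each factor using an off-the-shelf heat-kernel estimate from \cite{XY19}, namely $\|e^{-\widetilde{D}^2/s^2}\|_{\mathscr{A},k}\leq C e^{-r^2/2s^2}$ for $s$ small, obtained by writing $e^{-n\widetilde{D}^2}=(e^{-\widetilde{D}^2})^n$ and using sub-multiplicativity. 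This sidesteps the Fourier/commutator bookkeeping you anticipate and gives the super-exponential decay directly. Your route would work, but the paper's is shorter because the needed operator-norm bound on the heat kernel is already available in the literature.
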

\begin{proof}
Firstly, we note that by the proof of \Cref{thmCb4} the function $U_t(x)-1$ is a Schwartz function, and so by \Cref{lemCa1} $U_t(\widetilde{D})-\mathbbm{1}$ belongs to $\mathscr{A}(\widetilde{M},\mathcal{S})^\Gamma)$ for all $t\in[0,\infty)$. Being of the form $e^{f(\widetilde{D})}$ this further proves that $U_t(\widetilde{D})\in(\mathscr{A}(\widetilde{M},\mathcal{S})^\Gamma)^+$ is invertible for all $t$. By definition of the localization algebra, to prove that $U$ is an invertible element of $(\mathscr{A}_{L,0}(\widetilde{M},\mathcal{S})^\Gamma)^+$ it suffices to show that $U-\mathbbm{1}$ is a piecewise smooth function on the half-line. By the description of $F_{1/t}$ this is immediate for all $t\in(0,\infty)$, hence we only need to show smoothness at $t=0$ with respect to the Frechet topology generated by the seminorms $||\cdot||_{\mathscr{A},k}$. Denoting by $\dot{\psi}(s)$ the derivative with respect to $s$, for $t\in(0,\infty)$
\[||U_t(\widetilde{D})-\mathbbm{1}||_{\mathscr{A},k}=\exp\left(\int_{0}^{t}2\pi i\dot{\psi}(\widetilde{D}/s)\;ds\right)-\mathbbm{1}=\left|\left|\sum_{n=1}^{\infty}\frac{1}{n!}\left(\int_{0}^{t}2\pi i\dot{\psi}(\widetilde{D}/s)\;ds\right)^n\right|\right|_{\mathscr{A},k}\]
\[\leq\sum_{n=1}^{\infty}\frac{(2\pi)^n}{n!}\left|\left|\left(\int_{0}^{t}\dot{\psi}(\widetilde{D}/s)\;ds\right)\right|\right|_{\mathscr{A},k}^n\leq\sum_{n=1}^{\infty}\frac{(2\pi)^n}{n!}\left|\left|\left.\left(\frac{1}{\sqrt{\pi}}\frac{-2}{s^2}\widetilde{D}e^{-\widetilde{D}^2/s^2}\right)\right|^{t}_0\right|\right|_{\mathscr{A},k}^n\]
Following the arguments on \cite[Page 21]{XY19} for $1/s^2\in[n,n+1)$ there exists $n$ large enough such that for some constants $C_0,C_1>0$
\[||C_0e^{-\widetilde{D}^2/s^2}||_{\mathscr{A},k}\leq||e^{-n\widetilde{D}^2}||_{\mathscr{A},k}\leq e^{-nr^2/2}\leq C_1e^{-r^2/2s^2}\]
Thus for $s>0$ sufficiently small and for some positive constant $C_2$ we obtain-- taking $t>0$ sufficiently small to begin with-- the bound
\[\lim_{t\longrightarrow0}||U_t(\widetilde{D})-\mathbbm{1}||_{\mathscr{A},k}\leq\lim_{t\longrightarrow0}\sum_{n=1}^{\infty}\frac{(2\pi)^n}{n!}\left(\frac{-2}{t^2\sqrt{\pi}}C_2C_1e^{-r^2/2t^2}-\lim_{s\longrightarrow0}\frac{-2}{s^2\sqrt{\pi}}C_2C_1e^{-r^2/2s^2}\right)^n\] 
\[=\lim_{t\longrightarrow0}\sum_{n=1}^{\infty}\frac{(2\pi)^n}{n!}\left(\frac{-2}{t^2\sqrt{\pi}}C_2C_1e^{-r^2/2t^2}\right)^n=\lim_{t\longrightarrow0}\exp\left(\frac{1}{t^2}Ce^{-r^2/2t^2}\right)-1=0\]
Since $U_0(\widetilde{D})=\mathbbm{1}$ it follows that $U-\mathbbm{1}$ is continuous with respect to the family of seminorms; the same holds true for all orders of its derivatives according to the expansion
\[\frac{d}{dt}(U_t(x)-1)=\sum_{m_1+2m_2+\cdots+km_k=k} C_{m_1,\ldots,m_k}\prod_{l=1}^{k}C_k^{m_l}\left(\frac{d^l}{dt^l}\psi(x/t)\right)^{m_l}(U_t(x)-1)\] 
where $0\leq m_l\leq k$. Thus $U-\mathbbm{1}$ is smooth with respect to $||\cdot||_{\mathscr{A},k}$ which proves that $U\in(\mathscr{A}_{L,0}(\widetilde{M},\mathcal{S})^\Gamma)^+$ as desired.
\end{proof}
\begin{mycor}\label{corCc1}
Let $\varphi_\gamma\in(C^{2m}\mathbb{C}\Gamma,\mathrm{cl}(\gamma),b)$ be of polynomial growth, with $\widetilde{D}$ being invertible, then the integral 
\[\int_{0}^{\infty}\overline{\varphi}_\gamma\left(\dot{U}_t(\widetilde{D})U_t^{-1}(\widetilde{D})\hat{\otimes}(U_t(\widetilde{D})\hat{\otimes}U_t^{-1}(\widetilde{D}))^{\hat{\otimes}m}\right)\;dt\]
converges absolutely. 
\end{mycor}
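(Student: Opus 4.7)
The plan is to reduce the convergence claim to the absolute convergence of the delocalized higher eta invariant from \Cref{defCa1}, already proved in \Cref{lemCa2}. The key observation is that the operator $U_t$ used here coincides with $u_{1/t}$, where $u_s(x)=e^{2\pi i F_s(x)}$ is the family from \Cref{defCa1}; indeed, $F_s(x)=\psi(sx)$ is immediate from the integral expressions for $\psi$ and $F_s$. By \Cref{proCc1}, $U_t\in(\mathscr{A}(\widetilde{M},\mathcal{S})^\Gamma)^+$ with smooth dependence on $t$, so the integrand is pointwise well defined through the continuous extension of $\overline{\varphi}_\gamma$ to $((\mathscr{A}(\widetilde{M},\mathcal{S})^\Gamma)^+)^{\hat{\otimes}_\pi^{2m+1}}$ furnished by \Cref{thmCa1} (after choosing a polynomial-growth representative via \Cref{corBb1}).

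First I would perform the substitution $s=1/t$, so that $dt=-ds/s^2$. The chain rule gives $\dot{U}_t(x)|_{t=1/s}=-s^2\dot{u}_s(x)$, hence
\[\dot{U}_t(\widetilde{D})U_t^{-1}(\widetilde{D})\big|_{t=1/s}=-s^2\,\dot{u}_s(\widetilde{D})u_s^{-1}(\widetilde{D}).\]
The factor $-s^2$ cancels the Jacobian $1/s^2$ exactly, so after substitution the original integral equals
\[-\int_0^\infty\overline{\varphi}_\gamma\!\left(\dot{u}_s(\widetilde{D})u_s^{-1}(\widetilde{D})\hat{\otimes}(u_s(\widetilde{D})\hat{\otimes}u_s^{-1}(\widetilde{D}))^{\hat{\otimes}m}\right)ds.\]
Next, since $\overline{\varphi}_\gamma$ annihilates any tensor containing a unit entry, expanding $u_s=(u_s-\mathbbm{1})+\mathbbm{1}$ and $u_s^{-1}=(u_s^{-1}-\mathbbm{1})+\mathbbm{1}$ collapses the above integrand to $-\eta_{\varphi_\gamma}(\widetilde{D},s)$, the integrand appearing in \Cref{defCa1}.

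Because the substitution $s=1/t$ preserves absolute values, absolute integrability of the original integrand on $(0,\infty)$ is equivalent to that of $|\eta_{\varphi_\gamma}(\widetilde{D},s)|$ on the same range, and the latter is precisely the content of \Cref{lemCa2}. The only delicate point is justifying the chain-rule computation at the level of the Fr\'echet algebra $\mathscr{A}(\widetilde{M},\mathcal{S})^\Gamma$, but this is essentially built into \Cref{proCc1}, whose proof uses the spectral gap of $\widetilde{D}$ to control the seminorms $||\cdot||_{\mathscr{A},k}$ of $U_t$ and its derivatives uniformly in $t$. I anticipate no further technical obstacles.
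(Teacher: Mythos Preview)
Your proposal is correct and is essentially the same as the paper's approach: the paper's proof is the single sentence ``This is a direct consequence of \Cref{lemCa2}'', and your argument spells out precisely the substitution $s=1/t$ identifying $U_t$ with $u_{1/t}$ and the collapse of the unitized integrand to $\eta_{\varphi_\gamma}(\widetilde{D},s)$ that this one-liner presupposes (the same substitution appears explicitly in the paper immediately afterwards in \S\ref{C3}). The sign you attach when writing ``collapses the above integrand to $-\eta_{\varphi_\gamma}(\widetilde{D},s)$'' is off by one overall sign, but this is irrelevant for absolute convergence.
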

\begin{proof}
This is a direct consequence of \Cref{lemCa2}.
\end{proof}

The construction of a regularized representative of $\rho(\widetilde{D},\widetilde{g})$ involves the choice of some smooth normalizing function $\chi$ with compactly supported distributional Fourier transform $\hat{\chi}$, such that $E(t)=(\chi(\widetilde{D}/t)+\mathbbm{1})/2$ and $E$ has the properties outlined on \cpageref{lemCb1}. Adapting the argument preceding \cite[Proposition 6.11]{CWXY19} we can thus construct a path $w$
\begin{equation}
w(t)=\left\{\begin{array}{cc}
U(t) & 0\leq t\leq 1\\
e^{2\pi i((2-t)\psi(\widetilde{D})+(t-1)E(1))} & 1\leq t\leq 2\\
e^{2\pi iE(t-1)} & t\geq2
\end{array}\right.
\end{equation}
which defines a regularized representative. On the other hand, by definition, $U$ is its own regularized representative and the equality of $[U]$ and $[w]$ as K-theory classes in $K_1(C_{L,0}^*(\widetilde{M},\mathcal{S})^\Gamma)$ follows from their being homotopic in $(\mathscr{B}_{L,0}(\widetilde{M},\mathcal{S})^\Gamma)^+$. Explicitly, we have the homotopy induced by the family of invertibles $h_s:s\in[0,1]$ defined by
\begin{equation}
h_s(t)=\left\{\begin{array}{cc}
U(t) & 0\leq t\leq 1\\
e^{2\pi i((2-t)\psi(\widetilde{D})+(t-1)(sE(1)+(1-s)\psi(\widetilde{D})))} & 1\leq t\leq 1+s\\
e^{2\pi i(sE(t-1)+(1-s)\psi(\frac{\widetilde{D}}{t-1}))} & t\geq1+s
\end{array}\right.
\end{equation}
Thus by \Cref{corCc1} and the proofs of \Cref{lemCb2} and \Cref{corCb2} we obtain that
\[\tau_{[\varphi_\gamma]}(\rho(\widetilde{D},\widetilde{g}))=\tau_{[\varphi_\gamma]}(w)=\tau_{\varphi_\gamma}(U)\]
\[=\frac{(-1)^mm!}{\pi i}\int_{0}^{\infty}\overline{\varphi}_\gamma\left(U_t(\widetilde{D})^{-1}\dot{U}_t(\widetilde{D})\hat{\otimes}(U_t^{-1}(\widetilde{D})\hat{\otimes}U_t(\widetilde{D}))^{\hat{\otimes}m}\right)\;dt\]
\[=\frac{(-1)^mm!}{\pi i}\int_{0}^{\infty}\overline{\varphi}_\gamma\left(\dot{\bar{u}}_t(\widetilde{D})\bar{u}_t^{-1}(\widetilde{D})\hat{\otimes}(\bar{u}_t(\widetilde{D})\hat{\otimes}\bar{u}_t^{-1}(\widetilde{D}))^{\hat{\otimes}m}\right)\;dt=(-1)^m\eta_{[\varphi_\gamma]}(\widetilde{D})\]
where $\bar{u}_t=U_{1/t}$ and we have used the substitution $u_t\longleftrightarrow u_t^{-1}$.

\subsection{Delocalized Higher Atiyah-Patodi-Singer Index Theorem}\label{C4}

\noindent Consider smooth vector bundles $V_1$ and $V_2$ over a compact smooth manifold $M$-- without boundary-- and an elliptic differential operator $D:V_1\longrightarrow V_2$ which acts on the smooth sections of these vector bundles. Since every such $D$ has a pseudo inverse it is a Fredholm operator, with \textit{analytical index} defined by 
\begin{equation}
\mathrm{ind}(D)=\dim\ker(D)-\dim\ker(D^*)
\end{equation}
Let us also recall the \textit{topological index} of $D$ with respect to a non-commutative $K$-theoretic framework
\begin{equation}\label{TopInd}
\int_M\mathsf{ch}(D)\mathsf{Td}(T^*M\otimes\mathbb{C})
\end{equation}
where $\mathsf{Td}(T^*M\otimes\mathbb{C})$ is the Todd class of the complexified tangent bundle of $M$, and $\mathsf{ch}(D)$ is the pullback of the Chern character on a particular $K$-theory class associated to $D$. The original Atiyah-Singer index theorem \cite{AS63} was proven through cohomological means, and asserts that the topological index of $D$ is equal to its analytical index; a $K$-theoretic proof \cite{AS68I,AS68III} was later provided and shown to be equivalent to the one employing cohomology. The Atiyah-Patodi-Singer index theorem \cite{APS75I,APS75II} generalizes this statement to include manifolds with boundary under satisfaction of certain global boundary conditions. By considering $\mathrm{Ind}_G(D)$ rather than $\mathrm{ind}(D)$, this further admits a kind of (delocalized) higher analogue, in our case modeled on that of Lott \cite{JL92} (see the relationship between equations (1) and (66)).

Prior to stating and proving the delocalized version of a higher Atiyah-Patodi-Singer index theorem we will first exhibit a necessary relationship between the determinant map $\tau_{\varphi_\gamma}$ of the previous section and the Chern-Connes character map. In the remainder of this section we will work within the restriction of even dimensional cyclic cocyles and under the condition of a compact spin manifold $M$ having fundamental group $\Gamma$ of polynomial growth. In particular, given a delocalized cyclic cocyle $\varphi_\gamma\in(C^{2m}\mathbb{C}\Gamma,\mathrm{cl}(\gamma),b)$ we will define the $\varphi_\gamma$-component of the Chern-Connes character of an idempotent $p\in\mathscr{B}(\widetilde{M})^\Gamma$ according to that of \cite[Chapter 8]{JL94}
\begin{equation}\label{Cd:ChernConnes}
\mathsf{ch}_{\varphi_\gamma}(p):=\frac{(-1)^m(2m)!}{m!}\varphi_\gamma\left(p^{\hat{\otimes}2m+1}\right)
\end{equation}
Firstly let us prove the usual well-definedness properties.
\begin{mypro}\label{proCd1}
Let $[\varphi_\gamma]\in HC^{2m}(\mathbb{C}\Gamma,\mathrm{cl}(\gamma))$, then the $[\varphi_\gamma]$-component of the Chern-Connes character
\[\mathsf{ch}_{[\varphi_\gamma]}:K_0(\mathscr{B}(\widetilde{M})^\Gamma)\longrightarrow\mathbb{C}\]
is well defined, particularly being independent of the choices of cocycle representative and $K$-theory class representative.
\end{mypro}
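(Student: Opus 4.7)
The plan is to reduce the proposition to two independent statements: independence of the choice of cyclic cocycle representative in $HC^{2m}(\mathbb{C}\Gamma,\mathrm{cl}(\gamma))$, and independence of the choice of idempotent representative in $K_0(\mathscr{B}(\widetilde{M})^\Gamma)$. By \Cref{corBb1} we may always select a representative $\varphi_\gamma$ of polynomial growth, and by \Cref{thmCb1} this representative extends continuously to the projective tensor algebra $(\mathscr{B}(\widetilde{M})^\Gamma)^{\hat{\otimes}_\pi^{2m+1}}$; consequently $\mathsf{ch}_{\varphi_\gamma}(p)=\tfrac{(-1)^m(2m)!}{m!}\varphi_\gamma(p^{\hat{\otimes}2m+1})$ is a well-defined complex number for any idempotent $p$. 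The standard extension to matrices $M_N(\mathscr{B}(\widetilde{M})^\Gamma)$ via $\operatorname{Tr}\sharp\varphi_\gamma$ inherits continuity through the projective norm, so we can handle stable idempotents with no additional analytic input.

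For independence of cocycle representative, I would argue as follows. Using the remark immediately before \Cref{C1} on representatives of polynomial growth, if $\varphi_\gamma-\phi_\gamma=b\psi$ in $HC^{2m}(\mathbb{C}\Gamma,\mathrm{cl}(\gamma))$, then $\psi$ itself may be chosen of polynomial growth and cyclic. One computes directly from the definition of $b$ that
\[
(b\psi)(p^{\hat{\otimes}2m+1})=\sum_{i=0}^{2m}(-1)^i\psi(p^{\hat{\otimes}2m})+(-1)^{2m+1}\psi(p^{\hat{\otimes}2m}),
\]
and using $p^2=p$ together with cyclicity of $\psi$ (which forces $\psi(p^{\hat{\otimes}2m})=-\psi(p^{\hat{\otimes}2m})$ because the degree $2m-1$ is odd), both the alternating sum and the cyclic term collapse to $0$. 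The extension to the full smooth dense subalgebra is justified by the continuity statement of \Cref{thmCb1}, which lets me pass from $\mathbb{C}\Gamma\otimes\mathscr{R}$ to $\mathscr{B}(\widetilde{M})^\Gamma$ at the end of the argument.

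For independence of the $K_0$-representative, I would use that $\mathscr{B}(\widetilde{M})^\Gamma$ is closed under holomorphic functional calculus (noted in the paragraph before \Cref{defAa7}), so any equality $[p]=[q]$ in $K_0$ may be realized, after stabilization, by a piecewise smooth path of idempotents $p(s)\in M_N(\mathscr{B}(\widetilde{M})^\Gamma)$. It then suffices to show $\tfrac{d}{ds}\mathsf{ch}_{\varphi_\gamma}(p(s))=0$. Differentiating gives
\[
\frac{d}{ds}\varphi_\gamma(p(s)^{\hat{\otimes}2m+1})=\sum_{i=0}^{2m}\varphi_\gamma(p,\ldots,\dot p,\ldots,p),
\]
which, by cyclicity of $\varphi_\gamma$ (with $(-1)^{2m}=+1$), collapses to $(2m+1)\varphi_\gamma(\dot p,p,\ldots,p)$. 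Using the identity $\dot p=\dot p p+p\dot p$ forced by $p^2=p$, together with the cocycle condition $b\varphi_\gamma=0$ applied to the string $(\dot p,p,\ldots,p)$ and the relation $p\dot p p=0$, one obtains the vanishing. This is the classical Connes argument once the matrix trace has been absorbed, and it carries over verbatim because all manipulations respect the continuous extension from \Cref{thmCb1}.

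The main obstacle is the passage from the purely algebraic identity on $\mathbb{C}\Gamma\otimes\mathscr{R}$ to its continuous analogue on $\mathscr{B}(\widetilde{M})^\Gamma$, because the differentiation and the idempotent identities are only algebraically transparent on the dense subalgebra. This is resolved by the continuity bound in \Cref{thmCb1}, which allows one to approximate $p(s)$ by idempotents in $\mathbb{C}\Gamma\otimes\mathscr{R}$ (using the Fréchet structure with seminorms $\|\cdot\|_{\mathscr{B},k}$ and holomorphic functional calculus to preserve idempotency under approximation), perform the algebraic manipulation, and pass to the limit. The polynomial growth hypothesis on $\varphi_\gamma$ enters precisely at this step by furnishing uniform bounds in the seminorms that control the limit.
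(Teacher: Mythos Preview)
Your proposal is correct and follows essentially the same route as the paper: choose a polynomial-growth representative via \Cref{corBb1}, invoke \Cref{thmCb1} for continuity, then (i) show $(b\psi)(p^{\hat\otimes 2m+1})$ vanishes by cyclicity of $\psi$ in odd degree, and (ii) differentiate along a smooth path of idempotents and kill the derivative using $b\varphi_\gamma=0$. The paper packages step (ii) slightly more compactly as the single identity $(2m+1)\varphi_\gamma(\dot p_t\hat\otimes p_t^{\hat\otimes 2m})=(b\varphi_\gamma)\bigl((\dot p_t p_t-p_t\dot p_t)\hat\otimes p_t^{\hat\otimes 2m+1}\bigr)$, which is the same computation you unwind via $\dot p=\dot p\,p+p\,\dot p$ and $p\dot p\,p=0$.

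One small slip: in your displayed expansion of $(b\psi)(p^{\hat\otimes 2m+1})$ the last term should carry sign $(-1)^{2m}$, not $(-1)^{2m+1}$, so the alternating sum does not collapse to zero on its own---it equals $\psi(p^{\hat\otimes 2m})$. This is harmless because your cyclicity argument $\psi(p^{\hat\otimes 2m})=-\psi(p^{\hat\otimes 2m})$ is exactly what the paper uses and is what actually kills the term. Also, your closing density paragraph is more caution than is strictly needed: the algebraic identities $p^2=p$, $\dot p=\dot p\,p+p\,\dot p$, and $b\varphi_\gamma=0$ hold directly for idempotents in $\mathscr{B}(\widetilde M)^\Gamma$ once $\varphi_\gamma$ has been continuously extended, so no approximation by $\mathbb{C}\Gamma\otimes\mathscr{R}$ is required at that stage.
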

\begin{proof}
Since $\varphi_\gamma$ can be chosen to be of polynomial growth and $p\in(\mathscr{B}(\widetilde{M})^\Gamma)^+$	then \Cref{thmCb1} asserts that the formula for the $\varphi_\gamma$-component of the Chern-Connes character makes sense. Suppose that $\varphi_\gamma$ and $\phi_\gamma$ belong to the same cohomology class in $HC^{2m}(\mathbb{C}\Gamma,\mathrm{cl}(\gamma))$; by hypothesis, $\varphi_\gamma$ and $\phi_\gamma$ are cohomologous via a coboundary $b\varphi\in BC^{2m}(\mathbb{C}\Gamma,\mathrm{cl}(\gamma))$.	Independence with respect to cyclic cocyle representatives thus follows from showing that $\mathsf{ch}_{b\varphi}(p)=0$ for any idempotent $p$. A direct computation gives
\begin{equation}
\begin{gathered}
(b\varphi)\left(p^{\hat{\otimes}2m+1}\right)=\varphi\left(p^{\hat{\otimes}2m}\right)=0
\end{gathered}
\end{equation}
since by the definition of the cyclic operator $\mathfrak{t}\varphi\left(p^{\hat{\otimes}2m}\right)=(-1)^{2m-1}\varphi\left(p^{\hat{\otimes}2m}\right)$. Let us now turn our attention to proving that if $p_0,p_1\in(\mathscr{B}(\widetilde{M})^\Gamma)^+$ belong to the same class in $K_0(\mathscr{B}(\widetilde{M})^\Gamma)$ then $\mathsf{ch}_{[\varphi_\gamma]}(p_0)=\mathsf{ch}_{[\varphi_\gamma]}(p_1)$ 
By hypothesis there exist a piecewise smooth family of idempotents $p_t:t\in(0,1)$ connecting $p_0$ and $p_1$, which allows for the usual trick of taking the derivative. Using the fact that $\varphi_\gamma$ belongs to the kernel of the boundary map $b$ a direct calculation gives 
\[\frac{d}{dt}\varphi_\gamma\left(p_t^{\hat{\otimes}2m+1}\right)=(2m+1)\varphi_\gamma\left(\dot{p}_t\hat{\otimes}p_t^{\hat{\otimes}2m}\right)=(b\varphi_\gamma)\left((\dot{p}_tp_t-p_t\dot{p}_t)\hat{\otimes}p_t^{\hat{\otimes}2m+1}\right)=0\]
The desired result now follows immediately from integration 
\[0=\int_{0}^{1}\frac{(-1)^m(2m)!}{m!}\frac{d}{dt}\varphi_\gamma\left(p_t^{\hat{\otimes}2m+1}\right)\;dt=\mathsf{ch}_{[\varphi_\gamma]}(p_1)-\mathsf{ch}_{[\varphi_\gamma]}(p_0)\]
\end{proof}

\begin{mypro}\label{proCd2}
Let $S_\gamma^*:HC^{2m}(\mathbb{C}\Gamma,\mathrm{cl}(\gamma))\longrightarrow HC^{2m+2}(\mathbb{C}\Gamma,\mathrm{cl}(\gamma))$ be the delocalized Connes periodicity operator, then $\mathsf{ch}_{[\varphi_\gamma]}=\mathsf{ch}_{[S_\gamma\varphi_\gamma]}$ for every $[\varphi_\gamma]\in HC^{2m}(\mathbb{C}\Gamma,\mathrm{cl}(\gamma))$. 
\end{mypro}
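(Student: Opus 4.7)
The plan is to reduce the identity of functionals on $K_0(\mathscr{B}(\widetilde{M})^\Gamma)$ to a direct computation on idempotent representatives. By \Cref{proCd1}, both $\mathsf{ch}_{[\varphi_\gamma]}$ and $\mathsf{ch}_{[S_\gamma\varphi_\gamma]}$ descend to well-defined maps out of $K_0$, so it suffices to fix a polynomial-growth representative $\varphi_\gamma$ (which exists by \Cref{corBb1}, and hence extends continuously by \Cref{thmCb1}) and verify
\[
\frac{(-1)^m(2m)!}{m!}\,\varphi_\gamma\!\left(p^{\hat{\otimes}2m+1}\right)=\frac{(-1)^{m+1}(2m+2)!}{(m+1)!}\,(S_\gamma\varphi_\gamma)\!\left(p^{\hat{\otimes}2m+3}\right)
\]
for every idempotent $p\in(\mathscr{B}(\widetilde{M})^\Gamma)^+$.

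First I would exploit the explicit formula from \eqref{Ba:ConnesPer} together with the expansion \eqref{Ba:bBeta}. Because $p^2=p$, every face map $d_i$ sends $p^{\hat{\otimes}k}$ to $p^{\hat{\otimes}k-1}$, so $(\delta^i\delta^j\varphi_\gamma)(p^{\hat{\otimes}2m+3})=\varphi_\gamma(p^{\hat{\otimes}2m+1})$ for every pair $0\leq i<j\leq 2m+2$, independent of $(i,j)$. Adding the two coefficients in \eqref{Ba:bBeta} gives the key collapse $(j-i)+(i-j+1)=1$, and therefore
\[
(\beta b+b\beta)\varphi_\gamma\!\left(p^{\hat{\otimes}2m+3}\right)=\Bigg(\sum_{0\leq i<j\leq 2m+2}(-1)^{i+j}\Bigg)\,\varphi_\gamma\!\left(p^{\hat{\otimes}2m+1}\right).
\]

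Next I would evaluate the combinatorial sum using the standard identity $\sum_{0\leq i<j\leq N}(-1)^{i+j}=\tfrac{1}{2}\big[(\sum_{i=0}^{N}(-1)^i)^2-(N+1)\big]$, which at the even value $N=2m+2$ yields $\tfrac{1}{2}(1-(2m+3))=-(m+1)$. Dividing by the normalization $(2m+1)(2m+2)$ from \eqref{Ba:ConnesPer} gives
\[
(S_\gamma\varphi_\gamma)\!\left(p^{\hat{\otimes}2m+3}\right)=\frac{-1}{2(2m+1)}\,\varphi_\gamma\!\left(p^{\hat{\otimes}2m+1}\right),
\]
and the factorial identity $(2m+2)!/(m+1)!=2(m+1)(2m+1)(2m)!/((m+1)m!)$ then collapses the prefactor on the right-hand side of the displayed identity above to precisely $(-1)^m(2m)!/m!$, which gives the claim.

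The computation is essentially bookkeeping, and the one place that requires care is the combinatorial identity together with the interlocking signs, since the footnote to \eqref{Ba:ConnesPer} warns that the normalization of $S$ adopted here differs from Connes' original by a constant; I would double-check that this normalization is consistent with the constant $\frac{(-1)^m(2m)!}{m!}$ used in \eqref{Cd:ChernConnes}, as any mismatch would force the introduction of an auxiliary scalar.
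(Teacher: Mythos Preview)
Your argument is correct and follows essentially the same route as the paper: both reduce to evaluating $(\beta b+b\beta)\varphi_\gamma$ on $p^{\hat\otimes 2m+3}$ via the explicit formulas \eqref{Ba:bBeta} and obtain the factor $-(m+1)$. The only organizational difference is that the paper kills the $\beta b$ contribution immediately by invoking the cocycle condition $b\varphi_\gamma=0$ and then computes $b\beta$ alone, whereas you combine both pieces and extract $-(m+1)$ from the combinatorial identity $\sum_{0\le i<j\le 2m+2}(-1)^{i+j}=-(m+1)$; the endpoints and the final factorial check coincide.
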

\begin{proof}
Recalling the definition of the map $\beta$ as given in \eqref{Ba:ConnesPer} of \Cref{B1} it is straightforward to compute the action of $\beta b$ and $b\beta$ as refers to the Chern-Connes character.
\[(\beta\circ b\varphi_\gamma)\left(p^{\hat{\otimes}2m+3}\right)=0\quad\mathrm{and}\quad(b\circ\beta\varphi_\gamma)\left(p^{\hat{\otimes}2m+3}\right)=-(m+1)\varphi_\gamma\left(p^{\hat{\otimes}2m+1}\right)\]
Using the relation $S_\gamma=\frac{1}{(2m+1)(2m+2)}\left(\beta b+b\beta\right)$ as in \Cref{defBa1} we obtain the desired result.
\end{proof}

\begin{mylem}\label{lemCd1}
Let $[\varphi_\gamma]\in HC^{2m}(\mathbb{C}\Gamma,\mathrm{cl}(\gamma))$, then the following diagram commutes
\[\begin{tikzcd}
K_1(C_{L,0}(\widetilde{M})^\Gamma) \arrow[r,"\tau_{[\varphi_\gamma]}"] & \mathbb{C}\\
K_{0}(C^*(\widetilde{M})^\Gamma) \arrow[u,"\partial"] \arrow[r,"\mathsf{ch}_{[\varphi_\gamma]}"] & \mathbb{C} \ar[u,swap,"\times(-2)"]
\end{tikzcd}\]
\end{mylem}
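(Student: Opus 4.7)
The plan is to reduce the commutativity of the diagram to a transgression calculation on an explicit representative of $\partial[p]$. First, I would make the connecting map concrete: given an idempotent $p\in\mathscr{B}(\widetilde{M})^\Gamma$ representing a class in $K_0$, choose a piecewise smooth lift $P\in\mathscr{B}_L(\widetilde{M})^\Gamma$ with $P(0)=p$; such a lift exists since $\mathsf{ev}$ is surjective, and we may further arrange $\mathsf{prop}(P(t))\to 0$ as $t\to\infty$. Because $p$ is an idempotent, $u(t):=\exp(2\pi i P(t))$ satisfies $u(0)=\mathbbm{1}$, so $u$ lies in $(\mathscr{B}_{L,0}(\widetilde{M})^\Gamma)^+$, and the standard description of the exponential map of the six-term sequence identifies $[u]=\partial[p]$. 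By \Cref{thmCb3}, this representative (or any regularized replacement) may be used to compute $\tau_{\varphi_\gamma}$.

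Next I would substitute $u$ into the simplified formula \eqref{Cb:TauSimple} and perform a Chern--Simons transgression. Introducing the auxiliary family $u_s(t):=\exp(2\pi i s P(t))$ for $s\in[0,1]$, a computation in the spirit of \eqref{Ca:Transgression} and \Cref{lemCb2} shows, using only $b\bar\varphi_\gamma=0$, that the mixed partials
\[
\partial_s\bar\varphi_\gamma\!\left((u_s^{-1}\dot u_s)\hat\otimes(u_s^{-1}\hat\otimes u_s)^{\hat\otimes m}\right)
\quad\text{and}\quad
\partial_t\bar\varphi_\gamma\!\left((u_s^{-1}\partial_s u_s)\hat\otimes(u_s^{-1}\hat\otimes u_s)^{\hat\otimes m}\right)
\]
coincide. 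Integrating this identity over $[0,1]\times[0,\infty)$, with Fubini justified by the absolute-convergence estimates underlying \Cref{thmCb2}, collapses the double integral into boundary terms.

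The final step is to evaluate those boundary terms. The $s=0$ boundary is zero because $u_0\equiv\mathbbm{1}$, and the $t=\infty$ boundary vanishes by \Cref{remCb1} since $\mathsf{prop}(P(t))\to 0$ forces all the factors $u(t)-\mathbbm{1}$ and $u^{-1}(t)-\mathbbm{1}$ to have arbitrarily small propagation. The only surviving contribution is the $t=0$ boundary, at which $P(0)=p$ is an idempotent; using $u_s(0)=\mathbbm{1}+(e^{2\pi i s}-1)p$ together with Duhamel's formula and $[p,u_s(0)]=0$, the boundary integrand reduces to an explicit scalar multiple of $\bar\varphi_\gamma(p^{\hat\otimes 2m+1})$. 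Assembling the prefactors $(-1)^m m!/(\pi i)$ from $\tau_{\varphi_\gamma}$ and $(-1)^m(2m)!/m!$ from $\mathsf{ch}_{\varphi_\gamma}$ should then produce precisely the asserted factor $-2$.

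The main obstacle will be the combinatorial bookkeeping: tracking signs, factorials, and powers of $2\pi i$ through the Duhamel expansions and the repeated use of $b\bar\varphi_\gamma=0$ so that all constants combine to exactly $-2$. As a sanity check, in the $m=0$ case cyclicity of the trace gives $\tau_\gamma(u^{-1}\dot u)=2\pi i\,\tau_\gamma(\dot P)$, so the integral telescopes to $2[\tau_\gamma(P(\infty))-\tau_\gamma(P(0))]=-2\tau_\gamma(p)$ (the term at infinity vanishing because $\mathsf{prop}(P(t))$ eventually drops below the distance between $x\in\mathcal{F}$ and any $gx$ with $g\in\mathrm{cl}(\gamma)$), matching $-2\,\mathsf{ch}_{[\tau_\gamma]}(p)$. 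An alternative route that sidesteps much of the bookkeeping is to reduce to this $m=0$ case by iterating the delocalized Connes periodicity operator via \Cref{proCb2} and \Cref{proCd2}.
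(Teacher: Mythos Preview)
Your approach is essentially the one the paper adopts (the paper merely writes down the explicit lift and then cites \cite[Proposition~7.2]{CWXY19} for the transgression computation you outline), and your $m=0$ sanity check and the Wallis-type integral for general $m$ both produce the correct factor $-2$.

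There is one genuine slip in the justification of the $t\to\infty$ boundary. You claim that $\mathsf{prop}(P(t))\to 0$ forces $u(t)-\mathbbm{1}$ and $u^{-1}(t)-\mathbbm{1}$ to have arbitrarily small propagation, so that \Cref{remCb1} applies. This is false as stated: $u(t)-\mathbbm{1}=\sum_{k\ge 1}(2\pi i)^kP(t)^k/k!$ converges in the Fr\'echet topology, but $P(t)^k$ has propagation $\le k\cdot\mathsf{prop}(P(t))$, so the series need not have small propagation. The paper sidesteps this entirely by choosing the concrete lift
\[
F(t)=\begin{cases}(1-t)p & 0\le t\le 1,\\ 0 & t>1,\end{cases}
\]
for which $u_s(t)\equiv\mathbbm{1}$ once $t\ge 1$ and the boundary term at infinity is trivially zero; the whole double integral then lives on the compact rectangle $[0,1]\times[0,1]$ and Fubini is immediate. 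With this specific lift your transgression argument goes through verbatim, and indeed the computation in \Cref{thmCb3} (where $f(\theta)=e^{2\pi i\theta}p+(\mathbbm{1}-p)$ appears) is exactly the evaluation of your $t=0$ boundary. If you prefer to keep a general lift, you would need a truncation device of the sort used in \Cref{proCb1} rather than a direct appeal to \Cref{remCb1}.
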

\begin{proof}
By \Cref{proAa1} we know that the $K$-theory of $C^*(\widetilde{M})^\Gamma$ coincides with that of $\mathscr{B}(\widetilde{M})^\Gamma$, and likewise with respect to the localization algebras. Thus we can view every element of $K_{0}(C^*(\widetilde{M})^\Gamma)$ as a formal difference of two idempotents belonging to $(\mathscr{B}(\widetilde{M})^\Gamma)^+$. Each idempotent $p\in\mathscr{B}(\widetilde{M})^\Gamma$ defines an element $F\in\mathscr{B}_L(\widetilde{M})^\Gamma$
\begin{equation}
F(t)=\left\{\begin{array}{cc}
(1-t)p & t\in[0,1]\\
0 & t\in(1,\infty)
\end{array}\right.
\end{equation}
and $\partial[p]=[u]$ defines a $K$-theory class of invertibles in $K_1(\mathscr{B}_{L,0}^*(\widetilde{M})^\Gamma)$, where $\partial:K_{0}(C^*(\widetilde{M})^\Gamma)\longrightarrow K_1(C_{L,0}(\widetilde{M})^\Gamma)$ is the $K$-theoretical connecting map. The proof now follows by mirroring the calculations in \cite[Proposition 7.2]{CWXY19}
\end{proof}

Now we shall set up the necessary preliminaries for a delocalized version of the Atiyah-Patodi-Singer index theorem. To begin with, let $W$ be a compact $n$-dimensional spin manifold with boundary $\partial W=M$ which is closed, and naturally is an $n-1$-dimensional spin manifold. Moreover $W$ is endowed with a Riemannian metric $g$ which has product structure near $M$ and is of positive scalar curvature metric when restricted to $M$. Let $\widetilde{D}_W$ be the Dirac operator lifted to the universal cover $\widetilde{W}$, $\widetilde{g}$ be the metric lifted to $\widetilde{W}$, and by $\partial\widetilde{W}=\widetilde{M}$ denote the lifting of $M$ with respect to the covering map $p:\widetilde{W}\longrightarrow W$. As shown in \cite[Section 3]{XY14} the operator $\widetilde{D}_W$ defines a higher index $\mathrm{Ind}_{\pi_1(W)}(\widetilde{D}_W)\in K_n(C^*(\widetilde{W})^{\pi_1(W)})$, and as we have already detailed in \Cref{C2} in the case of $n-1$ being odd, the Dirac operator $\widetilde{D}_M$ defines a higher rho invariant $\rho(\widetilde{D}_M,\widetilde{g})$ in $K_{n-1}(C^*_{L,0}(\widetilde{M})^{\pi_1(W)})$. Recall that every equivariant coarse map $f:X\longrightarrow Y$ induces a homomorphism $C(f):C^*(X)^G\longrightarrow C^*(Y)^G$, which itself induces a functorial map $K(f)$ on the $K$-theory. Clearly the lifted inclusion map $\widetilde{\imath}:\widetilde{M}\hookrightarrow\widetilde{W}$ is equivariantly coarse and so gives rise to a natural homomorphism 
\begin{equation}
K(\widetilde{\imath}):K_{n-1}(C^*_{L,0}(\widetilde{M})^{\pi_1(W)})\longrightarrow K_{n-1}(C^*_{L,0}(\widetilde{W})^{\pi_1(W)})
\end{equation}
We will denote the image of $\rho(\widetilde{D}_M,\widetilde{g})$ under this map to also be $\rho(\widetilde{D}_M,\widetilde{g})$.

\begin{mythm}[Delocalized APS Index Theorem]\label{thmCd1}
Let $W$ be a compact even dimensional spin manifold with closed boundary $\partial W=M$, and endowed with a Riemannian metric $g$ which has product structure near $M$ and is of positive scalar curvature metric when restricted to $M$. If $\pi_1(W)$ is countable discrete, finitely generated, and of polynomial growth
\[\mathsf{ch}_{[\varphi_\gamma]}\left(\mathrm{Ind}_{\pi_1(W)}(\widetilde{D}_W)\right)=\frac{(-1)^{m+1}}{2}\eta_{[\varphi_\gamma]}(\widetilde{D}_M)\]
for any $[\varphi_\gamma]\in HC^{2m}(\mathbb{C}\Gamma,\mathrm{cl}(\gamma))$
\end{mythm}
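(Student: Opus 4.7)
The plan is to combine three ingredients established in the preceding sections: the commutative diagram of \Cref{lemCd1} relating the delocalized Chern-Connes character and the determinant map through the $K$-theoretic connecting map $\partial$, the identification $\tau_{[\varphi_\gamma]}(\rho(\widetilde{D},\widetilde{g})) = (-1)^m\eta_{[\varphi_\gamma]}(\widetilde{D})$ obtained in \Cref{C3} (which is half of \Cref{thm2}), and an APS-type boundary identity expressing the $K$-theoretic boundary of the higher index class of $\widetilde{D}_W$ in terms of the higher rho invariant of the Dirac operator on $\partial W = M$.

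The first step is to apply \Cref{lemCd1} with $\widetilde{M}$ replaced by $\widetilde{W}$. Since the proof of the lemma uses only the formal six-term exact sequence associated with $\mathsf{ev}:C_L^*(\widetilde{W})^{\pi_1(W)}\longrightarrow C^*(\widetilde{W})^{\pi_1(W)}$ together with the defining properties of the Chern-Connes character and of the determinant map on regularized representatives, the identical argument yields
\[\mathsf{ch}_{[\varphi_\gamma]}\left(\mathrm{Ind}_{\pi_1(W)}(\widetilde{D}_W)\right) = -\tfrac{1}{2}\,\tau_{[\varphi_\gamma]}\left(\partial\big[\mathrm{Ind}_{\pi_1(W)}(\widetilde{D}_W)\big]\right),\]
where now $\partial:K_0(C^*(\widetilde{W})^{\pi_1(W)})\longrightarrow K_1(C_{L,0}^*(\widetilde{W})^{\pi_1(W)})$. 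The second step is to invoke the boundary identity
\[\partial\left[\mathrm{Ind}_{\pi_1(W)}(\widetilde{D}_W)\right] = K(\widetilde{\imath})\rho(\widetilde{D}_M,\widetilde{g})\]
in $K_1(C_{L,0}^*(\widetilde{W})^{\pi_1(W)})$, which is the higher-index analogue of the Piazza-Schick boundary relation and is implicit in \cite[Proposition 5.3]{XY19} in the case of traces. Because every regularized representative of $K(\widetilde{\imath})\rho(\widetilde{D}_M,\widetilde{g})$ can be arranged to have propagation contained in an arbitrarily small collar neighborhood of $\widetilde{M}$, the Schwartz-kernel integrand that defines $\tau_{[\varphi_\gamma]}$ localizes at $\widetilde{M}$, giving $\tau_{[\varphi_\gamma]}(K(\widetilde{\imath})\rho(\widetilde{D}_M,\widetilde{g})) = \tau_{[\varphi_\gamma]}(\rho(\widetilde{D}_M,\widetilde{g}))$. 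The third step is then immediate: \Cref{thm2} provides the equality $\tau_{[\varphi_\gamma]}(\rho(\widetilde{D}_M,\widetilde{g})) = (-1)^m\eta_{[\varphi_\gamma]}(\widetilde{D}_M)$, and substituting back produces $\mathsf{ch}_{[\varphi_\gamma]}(\mathrm{Ind}_{\pi_1(W)}(\widetilde{D}_W)) = \frac{(-1)^{m+1}}{2}\eta_{[\varphi_\gamma]}(\widetilde{D}_M)$.

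The main obstacle is the rigorous verification of the boundary identity $\partial[\mathrm{Ind}_{\pi_1(W)}(\widetilde{D}_W)] = K(\widetilde{\imath})\rho(\widetilde{D}_M,\widetilde{g})$ together with the localization property of $\tau_{[\varphi_\gamma]}$. The hard part will be to exhibit a single smooth family of operators that simultaneously represents $\mathrm{Ind}_{\pi_1(W)}(\widetilde{D}_W)$ (via a Mishchenko-Fomenko-type projection built from functional calculus of $\widetilde{D}_W$ near the boundary) and whose image under the connecting map $\partial$ is, up to a piecewise smooth homotopy in $(\mathscr{B}_{L,0}(\widetilde{W})^{\pi_1(W)})^+$, the exponential path $e^{2\pi i H(t)}$ used to construct $\rho(\widetilde{D}_M,\widetilde{g})$ in \Cref{C2}. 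Granted this construction, the collar-neighborhood collapse together with the propagation bounds of \Cref{thmCb1} and the computations of \Cref{thmCb2} and its corollaries guarantee that passing through $K(\widetilde{\imath})$ leaves $\tau_{[\varphi_\gamma]}$ unchanged, completing the reduction to \Cref{thm2}.
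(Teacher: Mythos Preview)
Your approach is essentially identical to the paper's: apply \Cref{lemCd1} to $\widetilde{W}$, use the boundary identity $\partial[\mathrm{Ind}_{\pi_1(W)}(\widetilde{D}_W)]=\rho(\widetilde{D}_M,\widetilde{g})$, and then invoke \Cref{thm2}. The only difference is that what you flag as the ``main obstacle''---constructing representatives to verify the boundary identity and the localization through $K(\widetilde{\imath})$---is not something the paper proves from scratch. The paper simply cites \cite[Theorem~1.14]{PS14} and \cite[Theorem~A]{XY14} for the relation $\partial(\mathrm{Ind}_{\pi_1(W)}(\widetilde{D}_W))=\rho(\widetilde{D}_M,\widetilde{g})$ in $K_{n-1}(C^*_{L,0}(\widetilde{W})^{\pi_1(W)})$, and handles the passage through $K(\widetilde{\imath})$ by a notational convention introduced just before the theorem (the image of $\rho(\widetilde{D}_M,\widetilde{g})$ under $K(\widetilde{\imath})$ is again denoted $\rho(\widetilde{D}_M,\widetilde{g})$). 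So you need not reprove the Piazza--Schick/Xie--Yu boundary relation; once you cite it, the proof is the three-line substitution you already wrote down.
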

\begin{proof}
The proof of \Cref{lemCd1} did not depend on the dimension or boundary structure of $M$, the only necessity being that $\widetilde{M}$ admit a proper and co-compact isometric action of $\Gamma$ (see \Cref{defAa5}); thus the following diagram also commutes.
\begin{equation}
\begin{tikzcd}
K_1(C_{L,0}(\widetilde{W})^{\pi_1(W)}) \arrow[r,"\tau_{[\varphi_\gamma]}"] & \mathbb{C}\\
K_{0}(C^*(\widetilde{W})^{\pi_1(W)}) \arrow[u,"\partial"] \arrow[r,"\mathsf{ch}_{[\varphi_\gamma]}"] & \mathbb{C} \ar[u,swap,"\times(-2)"]
\end{tikzcd}
\end{equation}
Moreover since $\dim(W)=n$ is even, by \cite[Theorem 1.14]{PS14} and \cite[Theorem A]{XY14} the image of the higher index under the connecting map is
\begin{equation}
\partial\left(\mathrm{Ind}_{\pi_1(W)}(\widetilde{D}_W)\right)=\rho(\widetilde{D}_M,\widetilde{g})\in K_{n-1}(C^*_{L,0}(\widetilde{W})^{\pi_1(W)})\cong K_{1}(C^*_{L,0}(\widetilde{W})^{\pi_1(W)})
\end{equation}
Coupling this identity with the main result of \Cref{C3} we obtain
\begin{equation}
\begin{gathered}
-2\mathsf{ch}_{[\varphi_\gamma]}\left(\mathrm{Ind}_{\pi_1(W)}(\widetilde{D}_W)\right)=\tau_{[\varphi_\gamma]}\left(\partial\left(\mathrm{Ind}_{\pi_1(W)}(\widetilde{D}_W)\right)\right)\\
=\tau_{[\varphi_\gamma]}\left(\rho(\widetilde{D}_M,\widetilde{g})\right)=(-1)^m\eta_{[\varphi_\gamma]}(\widetilde{D}_M)
\end{gathered}
\end{equation}
\end{proof}

\nocite{SW99}
\bibliographystyle{plainurl}
\bibliography{PhDReferences}
\textsc{Department of Mathematics, Texas A\&M University}\par\nopagebreak
\noindent\textit{E-mail address}: \texttt{sakajohn@math.tamu.edu}
\end{document}